\documentclass[11pt]{article}
\usepackage{authblk}
\usepackage[toc,page]{appendix}
\usepackage[top=2cm, bottom=2cm, left=2cm, right=2cm]{geometry}

\usepackage{color}
\usepackage{helvet}         
\usepackage{courier}        
\usepackage{type1cm}        

\usepackage{framed} 
\usepackage{tikz}
\usepackage{makeidx}         
\usepackage{graphicx}        
\usepackage{multicol}        
\usepackage[bottom]{footmisc}

\usepackage{amsmath}
\usepackage{amssymb}
\usepackage{bbold}
\usepackage{amsthm}
\usepackage{subcaption}
\usepackage{sidecap}
\usepackage{floatrow}
\usepackage{pdflscape}
\usepackage{comment}
\usepackage[font=small]{caption}
\usepackage{enumitem}
\usepackage{esint}

\usepackage[hidelinks]{hyperref}

\usepackage{scalerel}
\usepackage{shuffle}
\usepackage{mathrsfs}
\usepackage{mathtools}

\mathtoolsset{showonlyrefs,showmanualtags}

\newtheorem{theorem}{Theorem}
\newtheorem{corollary}[theorem]{Corollary}
\newtheorem{lemma}[theorem]{Lemma}

\newtheorem{proposition}[theorem]{Proposition}

\theoremstyle{definition}

\theoremstyle{remark}

\newtheorem{remark}[theorem]{\bf Remark}

\numberwithin{theorem}{section}
\numberwithin{figure}{section}
\numberwithin{equation}{section}

\begin{document}
\title{Classification of commutation relation for multi-chordal SLE}
\date{}
\bigskip{}
\author[1]{Mo Chen\thanks{chenmothuprobab@gmail.com. ORCID:0009-0000-6574-4308}}
\author[1]{Chongzhi Huang\thanks{huangchzh2001prob@gmail.com. ORCID:0009-0002-7991-6267}}
\author[1]{Hao Wu\thanks{hao.wu.proba@gmail.com. ORCID:0000-0003-4265-7417}}
\affil[1]{Tsinghua University, China}


%
%


\newcommand{\Prad}[1]{\mathsf{P}_{#1\mathrm{\textnormal{-}rad}}}
\newcommand{\Erad}[1]{\mathsf{E}_{#1\mathrm{\textnormal{-}rad}}}
\newcommand{\LZrad}[1]{\mathcal{Z}_{#1\mathrm{\textnormal{-}rad}}}
\newcommand{\LSrad}[1]{\mathsf{S}_{#1\mathrm{\textnormal{-}rad}}}
\newcommand{\LFrad}[1]{\mathcal{F}_{#1\mathrm{\textnormal{-}rad}}}

\newcommand{\Pchord}{\mathsf{P}}
\newcommand{\Pchordtwo}{\Pchord_{\includegraphics[scale=0.15]{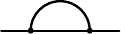}}}
\newcommand{\Echord}{\mathsf{E}}
\newcommand{\Echordtwo}{\mathsf{E}_{\includegraphics[scale=0.15]{figures/link-0}}}
\newcommand{\LSchord}[1]{\mathsf{S}_{#1\mathrm{\textnormal{-}chord}}}
\newcommand{\LSchordhat}[1]{\widehat{\mathsf{S}}_{#1\mathrm{\textnormal{-}chord}}}
\newcommand{\LFchord}[1]{\mathcal{F}_{#1\mathrm{\textnormal{-}chord}}^{(\lambda;\sigma)}}
\newcommand{\LGchord}[1]{\mathcal{G}_{#1\mathrm{\textnormal{-}chord}}^{(\kappa;\lambda;\sigma)}}
\newcommand{\cstchord}[1]{\mathrm{C}_{#1\mathrm{\textnormal{-}chord}}^{(\kappa;\lambda;\sigma)}}
\newcommand{\LGchordGFF}[1]{\mathcal{G}_{#1\mathrm{\textnormal{-}chord}}^{(4;\lambda;\sigma)}}
\newcommand{\cstchordGFF}[1]{\mathrm{C}_{#1\mathrm{\textnormal{-}chord}}^{(4;\lambda;\sigma)}}
\newcommand{\integrand}[1]{\Phi_{#1\mathrm{\textnormal{-}chord}}^{(\kappa;\lambda;\mathfrak{r})}}
\newcommand{\integrandzero}[1]{\Phi_{#1\mathrm{\textnormal{-}chord}}^{(\kappa;0;\mathfrak{r})}}

\newcommand{\blm}{\mathfrak{m}}
\newcommand{\LZmix}{\mathcal{Z}_{\bs{s};\alpha}^{(p)}}
\newcommand{\LZmixtilde}{\widetilde{\mathcal{Z}}_{\bs{s};\alpha}^{(p)}}
\newcommand{\Cmix}{C_{\bs{s};\alpha}^{(p)}}
\newcommand{\LZmixp}{\mathcal{Z}_{\bs{s};\alpha}^{(p; \mu_p)}}
\newcommand{\Cmixp}{C_{\bs{s};\alpha}^{(p; \mu_p)}}
\newcommand{\LZmixpminus}{\mathcal{Z}_{\bs{s};\alpha}^{(p; -\mu_p)}}
\newcommand{\Cmixpminus}{C_{\bs{s};\alpha}^{(p; -\mu_p)}}
\newcommand{\LZmixhat}{\mathcal{Z}_{\bs{s};\hat{\alpha}}^{(p)}}
\newcommand{\LZalphaCR}{\mathcal{Z}_{\alpha{\textnormal{-}}\mathrm{cr}}^{(\ell;\mu)}}
\newcommand{\CalphaCR}{C_{\alpha{\textnormal{-}}\mathrm{cr}}^{(\ell;\mu)}}
\newcommand{\LZalphaCRhat}{\mathcal{Z}_{\hat{\alpha}{\textnormal{-}}\mathrm{cr}}^{(\ell;\mu)}}
\newcommand{\LZalphahcap}{\mathcal{Z}_{\alpha{\textnormal{-}}\hcap}^{(\lambda)}}
\newcommand{\LZalphahcaphat}{\mathcal{Z}_{\hat{\alpha}{\textnormal{-}}\hcap}^{(\lambda)}}

\global\long\def\U{\mathbb{U}}
\global\long\def\T{\mathbb{T}}
\global\long\def\HH{\mathbb{H}}
\global\long\def\R{\mathbb{R}}
\global\long\def\C{\mathbb{C}}
\global\long\def\N{\mathbb{N}}
\global\long\def\Z{\mathbb{Z}}
\global\long\def\E{\mathbb{E}}
\global\long\def\PP{\mathbb{P}}
\global\long\def\QQ{\mathbb{Q}}
\global\long\def\A{\mathbb{A}}
\global\long\def\one{\mathbb{1}}

\global\long\def\CR{\mathrm{CR}}
\global\long\def\ST{\mathrm{ST}}
\global\long\def\SF{\mathrm{SF}}
\global\long\def\cov{\mathrm{cov}}
\global\long\def\dist{\mathrm{dist}}
\global\long\def\SLE{\mathrm{SLE}}
\global\long\def\hSLE{\mathrm{hSLE}}
\global\long\def\CLE{\mathrm{CLE}}
\global\long\def\GFF{\mathrm{GFF}}
\global\long\def\inte{\mathrm{int}}
\global\long\def\ext{\mathrm{ext}}
\global\long\def\inrad{\mathrm{inrad}}
\global\long\def\outrad{\mathrm{outrad}}
\global\long\def\dimH{\mathrm{dim}}
\global\long\def\capa{\mathrm{cap}}
\global\long\def\diam{\mathrm{diam}}
\global\long\def\sign{\mathrm{sgn}}
\global\long\def\cat{\mathrm{Cat}}
\global\long\def\cst{\mathrm{C}}
\global\long\def\ck{\mathrm{C}_{\kappa}}
\global\long\def\free{\mathrm{free}}
\global\long\def\hF{{}_2\mathrm{F}_1}
\global\long\def\simple{\mathrm{simple}}
\global\long\def\even{\mathrm{even}}
\global\long\def\odd{\mathrm{odd}}
\global\long\def\st{\mathrm{ST}}
\global\long\def\usf{\mathrm{USF}}
\global\long\def\Leb{\mathrm{Leb}}
\global\long\def\LP{\mathrm{LP}}
\global\long\def\I{\mathrm{I}}
\global\long\def\K{\mathrm{K}}
\global\long\def\J{\mathrm{J}}
\global\long\def\Y{\mathrm{Y}}
\global\long\def\Cat{\mathrm{Cat}}

\global\long\def\II{\mathrm{II}}
\global\long\def\hcap{\mathrm{hcap}}
\global\long\def\const{\mathrm{C}}
\global\long\def\Poisson{\mathrm{H}}
\global\long\def\Green{\mathrm{G}}
\global\long\def\Rat{\mathrm{Rat}}

\global\long\def\LA{\mathcal{A}}
\global\long\def\LB{\mathcal{B}}
\global\long\def\LC{\mathcal{C}}
\global\long\def\LD{\mathcal{D}}
\global\long\def\LF{\mathcal{F}}
\global\long\def\LK{\mathcal{K}}
\global\long\def\LE{\mathcal{E}}
\global\long\def\LG{\mathcal{G}}
\global\long\def\LI{\mathcal{I}}
\global\long\def\LJ{\mathcal{J}}
\global\long\def\LL{\mathcal{L}}
\global\long\def\LM{\mathcal{M}}
\global\long\def\LN{\mathcal{N}}
\global\long\def\OO{\mathcal{O}}
\global\long\def\LQ{\mathcal{Q}}
\global\long\def\LR{\mathcal{R}}
\global\long\def\LT{\mathcal{T}}
\global\long\def\LS{\mathcal{S}}
\global\long\def\LU{\mathcal{U}}
\global\long\def\LV{\mathcal{V}}
\global\long\def\LW{\mathcal{W}}
\global\long\def\LX{\mathcal{X}}
\global\long\def\LY{\mathcal{Y}}
\global\long\def\PartF{\mathcal{Z}}
\global\long\def\LH{\mathcal{H}}
\global\long\def\LJ{\mathcal{J}}

\global\long\def\blm{\mathfrak{m}}

\global\long\def\LZ{\mathcal{Z}}
\global\long\def\LZrp{\mathcal{Z}_{\alpha; \bs{s}}^{(p)}}
\global\long\def\LJrp{\mathcal{J}_{\alpha; \bs{s}}^{(p)}}
\global\long\def\chamberrp{\chamber_{\alpha; \bs{s}}^{(p)}}
\global\long\def\LErp{\mathcal{E}_{\alpha; \bs{s}}^{(p)}}
\global\long\def\Greenrp{G_{\alpha; \bs{s}}^{(p)}}
\global\long\def\Prp{P_{\alpha; \bs{s}}^{(p)}}
\global\long\def\norcst{\mathrm{C}_{\kappa}^{(\mathfrak{r})}}
\global\long\def\LZalphar{\mathcal{Z}_{\alpha}^{(\mathfrak{r})}}

\newcommand{\LZtwo}{\mathcal{Z}_{\includegraphics[scale=0.15]{figures/link-0}}}
\newcommand{\LEtwo}{\mathcal{E}_{\includegraphics[scale=0.15]{figures/link-0}}}
\newcommand{\LItwo}{\mathcal{I}_{\includegraphics[scale=0.15]{figures/link-0}}}
\newcommand{\LUtwo}{\mathcal{U}_{\includegraphics[scale=0.15]{figures/link-0}}}
\newcommand{\LZtwor}{\LZtwo^{(\mathfrak{r})}}
\newcommand{\LHtwo}{\mathcal{H}_{\includegraphics[scale=0.15]{figures/link-0}}}
\newcommand{\LFtwo}{\mathcal{F}_{\includegraphics[scale=0.15]{figures/link-0}}}

\newcommand{\LZfoura}{\mathcal{Z}_{\includegraphics[scale=0.15]{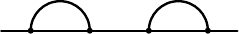}}}
\newcommand{\LZfourb}{\mathcal{Z}_{\includegraphics[scale=0.15]{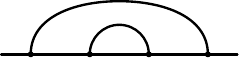}}}
\newcommand{\LHfoura}{\mathcal{H}_{\includegraphics[scale=0.15]{figures/link-1}}}
\newcommand{\LHfourb}{\mathcal{H}_{\includegraphics[scale=0.15]{figures/link-2}}}
\newcommand{\LFfoura}{\mathcal{F}_{\includegraphics[scale=0.15]{figures/link-1}}}
\newcommand{\LFfourb}{\mathcal{F}_{\includegraphics[scale=0.15]{figures/link-2}}}
\newcommand{\LFfouraRenorm}{\widehat{\mathcal{F}}_{\includegraphics[scale=0.15]{figures/link-1}}}
\newcommand{\LFfourbRenorm}{\widehat{\mathcal{F}}_{\includegraphics[scale=0.15]{figures/link-2}}}

\newcommand{\chamberfusion}[1]{\chamber_{\includegraphics[scale=0.5]{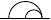}_{#1}}}
\newcommand{\Pfusion}[1]{\mathsf{P}_{\includegraphics[scale=0.5]{figures/link4fusion}_{#1}}}
\newcommand{\Efusion}[1]{\mathsf{E}_{\includegraphics[scale=0.5]{figures/link4fusion}_{#1}}}
\newcommand{\LZfusion}[1]{\mathcal{Z}_{\includegraphics[scale=0.5]{figures/link4fusion}_{#1}}}
\newcommand{\LZfusionInv}[1]{\mathcal{Z}_{\rotatebox{180}{\scalebox{1}[-1]{\includegraphics[scale=0.5]{figures/link4fusion}}}_{#1}}}
\newcommand{\Prainbow}[1]{\mathsf{P}_{\includegraphics[scale=0.15]{figures/link-2}_{#1}}}
\newcommand{\Erainbow}[1]{\mathsf{E}_{\includegraphics[scale=0.15]{figures/link-2}_{#1}}}
\newcommand{\LZrainbow}[1]{\mathcal{Z}_{\includegraphics[scale=0.15]{figures/link-2}_{#1}}}

\global\long\def\coulomb{\LH}
\global\long\def\auxcoulomb{\hat{\coulomb}}
\global\long\def\coulombGas{\LF}
\global\long\def\coulombnew{\LK}
\global\long\def\coulombLine{\LG}
\global\long\def\kfunc{p}

\global\long\def\eps{\epsilon}
\global\long\def\ov{\overline}
\global\long\def\QQrp{\QQ_{\alpha; \bs{s}}^{(p)}}

\global\long\def\bn{\mathbf{n}}
\global\long\def\MR{MR}
\global\long\def\cond{\,|\,}
\global\long\def\la{\langle}
\global\long\def\ra{\rangle}
\global\long\def\tree{\Upsilon}
\global\long\def\prob{\mathbb{P}}
\global\long\def\hm{\mathrm{Hm}}
%

\global\long\def\Im{\operatorname{Im}}
\global\long\def\Re{\operatorname{Re}}

\global\long\def\ud{\mathrm{d}}
\global\long\def\pder#1{\frac{\partial}{\partial#1}}
\global\long\def\pdder#1{\frac{\partial^{2}}{\partial#1^{2}}}
\global\long\def\pddder#1{\frac{\partial^{3}}{\partial#1^{3}}}
\global\long\def\der#1{\frac{\ud}{\ud#1}}

\global\long\def\bZnn{\mathbb{Z}_{\geq 0}}
\global\long\def\bZpos{\mathbb{Z}_{> 0}}
\global\long\def\bZneg{\mathbb{Z}_{< 0}}

\global\long\def\Vfunc{\LG}
\global\long\def\gfunc{g^{(\rr)}}
\global\long\def\hfunc{h^{(\rr)}}

\global\long\def\SimplexInt{\rho}
\global\long\def\CubeInt{\widetilde{\rho}}

\global\long\def\ii{\mathfrak{i}}
\global\long\def\ee{\mathrm{e}}
\global\long\def\rr{\mathfrak{r}}
\global\long\def\chamber{\mathfrak{X}}
\global\long\def\Wchamber{\mathfrak{W}}

\global\long\def\SimplexIntKappa8{\SimplexInt}

\global\long\def\nested{\boldsymbol{\underline{\Cap}}}
\global\long\def\unnested{\boldsymbol{\underline{\cap\cap}}}
\global\long\def\unnested{\boldsymbol{\underline{\cap\cap}}}

\global\long\def\acycle{\vartheta}
\global\long\def\bcycle{\tilde{\acycle}}

\global\long\def\metric{\mathrm{dist}}

\global\long\def\adj#1{\mathrm{adj}(#1)}

\global\long\def\bs{\boldsymbol}

\global\long\def\edge#1#2{\langle #1,#2 \rangle}
\global\long\def\graph{G}

\newcommand{\conn}{\varsigma}
\newcommand{\realacycle}{\smash{\mathring{\acycle}}}
\newcommand{\realpt}{\smash{\mathring{x}}}
\newcommand{\corrind}{\LC}
\newcommand{\bssymb}{\pi}
\newcommand{\PRCM}{\mu}
\newcommand{\coeff}{p}
\newcommand{\MainConst}{C}

\global\long\def\removeLink{/}

\global\long\def\domainofdef{\mathfrak{U}}
\global\long\def\Test_space{C_c^\infty}
\global\long\def\Distr_space{(\Test_space)^*}

\global\long\def\bs{\boldsymbol}
\global\long\def\cst{\mathrm{C}}

\newcommand{\red}{\textcolor{red}}
\newcommand{\blue}{\textcolor{blue}}
\newcommand{\green}{\textcolor{green}}
\newcommand{\magenta}{\textcolor{magenta}}

\newcommand{\coulombGasH}{\mathcal{H}}
\newcommand{\secondbeta}{\intloop}

\newcommand{\cev}[1]{\reflectbox{\ensuremath{\vec{\reflectbox{\ensuremath{#1}}}}}}

\global\long\def\anticonf{\zeta}
\global\long\def\intloop{\varrho}
\global\long\def\Gloop{\smash{\mathring{\intloop}}}

\global\long\def\SLEmeasure{\mathrm{P}}
\global\long\def\SLEmeasureEx{\mathrm{E}}

\global\long\def\fugacity{\nu}
\global\long\def\meanderMat{\mathcal{M}}
\global\long\def\LM{\mathcal{M}}
\global\long\def\meanderMatrix{\meanderMat_{\fugacity}}
\global\long\def\meanderMatrixPrime{\meanderMat_{\fugacity(\kappa')}}
\global\long\def\meanderRenorm{\widehat{\mathcal{M}}}

\global\long\def\PartFRenorm{\widehat{\PartF}}
\global\long\def\coulombGasRenorm{\widehat{\coulombGas}}

\global\long\def\hexa{\scalebox{1.3}{\hexagon}}

\global\long\def\np{p}

\global\long\def\FKdual{\mathcal{L}}

\global\long\def\fixedindex{\flat}
\maketitle
\vspace{-1cm}
\begin{center}
\begin{minipage}{0.95\textwidth}
\begin{abstract}
We classify the linear solution spaces of the chordal Belavin--Polyakov--Zamolodchikov (BPZ) equations, which arise in Dub\'edat's commutation relations for multiple Schramm--Loewner evolutions (SLE). Without imposing growth conditions, we determine the subspaces selected by successive conformal Ward identities and their exact dimensions. For every $\kappa>0$ and BPZ spectral parameter $\lambda\in\mathbb{R}$, we determine the spectrum and Jordan structure of the translation operator. When $\lambda=0$ and $\kappa\in(0,8)$, we further determine all admissible scaling exponents and the dimensions of the corresponding translation-invariant solution spaces. A triangular change of derivative coordinates yields a first-order system of rational Knizhnik--Zamolodchikov type, allowing us to represent the Ward operators explicitly and reduce the classification to finite-dimensional linear algebra.

For $2N$ boundary points and $\kappa\in(0,8)$, translation-invariant BPZ solutions with $\lambda=0$ and the homogeneity required by M\"obius covariance automatically satisfy the third Ward identity and the usual power-law bound. This recovers, under weaker assumptions, the Catalan dimension formula previously established by Flores and Kleban. At $\kappa=8$, the third Ward identity imposes an additional constraint when $N\ge2$, but the full M\"obius-covariant solution space still has Catalan dimension, establishing completeness of the partition functions constructed from uniform spanning trees. Finally, at $\kappa=4$ and $\lambda>0$, we construct an explicit basis of positive BPZ solutions and identify its elements as partition functions for level lines of a Gaussian free field with suitable harmonic means.
\end{abstract}

\noindent\textbf{Keywords:} Belavin-Polyakov-Zamolodchikov equation, Schramm-Loewner evolution, commutation relation\\ 
\noindent\textbf{MSC:} 60J67
\end{minipage}
\end{center}

\tableofcontents
\section{Introduction}
Conformal invariance and the domain Markov property are the guiding principles behind the description of planar critical interfaces by Schramm–Loewner evolution (SLE)~\cite{SchrammFirstSLE}. 
When boundary conditions produce several interfaces, one must additionally account for their joint law: exploring the curves in different orders should give consistent conditional distributions. This principle underlies the multiple SLE framework developed in connection with conformal field theory and statistical mechanics~\cite{BauerBernardKytolaMultipleSLE}, and its probabilistic formulation through Dub\'edat’s commutation relations~\cite{DubedatCommutationSLE}. It leads to a basic classification problem: which local growth laws satisfy these relations, and which partition functions encode them?

Dub\'edat’s analysis expresses the driving drifts as logarithmic derivatives of a partition function and translates the commutation conditions into a system of Belavin–Polyakov–Zamolodchikov (BPZ) equations~\cite{DubedatCommutationSLE}, connecting the probabilistic formulation with the null-vector equations of conformal field theory~\cite{BPZInfiniteConformalSymmetry2DQuantum,BPZInfiniteConformalSymmetryCritical2D,BauerBernardCFTSLE}. In the chordal formulation with a distinguished point at infinity, covariance under conformal maps fixing that point imposes translation invariance and homogeneity, without fixing the homogeneity degree in advance. The more restrictive M\"obius-covariant problem with $2N$ boundary insertions of weight $h_{1,2}=(6-\kappa)/(2\kappa)$ involves three conformal Ward identities, associated with translation, scaling and inversion. For $\kappa\in(0,8)$, Flores and Kleban proved that the solution space of the BPZ equations and these three identities, subject to a power-law bound, has dimension equal to the $N$-th Catalan number~\cite{FloresKlebanPDE1,FloresKlebanPDE2,FloresKlebanPDE3}. 
Related classification problems have also been studied in radial geometry: Krusell, Wang, and Wu treated two locally commuting curves under an interchangeability assumption~\cite{KrusellWangWuCommutationRelation}, while Huang and Wu determined the solution spaces of the radial BPZ and rotation Ward equations for arbitrary numbers of marked points~\cite{HuangWuCommutationRadial}.

The chordal M\"obius-covariant space studied by Flores and Kleban has a distinguished basis of pure partition functions, indexed by planar link patterns and selected by boundary fusion asymptotics. Their construction connects Coulomb gas integrals~\cite{DubedatEulerIntegralsCommutingSLEs,
FloresKlebanPDE3,PeltolaBasisPDE,FengLiuPeltolaWu2024,zhang2025multiplechordalslekappaquantum}, quantum-group methods~\cite{KytolaPeltolaPurePartitionFunctions,KytolaPeltolaConformalCovBoundaryCorrelation}, and probabilistic constructions of multiple SLE~\cite{KozdronLawlerMultipleSLEs,PeltolaWuGlobalMultipleSLEs,WuHyperSLE}. The corresponding global curve laws have been studied through their resampling characterization~\cite{BeffaraPeltolaWuUniqueness,ZhanExistenceUniquenessMultipleSLE} and, more recently, conformal welding~\cite{AngHoldenSunYu2023}. The fusion of boundary marked points leads to partition functions satisfying higher-order BPZ equations~\cite{DubedatSLEVirasoroFusion,LiuWuCrossingProbaMGFF,KarrilaLafayPeltolaRoussillonUSTBranches,KarrilaPeltolaSchougGFFdegeneratelevelline}.

With the usual growth condition and normalization, a unique positive family of pure partition functions is now available for every $\kappa\in(0,8)$, see references in~\cite{FengLiuPeltolaWu2024}. These functions encode connectivity information: suitably weighted ratios of pure partition functions are predicted to describe connection probabilities in critical loop-$O(n)$ model, random-cluster models and GFF level lines~\cite{FloresSimmonsKlebanZiffCrossingProba,PeltolaWuGlobalMultipleSLEs,PeltolaWuCrossingProbaIsing,FengPeltolaWuConnectionProbaFKIsing,FengLiuPeltolaWu2024}. At the endpoint $\kappa=8$, Liu, Peltola, and Wu constructed positive, linearly independent partition functions associated with uniform spanning tree connectivities, with logarithmic boundary fusion asymptotics~\cite{LiuPeltolaWuUST}. Whether these functions span the full M\"obius-covariant BPZ solution space was left open.

In this article, we classify the successive Ward constraints on the full chordal BPZ solution space, rather than imposing M\"obius covariance and boundary growth at the beginning.
In particular, we give answers to several questions about BPZ equations asked in~\cite{FloresKlebanPDE1,PeltolaCFTSLE, LiuPeltolaWuUST,zhang2025multiplechordalslekappaquantum}. 

\paragraph*{Chordal BPZ system.}
Fix $n\ge 2$ and $\kappa>0$ and $\lambda\in\R$. We consider functions $\LZ$ defined on the space
\[\LX_n=\{(x_1, \ldots, x_n)\in\R^n: x_1<\cdots<x_n\}\]
that satisfy the chordal BPZ system:
\begin{equation}\label{eqn::BPZ_chordal}
\frac{\kappa}{2}\partial_j^2\LZ+\sum_{\ell\neq j}\left(\frac{2}{x_{\ell}-x_j}\partial_{\ell}\LZ-\frac{(6-\kappa)/\kappa}{(x_{\ell}-x_j)^2}\LZ\right)=\frac{\lambda}{\kappa}\LZ,\qquad \text{for all }1\le j\le n.
\end{equation}
Define 
\begin{equation}
\LSchord{n}^{(\lambda)}:=\{F\in C^2(\LX_n\to\R): F\text{ satisfies chordal BPZ system~\eqref{eqn::BPZ_chordal}}\}. 
\end{equation}

We recall Dub\'edat's commutation relation in Appendix~\ref{appendix::Dubedat}. 
We present two versions of commutation relation there: a weak version and the original version in~\cite{DubedatCommutationSLE}. 
The weak version corresponds to positive solutions of the chordal BPZ system~\eqref{eqn::BPZ_chordal} and the first Ward identity, for some constants $\lambda,\nu\in\R$:
\begin{equation}\label{eqn::Ward}
\sum_{j=1}^n\partial_j\LZ=\frac{\nu}{\kappa}\LZ.
\end{equation}
The original version imposes $\lambda=\nu=0$ and the second Ward identity: for $\lambda=\nu=0$ and $\xi\in\R$, we consider positive solutions $\LZ$ that also satisfy the second Ward identity:
\begin{align}\label{eqn::Ward_2nd}
\sum_{j=1}^n x_j\partial_j\LZ=\frac{\xi}{\kappa}\LZ.
\end{align}
We summarize the results below.
\begin{itemize}
\item We first decompose the solution space $\LSchord{n}^{(\lambda)}$ according to different levels in the first Ward identity~\eqref{eqn::Ward}.
In the generic case $\lambda\neq 0$, Theorem~\ref{thm::BPZ_decomposition} determines the admissible values of $\nu$ in~\eqref{eqn::Ward} and the corresponding dimensions, giving a direct sum decomposition when $\lambda>0$ and ruling out positive solutions when $\lambda<0$.
In the degenerate case $\lambda=0$, the only admissible value of $\nu$ is $\nu=0$, and Theorem~\ref{thm::BPZ_decomposition_degenerate} gives the dimensions of the subspaces determined by the generalized first Ward identity~\eqref{eqn::Ward_generalized}. Together, these results classify the linear solution spaces for the BPZ system and the first Ward identity. Their positive elements are precisely the partition functions for the weak commutation relation in Appendix~\ref{appendix::Dubedat}.
\item In the degenerate case $\lambda=\nu=0$, we further decompose the space according to different levels in the second Ward identity~\eqref{eqn::Ward_2nd}. 
Theorem~\ref{thm::xi_p_decomposition} identifies the admissible levels $\xi$ and the dimensions of the corresponding solution spaces for $\kappa\in(0,8)$. For even $n=2N$ at the level $\xi=\xi_0$, Corollary~\ref{cor::BPZ_Catalan} recovers the Catalan dimension of the space of M\"obius-covariant solutions studied in~\cite{FloresKlebanPDE1,FloresKlebanPDE2,FloresKlebanPDE3} and strengthens this result: interestingly, the BPZ system and the first two Ward identities already imply both the third Ward identity~\eqref{eqn::Ward_3rd} and the power-law bound when $\kappa\in(0,8)$; whereas, this is not the case when $\kappa=8$.

\item At $\kappa=8$ and $n=2N$ even, the scaling exponents $\xi_0$ and $\xi_2$ of the second Ward identity~\eqref{eqn::Ward_2nd} coincide. Theorem~\ref{thm::UST_decomposition} determines the dimension of the corresponding solution space and shows that imposing the third Ward identity selects a subspace of dimension $\Cat_N$, without any growth assumption. This extends the Catalan dimension result to the endpoint $\kappa=8$. For $N\ge2$, there are solutions with the required translation and scaling covariance that fail the third Ward identity. Thus full M\"obius covariance is an additional constraint at this endpoint.

\item When $\kappa=4$, we provide explicit solutions to the chordal BPZ system~\eqref{eqn::BPZ_chordal} in Proposition~\ref{prop::LFchordsigma}. When $\lambda>0$, they are $2^n$ linearly independent positive solutions. Moreover, they are the partition functions for level lines of Gaussian free field with properly chosen mean value, see Proposition~\ref{prop::GFF_chord}.  
\end{itemize}

Our approach is very different from that in~\cite{FloresKlebanPDE1,FloresKlebanPDE2,FloresKlebanPDE3}.
Our analysis extends the first-order formulation developed in~\cite{HuangWuCommutationRadial} from radial setting to chordal setting.
The radial setting has only one conformal Ward identity; whereas, the chordal setting has three. For this reason, the chordal setting exhibits a much richer hierarchy structure coming from conformal Ward constraints.
To analyze the further hierarchy structure, a key difference from the radial is that, in the chordal setting,  we figure out a triangular combinations of the derivative coordinates.
The resulting first-order system is of rational Knizhnik--Zamolodchikov type, see~\cite{KnizhnikZamolodchikov1984,FelderMarkovTarasovVarchenkoKZ,ZabrodinZotov2017}. It has explicit coefficient matrices, and this allows us to represent the Ward operators explicitly and study the successive constraints using finite-dimensional linear algebra.
We identify the solution spaces selected by the Ward identities with eigenspaces or kernels of finite-dimensional matrices, with the Ward parameters appearing as eigenvalues. Successively imposing these identities gives a hierarchy of subspaces. At $\lambda=0$, an $\mathfrak{sl}_2$ representation built from the translation matrix helps us analyze the structure of these subspaces and of the remaining Ward operators.
 The formal semiclassical limit of the same chordal BPZ system is recently studied in~\cite{ChenWuSemiclassicalBPZ}, which classifies the global real-valued solutions of the resulting nonlinear Hamilton--Jacobi equations modulo additive constants.

\subsection{First Ward identity}
Fix $n\ge 2$ and $\kappa>0$. For $\lambda\in\R$ and $\nu\in\R$, we define 
\begin{align}
	\LSchord{n}^{(\lambda;\nu)}:=\left\{F\in C^2(\LX_n\to \R):
	\begin{array}{l}
		F \text{ satisfies chordal BPZ system~\eqref{eqn::BPZ_chordal}}\\
		\text{and the first Ward identity~\eqref{eqn::Ward}}
	\end{array}\right\}.
\end{align}
We will decompose the solution space $\LSchord{n}^{(\lambda)}$ according to different levels of $\nu$ in the first Ward identity~\eqref{eqn::Ward}. 
The decomposition is different for the generic case $\lambda\neq 0$ and the degenerate case $\lambda=0$. 

\begin{theorem}\label{thm::BPZ_decomposition}
Fix $n\ge 2$ and $\kappa>0$. For $\lambda\in\R$, we have $\dim\LSchord{n}^{(\lambda)}=2^n$. 
\begin{itemize}
\item When $\lambda>0$, we have
\begin{align}\label{eqn::lambda_nu_dimension}
\dim\LSchord{n}^{(\lambda;\nu)}=
\begin{cases}
\displaystyle\binom{n}{\mathfrak{r}}, \quad &\text{if }\nu=(n-2\mathfrak{r})\sqrt{2\lambda}, \quad \mathfrak{r}\in\{0,1,\ldots, n\}; \\
0, &\text{otherwise}.
\end{cases}
\end{align}
Moreover, we have the decomposition
\begin{align}\label{eqn::lambda_nu_decomposition}
\LSchord{n}^{(\lambda)}=\bigoplus_{\mathfrak{r}=0}^n
			\LSchord{n}^{\left(\lambda; \nu_\mathfrak{r} \right)}, \qquad \text{where }\nu_{\mathfrak{r}}:=(n-2\mathfrak{r})\sqrt{2\lambda}.
\end{align}
\item When $\lambda<0$, we have
\begin{equation}\label{eqn::lambda_negative_nu_dimension}
\dim\LSchord{n}^{(\lambda;\nu)}=
\begin{cases}
\displaystyle\binom{n}{n/2}, &\text{if $n$ is even and $\nu=0$},\\
0, &\text{otherwise}.
\end{cases}
\end{equation}
Moreover, any solution in $\LSchord{n}^{(\lambda)}$ has at least one zero in $\LX_n$.
\end{itemize}
\end{theorem}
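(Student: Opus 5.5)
The plan is to trade the second-order BPZ system~\eqref{eqn::BPZ_chordal} for a closed first-order linear system of rational Knizhnik--Zamolodchikov type, as in the radial analysis of~\cite{HuangWuCommutationRadial}, and then reduce every statement of Theorem~\ref{thm::BPZ_decomposition} to linear algebra of one constant $2^n\times 2^n$ matrix.

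\textbf{Step 1: rank $2^n$.} For $S\subseteq\{1,\ldots,n\}$ write $\partial_S=\prod_{j\in S}\partial_j$. Form the vector $V=(\partial_S\LZ)_{S}$ of multilinear derivatives, which has $2^n$ entries. Using~\eqref{eqn::BPZ_chordal}, every $\partial_j^2$ can be rewritten as $\frac{2\lambda}{\kappa^2}$ plus first-order terms with rational coefficients. Differentiating and iterating gives $\partial_j V=\Omega_j(x)V$, where the $\Omega_j$ are rational matrices. The key sub-step is the compatibility $\partial_i\Omega_j-\partial_j\Omega_i+[\Omega_j,\Omega_i]=0$. I would verify it by the triangular change of derivative coordinates, in which $\Omega_j$ splits as a constant part plus a sum of $\frac{1}{x_\ell-x_j}$ times constant matrices. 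Frobenius' theorem on the simply connected domain $\LX_n$ then shows that $\LZ\mapsto V(x^0)$ is a bijection from $\LSchord{n}^{(\lambda)}$ onto $\R^{2^n}$. This gives $\dim\LSchord{n}^{(\lambda)}=2^n$.

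\textbf{Step 2: spectrum of the translation operator.} The coefficients of~\eqref{eqn::BPZ_chordal} depend only on differences $x_\ell-x_j$. Hence $L=\sum_j\partial_j$ preserves $\LSchord{n}^{(\lambda)}$, and $\LSchord{n}^{(\lambda;\nu)}$ is its $\nu/\kappa$-eigenspace. In the coordinates $V(x^0)$, $L$ acts by $T=\sum_j\Omega_j(x^0)$. The triangular coordinates are chosen so that $T$ is block triangular with respect to the filtration by $|S|$. On the associated graded, $T$ equals $\sum_j D_j$, where the $D_j$ commute and satisfy $D_j^2=\frac{2\lambda}{\kappa^2}$. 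This is exactly the tensor product of $n$ copies of $\begin{pmatrix}0&1\\ 2\lambda/\kappa^2&0\end{pmatrix}$. Its eigenvalues are $\frac{1}{\kappa}(n-2\rr)\sqrt{2\lambda}$ with multiplicity $\binom{n}{\rr}$, so the characteristic polynomial of $T$ is $\prod_{\rr}\bigl(t-\nu_\rr/\kappa\bigr)^{\binom n\rr}$.

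\textbf{Step 3: semisimplicity.} The main obstacle is to exclude Jordan blocks, i.e.\ to show that $\prod_{\rr}(L-\nu_\rr/\kappa)$ annihilates $\LSchord{n}^{(\lambda)}$. My first attempt would be the explicit off-diagonal (nilpotent) part in the KZ form. I would try to exhibit an invertible $P$ with $P^{-1}TP=\sum_j D_j$; the natural candidate is $P$ built from the interaction matrices attached to $\frac{1}{x_\ell-x_j}$, evaluated through the homogeneity $x^0\to\infty$. A fallback argument is asymptotic: let $x_j=x^0_j+sy_j$ with $y$ strictly increasing and $s\to\infty$. The system then decouples into $n$ copies of $\frac{\kappa}{2}f''=\frac{\lambda}{\kappa}f$, whose solutions are $e^{\pm\sqrt{2\lambda}x/\kappa}$. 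Products of these give $2^n$ asymptotically independent solutions that are eigenfunctions of $L$ to leading order, and I would then correct them to exact eigenfunctions. With $T$ diagonalizable, \eqref{eqn::lambda_nu_dimension} and the decomposition~\eqref{eqn::lambda_nu_decomposition} for $\lambda>0$ follow at once.

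\textbf{Step 4: the case $\lambda<0$.} The same computation gives the eigenvalues $\frac{\ii}{\kappa}(n-2\rr)\sqrt{2|\lambda|}$. The only real one is $0$, occurring for $\rr=n/2$ when $n$ is even. Diagonalizability over $\C$ gives the real eigenspace dimension $\binom{n}{n/2}$, which is~\eqref{eqn::lambda_negative_nu_dimension}. For the zero statement, I would separate nontranslation-invariant from translation-invariant solutions. Suppose $\LZ$ has a nonzero component with $\rr\neq n/2$. Then $t\mapsto\LZ(x+t)$ is a nonzero real finite sum of the form $\sum c_\omega e^{\ii\omega t}$ with some $\omega\ne0$, plus a constant. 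Such a function cannot keep a strict sign, because its mean against $e^{-\ii\omega t}$ over long intervals is nonzero. If instead $\LZ$ is translation invariant, I would apply the same quasi-periodicity argument to the translation of $x_n$ alone, sending $x_n\to+\infty$. In that regime the $j=n$ equation becomes asymptotically $\frac{\kappa}{2}\partial_n^2\LZ=\frac{\lambda}{\kappa}\LZ$ with $\lambda<0$, whose solutions oscillate in $x_n$, so $\LZ$ changes sign and hence vanishes somewhere. Making this asymptotic decoupling rigorous is the second delicate point. I would handle it with the KZ form of Step 1, by the standard argument at a regular singular point at $x_n=\infty$.
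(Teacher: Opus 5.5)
Your Step 1 is the paper's route: the square-free derivative system, the renormalization by $\LZ_{\shuffle_n}$, and the triangular change to the flat connection $\widetilde D_j^{(\lambda)}=L_j^{(\lambda)}+\frac{4}{\kappa}\sum_{\ell\neq j}\frac{P_{j\ell}-I}{x_j-x_\ell}$. In Steps 2--3, however, you miss the observation that makes the translation operator trivial, and you leave semisimplicity open as the "main obstacle". When you sum this connection over $j$, the residue terms cancel in pairs, because $P_{j\ell}=P_{\ell j}$ while $(x_j-x_\ell)^{-1}$ is antisymmetric. The gauge term $M^{-1}\sum_j\partial_jM$ vanishes because $M(\bs{x})$ is built from $\LZ_{\shuffle_n}$ and coefficients depending only on differences $x_j-x_\ell$. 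Hence $M^{-1}T^{(\lambda)}(\bs{x})M=\kappa\sum_jL_j^{(\lambda)}$ exactly (Lemma~\ref{lem::Tlambda_tilde}), and there is no nilpotent off-diagonal part to remove. The $L_j^{(\lambda)}$ commute and satisfy $(L_j^{(\lambda)})^2=\frac{2\lambda}{\kappa^2}I$. So for $\lambda\neq0$ the vectors $(\bs{v}_\sigma)_S=\prod_{i\in S}\frac{\sqrt{2\lambda}}{\kappa}\sigma_i$, $\sigma\in\{\pm1\}^n$, form a joint eigenbasis, which gives diagonalizability and all multiplicities at once, for either sign of $\lambda$.

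Your Step 2 justification is not coherent as stated; its conclusion is right only because of this identity. The commuting matrices you call $D_j$ shift $|S|$ by $\pm1$, so $\sum_jD_j$ cannot be the associated graded of a matrix that is block triangular for the $|S|$-filtration. Neither fallback in Step 3 is yet an argument. The conjugator $P$ is unspecified, and ``correcting'' the asymptotic exponential products to exact eigenfunctions is the whole difficulty. The paper does obtain exact eigenfunctions with precisely these asymptotics, but only for $\lambda>0$ and via explicit Coulomb gas integrals (Section~\ref{sec::GFF}), not by perturbation.

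Step 4 also fails as written. In your first case, a real function $c_0+\sum_{\omega\neq0}c_\omega e^{\ii\omega t}$ with some $c_\omega\neq0$ can be strictly positive, e.g.\ $2+\cos t$. A nonzero Fourier coefficient forces a sign change only when the mean $c_0$ vanishes. This case is repairable once diagonalizability is known. Averaging $t\mapsto\LZ(x_1+t,\ldots,x_n+t)$ over a period shows that a positive $\LZ$ has a positive translation-invariant component, which is again a solution. For odd $n$ that component is zero, which is an immediate contradiction; for even $n$ everything reduces to your second case.

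Your second case does not work either. There $x_n=\infty$ is an \emph{irregular} singular point for $\lambda\neq0$, since the leading coefficient of $\widetilde D_n^{(\lambda)}$ as $x_n\to\infty$ is $L_n^{(\lambda)}$, whose eigenvalues $\pm\sqrt{2\lambda}/\kappa$ are nonzero. So the regular-singular-point argument you invoke does not apply. You would still need to show that the leading oscillatory coefficient of an arbitrary nonzero real solution cannot vanish, and nothing in the proposal controls this.

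The paper (Lemma~\ref{lem::chordal_negative_lambda_zero}) avoids asymptotics entirely with an energy estimate:
\begin{itemize}
\item Write $\LZ=\LZ_{\shuffle_n}\LF$ with $\LF>0$. The summed renormalized equations then read $\frac{\kappa^2}{2}\operatorname{div}(\LZ_{\shuffle_n}^4\nabla\LF)=n\lambda\LZ_{\shuffle_n}^4\LF$.
\item Test against $f^2/\LF$ with $f\in C_c^\infty(\LX_n)$. Picone's identity gives $n\lambda\int\LZ_{\shuffle_n}^4f^2\ge-\frac{\kappa^2}{2}\int\LZ_{\shuffle_n}^4|\nabla f|^2$.
\item Replace $f$ by $f(\cdot/R)$, using that $\LX_n$ is a cone and $\LZ_{\shuffle_n}$ is homogeneous. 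The right-hand ratio becomes $O(R^{-2})$, which forces $n\lambda\ge0$.
\end{itemize}
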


\begin{figure}[ht!]
	\begin{center}
	\includegraphics[width=\textwidth]{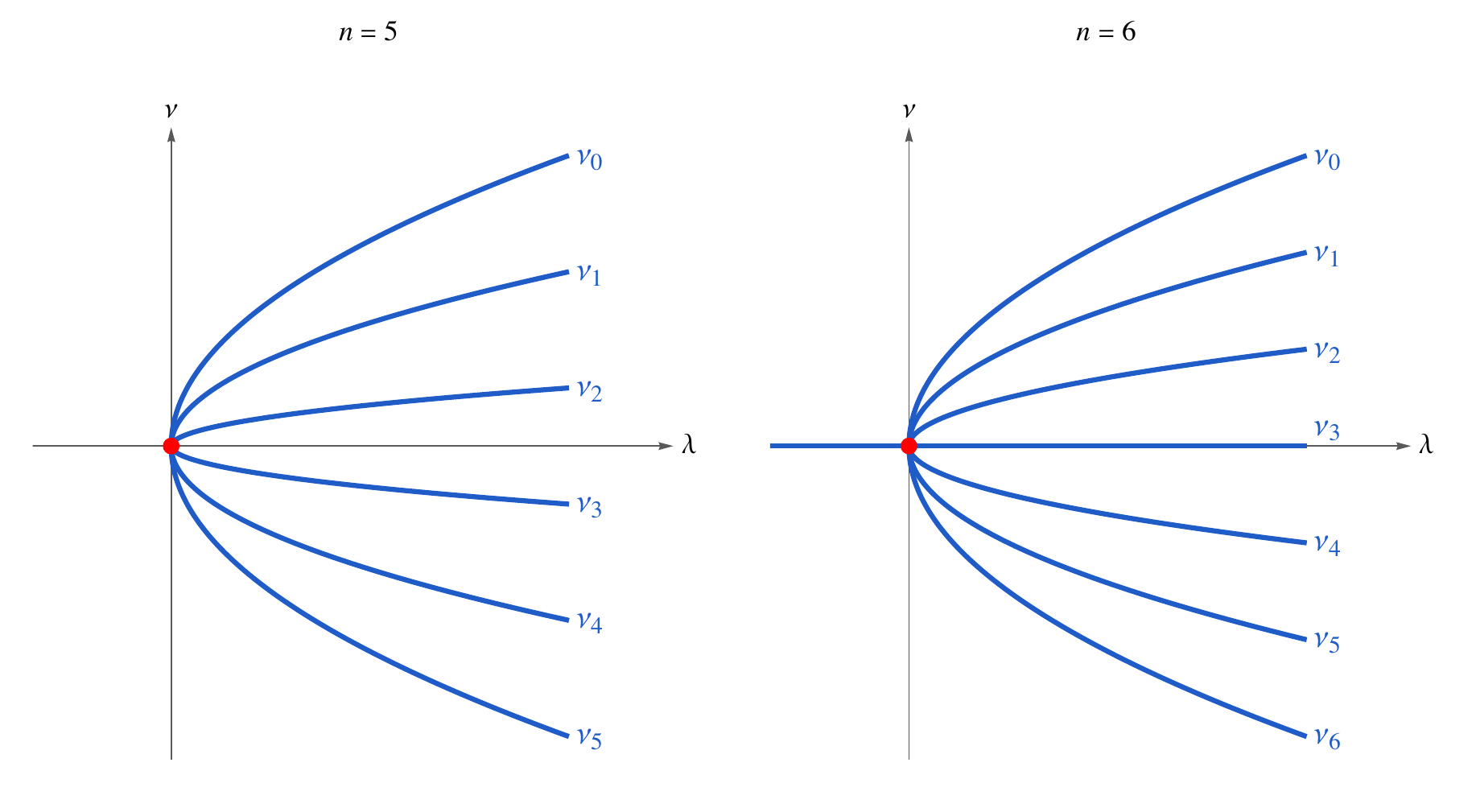}
	\end{center}
	\caption{Illustration for the admissible pairs $(\lambda,\nu)$ in Theorem~\ref{thm::BPZ_decomposition} for $n=5$ and $n=6$. The blue curves correspond to the generic case $(\lambda>0, \nu_{\mathfrak{r}}=(n-2\mathfrak{r})\sqrt{2\lambda})$ and $(\lambda<0, \nu=0)$. The red dots correspond to the degenerate case $(\lambda=0, \nu=0)$.
	Note that the line $\nu=0$ is admissible for even $n$ and is not admissible for odd $n$ except at the degenerate case.}
	\label{fig::lambdanu}
\end{figure}

We prove Theorem~\ref{thm::BPZ_decomposition} in Section~\ref{subsec::Ward_1st}. The triangular change of coordinates (in Section~\ref{subsec::renormalization}) turns the translation operator into an explicit constant matrix. We construct its eigenvectors directly and read off the admissible values of $\nu$ and their multiplicities. This gives a direct algebraic calculation of the spectrum. For $\lambda<0$, an energy estimate rules out positive solutions, so this parameter range cannot occur for the weak commutation relation.
For $\lambda>0$, Proposition~\ref{prop::CGI} constructs bases of all the first-Ward eigenspaces from Coulomb gas integrals. The Coulomb gas construction originates in~\cite{DotsenkoFateevCoulombGas} and was applied to commuting SLEs in~\cite{DubedatEulerIntegralsCommutingSLEs}.
The exponential modification of Coulomb gas integral also occurs in integral representations of irregular conformal blocks, see~\cite{HaghighatLiuReshetikhinIrregularBlocks}.
In the following two special cases, the solutions are explicit.
\begin{itemize}
\item When $\nu=\nu_0=n\sqrt{2\lambda}$, no screening variables are needed, and there is only one-dimensional solution in $\LSchord{n}^{(\lambda; \nu_0)}$ which is given by 
\begin{align}\label{eqn::halfwatermelon_pf_drift}
\LZ_{\shuffle_n}^{(\lambda)}(\bs{x})=\prod_{1\le i<\ell\le n}(x_{\ell}-x_i)^{\frac{2}{\kappa}}\times\exp\left(\frac{\sqrt{2\lambda}}{\kappa}\sum_{j=1}^n x_j\right), \qquad\text{for }\bs{x}=(x_1, \ldots, x_n)\in\LX_n. 
\end{align}
This is a generalization of the partition function for half-watermelon $\SLE_{\kappa}$, see Section~\ref{subsec::halfwatermelon_SLE}. 
\item When $\kappa=4$, the residue calculation in Proposition~\ref{prop::CGI_kappa4} evaluates the integrals explicitly and gives the $2^n$ positive basis functions in Proposition~\ref{prop::LFchordsigma}.
\end{itemize}


\subsection{Degenerate case}
In the degenerate case $\lambda=0$, the only admissible $\nu$ in the first Ward identity~\eqref{eqn::Ward} is $\nu=0$ and $\dim\LSchord{n}^{(0;0)}=\binom{n}{\lfloor n/2\rfloor}$. The solutions in $\LSchord{n}^{(0)}\setminus \LSchord{n}^{(0;0)}$ can be classified by generalized first Ward identity.
For $q\in\{1,2, \ldots, n+1\}$, we consider solutions $\LZ\in\LSchord{n}^{(0)}$ that also satisfy the generalized first Ward identity
\begin{align}\label{eqn::Ward_generalized}
\LSchord{n}^{\langle q\rangle}:=\left\{F\in \LSchord{n}^{(0)}: \big(\sum_{j=1}^n\partial_j\big)^q F=0\right\}.
\end{align}
Note that $\LSchord{n}^{\langle 1\rangle}=\LSchord{n}^{(0;0)}$. 

\begin{theorem}\label{thm::BPZ_decomposition_degenerate}
Fix $\kappa>0$ and $\lambda=0$. For $n\ge 2$, we denote 
\begin{align}\label{eqn::parityn_def}
\mathsf{P}_n=\{p\in\{0,1,\ldots, n\}: n-p\text{ is even}\}. 
\end{align}
For $p\in\mathsf{P}_n$, define 
\begin{align}\label{eqn::bnp_def}
\mathsf{B}_{n,p}=\binom{n}{(n-p)/2}-\binom{n}{(n-p)/2-1}, 
\end{align}
where $\binom{n}{-1}:=0$. 
For $q\in\{1,2, \ldots, n+1\}$, we have
\begin{align}
\dim\LSchord{n}^{\langle q\rangle}=\sum_{p\in\mathsf{P}_n}\min(q,p+1)\mathsf{B}_{n,p}.
\end{align}
In particular,
\[\dim\LSchord{n}^{\langle 1\rangle}=\binom{n}{\lfloor n/2\rfloor},\qquad \dim\LSchord{n}^{\langle n+1\rangle}=2^n.\]
\end{theorem}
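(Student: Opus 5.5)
The plan is to use the first-order reformulation from Section~\ref{subsec::renormalization}, which Theorem~\ref{thm::BPZ_decomposition} also relies on. There, the triangular change of derivative coordinates identifies $\LSchord{n}^{(\lambda)}$ with the solution space of a rational KZ-type first-order system on a rank-$2^n$ bundle. Under this identification, the translation operator $T:=\sum_j\partial_j$ acts as a constant $2^n\times 2^n$ matrix $M_\lambda$ on the fibre. Since the identification is a linear isomorphism that intertwines $T$ with $M_\lambda$ (this is how the spectrum is read off in Theorem~\ref{thm::BPZ_decomposition}), we get $\dim\LSchord{n}^{\langle q\rangle}=\dim\ker (M_0)^q$. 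The theorem therefore reduces to computing the Jordan structure of the nilpotent matrix $M_0$. We know it is nilpotent because $\nu=0$ is the only admissible eigenvalue at $\lambda=0$, which follows by letting $\lambda\to0$ in $\nu_\mathfrak r=(n-2\mathfrak r)\sqrt{2\lambda}$, or directly from the construction of eigenvectors.

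The key step is to recognize $M_0$ as the lowering operator $f$ of an $\mathfrak{sl}_2$ representation on $(\C^2)^{\otimes n}$. Concretely, I expect that in the triangular coordinates the fibre is indexed by subsets $S\subset\{1,\dots,n\}$ (one choice per point: value or derivative). I also expect $M_\lambda$ to act as $\sum_j \big(\sigma^-_j + \tfrac{2\lambda}{\kappa^2}\sigma^+_j\big)$ up to normalization, i.e.\ a sum of single-site operators. This would be consistent with the eigenvalues $(n-2\mathfrak r)\sqrt{2\lambda}$ with multiplicity $\binom n{\mathfrak r}$, which are exactly the spectrum of $\sqrt{2\lambda}\sum_j\sigma^x_j$ after rescaling. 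At $\lambda=0$ this becomes $f=\sum_j\sigma^-_j$. Together with $h=\sum_j\sigma^z_j$ and $e=\sum_j\sigma^+_j$ it forms an $\mathfrak{sl}_2$-triple, and $f$ is the lowering operator of the spin-$\tfrac12^{\otimes n}$ representation.

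If the matrix is not literally of this form, I would instead build $h$ as the grading by $|S|$ (the degree/weight that $T$ lowers by one). I would then construct $e$ explicitly, and verify the relations $[h,f]=-2f$ and $[e,f]=h$ by direct computation with the explicit coefficients. By Jacobson--Morozov-type reasoning, the Jordan type of $f$ is then determined by the weight multiplicities $\binom nk$.

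The remaining computation is standard. By Clebsch--Gordan, $(\C^2)^{\otimes n}\cong\bigoplus_{p\in\mathsf P_n} V_p^{\oplus \mathsf B_{n,p}}$, where $V_p$ is the irreducible representation of dimension $p+1$. The multiplicity is $\mathsf B_{n,p}=\binom{n}{(n-p)/2}-\binom n{(n-p)/2-1}$, given by differences of consecutive weight multiplicities. On each $V_p$, $f$ is a single nilpotent Jordan block of size $p+1$, so $\dim\ker f^q|_{V_p}=\min(q,p+1)$. Summing over blocks gives the formula. For $q=1$, the sum $\sum_p \mathsf B_{n,p}$ telescopes to $\binom n{\lfloor n/2\rfloor}$. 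For $q=n+1$, it gives $\sum_p(p+1)\mathsf B_{n,p}=2^n$.

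The main obstacle is the identification step: showing that the explicit constant matrix representing $T$ at $\lambda=0$ is exactly a principal-type lowering operator with the tensor-product Jordan structure. It is not enough that it is nilpotent, since its Jordan type must be identified. I would first try to verify that the off-diagonal coefficients of $M_0$ factor site-by-site. If they carry position-dependent weights, I would conjugate by a diagonal matrix to normalize them to $1$, which does not change the Jordan type. Failing that, I would compute the ranks of $M_0^q$ directly, using the grading by $|S|$ and the fact that the maps between consecutive graded pieces have maximal rank (injective below the middle degree, surjective above). This maximal-rank property is exactly what yields the stated kernel dimensions.
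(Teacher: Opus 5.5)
Your proposal is correct and follows the paper's argument. The paper identifies $\LSchord{n}^{\langle q\rangle}$ with $\ker\bigl(T^{(0)}(\bs{x})\bigr)^q$ by iterating the intertwining identity, then conjugates to the constant matrix $\widetilde T^{(0)}=\kappa\sum_j L_j^{(0)}$, where $L_j^{(0)}\bs{e}_S=\bs{e}_{S\setminus\{j\}}$ for $j\in S$ and $0$ otherwise. So your guess $M_0=\sum_j\sigma_j^-$ holds exactly, and $L_j^{(\lambda)}$ is likewise your $\sigma_j^-+\tfrac{2\lambda}{\kappa^2}\sigma_j^+$.

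The only difference is how the Jordan type is obtained. You read it off from Clebsch--Gordan. The paper instead proves it by an elementary induction on $n$, using $N_n=\begin{pmatrix}N_{n-1}&I\\0&N_{n-1}\end{pmatrix}$ and $\begin{pmatrix}J_s(0)&I_s\\0&J_s(0)\end{pmatrix}\sim J_{s+1}(0)\oplus J_{s-1}(0)$; this is the rule $V_{s-1}\otimes\mathbb{C}^2\cong V_s\oplus V_{s-2}$ written as matrices.

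One caution about your last fallback. Maximal rank of consecutive graded maps only determines the number of Jordan blocks, not $\dim\ker M_0^q$ for $q\ge2$. For that you need maximal rank of all powers, which again comes from the $\mathfrak{sl}_2$ structure.
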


We prove Theorem~\ref{thm::BPZ_decomposition_degenerate} in Section~\ref{subsec::BPZ_decomposition_degenerate}. 
At $\lambda=0$, the translation matrix becomes nilpotent, and the same change of coordinates identifies $\LSchord{n}^{\langle q\rangle}$ with the kernel of its $q$-th power. Its Jordan blocks correspond to the irreducible components of an $\mathfrak{sl}_2$ representation. For the relation between its highest-weight spaces and planar link patterns, see \cite{FloresPeltolaSchurWeyl}. 
We next restrict to the translation-invariant level $\LSchord{n}^{\langle1\rangle}=\LSchord{n}^{(0;0)}$, of dimension $\binom{n}{\lfloor n/2\rfloor}$, and decompose it according to the second Ward identity.

\subsection{Second Ward identity}
Fix $n\ge 2$ and $\kappa>0$. For $\lambda=\nu=0$ and $\xi\in\R$, we define 
\begin{align}
\LSchord{n}^{(0;0;\xi)}:=\left\{F\in C^2(\LX_n\to \R):
	\begin{array}{l}
		F \text{ satisfies chordal BPZ system~\eqref{eqn::BPZ_chordal} with }\lambda=0\\
		\text{and the first Ward identity~\eqref{eqn::Ward}  with }\nu=0\\
		\text{and the second Ward identity~\eqref{eqn::Ward_2nd}}
	\end{array}\right\}.
\end{align}

\begin{theorem}\label{thm::xi_p_decomposition}
Fix $\kappa\in (0,8)$ and $\lambda=\nu=0$. For $n\ge 2$, recall that $\mathsf{P}_n=\{p\in\{0,1,\ldots, n\}: n-p\text{ is even}\}$ is defined in~\eqref{eqn::parityn_def} and $\mathsf{B}_{n,p}$ is defined in~\eqref{eqn::bnp_def}. Define 
\begin{align}\label{eqn::xi_p_def}
\xi_p=p(p+2-\tfrac{\kappa}{2})-n(3-\tfrac{\kappa}{2}). 
\end{align}
We have
\begin{align}\label{eqn::solution_xi_dimension}
\dim\LSchord{n}^{(0;0;\xi)}=
\begin{cases}
\mathsf{B}_{n,p}, &\text{if }\xi=\xi_p, \quad p\in\mathsf{P}_n;\\
0, &\text{otherwise}.
\end{cases}
\end{align}
 Moreover, we have the decomposition
 \begin{align}\label{eqn::solution_xi_decomposition}
 \LSchord{n}^{(0;0)}=\bigoplus_{p\in\mathsf{P}_n}\LSchord{n}^{(0;0;\xi_p)}. 
 \end{align}
\end{theorem}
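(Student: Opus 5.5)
The plan is to work in the first-order formulation from the triangular change of derivative coordinates. There, a solution $F\in\LSchord{n}^{(0)}$ is determined by a vector-valued function $\Psi$ of size $2^n$ solving a rational KZ-type system $\partial_j\Psi = A_j(\bs{x})\Psi$. Under this identification, $\LSchord{n}^{(0)}$ is the $2^n$-dimensional space of horizontal sections, and the first Ward operator $\sum_j\partial_j$ acts by a constant nilpotent matrix $T$. By Theorem~\ref{thm::BPZ_decomposition_degenerate}, $\LSchord{n}^{(0;0)}=\ker T$ has dimension $\binom{n}{\lfloor n/2\rfloor}=\sum_{p\in\mathsf{P}_n}\mathsf{B}_{n,p}$. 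Its Jordan blocks have sizes $p+1$ with multiplicities $\mathsf{B}_{n,p}$, and they come from the $\mathfrak{sl}_2$-representation generated by $T$, which has weight decomposition by the number $\mathfrak{r}$ of "screening" indices. The first step is to show that the Euler operator $D=\sum_j x_j\partial_j$ commutes with $\sum_j\partial_j$ up to $[\sum_j\partial_j, D]=\sum_j\partial_j$, so that $D$ preserves $\ker(\sum_j\partial_j)$. Hence $D$ induces a linear endomorphism $M$ of the finite-dimensional space $\LSchord{n}^{(0;0)}$. In coordinates, the matrices $A_j$ are homogeneous of degree $-1$, so $\sum_j x_jA_j(\bs{x})$ restricted to translation-invariant sections becomes a constant matrix $W$ (the residue at infinity, i.e.\ the sum of the residue matrices $\Omega_{ij}$ plus the diagonal scaling shifts). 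Then $M$ is represented by $W$ acting on $\ker T$.

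The second step is to compute $W$ via the $\mathfrak{sl}_2$ structure. The residue matrices of a KZ system are built from the Casimir-type operators $\Omega_{ij}$, so $\sum_{i<j}\Omega_{ij}$ is, up to a scalar and a multiple of the identity, the total Casimir $C=EF+FE+H^2/2$ of the $\mathfrak{sl}_2$ containing $T$ as a lowering (or raising) operator. On the highest-weight vectors of the isotypic component of spin $p/2$, of multiplicity $\mathsf{B}_{n,p}$, the operator $C$ acts by $p(p+2)/2$. Translating constants gives the eigenvalue $\xi_p=p(p+2-\kappa/2)-n(3-\kappa/2)$. Here the term $-\kappa p/2$ comes from the weight shift along $H$, since $\ker T$ intersects spin-$p/2$ blocks at weight $p$, and $-n(3-\kappa/2)$ comes from the diagonal conformal weights. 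I would check the normalization on the explicit example $\LZ=\prod(x_\ell-x_i)^{2/\kappa}$ (with $p=n$), which has degree $n(n-1)/\kappa$. This agrees with $\xi_n/\kappa$, since $n(n+2-\kappa/2)-n(3-\kappa/2)=n(n-1)$. It also fixes the sign conventions. Thus $M$ is diagonalizable, with $\xi_p/\kappa$-eigenspace (in the normalization of~\eqref{eqn::Ward_2nd}) equal to the spin-$p/2$ part of $\ker T$.

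Finally, the eigenspace decomposition gives~\eqref{eqn::solution_xi_decomposition}, and the dimension count~\eqref{eqn::solution_xi_dimension} follows once the $\xi_p$, $p\in\mathsf{P}_n$, are pairwise distinct. Indeed, $\xi_p-\xi_{p'}=(p-p')(p+p'+2-\kappa/2)$, and for distinct $p,p'\in\mathsf{P}_n$ we have $p+p'\ge2$ (both have the parity of $n$). So $p+p'+2-\kappa/2>0$ exactly when $\kappa<8$, with equality possible at $\kappa=8$, $\{p,p'\}=\{0,2\}$. This is why $\kappa\in(0,8)$ is assumed, and it also matches the announced coincidence $\xi_0=\xi_2$ at $\kappa=8$. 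Genuine diagonalizability of $M$ (no Jordan blocks) then follows from having distinct eigenvalues on the $\mathfrak{sl}_2$-isotypic pieces on which it acts as a scalar.

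The main obstacle I expect is the second step: verifying explicitly that the constant matrix $W$ obtained after the triangular change of coordinates is indeed an affine function of the $\mathfrak{sl}_2$ Casimir on $\ker T$. The change of coordinates is not a gauge that obviously preserves the KZ-Casimir structure. I would first try to write $W=\sum_{i<j}\mathrm{Res}$ directly in the paper's basis, check the commutation relations $[W,T]=T$ and $[W,T^{*}]=-T^*$ for the raising partner, and then identify $W$ restricted to $\ker T$ via its values on highest-weight vectors indexed by $\mathfrak{r}$.
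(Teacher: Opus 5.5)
Your overall route is the paper's: the first-order system, the $\mathfrak{sl}_2$ structure of the nilpotent translation matrix, a Casimir producing the $\xi_p$, and distinctness of the $\xi_p$ for $\kappa<8$. Your eigenvalue bookkeeping, including the check at $p=n$, is also correct. But the central structural claim is false, and your diagonalizability argument rests on it.

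The renormalized coefficient matrices are not homogeneous of degree $-1$. The matrix $\widetilde D^{(0)}_j=L^{(0)}_j+\frac4\kappa\sum_{\ell\neq j}\frac{P_{j\ell}-I}{x_j-x_\ell}$ has the constant part $L^{(0)}_j$, and this is precisely what makes $\widetilde T^{(0)}=\kappa\sum_jL^{(0)}_j$ nonzero. Hence $\sum_jx_j\widetilde D^{(0)}_j$ is not a constant residue at infinity. The dilation matrix is
\[
\widetilde E(\bs x)=\Omega+\kappa\Lambda-3nI+\kappa X_1,\qquad X_1:=\sum_{j=1}^nx_jL^{(0)}_j,
\]
where $\Omega=H^2+2H+4RL$ is the Casimir. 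The extra term commutes with $L$ but lowers degree, $X_1K_m\subseteq K_{m-1}$. So on $\ker\widetilde T^{(0)}=\bigoplus_mK_m$ the operator is block upper triangular with scalar diagonal blocks $\xi_{n-2m}I$. It is not constant in $\bs x$, not a function of $\Omega$, and not scalar on the highest-weight spaces. Its $\xi_p$-eigenspace is in general a graph over $K_{(n-p)/2}$, not $K_{(n-p)/2}$ itself. Your proposed check fails accordingly: $[\widetilde E(\bs x),R]=\kappa R-2\kappa\Lambda_{\bs x}+\kappa\bigl(\sum_jx_j\bigr)I$.

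The smallest case shows this is not a technicality. For $n=2$, in the basis $\bs e_\varnothing$, $\bs e_{\{1\}}-\bs e_{\{2\}}$ of $\ker L$,
\[
\widehat E(\bs x)=\begin{pmatrix}2&\kappa(x_1-x_2)\\0&\kappa-6\end{pmatrix}.
\]
At $\kappa=8$ this is a nontrivial Jordan block, matching the logarithmic solution $(x_2-x_1)^{1/4}\log(x_2-x_1)$ of Remark~\ref{rem::xi_p_fail}. A dilation operator acting by scalars on isotypic pieces would instead give $\widehat E=2I$ there, making every translation-invariant solution homogeneous. So diagonalizability for $\kappa<8$ needs a genuine argument. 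The paper's argument runs as follows:
\begin{enumerate}
\item Your Casimir computation gives exactly the diagonal blocks, hence the characteristic polynomial $\prod_p(t-\xi_p)^{\mathsf B_{n,p}}$.
\item The filtration $F_m:=\bigoplus_{j\le m}K_j$ satisfies $(\widehat E-\xi_{n-2m}I)F_m\subseteq F_{m-1}$, so $\prod_{p\in\mathsf P_n}(t-\xi_p)$ annihilates $\widehat E$.
\item This polynomial has simple roots when $\kappa<8$, so $\widehat E$ is diagonalizable.
\end{enumerate}
With this replacement, your outline becomes the paper's proof. You should also note that $\mathcal E$ preserves BPZ solutions, since $[\mathcal D_j,\mathcal E]=2\mathcal D_j$ at $\lambda=0$; you only checked its commutator with the translation operator.
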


We prove Theorem~\ref{thm::xi_p_decomposition} in Section~\ref{subsec::Ward_2nd}.
In the renormalized coordinates, translation-invariant solutions correspond to highest-weight vectors of the same $\mathfrak{sl}_2$ representation. The Casimir and degree operators determine the scalar diagonal blocks of the dilation matrix, while a weighted lowering operator connects adjacent highest-weight spaces. This block upper triangular form gives both its characteristic polynomial and an annihilating polynomial. For $\kappa\in(0,8)$, the levels $\xi_p$ are distinct, so the matrix is diagonalizable and the dimensions follow from the characteristic polynomial. This further decomposition uses the Euclidean dilation symmetry of the chordal configuration space. Thus Theorem~\ref{thm::xi_p_decomposition} classifies the linear solution spaces underlying the original version of Dub\'edat's commutation relation, and settles the dimension and full-space classification problem left open in~\cite[Section~1.2]{zhang2025multiplechordalslekappaquantum}, for $\kappa\in(0,8)$. Their positive elements give the corresponding partition functions, as explained in Appendix~\ref{appendix::Dubedat}. In Appendix~\ref{sec::solutions_lambdazero}, we give a probabilistic construction of the solutions for $\LSchord{n}^{(0;0;\xi_p)}$ when $\kappa\in (0,4]$. These solutions are all positive and form a basis for the space $\LSchord{n}^{(0;0;\xi_p)}$. 

For irrational $\kappa\in(0,8)$, Peltola constructs $\mathsf B_{n,p}$ linearly independent solutions at the level $\xi=\xi_p$, indexed by planar link patterns with $p$ defects, using the spin chain--Coulomb gas correspondence of~\cite{KytolaPeltolaConformalCovBoundaryCorrelation}; see~\cite[Theorem~5.3 and Proposition~6.3]{PeltolaBasisPDE}. Theorem~\ref{thm::xi_p_decomposition} shows that these functions span the complexification of $\LSchord{n}^{(0;0;\xi_p)}$, without imposing growth conditions or prescribed fusion asymptotics. Our dimension formula also covers rational $\kappa\in(0,8)$.

The levels $\xi_p$ in Theorem~\ref{thm::xi_p_decomposition} have a natural interpretation in terms of the first row of the Kac table:
\begin{align*}
\frac{\xi_p}{\kappa}=h_{1,p+1}-nh_{1,2},\qquad\text{where }h_{1,s}=\frac{(\kappa-4s)^2-(\kappa-4)^2}{16\kappa}.
\end{align*}
In the boundary CFT interpretation, each finite marked point carries the level-two degenerate weight $h_{1,2}=(6-\kappa)/(2\kappa)$, while $p$ unpaired curves ending at infinity correspond to a boundary field of weight $h_{1,p+1}$ at their common endpoint.
Under a dilation $z\mapsto Rz$, the derivative at infinity, computed in the local coordinate $1/z$, is $R^{-1}$; conformal covariance therefore gives the homogeneity degree $h_{1,p+1}-nh_{1,2}$.

\begin{figure}[ht!]
	\begin{center}
	\includegraphics[width=0.48\textwidth]{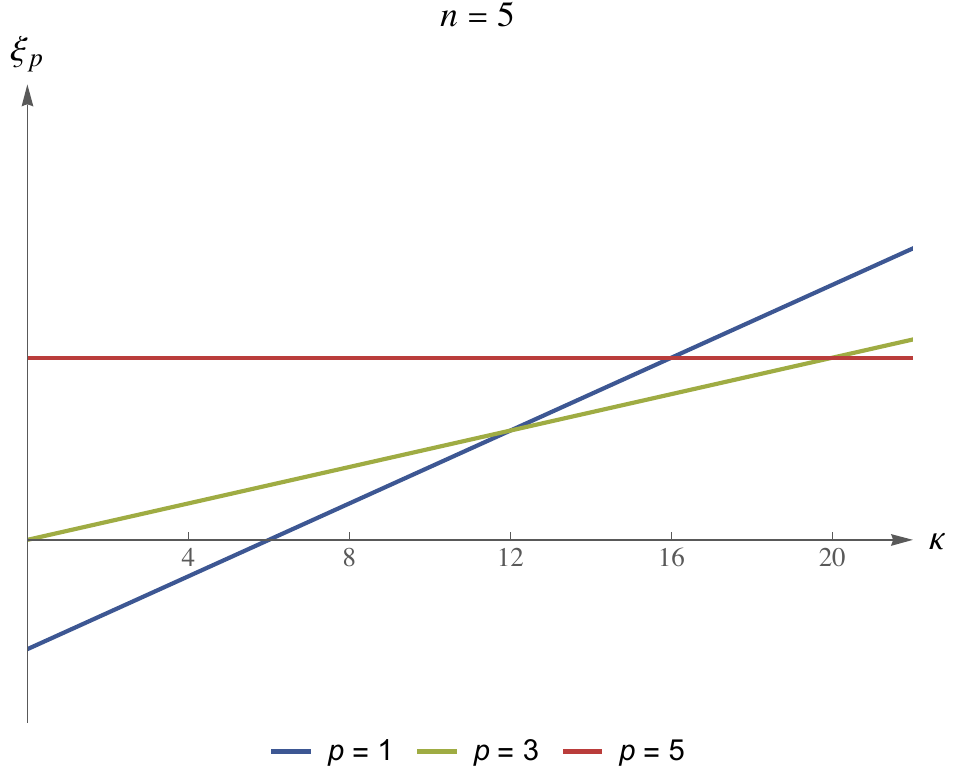}\hfill
    \includegraphics[width=0.48\textwidth]{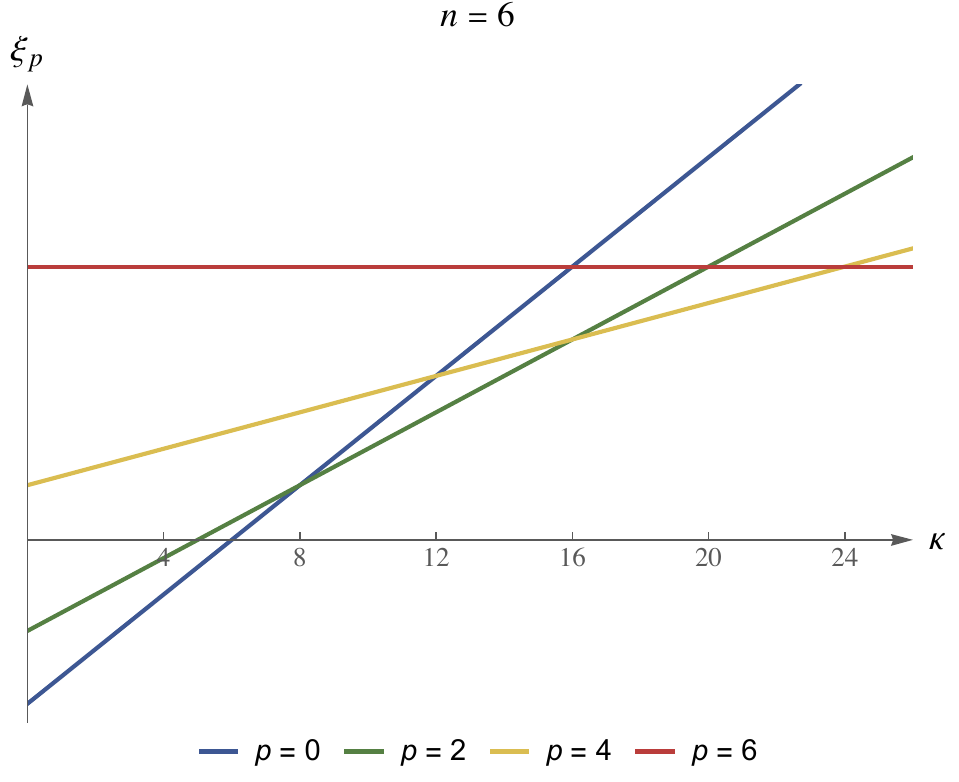}
	\end{center}
	\caption{Illustration for the admissible $\xi_p$ in Theorem~\ref{thm::xi_p_decomposition} for $n=5$ and $n=6$. In the left panel, the three lines are $\xi_1, \xi_3, \xi_5$. The first intersection happens at $\kappa=12$. In the right panel, the four lines are $\xi_0, \xi_2, \xi_4, \xi_6$. The first intersection happens at $\kappa=8$.}
	\label{fig::xi_p}
\end{figure}

We emphasize that the conclusions in Theorems~\ref{thm::BPZ_decomposition} and~\ref{thm::BPZ_decomposition_degenerate} hold for all $\kappa>0$, whereas the range for $\kappa$ in Theorem~\ref{thm::xi_p_decomposition} is $\kappa\in (0,8)$. In fact, the results in Theorem~\ref{thm::xi_p_decomposition} hold for every $\kappa>0$ satisfying
\begin{equation}
	\kappa\notin \{2(p_1+p_2+2):p_1, p_2\in\mathsf{P}_n,\ p_1<p_2\}.
\end{equation}
The excluded parameters are precisely those for which two of the exponents $\xi_p$ coincide, see Figure~\ref{fig::xi_p}. Since $p_1+p_2\ge2$, they are all at least $8$.
At these parameters, the results can fail, see Remark~\ref{rem::xi_p_fail}. 
Theorems~\ref{thm::BPZ_decomposition}, \ref{thm::BPZ_decomposition_degenerate}, and~\ref{thm::xi_p_decomposition} also hold for $\kappa<0$: the same arguments apply, and the exponents $\xi_p$ are distinct throughout this range. We state the results for positive $\kappa$ because of their SLE interpretation.

\subsection{Third Ward identity}
\label{subsec::intro_3rdWard}
For $n=2N$ even and $\kappa\in(0,8)$, the choice $p=0$ in Theorem~\ref{thm::xi_p_decomposition} corresponds to the homogeneity degree $\xi_0/\kappa=-nh_{1,2}$.
The following corollary recovers the Catalan dimension obtained in~\cite[Theorem~8]{FloresKlebanPDE3}.

\begin{corollary}\label{cor::BPZ_Catalan}
Fix $n=2N$ even and $\kappa\in(0,8)$ and $\lambda=\nu=0$. 
Recall that $\xi_0=-n(3-\tfrac{\kappa}{2})$ from~\eqref{eqn::xi_p_def} and 
\begin{align}\label{eqn::space_Catalan}
\LSchord{n}^{(0;0;\xi_0)}=\left\{F\in C^2(\LX_n\to \R):
	\begin{array}{l}
		F \text{ satisfies chordal BPZ system~\eqref{eqn::BPZ_chordal} with }\lambda=0\\
		\text{and the first Ward identity~\eqref{eqn::Ward}  with }\nu=0\\
		\text{and the second Ward identity~\eqref{eqn::Ward_2nd} with }\xi=\xi_0
	\end{array}\right\}.
\end{align}
	We have 
\begin{align}\label{eqn::solution_Catalan_dimension}
\dim \LSchord{n}^{(0;0;\xi_0)}=\frac{1}{N+1}\binom{2N}{N}=:\Cat_N.
\end{align}
Furthermore, every $\LZ\in\LSchord{n}^{(0;0;\xi_0)}$ satisfies the third Ward identity
\begin{equation}\label{eqn::Ward_3rd}
    \sum_{j=1}^n x_j^2\partial_j\LZ
    =-\frac{6-\kappa}{\kappa}\left(\sum_{j=1}^n x_j\right)\LZ.
\end{equation}
\end{corollary}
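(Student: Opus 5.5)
The dimension formula is immediate from Theorem~\ref{thm::xi_p_decomposition} with $p=0$. Since $n=2N$ is even, $0\in\mathsf{P}_n$, so $\dim\LSchord{n}^{(0;0;\xi_0)}=\mathsf{B}_{2N,0}=\binom{2N}{N}-\binom{2N}{N-1}$. A one-line computation shows this equals $\frac{1}{N+1}\binom{2N}{N}=\Cat_N$. So the real content of the corollary is the third Ward identity~\eqref{eqn::Ward_3rd}.

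For the third Ward identity, I would use the commutation structure of the conformal generators. Write
\begin{align}
L_{-1}=\sum_j\partial_j,\qquad L_0=\sum_j x_j\partial_j+h\,n,\qquad L_1=\sum_j\bigl(x_j^2\partial_j+2h\,x_j\bigr),
\end{align}
with $h=h_{1,2}=(6-\kappa)/(2\kappa)$. Then~\eqref{eqn::Ward_3rd} reads $L_1\LZ=0$, and the hypotheses say $L_{-1}\LZ=0$ and $L_0\LZ=(\xi_0/\kappa+nh)\LZ=0$, because $\xi_0/\kappa=-nh$. The plan has three steps.
\begin{itemize}
\item[(i)] Check that $L_1$ preserves the BPZ solution space $\LSchord{n}^{(0)}$. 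This is the standard Möbius covariance of the null-vector operators $\mathcal{D}_j$ for weight $h_{1,2}$ fields. Concretely, one verifies a commutator identity of the form $[\mathcal{D}_j,L_1]=c\,x_j\mathcal{D}_j$ (up to the exact constant), which follows from a direct computation with the specific weight $h_{1,2}$.
\item[(ii)] Use the $\mathfrak{sl}_2$ relations $[L_{-1},L_1]=2L_0$ and $[L_0,L_1]=L_1$ (with appropriate sign conventions). With these, $G:=L_1\LZ$ satisfies $L_{-1}G=L_1L_{-1}\LZ+2L_0\LZ=0$. Also $L_0G=L_1L_0\LZ+L_1\LZ=G$. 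Hence $G\in\LSchord{n}^{(0;0;\xi)}$ with $\xi/\kappa=\xi_0/\kappa+1$, that is, $\xi=\xi_0+\kappa$.
\item[(iii)] By Theorem~\ref{thm::xi_p_decomposition}, $G$ can be nonzero only if $\xi_0+\kappa=\xi_p$ for some even $p\in\{0,\dots,n\}$. Now $\xi_p-\xi_0=p(p+2-\kappa/2)$. Setting $p(p+2-\kappa/2)=\kappa$ gives $p^2+2p=\kappa(1+p/2)$, hence $p(p+2)=\kappa(p+2)/2$ and $p=\kappa/2$. For $\kappa\in(0,8)$ and even $p\ge 0$, this forces $p=2$, $\kappa=4$, or $p=0$, $\kappa=0$. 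So for $\kappa\neq4$ we get $G=0$ directly.
\end{itemize}

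The case $\kappa=4$ needs a separate argument, and I expect it to be the main obstacle: there $G$ lands in the $p=2$ level, so the eigenvalue argument alone does not kill it. I would first try to prove that $L_1$ maps the $p=0$ highest-weight sector into the $p=0$ sector, using the paper's $\mathfrak{sl}_2$/renormalized-coordinate description, in which the solution spaces at level $\xi_p$ correspond to highest-weight vectors of spin $p/2$. The Casimir of the conformal $\mathfrak{sl}_2$ generated by $L_{-1},L_0,L_1$ commutes with $L_1$. On $\LZ$ it equals $L_0^2-L_0-L_1L_{-1}$ (up to convention), which gives $0$. On $G$ it equals $1-1-L_1L_{-1}G=0$, consistent but not contradicting. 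Instead I would use the explicit block-triangular form of the dilation matrix, which identifies the $\xi_p$ sectors with a finite-dimensional representation-theoretic label $p$ preserved by $L_1$. Since $L_1$ commutes with that internal $\mathfrak{sl}_2$ of the KZ-type system, it preserves $p$, so $G$ lies in a $p=0$ sector at the wrong eigenvalue, forcing $G=0$.

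As a fallback, I would use the power-law bound and continuity in $\kappa$. The Catalan basis (e.g.\ Coulomb gas pure partition functions) satisfies $L_1=0$ for all $\kappa$ near $4$. By the dimension count, this basis spans $\LSchord{n}^{(0;0;\xi_0)}$ for every $\kappa\in(0,8)$, including $\kappa=4$, so $L_1$ vanishes on the whole space.
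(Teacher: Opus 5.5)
Your proof is correct once $\kappa=4$ is handled by your fallback. For $\kappa\neq4$ it takes a genuinely different route from the paper's.

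\textbf{Comparison with the paper.} The paper obtains the dimension exactly as you do. For \eqref{eqn::Ward_3rd}, however, it uses no $\mathfrak{sl}_2$ argument. It takes the pure partition functions $\{\LZ_\alpha:\alpha\in\LP_N\}$ of \cite{PeltolaWuGlobalMultipleSLEs,FengLiuPeltolaWu2024}, which are M\"obius covariant and linearly independent by their fusion asymptotics. By the dimension count they form a basis, so the third Ward identity passes to the whole space. Your argument is: $G=L_1\LZ\in\LSchord{n}^{(0;0;\xi_0+\kappa)}$, and $\xi_0+\kappa=\xi_p$ forces $p=\kappa/2$. It uses only Theorem~\ref{thm::xi_p_decomposition} and commutator identities. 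For $\kappa\neq4$ it is therefore independent of the existence theory of multiple SLE, and it explains why the identity is automatic. In exchange, the paper's route also exhibits a positive basis, which is what yields the power-law bound discussed after the corollary.

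\textbf{Correction to step (i).} The identity valid for every $\kappa$ is $[\LD_j,L_1]=4x_j\LD_j+6\LT$; the paper records it at $\kappa=8$ in \eqref{eqn::chordal_third_Ward_operator_relations}. So $L_1$ does not preserve $\LSchord{n}^{(0)}$. It only maps translation-invariant solutions to solutions, but that is all you actually use.

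\textbf{The case $\kappa=4$.} Drop your first idea. The internal lowering operator $L=\kappa^{-1}\widetilde T^{(0)}$ is just the translation generator $L_{-1}$ written in the coordinates $\bs v$. For nonzero $\LZ$ at any level $\xi_p$ with $p\neq0$, one has $L_{-1}L_1\LZ=2L_0\LZ\neq0$, because $\xi_p\neq\xi_0$ for $\kappa\in(0,8)$. Hence $L_1$ does not commute with the internal $\mathfrak{sl}_2$, and there is no preserved sector label to invoke. The conformal relations alone are also consistent with $G\neq0$: $L_{-1}\LZ=L_0\LZ=0$ already implies $L_{-1}L_1\LZ=0$.

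Your fallback works, but it is simply the paper's argument at the single value $\kappa=4$. The pure partition functions at $\kappa=4$ from \cite{PeltolaWuGlobalMultipleSLEs} give $\Cat_N$ linearly independent M\"obius-covariant elements, hence a basis. Neither continuity in $\kappa$ nor the power-law bound plays any role.

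If you want the whole proof free of multiple-SLE input, a genuine continuity argument closes this case.
\begin{itemize}
\item Fix $\bs x$. For every BPZ solution, $\zeta_{\bs x}(L_1\LZ)=W_\kappa\zeta_{\bs x}(\LZ)$, where $W_\kappa$ is defined as in \eqref{eqn::chordal_third_Ward_matrix} with $-\tfrac14$ replaced by $(6-\kappa)/\kappa$. This matrix is continuous in $\kappa$.
\item Near $\kappa=4$ the exponents $\xi_p$ stay distinct. By \eqref{eqn::chordal_dilation_annihilating}, $P_\kappa=\prod_{p\neq0}(\widehat E(\bs x)-\xi_p)/(\xi_0-\xi_p)$ is then the projector of $\ker\widetilde T^{(0)}$ onto the $\xi_0$-eigenspace. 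It is continuous in $\kappa$, and so is $M(\bs x)$.
\item Your argument gives $W_\kappa M(\bs x)P_\kappa=0$ for $\kappa\neq4$ close to $4$, hence also at $\kappa=4$.
\item Since $L_1\LZ\in\LSchord{n}^{(0)}$, injectivity of $\zeta_{\bs x}$ then gives $L_1\LZ=0$.
\end{itemize}
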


Let us explain the relation between Corollary~\ref{cor::BPZ_Catalan} and~\cite{FloresKlebanPDE1,FloresKlebanPDE2,FloresKlebanPDE3}. 
Define
\begin{align*}
\LSchord{n}^{\mathrm{FK}}=
\left\{F\in \LSchord{n}^{(0;0;\xi_0)}:
	\begin{array}{l}
		F \text{ satisfies the third Ward identity~\eqref{eqn::Ward_3rd}}\\
		\text{and a power-law bound}
	\end{array}
	\right\}.
\end{align*}
It is proved in~\cite[Theorem~8]{FloresKlebanPDE3} that, for $n=2N$ even and $\kappa\in(0,8)$, 
\begin{align}\label{eqn::FK15}
\dim \LSchord{n}^{\mathrm{FK}}=\Cat_N.
\end{align}
Comparing with~\eqref{eqn::solution_Catalan_dimension}, the third Ward identity~\eqref{eqn::Ward_3rd} and the power-law bound are consequences of the three requirements in~\eqref{eqn::space_Catalan}. 
Flores and Kleban obtain the upper bound in~\eqref{eqn::FK15} by analyzing collision limits and constructing an injective map from the solution space to a space of dimension $\Cat_N$~\cite{FloresKlebanPDE1,FloresKlebanPDE2}. Such analysis uses the M\"obius covariance encoded by all three Ward identities, while the power-law bound supplies the initial control for the collision estimates. Our method is very different.
Our dimension calculation uses the equivalent first-order system and finite-dimensional linear algebra, without imposing the third Ward identity or a power-law bound. The known pure partition functions form a basis of $\LSchord{n}^{(0;0;\xi_0)}$. Every element of this space therefore inherits their third Ward identity and power-law bounds. This confirms Peltola's suggestion in~\cite[Section~3.B]{PeltolaCFTSLE} that the power-law bound follows from the other defining properties of pure partition functions. 
Our conclusion applies to the full solution space, without assuming positivity or prescribed fusion asymptotics, and also makes the third Ward identity automatic.
\medbreak
At $\kappa=8$, the collision $\xi_0=\xi_2=n$ changes the conclusion of Corollary~\ref{cor::BPZ_Catalan}: for $N\ge2$, the dimension of $\LSchord{n}^{(0;0;\xi_0)}$ exceeds the Catalan number, and the third Ward identity~\eqref{eqn::Ward_3rd} no longer follows from the three requirements in~\eqref{eqn::space_Catalan}. The next theorem determines the dimensions both before and after imposing this additional identity.

\begin{theorem}\label{thm::UST_decomposition}
Fix $n=2N$ even and $\kappa=8$ and $\lambda=\nu=0$. 
In this case $\xi_0=n$. 
Define 
\begin{align}\label{eqn::solutionspace_UST}
\LSchord{n}^{\mathrm{UST}}=\left\{F\in \LSchord{n}^{(0;0;\xi_0)}:
	\begin{array}{l}
		F \text{ satisfies the third Ward identity~\eqref{eqn::Ward_3rd}}
	\end{array}\right\}.
\end{align}
We have 
\begin{align}
\dim\LSchord{n}^{(0;0;\xi_0)}=\frac{3N}{N+2}\Cat_N, \qquad \dim \LSchord{n}^{\mathrm{UST}}=\Cat_N.
\end{align}
\end{theorem}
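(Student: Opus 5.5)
We work in the renormalized derivative coordinates of Section~\ref{subsec::renormalization}, where $\LSchord{n}^{(0;0)}$ is identified with the space of highest-weight vectors of the $\mathfrak{sl}_2$ representation built from the nilpotent translation matrix. The dilation operator $\sum_j x_j\partial_j$ then acts on this space by a finite matrix $D$. From the analysis behind Theorem~\ref{thm::xi_p_decomposition}, $D$ is block upper triangular with respect to the decomposition into highest-weight spaces of weight $p\in\mathsf{P}_n$. The diagonal block on the $p$-th space is the scalar $\xi_p/\kappa$, and the off-diagonal blocks come from a weighted lowering operator connecting adjacent highest-weight spaces $p$ and $p\pm 2$.

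At $\kappa=8$ we have $\xi_p=p(p-2)-n(-1)$, so $\xi_0=\xi_2=n$, while all other $\xi_p$ with $p\ge 4$ are distinct from $n$. Hence $\LSchord{n}^{(0;0;\xi_0)}=\ker(D-\xi_0/\kappa)$ lives inside the generalized eigenspace corresponding to the blocks $p=0$ and $p=2$. That generalized eigenspace has dimension $\mathsf{B}_{n,0}+\mathsf{B}_{n,2}=\Cat_N+\bigl(\binom{2N}{N-1}-\binom{2N}{N-2}\bigr)$.

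The main step is to compute the rank of the off-diagonal block $L_{0,2}$ coupling the $p=0$ and $p=2$ highest-weight spaces inside $D-\xi_0/\kappa$, evaluated at $\kappa=8$. The claimed count requires
\[
\frac{3N}{N+2}\Cat_N = \mathsf{B}_{n,0}+\mathsf{B}_{n,2}-\operatorname{rank} L_{0,2}.
\]
With $\mathsf{B}_{n,2}=\frac{3N}{N+2}\Cat_N$, this forces $\operatorname{rank}L_{0,2}=\Cat_N=\mathsf{B}_{n,0}$. So I would aim to show that the coupling map is injective from the $p=0$ block, equivalently surjective onto it depending on the direction of triangularity. To do this, I would write the weighted lowering operator explicitly via the $\mathfrak{sl}_2$ structure: up to a nonzero scalar depending on $\kappa$ it is a composition of a lowering operator with the projection onto highest-weight vectors. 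I would then check that its $\kappa$-dependent coefficient does not vanish at $\kappa=8$, and use the standard fact that lowering-type maps between adjacent isotypic components are injective or surjective in the appropriate direction ($\mathsf{B}_{n,2}\ge\mathsf{B}_{n,0}$ for $N\ge2$). This nondegeneracy check at $\kappa=8$ is where I expect the real work. If the direct argument stalls, I would try a continuity argument in $\kappa$ from the Coulomb gas or pure partition function bases.

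For the third Ward identity, I would write the operator $W_3=\sum_j x_j^2\partial_j+\frac{6-\kappa}{\kappa}\sum_j x_j$ in the same coordinates, as in the proof of Corollary~\ref{cor::BPZ_Catalan}. It maps $\LSchord{n}^{(0;0;\xi_0)}$ to a space of translation-covariant functions, and its kernel on the $\xi$-eigenspace should be exactly those vectors with no component in the $p=2$ highest-weight direction, namely the Möbius-covariant part. Then I would show that $\ker W_3$ has dimension $\Cat_N$ by exhibiting $\Cat_N$ independent elements, namely the UST partition functions of~\cite{LiuPeltolaWuUST}, which satisfy all three Ward identities, for the lower bound. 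For the upper bound, I would show that $W_3$ restricted to the $p=2$ quotient is injective, with its leading part in the $\mathfrak{sl}_2$ picture being a raising operator, which is injective on nonzero highest-weight vectors of positive weight.
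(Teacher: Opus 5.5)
Your reduction of the first formula is correct: on $\ker\widetilde T^{(0)}=\bigoplus_m K_m$, the dilation matrix acts as the scalar $\xi_{n-2m}$ on $K_m$ plus the coupling $\kappa X_1$, with $X_1=\sum_j x_jL_j^{(0)}$. This coupling maps only $K_m\to K_{m-1}$, i.e.\ $p\mapsto p+2$. So at $\kappa=8$ one has $\dim\ker(\widehat E-nI)=\dim K_{N-1}+\dim\ker(X_1|_{K_N})$, and everything hinges on injectivity of $X_1:K_N\to K_{N-1}$.

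The mechanism you propose for this injectivity cannot work. For $n=2N$, the norm identity $\|R\bs w\|^2-\|L\bs w\|^2=(n-2N)\|\bs w\|^2$ shows that both $L$ and $R$ kill $K_N$. Hence $K_N$ consists of $\mathfrak{sl}_2$-invariants, and any operator built from $L,R,H$ and projections maps $K_N$ into itself. No ``lowering operator composed with a projection'' can produce a nonzero map $K_N\to K_{N-1}$. There is also no $\kappa$-dependent coefficient to check, since the only $\kappa$-dependence is the factor $\kappa$ itself.

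The coupling is genuinely $\bs x$-dependent, and its injectivity uses that the $x_j$ are distinct: if $x_1=\cdots=x_n$, then $X_1=x_1L$ vanishes on $K_N$. The missing idea is the Vandermonde argument of Lemma~\ref{lem::chordal_middle_injectivity}. Suppose $X_1\bs w=0$ with $\bs w\in K_N$. Then $R\bs w=0$ and $[R,X_1]=2\Lambda_{\bs x}-\sum_jx_jI$ give $\Lambda_{\bs x}\bs w=\frac12\sum_jx_j\,\bs w$. The relation $[\Lambda_{\bs x},X_r]=-X_{r+1}$ then propagates $X_r\bs w=0$ to all $r$. Hence $L_j\bs w=0$ for every $j$, and so $\bs w=0$.

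Your fallback of continuity in $\kappa$ only gives $\dim\ker(\widehat E-nI)\ge\mathsf B_{n,2}$, because kernel dimension is upper semicontinuous. The needed upper bound is exactly this injectivity.

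For the third Ward identity, your description of the kernel is reversed. Solving $(\widehat E-nI)\bs v=0$ from the top degree shows that every element of the $\xi_0$-eigenspace has \emph{zero} $K_N$ (i.e.\ $p=0$) component. Each such element is parametrized by its free $K_{N-1}$ (i.e.\ $p=2$) component. So ``no component in the $p=2$ direction'' describes only $\bs 0$. The M\"obius-covariant solutions are instead those whose $K_{N-1}$-component lies in $X_1K_N$.

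Your lower bound via the functions of~\cite{LiuPeltolaWuUST} is legitimate. The paper instead checks $X_1K_N\subseteq\ker Q_{\bs x}$ algebraically, using $RX_1^3\bs u=6(\Lambda_{\bs x}-\frac12\sum_jx_jI)X_1^2\bs u+6X_2X_1\bs u$.

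The upper bound, however, is not supplied. First, to reduce to linear algebra at a single point you need $\LW$ to map $\LSchord{n}^{(0;0;n)}$ into $\LSchord{n}^{(0;0;n+8)}$, which follows from $[\LD_j,\LW]=4x_j\LD_j+6\LT$. Landing merely in translation-covariant functions is not enough. Second, after projection the third Ward map is again a non-equivariant map between highest-weight spaces:
$Q_{\bs x}=X_2+(\Lambda_{\bs x}-\frac12\sum_jx_jI)X_1-\frac16RX_1^2:K_{N-1}\to K_{N-2}$.
Its kernel is $\Cat_N$-dimensional, so no injectivity of an $\mathfrak{sl}_2$ raising operator can bound it; such an operator annihilates highest-weight vectors anyway.

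The paper obtains $\dim\ker Q_{\bs x}\le\Cat_N$ as follows. This dimension is independent of $\bs x\in\LX_n$, and the entries of $Q_{\bs x}$ are polynomial in $\bs x$. At the degenerate point $\bs x_*=(0,1,\ldots,1)$ one has $Q_{\bs x_*}=\frac12L_1$ on $K_{N-1}$. Its kernel is the degree-$(N-1)$ highest-weight space of $\sum_{j\ge2}L_j$ on $2N-1$ sites, of dimension $\binom{2N-1}{N-1}-\binom{2N-1}{N-2}=\Cat_N$. Lower semicontinuity of rank then gives the bound.
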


We prove Theorem~\ref{thm::UST_decomposition} in Section~\ref{subsec::Ward_3rd}. At the collision $\xi_0=\xi_2$, eigenvalue multiplicities alone do not determine the eigenspace dimension. The third Ward operator maps this eigenspace into a different dilation eigenspace, so its matrix is generally rectangular. For $N\ge2$, projection onto suitable highest-weight components reduces the third Ward identity to the kernel of an explicit polynomial matrix. This confirms the extension to $\kappa=8$ anticipated in~\cite{FloresKlebanPDE1} and answers the dimension aspect of the question raised in~\cite[Question~3.13]{PeltolaCFTSLE}, without assuming a power-law bound.

In the uniform spanning tree setting, Liu, Peltola, and Wu~\cite{LiuPeltolaWuUST} construct $\Cat_N$ positive, linearly independent solutions in $\LSchord{n}^{\mathrm{UST}}$. 
In~\cite{LiuPeltolaWuUST}, the authors do not know the dimension of the solution space. Our Theorem~\ref{thm::UST_decomposition} determines this dimension and proves that the constructed solutions form a basis of $\LSchord{n}^{\mathrm{UST}}$. They are associated with operators in a logarithmic CFT with central charge $\mathfrak{c}=-2$.
The solutions constructed in~\cite{LiuPeltolaWuUST} encode the connection probabilities for uniform spanning tree in polygons. From Theorem~\ref{thm::UST_decomposition}, we see that the space $\LSchord{n}^{(0;0;\xi_0)}$ is bigger than $\LSchord{n}^{\mathrm{UST}}$ in general. It is an interesting question to give probabilistic interpretation for the solutions in $\LSchord{n}^{(0;0;\xi_0)}\setminus\LSchord{n}^{\mathrm{UST}}$.

\subsection{GFF level lines}
In this section, we present explicit solutions for $\LSchord{n}^{(\lambda)}$ when $\kappa=4$ and $\lambda>0$. 
These solutions are partition functions for level lines of Gaussian free field with properly chosen mean value. 
\begin{proposition}\label{prop::LFchordsigma}
Fix $n\ge 2$ and $\kappa=4$ and $\lambda>0$. Fix $\sigma=(\sigma_1, \ldots, \sigma_n)\in\{\pm1\}^n$. 
Define
\begin{align}\label{eqn::LFchord_sigma_def}
\LFchord{n}(\bs{x})=\prod_{1\le i<\ell\le n}(x_{\ell}-x_i)^{\frac{1}{2}\sigma_i\sigma_{\ell}}\times\exp\left(\frac{\sqrt{2\lambda}}{4}\sum_{j=1}^n\sigma_j x_j\right),\qquad\text{for }\bs{x}=(x_1, \ldots, x_n)\in\LX_n.
\end{align}
The collection $\left\{\LFchord{n}: \sigma\in\{\pm 1\}^n\right\}$ are $2^n$ linearly independent solutions to the chordal BPZ system~\eqref{eqn::BPZ_chordal} with $\kappa=4$ and $\lambda>0$.
\end{proposition}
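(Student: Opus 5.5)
The plan is to verify the BPZ system by direct computation through the logarithmic derivative, and then to prove linear independence from the distinct exponential factors.

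\textbf{BPZ system.} With $\kappa=4$, we have $\frac{\kappa}{2}=2$ and $(6-\kappa)/\kappa=\tfrac12$, and the right-hand side of~\eqref{eqn::BPZ_chordal} is $\frac{\lambda}{4}\LZ$. Set $a:=\sqrt{2\lambda}/4$ and $L:=\log \LFchord{n}$, which is well defined on $\LX_n$ because $\LFchord{n}>0$ there. Then
\[
\partial_j L=\sum_{\ell\neq j}\frac{\sigma_j\sigma_\ell}{2(x_j-x_\ell)}+a\sigma_j,\qquad \partial_j^2L=-\sum_{\ell\ne j}\frac{\sigma_j\sigma_\ell}{2(x_j-x_\ell)^2}.
\]
I will divide~\eqref{eqn::BPZ_chordal} by $\LFchord{n}$ and use $\partial_j^2F/F=(\partial_jL)^2+\partial_j^2L$. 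The resulting identity will be checked by sorting terms by type.

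\emph{Constants.} The only constant is $2a^2=\lambda/4$, which matches the right-hand side.

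\emph{Terms linear in $a$.} These are $2a\sigma_\ell/(x_j-x_\ell)$ from $2(\partial_jL)^2$ (using $\sigma_j^2=1$) and $2a\sigma_\ell/(x_\ell-x_j)$ from the drift $\frac{2}{x_\ell-x_j}\partial_\ell L$. They cancel.

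\emph{Two-point terms $(x_j-x_\ell)^{-2}$.} The diagonal part of $2(\partial_jL)^2$ contributes $\tfrac12$ and $2\partial_j^2L$ contributes $-\sigma_j\sigma_\ell$. The drift contributes $+\sigma_j\sigma_\ell$ and the potential contributes $-\tfrac12$. These cancel.

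\emph{Three-point terms with $j,\ell,m$ distinct.} From $2(\partial_jL)^2$, each unordered pair $\{\ell,m\}$ carries $\sigma_\ell\sigma_m/((x_j-x_\ell)(x_j-x_m))$ with coefficient $1$. From the drift we have $\sum_{\ell\ne m}\sigma_\ell\sigma_m/((x_\ell-x_j)(x_\ell-x_m))$. Symmetrizing in $(\ell,m)$ via the partial-fraction identity
\[
\frac{1}{(x_\ell-x_j)(x_\ell-x_m)}+\frac{1}{(x_m-x_j)(x_m-x_\ell)}=-\frac{1}{(x_\ell-x_j)(x_m-x_j)}
\]
gives coefficient $-1$ per unordered pair. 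So these cancel as well.

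The only care needed in this step is tracking the factors of $2$ between ordered and unordered pairs; there is no genuine obstacle.

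\textbf{Linear independence.} Write $\LFchord{n}=P_\sigma(\bs x)\,e^{a\langle\sigma,\bs x\rangle}$, where $P_\sigma$ is a product of real powers of the differences $x_\ell-x_i$. Fix a base point $\bs x\in\LX_n$ and restrict to the ray $\bs x+t\bs v$, $t>0$, where $\bs v$ is strictly increasing (so the ray stays in $\LX_n$) and chosen so that the values $\langle\sigma,\bs v\rangle$, $\sigma\in\{\pm1\}^n$, are pairwise distinct. For example $v_j=2^j$ works, since then $\langle\sigma,\bs v\rangle$ determines $\sigma$ by binary expansion.

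Along the ray, each $P_\sigma(\bs x+t\bs v)$ has at most polynomial growth or decay, while the exponentials $e^{a t\langle\sigma,\bs v\rangle}$ have distinct rates. Suppose $\sum_\sigma c_\sigma \LFchord{n}=0$. Take the $\sigma$ with $c_\sigma\neq 0$ that maximizes $\langle\sigma,\bs v\rangle$, multiply by $e^{-at\langle\sigma,\bs v\rangle}$ and let $t\to\infty$. All other terms tend to zero, while $P_\sigma(\bs x+t\bs v)$ is nonzero (and at worst polynomially small), so $c_\sigma=0$. Inducting downward gives $c_\sigma=0$ for every $\sigma$.

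To make the comparison rigorous, I will note that $P_\sigma(\bs x+t\bs v)$ is bounded above and below by powers of $t$ for large $t$, because each factor $x_\ell-x_i+t(v_\ell-v_i)$ grows linearly in $t$. Since $\lambda>0$ gives $a>0$, the distinct exponential rates dominate any power of $t$. Combined with $\dim\LSchord{n}^{(\lambda)}=2^n$ from Theorem~\ref{thm::BPZ_decomposition}, these functions in fact form a basis.
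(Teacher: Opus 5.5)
Your proof is correct, but it takes a different route from the paper.

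The paper never differentiates $\LFchord{n}$ directly. It realizes these functions as the $\kappa=4$ specializations of the exponentially modified Coulomb gas integrals, evaluating the Hankel-contour integrals by residues (Proposition~\ref{prop::CGI_kappa4}). The BPZ system is then inherited from the total-derivative identity~\eqref{eqn::CGI_BPZ_total_derivative}, and linear independence from the large-separation asymptotics~\eqref{eqn::CGI_large_separation} of Proposition~\ref{prop::CGI}.

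You instead verify the BPZ equations by a direct logarithmic-derivative computation, and your bookkeeping is right:
the constant $2a^2=\lambda/4$ matches the right-hand side; the terms linear in $a$ and the two-point terms cancel; and the three-point terms cancel by the three-term partial-fraction identity.
You then run the exponential-rate argument, with the same choice $2^j$ as the paper, on the exact formula rather than on an asymptotic expansion.

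What each route buys:
\begin{itemize}
\item Your route is shorter and self-contained, with no contours, convergence estimates or phase normalizations. It also makes plain that independence genuinely uses $\lambda>0$: at $\lambda=0$ the functions indexed by $\sigma$ and $-\sigma$ coincide.
\item The paper's route explains where the formula comes from. It exhibits $\LFchord{n}$ as the $\kappa=4$ member of a Coulomb gas basis of $\LSchord{n}^{(\lambda)}$ valid for every $\kappa>0$, adapted to the first-Ward decomposition of Theorem~\ref{thm::BPZ_decomposition}.
\end{itemize}

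One small wording fix. Multiplying only by $e^{-at\langle\sigma,\bs v\rangle}$ does not suffice when $P_\sigma$ decays polynomially, since the surviving term then also tends to zero. Divide by the full term $P_\sigma(\bs x+t\bs v)e^{at\langle\sigma,\bs v\rangle}$ instead; your two-sided power bounds on $P_\sigma$ justify this.
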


The partition functions $\LFchord{n}$ with $\lambda=0$ are ``conformal blocks" for GFF analyzed in~\cite[Section~6]{PeltolaWuGlobalMultipleSLEs}. They are linearly dependent when $\lambda=0$; whereas, they become linearly independent when $\lambda>0$.

\begin{proposition}\label{prop::GFF_chord}
Fix $n\ge 2$ and $\kappa=4$ and $\lambda>0$. Fix $\sigma=(\sigma_1, \ldots, \sigma_n)\in\{\pm1\}^n$. 
Suppose $\Phi$ is Dirichlet GFF in $\HH$. For $\bs{x}=(x_1,\ldots,x_n)\in\LX_n$, define $\varphi_{\sigma}(\bs{x};\cdot)$ to be the harmonic function on $\HH$ given by
\begin{align}\label{eqn::sigma_H}
	\varphi_{\sigma}(\bs{x};z):={}&\pi\sum_{i=1}^n\sigma_i-\sum_{i=1}^n\sigma_i\arg(z-x_i), \qquad z\in\HH,
\end{align}
where $\arg(z-x_i)\in(0,\pi)$. The boundary data of $\varphi_{\sigma}(\bs{x}; \cdot)$ is
\begin{align}\label{eqn::data_sigma_chordal}
	0 \text{ on }(-\infty, x_1);\qquad
	\pi\sum_{i=1}^j\sigma_i\text{ on }(x_j, x_{j+1}), \qquad 1\le j\le n-1; \qquad \pi\sum_{i=1}^n\sigma_i\text{ on }(x_n,\infty).
\end{align}
For $1\le j\le n$, the level line of 
\begin{align}\label{eqn::GFF_sigma_mu_H}
	\Phi(z)+\varphi_{\sigma}(\bs{x};z)-\frac{\sqrt{2\lambda}}{2}\Im{z}
\end{align}
starting from $x_j$ with height $\pi\sum_{i=1}^{j-1}\sigma_i + \pi \sigma_j/2$ has the same law as the chordal Loewner chain with the following driving function, up to the swallowing time of $\{x_{j-1}, x_{j+1}\}$:
\begin{align}\label{eqn::LFchord_sigma_SDE}
	\ud W_t=2\ud B_t+4\left(\partial_j\log\LFchord{n}\right)(g_t(x_1), \ldots, g_t(x_{j-1}), W_t, g_t(x_{j+1}), \ldots, g_t(x_{n}))\ud t;
\end{align}
where $W_0=x_j$ and $B_t$ is one-dimensional Brownian motion and $\LFchord{n}$ is defined in~\eqref{eqn::LFchord_sigma_def}. 
\end{proposition}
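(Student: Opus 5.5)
The plan is to reduce the level-line statement to the standard coupling between GFF level lines and SLE$_4(\underline{\rho})$ processes with force points. We then add the linear term $-\tfrac{\sqrt{2\lambda}}{2}\Im z$ through a Girsanov change of measure, and identify the resulting drift with $4\,\partial_j\log\LFchord{n}$.

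\emph{Step 1: the case without drift.} First consider the field $\Phi+\varphi_\sigma(\bs{x};\cdot)$. Its boundary data~\eqref{eqn::data_sigma_chordal} jumps by $\pi\sigma_i$ at each $x_i$. The height $\pi\sum_{i<j}\sigma_i+\pi\sigma_j/2$ lies halfway in the jump at $x_j$. With the normalization $\lambda_{\mathrm{GFF}}=\pi/2$ for $\kappa=4$ (boundary values $\pm\pi/2$ relative to the height), the Schramm--Sheffield theory identifies the level line from $x_j$ as an SLE$_4(\underline\rho)$ with force points $x_i$, $i\ne j$, and weights $\rho_i=\sigma_i\sigma_j$. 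This holds up to the first time a force-point interval is hit, in particular up to the swallowing time of $\{x_{j-1},x_{j+1}\}$. Its driving function is
\[
\ud W_t=2\ud B_t+\sum_{i\ne j}\frac{2\sigma_i\sigma_j}{W_t-g_t(x_i)}\ud t.
\]
A direct differentiation of~\eqref{eqn::LFchord_sigma_def} with $\lambda=0$ gives $4\partial_j\log\LFchord{n}=\sum_{i\ne j}\frac{2\sigma_i\sigma_j}{x_j-x_i}$, so the claim holds when the drift vanishes.

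\emph{Step 2: adding $-\tfrac{\sqrt{2\lambda}}{2}\Im z$.} The function $u(z)=-\tfrac{\sqrt{2\lambda}}{2}\Im z$ is harmonic with zero boundary values on $\R$, but it is unbounded at $\infty$. Adding it therefore corresponds, locally, to the Cameron--Martin shift of $\Phi$ by $u$. Along the exploration, the conditional mean after time $t$ is $\varphi_\sigma(\bs{x};\cdot)\circ g_t$ plus the harmonic extension of $u$ on $\HH\setminus K_t$ vanishing on $\partial(\HH\setminus K_t)$. That extension equals $-\tfrac{\sqrt{2\lambda}}{2}\Im g_t(z)$, since $\Im g_t(z)=\Im z-\mathrm{hcap}$-type corrections that are harmonic and bounded.

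I would compute the Radon--Nikodym derivative of the shifted level-line law with respect to the unshifted one via the standard martingale argument. Take the derivative at time $t$ to be $M_t=\exp\big(\tfrac{\sqrt{2\lambda}}{4}\sum_i\sigma_i\,g_t'\text{-corrected positions}\big)$. Concretely, the candidate is
\[
M_t=\frac{\LFchord{n}(g_t(x_1),\dots,W_t,\dots,g_t(x_n))}{\LFchord{n}^{(0;\sigma)}(g_t(x_1),\dots,W_t,\dots,g_t(x_n))}\times\exp\Big(-\tfrac{\lambda}{4}\int_0^t\ldots\Big),
\]
that is, the ratio of exponential factors $\exp(\tfrac{\sqrt{2\lambda}}{4}(\sigma_jW_t+\sum_{i\ne j}\sigma_ig_t(x_i)))$ times a deterministic compensator. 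Using $\ud g_t(x_i)=\frac{2}{g_t(x_i)-W_t}\ud t$, Itô's formula shows that this exponential, compensated by $e^{-\lambda t/4}$ (here $\sigma_j^2=1$ and the quadratic variation is $4\,\ud t$), is a local martingale under the SLE$_4(\underline\rho)$ law. This uses the BPZ equation~\eqref{eqn::BPZ_chordal} for $\LFchord{n}$ with $\kappa=4$, which is available from Proposition~\ref{prop::LFchordsigma}. Girsanov then adds the drift $4\partial_j\log$ of the exponential factor, namely $\sqrt{2\lambda}\,\sigma_j$. Together with Step 1 this yields~\eqref{eqn::LFchord_sigma_SDE}.

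\emph{Step 3: matching with the GFF side (the main obstacle).} It remains to check that this tilt coincides with the law of the level line of the shifted field. Because $u$ is unbounded, the global Cameron--Martin shift is singular. I would localize: stop at $\tau_R$, the exit time of a large ball, and before the swallowing time. Then compare the field shifted by $u$ with the field shifted by a compactly supported modification $u_R$ agreeing with $u$ near $K_{\tau_R}$. For $u_R$, the Cameron--Martin density conditioned on $K_t$ is the exponential of the Dirichlet inner product of $u_R$ with the conditional mean. Integrating by parts, this becomes a boundary term along the curve. Since $u$ vanishes on $\R$ and the level line carries boundary values $\pm\pi/2$ on its two sides, the boundary term reduces to the flux of $u$ weighted by the mean jump. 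This equals $\tfrac{\sqrt{2\lambda}}{4}\cdot$(the change in $\sigma_jW_t+\sum_{i\ne j}\sigma_i g_t(x_i)$), up to the deterministic term. The key identity is that $\Im z-\Im g_t(z)$ is asymptotically captured by the linear functional $\sum_i\sigma_i\,(\cdot)$ of the boundary points. Verifying this flux computation, including the independence from $R$, is the delicate part. Finally, the local absolute continuity, together with the fact that level lines are determined by the field locally (Schramm--Sheffield), transfers the Step~2 identification up to the swallowing time of $\{x_{j-1},x_{j+1}\}$.
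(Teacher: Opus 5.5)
Your route differs from the paper's, and its overall architecture is sound: Schramm--Sheffield for the bounded boundary data, Girsanov for the exponential factor, then Cameron--Martin to identify the tilt. However, Step~3, which carries all the weight, is only sketched, and the mechanism you describe there does not give the right answer.

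Two minor points first. The force-point weights are $\rho_i=2\sigma_i\sigma_j$, since the jump at $x_i$ is $\pi\sigma_i=2\cdot\frac{\pi}{2}\sigma_i$; this is what your displayed SDE actually uses. Also, Step~2 needs no BPZ input. With $A_t:=\sigma_jW_t+\sum_{i\ne j}\sigma_ig_t(x_i)$, one has $\ud A_t=2\sigma_j\,\ud B_t$ exactly under $\SLE_4(\underline{\rho})$. Hence $N_t:=\exp\bigl(\frac{\sqrt{2\lambda}}{4}(A_t-A_0)-\frac{\lambda}{4}t\bigr)$ is the Girsanov density that adds the drift $\sqrt{2\lambda}\sigma_j$.

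\textbf{Step 3: what goes wrong.} Reducing the pairing to ``jump $\times$ flux of $u$ along the curve'' yields only a multiple of the horizontal displacement $\operatorname{Re}\eta_t-x_j$ of the tip, not of $A_t-A_0$. Green's identity on $B_R\setminus K_t$ also produces a second term: $u$ times the jump of the normal derivative of the conditional mean across $\eta$, and this does not vanish. Nor is $\Im z-\Im g_t(z)$ the source of the linear functional. Indeed $\Im z-\Im g_t(z)=2t\,\Im z/|z|^2+O(|z|^{-2})$ has no $\sigma$-dependence at all.

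\textbf{Step 3: the correct bookkeeping.} Let $\mathfrak{h}_t(z):=\varphi_\sigma(g_t(x_1),\ldots,W_t,\ldots,g_t(x_n);g_t(z))-\varphi_\sigma(\bs{x};z)$ be the conditional mean of $\Phi$, and let $P_t$ be the projection onto $H(\HH\setminus K_t)$. Given $\eta_{[0,t]}$, the conditional expectation of $\exp\bigl((\Phi,u_R)_\nabla-\frac12\|u_R\|_\nabla^2\bigr)$ is $\exp\bigl((\mathfrak{h}_t,u_R)_\nabla-\frac12\|(I-P_t)u_R\|_\nabla^2\bigr)$.
\begin{itemize}
\item The linear term is $(\mathfrak{h}_t,u_R)_\nabla=-\frac{1}{2\pi}\int\mathfrak{h}_t\,\Delta u_R$. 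By Green's identity on $\HH\setminus B_R$, this is a flux through a large semicircle, independent of $R$ once $K_t\subset B_R$. Since $\mathfrak{h}_t(z)=\Im\frac{A_t-A_0}{z}+O(|z|^{-2})$ is a dipole at infinity and $u=-\frac{\sqrt{2\lambda}}{2}\Im z$, the flux equals $\frac{\sqrt{2\lambda}}{4}(A_t-A_0)$.
\item The function $\Im z-\Im g_t(z)$ enters only through $(I-P_t)u_R=-\frac{\sqrt{2\lambda}}{2}(\Im z-\Im g_t(z))$. Its Dirichlet energy is $\frac{\lambda}{2}t$, which gives the compensator $e^{-\lambda t/4}$.
\end{itemize}
So the conditional density is exactly $N_t$, and with this correction your argument closes.

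\textbf{Comparison with the paper.} The paper avoids all of this. It takes the target driving function and checks by It\^o's formula that the full harmonic mean $M_t(z)=\pi\sum_i\sigma_i-\sigma_j\arg(g_t(z)-W_t)-\sum_{i\ne j}\sigma_i\arg(g_t(z)-g_t(x_i))-\frac{\sqrt{2\lambda}}{2}\Im g_t(z)$ satisfies $\ud M_t(z)=2\sigma_j\Im\frac{1}{g_t(z)-W_t}\,\ud B_t$. The key cancellation is that the constant drift $\sqrt{2\lambda}\sigma_j$ is absorbed exactly by $\ud\Im g_t(z)=2\Im\frac{1}{g_t(z)-W_t}\,\ud t$. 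It then checks $\ud\langle M(z),M(w)\rangle_t=-\ud\Green_{\HH}(g_t(z),g_t(w))$ and invokes the martingale characterization of level-line couplings. This takes a few lines and treats the unbounded term like any other harmonic term, since $0<\Im g_t(z)\le\Im z$ for fixed $z$.

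Your route is longer, but it buys something the paper leaves implicit. It gives a concrete meaning to the level line of a field with an unbounded harmonic part, through local absolute continuity with respect to the bounded-data case. As a result, the fact that the curve is determined by the field is inherited from the standard theory rather than asserted, as it is in the paper's final sentence.
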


Proposition~\ref{prop::GFF_chord} involves singular modifications of the GFF. In our previous work~\cite{HuangWuCommutationRadial}, we obtained the radial counterpart of this construction. 
In the radial setup, the distinguished point is the interior marked point $0$, and the additional harmonic term is $\frac{\sqrt{2\lambda}}{2}\log |z| =-\frac{\sqrt{2\lambda}}{2}\Green_{\mathbb U}(z,0)$, where $\Green$ is the Green's function. 
In the chordal setup, the corresponding term is $-\frac{\sqrt{2\lambda}}{2}\Im z$, where, up to normalization, $\Im z$ is the Poisson kernel of $\mathbb H$ associated with the boundary point at infinity. This is closely related to the work of Ivanov, Kang, and Vasil'ev~\cite{IvanovKangVasilevSlitHoloStochasticGFF}, who studied unbounded harmonic modifications of the GFF and showed, in particular, that a term proportional to $\Im z$ couples to chordal SLE with constant drift. In both geometries, the extra drift is therefore encoded by the canonical harmonic kernel associated with the distinguished marked point: a Green's function for the interior point in the radial case, and a Poisson kernel for infinity in the chordal case.

We prove Propositions~\ref{prop::LFchordsigma} and~\ref{prop::GFF_chord} in Section~\ref{sec::GFF}. Proposition~\ref{prop::LFchordsigma} follows by evaluating the Coulomb gas basis at $\kappa=4$ using residues. For Proposition~\ref{prop::GFF_chord}, we use the martingale characterization of the GFF–SLE coupling. Along the Loewner evolution, we track the harmonic mean of the field and show that it is a local martingale; its quadratic covariation exactly compensates for the decrease of the Green's function in the evolving domain.

\paragraph*{Outline.}
We prove Theorems~\ref{thm::BPZ_decomposition}-\ref{thm::UST_decomposition} in Section~\ref{sec::PDE_analysis}. 
We construct solutions for $\LSchord{n}^{(\lambda)}$ with $\lambda>0$ using Coulomb gas integral and prove Propositions~\ref{prop::LFchordsigma} and~\ref{prop::GFF_chord} in Section~\ref{sec::GFF}. 
We summarize Dub\'edat's commutation relation in Appendix~\ref{appendix::Dubedat}.  
We construct solutions for $\LSchord{n}^{(0;0; \xi_p)}$ in Appendix~\ref{sec::solutions_lambdazero}. 

\paragraph{Acknowledgment.}
We thank Eveliina Peltola, Nicolai Reshetikhin and Yilin Wang for helpful discussion. 
H.W. is supported by New Cornerstone Investigator Program 100001127. H.W. is partly affiliated at Yanqi Lake Beijing Institute of Mathematical Sciences and Applications, Beijing, China.

\section{Analysis on the PDE system}
\label{sec::PDE_analysis}
In this section, we complete the proof of Theorems~\ref{thm::BPZ_decomposition}-\ref{thm::UST_decomposition}.
We fix $\kappa>0$ and set $\mathfrak{b}=(6-\kappa)/(2\kappa)$. 
We denote the differential operator in the BPZ system~\eqref{eqn::BPZ_chordal} by 
\begin{equation}\label{eqn::chordal_BPZ_operators}
    \LD_j:=\frac{\kappa}{2}\partial_j^2
    +\sum_{\ell\ne j}\left(\frac{2}{x_\ell-x_j}\partial_\ell
    -\frac{2\mathfrak{b}}{(x_\ell-x_j)^2}\right),
    \qquad j\in\{1,\ldots,n\};
\end{equation}
and denote the differential operators in the first and the second Ward identities~\eqref{eqn::Ward} and~\eqref{eqn::Ward_2nd} by
\begin{equation}\label{eqn::chordal_Ward_12}
    \LT:=\sum_{j=1}^n\partial_j; \qquad \LE:=\sum_{j=1}^n x_j\partial_j.
\end{equation}
For operators or matrices $A$ and $B$, write $[A,B]:=AB-BA$.
Direct computation gives the following commutator identities (see also~\cite[Page~1807]{DubedatCommutationSLE}):
\begin{align}
    &[\LD_i,\LD_j]+\frac{4}{(x_i-x_j)^2}(\LD_i-\LD_j)=0,
    \qquad \text{for }i\ne j; \label{eqn::chordal_operator_commutator}\\
    &[\LD_j,\LT]=0,\qquad [\LD_j,\LE]=2\LD_j,
    \qquad \text{for }j\in\{1,\ldots,n\}; \label{eqn::BPZ_dilation}\\
    &[\LT,\LE]=\LT. \label{eqn::translation_dilation}
\end{align}

Following the first-order formulation of the radial BPZ system in~\cite[Section~4]{HuangWuCommutationRadial}, we give the corresponding construction in the chordal setting. It identifies the BPZ equations with a linear system.
For a multi-index $\bs{\beta}\in\Z_{\ge0}^n$, write $|\bs{\beta}|=\sum_j\beta_j$ and $\partial^{\bs{\beta}}=\partial_1^{\beta_1}\cdots\partial_n^{\beta_n}$. Let $\bs{0}:=(0,\ldots,0)$, and for $j\in\{1,\ldots,n\}$, let $\bs{e}_j$ denote the $j$th standard basis vector.
For a vector-valued function $\bs{u}=(u_{\bs{\beta}})_{\bs{\beta}\in\{0,1\}^n}$, consider the linear system
    \begin{equation}\label{eqn::chordal_first_order_system}
        \partial_j\bs{u}(\bs{x})=D^{(\lambda)}_j(\bs{x})\bs{u}(\bs{x}), \qquad \text{for }j\in\{1,\ldots,n\},
    \end{equation}
    defined by the following recursion (using the Leibniz rule):
    \begin{equation}\label{eqn::chordal_jet_recursion}
\partial_j u_{\bs{\beta}}=
\begin{cases}
    u_{\bs{\beta}+\bs{e}_j}, &\text{if }\beta_j=0,\\[1ex]
    \displaystyle\frac{2}{\kappa}\partial^{\bs{\beta}-\bs{e}_j}
    \left(\frac{\lambda}{\kappa}u_{\bs{0}}
    -\sum_{\ell\ne j}\frac{2}{x_\ell-x_j}u_{\bs{e}_\ell}
    +\sum_{\ell\ne j}\frac{2\mathfrak{b}}{(x_\ell-x_j)^2}u_{\bs{0}}\right),
    &\text{if }\beta_j=1.
\end{cases}
\end{equation}

\begin{proposition}\label{prop::Dmatrix}
Fix $n\ge2$ and $\kappa>0$ and $\lambda\in\R$.
The recursion~\eqref{eqn::chordal_jet_recursion} uniquely determines the $2^n\times2^n$ matrices $D_j^{(\lambda)}(\bs{x})$, whose entries are real analytic on $\LX_n$.
For any $\bs{x}\in\LX_n$, the linear map
\begin{equation}\label{eqn::chordal_zeta}
    \zeta_{\bs{x}}:\LSchord{n}^{(\lambda)}\longrightarrow \R^{2^n},\qquad
    \LZ\longmapsto (\partial^{\bs{\beta}}\LZ(\bs{x}))_{\bs{\beta}\in\{0,1\}^n}
\end{equation} 
is an isomorphism.
In particular, $\dim\LSchord{n}^{(\lambda)}=2^n$.
\end{proposition}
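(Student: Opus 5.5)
We prove the three claims in order: well-definedness of the matrices $D_j^{(\lambda)}$, then injectivity of $\zeta_{\bs{x}}$, then surjectivity, which yields the dimension count.

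\textbf{Well-definedness.} We argue by induction on $|\bs{\beta}|$. In the case $\beta_j=0$ the rule is explicit. In the case $\beta_j=1$ we expand $\partial^{\bs{\beta}-\bs{e}_j}$ with the Leibniz rule. It falls on the rational coefficients, which are analytic on $\LX_n$ because $x_\ell\neq x_j$, and on $u_{\bs{0}}$ and $u_{\bs{e}_\ell}$. Since $\beta_j=1$, the multi-index $\bs{\beta}-\bs{e}_j$ has vanishing $j$-th entry. We then reduce each resulting term $\partial^{\bs{\gamma}}u_{\bs{0}}$ or $\partial^{\bs{\gamma}}u_{\bs{e}_\ell}$ to the $u$'s using only the first-case rule, and the rule itself recursively.

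The one delicate term is $\partial^{\bs{\gamma}}u_{\bs{e}_\ell}$ with $\gamma_\ell=1$. Here $|\bs{\gamma}+\bs{e}_\ell|\le|\bs{\beta}|$, but $\gamma_j=0$, so the $j$-direction is never differentiated twice. We order the recursion by the pair (total degree, number of repeated indices), and this guarantees termination. The output is a linear combination of components $u_{\bs{\alpha}}$ with $\bs{\alpha}\in\{0,1\}^n$, with coefficients that are polynomials in $(x_\ell-x_k)^{-1}$ and hence real analytic. This defines $D_j^{(\lambda)}(\bs{x})$ uniquely.

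\textbf{Injectivity.} Let $\LZ\in\LSchord{n}^{(\lambda)}$. Solutions are real analytic because the BPZ system contains an elliptic combination; we can also bootstrap regularity directly from $\partial_j^2\LZ$ expressed through lower derivatives. Consequently $\bs{u}=(\partial^{\bs{\beta}}\LZ)_{\bs{\beta}}$ solves \eqref{eqn::chordal_first_order_system}, since the recursion is exactly the BPZ equation differentiated by $\partial^{\bs{\beta}-\bs{e}_j}$. A linear ODE system along paths in the connected set $\LX_n$ has unique solutions, so $\zeta_{\bs{x}}(\LZ)=0$ forces $\bs{u}\equiv0$, and therefore $\LZ=0$.

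\textbf{Surjectivity, the main obstacle.} We need the Pfaffian system \eqref{eqn::chordal_first_order_system} to be integrable. Since $\LX_n$ is simply connected (it is convex), it suffices to prove the zero-curvature condition
\[\partial_iD_j-\partial_jD_i+[D_j,D_i]=0\quad\text{for all }i,j.\]
By Frobenius, every initial vector in $\R^{2^n}$ then extends to a global analytic solution $\bs{u}$.

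Our first approach to the compatibility is to check $\partial_i\partial_ju_{\bs{\beta}}=\partial_j\partial_iu_{\bs{\beta}}$ case by case in $(\beta_i,\beta_j)$. The case $\beta_i=\beta_j=0$ is trivial. When exactly one of $\beta_i,\beta_j$ equals $1$, compatibility holds by the construction through $\partial^{\bs{\beta}-\bs{e}_j}$. The genuinely nontrivial case is $\beta_i=\beta_j=1$. Here compatibility is equivalent to the statement that the $i$-th and $j$-th BPZ equations admit no new second-order obstruction. This is encoded in the commutator identity \eqref{eqn::chordal_operator_commutator}: $[\LD_i,\LD_j]$ is a combination of $\LD_i$ and $\LD_j$ themselves, so the formal ideal generated by the $\LD_j-\lambda/\kappa$ is involutive. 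A cleaner route is to argue formally: the jet space modulo the left ideal generated by $\LD_j-\lambda/\kappa$ has the monomials $\partial^{\bs{\beta}}$, $\bs{\beta}\in\{0,1\}^n$, as a basis. We would show this by a Gröbner or normal-form argument, using \eqref{eqn::chordal_operator_commutator} to resolve the overlaps $\partial_i\partial_j^2$ versus $\partial_j\partial_i^2$. Once the quotient is free of rank $2^n$, the connection is flat.

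\textbf{Conclusion.} Given a flat solution $\bs{u}$, set $\LZ=u_{\bs{0}}$. The first-case rule gives $u_{\bs{\beta}}=\partial^{\bs{\beta}}\LZ$, and the second-case rule with $\bs{\beta}=\bs{e}_j$ is exactly the BPZ equation. Hence $\LZ\in\LSchord{n}^{(\lambda)}$ with $\zeta_{\bs{x}}(\LZ)$ equal to the prescribed initial vector. Therefore $\zeta_{\bs{x}}$ is an isomorphism and $\dim\LSchord{n}^{(\lambda)}=2^n$.
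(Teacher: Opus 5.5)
Your well-definedness and injectivity arguments are the same as the paper's (Lemma~\ref{lem::chordal_first_order_system}: elliptic regularity, then uniqueness for the linear ODE along segments). For surjectivity you take a genuinely different route. You want to prove the zero-curvature condition for the original matrices $D_j^{(\lambda)}$ directly from the involutivity encoded in~\eqref{eqn::chordal_operator_commutator}.

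The paper never checks flatness of $D_j^{(\lambda)}$. It writes $\LZ=\LZ_{\shuffle_n}\LF$ and passes to triangular jet coordinates $\mathcal J_S\LF$. This gives the explicit connection $\widetilde D_j^{(\lambda)}=L_j^{(\lambda)}+\frac4\kappa\sum_{\ell\ne j}\frac{P_{j\ell}-I}{x_j-x_\ell}$, whose flatness reduces to commutation relations among constant matrices and a three-term partial-fraction identity. Solutions with arbitrary square-free jets at $\bs x_0$ are then transported back through the lower-triangular matrix $M(\bs x_0)$. Flatness of $D_j^{(\lambda)}$ follows only a posteriori, from~\eqref{eqn::chordal_connection_change}.

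The two routes trade off differently. The paper's needs more setup, but it produces the constant matrices $\widetilde T^{(\lambda)}$, $\widetilde E$, $\widetilde W$ that the rest of the paper runs on. Yours is shorter and uses nothing beyond the commutator identity, but gives no explicit form.

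As written, however, the decisive step is only announced (``we would show this''). The case-by-case sketch is also inaccurate. Even when exactly one of $\beta_i,\beta_j$ equals $1$, equality of the two mixed derivatives is not automatic ``by construction.'' It requires that reducing $\partial^{\bs\beta-\bs e_j}(\cdots)$ before or after applying $\partial_i$ gives the same result, which is lower-order compatibility, so an induction would be needed.

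The Gr\"obner route closes this uniformly, and the computation is short. Put $A_k:=\frac2\kappa\bigl(\LD_k-\frac\lambda\kappa\bigr)=\partial_k^2+B_k$ with $B_k$ of first order. Then
\[
    \partial_j^2A_i-\partial_i^2A_j
    =B_iA_j-B_jA_i+[A_j,A_i],
    \qquad
    [A_j,A_i]=\frac{8}{\kappa(x_i-x_j)^2}\,(A_i-A_j),
\]
the second identity by~\eqref{eqn::chordal_operator_commutator}. So the S-polynomial of $A_i,A_j$ has a standard representation with left factors of order at most $1$, strictly below $\partial_i^2\partial_j^2$.

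Buchberger's criterion for left ideals in $\R(\bs x)\langle\partial_1,\ldots,\partial_n\rangle$ (for instance with a degree-compatible term order) then shows that $\{A_k\}$ is a Gr\"obner basis. The following then hold:
\begin{enumerate}
\item The square-free monomials form an $\R(\bs x)$-basis of the quotient module.
\item Normal forms are unique, so the recursion~\eqref{eqn::chordal_jet_recursion} computes exactly the matrix of $\partial_j$ in this basis.
\item Expanding $\partial_i\partial_j[\partial^{\bs\beta}]=\partial_j\partial_i[\partial^{\bs\beta}]$ in the basis gives $\partial_iD_j-\partial_jD_i+[D_j,D_i]=0$.
\end{enumerate}
With this inserted, your Frobenius step and your conclusion go through.
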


\begin{proposition}\label{prop::Tmatrix}
Fix $n\ge2$ and $\kappa>0$ and $\lambda\in\R$. Define 
    \begin{equation}\label{eqn::Tmatrix_def}
       T^{(\lambda)}(\bs{x}):=\kappa\sum_{j=1}^n D^{(\lambda)}_j(\bs{x}).
    \end{equation}
    For each $\bs{x}\in\LX_n$ and $\nu\in\R$, the restriction of $\zeta_{\bs{x}}$ in~\eqref{eqn::chordal_zeta} gives an isomorphism \[
    \zeta_{\bs{x}}:\LSchord{n}^{(\lambda;\nu)}\longrightarrow \ker\left(T^{(\lambda)}(\bs{x})-\nu I\right).\] 
\end{proposition}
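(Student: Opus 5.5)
The plan is to transport the first Ward operator through the isomorphism $\zeta_{\bs{x}}$ of Proposition~\ref{prop::Dmatrix}. Given $F\in\LSchord{n}^{(\lambda)}$, set $\bs{u}=\zeta_{\bs{\cdot}}(F)=(\partial^{\bs{\beta}}F)_{\bs{\beta}\in\{0,1\}^n}$. By construction $\bs{u}$ solves the first-order system~\eqref{eqn::chordal_first_order_system}, that is, $\partial_j\bs{u}=D_j^{(\lambda)}\bs{u}$. Summing over $j$ and multiplying by $\kappa$ gives the pointwise identity
\[
\kappa\,\LT\bs{u}(\bs{x})=T^{(\lambda)}(\bs{x})\,\bs{u}(\bs{x}).
\]
In particular, the component $\bs{\beta}=\bs{0}$ of $T^{(\lambda)}(\bs{x})\zeta_{\bs{x}}(F)$ equals $\kappa\LT F(\bs{x})$.

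\emph{Forward inclusion.} Let $F\in\LSchord{n}^{(\lambda;\nu)}$. Since $\LT$ has constant coefficients, it commutes with every $\partial^{\bs{\beta}}$. Hence $\kappa\LT\partial^{\bs{\beta}}F=\partial^{\bs{\beta}}(\kappa\LT F)=\nu\,\partial^{\bs{\beta}}F$ for all $\bs{\beta}$, i.e. $\kappa\LT\bs{u}=\nu\bs{u}$. Combined with the display above, this yields $T^{(\lambda)}(\bs{x})\zeta_{\bs{x}}(F)=\nu\,\zeta_{\bs{x}}(F)$. So $\zeta_{\bs{x}}$ maps $\LSchord{n}^{(\lambda;\nu)}$ into $\ker(T^{(\lambda)}(\bs{x})-\nu I)$, and the map is injective because $\zeta_{\bs{x}}$ is.

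\emph{Surjectivity.} Take $v\in\ker(T^{(\lambda)}(\bs{x}_0)-\nu I)$ and let $F\in\LSchord{n}^{(\lambda)}$ be the unique solution with $\zeta_{\bs{x}_0}(F)=v$. Put $G:=\kappa\LT F-\nu F$. By~\eqref{eqn::BPZ_dilation} we have $[\LD_j,\LT]=0$, so $G\in\LSchord{n}^{(\lambda)}$. Its jet is
\[
\zeta_{\bs{x}}(G)=\kappa\LT\bs{u}-\nu\bs{u}=(T^{(\lambda)}(\bs{x})-\nu I)\bs{u}(\bs{x}),
\]
where we again use that $\LT$ commutes with $\partial^{\bs{\beta}}$. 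At $\bs{x}=\bs{x}_0$ this vanishes by the choice of $v$. Injectivity of $\zeta_{\bs{x}_0}$ then gives $G\equiv0$, so $F\in\LSchord{n}^{(\lambda;\nu)}$. This proves surjectivity.

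The only delicate point is the commutation $[\LD_j,\LT]=0$, which keeps $G$ inside the BPZ solution space; it is recorded in~\eqref{eqn::BPZ_dilation} and is immediate because the coefficients of $\LD_j$ depend only on the differences $x_\ell-x_j$. Everything else reduces to the uniqueness and isomorphism statement of Proposition~\ref{prop::Dmatrix}.
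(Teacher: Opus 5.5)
Your proof is correct and follows essentially the same route as the paper. Both arguments use $[\LD_j,\LT]=0$ to keep $\kappa\LT F-\nu F$ in the BPZ solution space, and use the commutation of $\LT$ with each $\partial^{\bs{\beta}}$ together with the first-order system to obtain $\zeta_{\bs{x}}(\kappa\LT F)=T^{(\lambda)}(\bs{x})\zeta_{\bs{x}}(F)$; injectivity of $\zeta_{\bs{x}}$ then gives both inclusions.
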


\begin{proposition}\label{prop::Ematrix}
Fix $n\ge 2$ and $\kappa>0$ and $\lambda=\nu=0$. Define 
\begin{equation}\label{eqn::chordal_dilation_matrix}
\begin{aligned}
    E(\bs{x})
    &:=\kappa\left(\Lambda+\sum_{j=1}^n x_j D^{(0)}_j(\bs{x})\right),
    \qquad \Lambda:=\operatorname{diag}\bigl(|\bs{\beta}|\bigr)_{\bs{\beta}\in\{0,1\}^n}.
\end{aligned}
\end{equation}
For each $\bs{x}\in \LX_n$ and $\xi\in \mathbb{R}$, the restriction of $\zeta_{\bs{x}}$ in~\eqref{eqn::chordal_zeta} gives an isomorphism
\[\zeta_{\bs{x}}:\LSchord{n}^{(0;0;\xi)}\longrightarrow \ker T^{(0)}(\bs{x})\cap\ker(E(\bs{x})-\xi I).\]
\end{proposition}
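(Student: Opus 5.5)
The plan is to reduce both Ward identities to linear conditions on the jet vector $\zeta_{\bs{x}}(\LZ)$, using Proposition~\ref{prop::Dmatrix} and Proposition~\ref{prop::Tmatrix}. Let $\LZ\in\LSchord{n}^{(0)}$ and put $\bs{u}=\zeta_{\bs{y}}(\LZ)$ at every point $\bs{y}$, so that $\bs{u}$ solves the first-order system~\eqref{eqn::chordal_first_order_system} with $\lambda=0$.

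\emph{Step 1: express $\zeta(\LE\LZ)$ through $\bs{u}$.} Since $\partial_j u_{\bs{\beta}}=(D^{(0)}_j\bs{u})_{\bs{\beta}}$, we have $\sum_j y_j\partial_j u_{\bs{\beta}}=\bigl(\sum_j y_jD^{(0)}_j\bs{u}\bigr)_{\bs{\beta}}$. On the other hand, the commutation relation $\partial^{\bs{\beta}}\LE=(\LE+|\bs{\beta}|)\partial^{\bs{\beta}}$ gives
\[
\partial^{\bs{\beta}}(\LE\LZ)=\sum_j y_j\partial_j\partial^{\bs{\beta}}\LZ+|\bs{\beta}|\,\partial^{\bs{\beta}}\LZ .
\]
Together these yield the identity
\[
\zeta_{\bs{y}}(\kappa\LE\LZ)=E(\bs{y})\,\zeta_{\bs{y}}(\LZ).
\]
We need $\LE\LZ$ to lie again in $\LSchord{n}^{(0)}$; this follows from~\eqref{eqn::BPZ_dilation}, since $\LD_j\LE\LZ=\LE\LD_j\LZ+2\LD_j\LZ=0$. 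The same argument, using $[\LD_j,\LT]=0$, shows that $\LT$ preserves $\LSchord{n}^{(0)}$, which is what underlies Proposition~\ref{prop::Tmatrix}. The identity $\zeta_{\bs{y}}(\kappa\LT\LZ)=T^{(0)}(\bs{y})\zeta_{\bs{y}}(\LZ)$ then holds by the same computation without the $\Lambda$ term.

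\emph{Step 2: conclude the isomorphism.} The map $\zeta_{\bs{x}}$ is injective on $\LSchord{n}^{(0)}$ by Proposition~\ref{prop::Dmatrix}. Suppose first that $\LZ\in\LSchord{n}^{(0;0;\xi)}$. Then $\kappa\LT\LZ=0$ and $\kappa\LE\LZ=\xi\LZ$, so by Step~1 the vector $\zeta_{\bs{x}}(\LZ)$ lies in $\ker T^{(0)}(\bs{x})\cap\ker(E(\bs{x})-\xi I)$. Conversely, take $\bs{v}$ in this intersection and let $\LZ=\zeta_{\bs{x}}^{-1}(\bs{v})\in\LSchord{n}^{(0)}$. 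Both $\kappa\LT\LZ$ and $\kappa\LE\LZ-\xi\LZ$ lie in $\LSchord{n}^{(0)}$, and their images under $\zeta_{\bs{x}}$ are $T^{(0)}(\bs{x})\bs{v}=0$ and $(E(\bs{x})-\xi I)\bs{v}=0$. Injectivity of $\zeta_{\bs{x}}$ then forces $\LT\LZ=0$ and $\LE\LZ=\frac{\xi}{\kappa}\LZ$ globally, so $\LZ\in\LSchord{n}^{(0;0;\xi)}$.

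\emph{Main obstacle.} The only delicate point is Step~1. There we must check that the matrix $D^{(0)}_j$, defined by the recursion~\eqref{eqn::chordal_jet_recursion}, genuinely encodes $\partial_j\partial^{\bs{\beta}}\LZ$ for solutions, including when $\beta_j=1$, where the BPZ equation is used. This is already contained in Proposition~\ref{prop::Dmatrix}. We also need the commutation $[\partial^{\bs{\beta}},x_j\partial_j]=\beta_j\partial^{\bs{\beta}}$ for $\beta_j\in\{0,1\}$, which is immediate. Everything else is a formal consequence of injectivity.
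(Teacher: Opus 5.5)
Your proof is correct and follows the paper's argument: the intertwining identity $\zeta_{\bs{x}}(\kappa\LE\LZ)=E(\bs{x})\zeta_{\bs{x}}(\LZ)$, obtained from $[\partial^{\bs{\beta}},\LE]=|\bs{\beta}|\partial^{\bs{\beta}}$ and the first-order system, followed by injectivity of $\zeta_{\bs{x}}$. The only cosmetic difference is the order of reductions: you note that $\LE$ already preserves the full space $\LSchord{n}^{(0)}$ when $\lambda=0$ and impose both Ward identities through injectivity there. The paper instead first invokes Proposition~\ref{prop::Tmatrix} to land in $\LSchord{n}^{(0;0)}$, and then uses that $\LE$ preserves this subspace.
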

\paragraph*{Organization.}
In Section~\ref{subsec::renormalization}, we prove Proposition~\ref{prop::Dmatrix}. We renormalize the BPZ system and make a triangular change of coordinates to obtain an equivalent first-order system with explicit coefficient matrices.
In Section~\ref{subsec::Ward_1st}, we prove Proposition~\ref{prop::Tmatrix}, express the translation matrix in the new coordinates, and complete the proof of Theorem~\ref{thm::BPZ_decomposition}.
We prove Theorem~\ref{thm::BPZ_decomposition_degenerate} in Section~\ref{subsec::BPZ_decomposition_degenerate}. 
In Section~\ref{subsec::Ward_2nd}, we prove Proposition~\ref{prop::Ematrix}, analyze subspaces selected by the second Ward identity and complete the proof of Theorem~\ref{thm::xi_p_decomposition}. 
We analyze the structure of the dilation operator in Section~\ref{subsec::dilation} using the associated $\mathfrak{sl}_2$ representation.
We analyze subspaces selected by the third Ward identity in Section~\ref{subsec::Ward_3rd} and complete the proof of Theorem~\ref{thm::UST_decomposition}.

The following table summarizes the operators, matrices and solution-space identifications used below. All matrices are evaluated at a fixed $\bs{x}\in\LX_n$, with this dependence suppressed. The BPZ row records the coefficient matrices of the associated first-order system, and the Ward rows identify the subspaces selected by successive Ward identities with eigenspaces or intersections of kernels. The last column gives the matrices in the coordinates $\bs v=M^{-1}\bs u$ introduced in Lemma~\ref{lem::chordal_triangular_change}.
\begin{equation*}
\begin{array}{|c|c|c|c|c|}
\hline
&&&&\\
&\text{Operator}&\text{Matrix}&\text{Isomorphism via $\zeta_{\boldsymbol x}$}
&\begin{gathered}\text{Change of}\\\text{coordinates}\end{gathered}\\
\hline
&&&&\\
\text{BPZ}& \LD_j &D_j^{(\lambda)}&\LSchord{n}^{(\lambda)}\simeq\mathbb R^{2^n}&\widetilde{D}_j^{(\lambda)}\\
\lambda\in\R&\text{Eq.~\eqref{eqn::chordal_BPZ_operators}}&\text{Prop.~\ref{prop::Dmatrix}}&\text{Prop.~\ref{prop::Dmatrix}}&\text{Lemma~\ref{lem::chordal_renormalized_first_order}}\\
\hline
&&&&\\
\text{1st Ward}&\kappa\LT& T^{(\lambda)}&\LSchord{n}^{(\lambda;\nu)}\simeq  \ker(T^{(\lambda)}-\nu I)&\widetilde{T}^{(\lambda)}\\
\lambda, \nu\in\R&\text{Eq.~\eqref{eqn::chordal_Ward_12}}&\text{Prop.~\ref{prop::Tmatrix}}&\text{Prop.~\ref{prop::Tmatrix}}&\text{Lemma~\ref{lem::Tlambda_tilde}}\\
\hline
&&&&\\
\text{2nd Ward}&\kappa\LE& E &\LSchord{n}^{(0;0;\xi)}\simeq \ker T^{(0)}\cap\ker(E-\xi I)&\widetilde{E}\\
\lambda=\nu=0&\text{Eq.~\eqref{eqn::chordal_Ward_12}}&\text{Prop.~\ref{prop::Ematrix}}&\text{Prop.~\ref{prop::Ematrix}}&\text{Lemma~\ref{lem::Ematrix_tilde}}\\
\hline
&&&&\\
\text{3rd Ward}&\LW &W
& \LSchord{n}^{\mathrm{UST}}\simeq
\ker T^{(0)} \cap\ker(E-nI)\cap\ker W
&\widetilde{W}\\
\lambda=\nu=0, \xi=\xi_0&\text{Eq.~\eqref{eqn::chordal_third_Ward_operator}}&\text{Prop.~\ref{prop::Wmatrix}}&\text{Prop.~\ref{prop::Wmatrix}}&\text{Lemma~\ref{lem::Wmatrix_tilde}}\\
\hline
\end{array}
\end{equation*}

\subsection{First-order formulations of the BPZ system}
\label{subsec::PDE_pre}
\label{subsec::renormalization}
We prove Proposition~\ref{prop::Dmatrix} by analyzing two first-order formulations of the BPZ system.
We begin with the system of square-free derivatives introduced in~\eqref{eqn::chordal_first_order_system}.

\begin{lemma}\label{lem::chordal_first_order_system}
Fix $n\ge2$ and $\kappa>0$ and $\lambda\in\R$.
\begin{itemize}
\item The recursion~\eqref{eqn::chordal_jet_recursion} uniquely determines the $2^n\times2^n$ matrices $D_j^{(\lambda)}(\bs{x})$, whose entries are real analytic on $\LX_n$.
\item Every $\LZ\in\LSchord{n}^{(\lambda)}$ is real analytic. The map
\[
    \LZ\longmapsto (\partial^{\bs{\beta}}\LZ)_{\bs{\beta}\in\{0,1\}^n}
\]
is a bijection from $\LSchord{n}^{(\lambda)}$ onto the smooth solutions of~\eqref{eqn::chordal_first_order_system}, with inverse $\bs{u}\mapsto u_{\bs{0}}$.
\item For any $\bs{x}\in\LX_n$, the map $\zeta_{\bs{x}}$ defined in~\eqref{eqn::chordal_zeta} is injective.
\end{itemize}
\end{lemma}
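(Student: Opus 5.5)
Three things need proof: the recursion~\eqref{eqn::chordal_jet_recursion} is well defined, solutions of the BPZ system correspond exactly to smooth solutions of~\eqref{eqn::chordal_first_order_system}, and $\zeta_{\bs x}$ is injective. I take them in that order.

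\emph{First item.} For $\bs\beta$ with $\beta_j=1$, I expand $\partial^{\bs\beta-\bs e_j}$ of the bracket by the Leibniz rule. Write $\bs\gamma:=\bs\beta-\bs e_j$, which is square-free with $\gamma_j=0$.
\begin{itemize}
\item The term $\partial^{\bs\gamma}u_{\bs 0}$ is replaced by $u_{\bs\gamma}$.
\item Derivatives of $u_{\bs e_\ell}$ are $\partial^{\bs\delta}u_{\bs e_\ell}$ with $\bs\delta\le\bs\gamma$. These are replaced by $u_{\bs\delta+\bs e_\ell}$ when $\delta_\ell=0$.
\item When $\delta_\ell=1$, the derivative $\partial_\ell\partial^{\bs\delta-\bs e_\ell}u_{\bs e_\ell}$ is a pure second derivative in $x_\ell$. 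I define it by the same recursion applied with index $\ell$.
\end{itemize}
To make this well founded, I induct on $|\bs\beta|$. Each application of the second-order rule trades two derivatives for at most $|\bs\beta|-1$ derivatives applied to $u_{\bs 0}$ or $u_{\bs e_\ell}$, so it lowers the total order. The outcome is that every entry of $\partial_j\bs u$ is an explicit linear combination of the $u_{\bs\alpha}$, $\bs\alpha\in\{0,1\}^n$. The coefficients are polynomials in $\lambda/\kappa$, $\mathfrak b$, and the derivatives of $(x_\ell-x_j)^{-1}$ and $(x_\ell-x_j)^{-2}$. These are rational functions with poles only on the diagonals, so the entries of $D_j^{(\lambda)}$ are real analytic on $\LX_n$. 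For uniqueness I fix the natural convention: reduce every occurrence of $\partial_\ell^2$ by the BPZ equation $\LD_\ell$, and evaluate until only square-free jets remain.

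\emph{Second item.} Take $\LZ\in\LSchord{n}^{(\lambda)}$. I first show that $\LZ$ is real analytic. Summing the $n$ equations gives $\frac{\kappa}{2}\Delta\LZ+(\text{first order})+(\text{zeroth order})=\frac{n\lambda}{\kappa}\LZ$. This is an elliptic equation with real-analytic coefficients, so $\LZ$, being $C^2$, is real analytic by elliptic regularity. Now set $u_{\bs\beta}=\partial^{\bs\beta}\LZ$. The recursion was obtained by differentiating the BPZ equations and reducing pure second derivatives by those same equations. Hence $\bs u$ solves~\eqref{eqn::chordal_first_order_system}: the case $\beta_j=0$ is the definition, and the case $\beta_j=1$ is $\partial^{\bs\beta-\bs e_j}$ applied to $\LD_j\LZ=\frac{\lambda}{\kappa}\LZ$.

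Conversely, let $\bs u$ be a smooth solution. By induction on $|\bs\beta|$, using the $\beta_j=0$ rule, $u_{\bs\beta}=\partial^{\bs\beta}u_{\bs 0}$. The $\beta=\bs e_j$ rule then reads $\partial_j u_{\bs e_j}=\frac{2}{\kappa}(\cdots)$, which is exactly the $j$-th BPZ equation for $u_{\bs 0}$. So $u_{\bs 0}\in\LSchord{n}^{(\lambda)}$, and the two maps are mutually inverse.

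\emph{Third item.} Injectivity of $\zeta_{\bs x}$ follows by combining the second item with uniqueness for linear ODEs. If $\zeta_{\bs x}(\LZ)=0$, the associated $\bs u$ solves the linear system~\eqref{eqn::chordal_first_order_system} with continuous coefficients and $\bs u(\bs x)=0$. The set $\LX_n$ is connected, indeed convex. Restricting to the segment from $\bs x$ to any $\bs y\in\LX_n$ gives a linear ODE along the segment with zero initial data, so $\bs u(\bs y)=0$. Hence $\LZ=u_{\bs 0}=0$.

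The main obstacle is the bookkeeping in the first item. I must check that the nested Leibniz expansion terminates and does not depend on the order of reductions, since the order of $\partial$'s is not fixed once second-order rules are applied to $u_{\bs e_\ell}$. The plan is to fix a canonical reduction order and rely on the induction on total order, which gives a well-defined result. Consistency of that result with genuine solutions needs no separate argument, because it is exactly what the second item verifies.
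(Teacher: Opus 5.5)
Your proposal is correct and follows essentially the same route as the paper: real analyticity via elliptic regularity for the summed equation, and the Leibniz-rule recursion terminating because each reduction lowers the total order. It also matches on the converse (via $u_{\bs\beta}=\partial^{\bs\beta}u_{\bs 0}$ and the $\bs\beta=\bs e_j$ component) and on injectivity (via linear ODE uniqueness along segments in the convex set $\LX_n$); your worry about the order of reductions is harmless, since every jet produced in the expansion has at most one repeated index, so each reduction step is forced.
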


\begin{proof}
We write $D_j = D_j^{(\lambda)}(\bs{x})$. Summing the BPZ equations gives an elliptic equation with analytic coefficients on $\LX_n$. Elliptic regularity therefore implies that every $\LZ\in\LSchord{n}^{(\lambda)}$ is real analytic, as in~\cite[Lemma~4.1]{HuangWuCommutationRadial}.
Solving the $j$-th BPZ equation for $\partial_j^2\LZ$ and applying the Leibniz rule gives~\eqref{eqn::chordal_jet_recursion}.
This lowers the total derivative order, so the recursion terminates and uniquely determines the analytic coefficient matrices $D_j$, as in~\cite[Lemma~4.2]{HuangWuCommutationRadial}.
Conversely, for any smooth solution $\bs{u}$ of the first-order system, the equation~\eqref{eqn::chordal_jet_recursion} gives $u_{\bs{\beta}}=\partial^{\bs{\beta}}u_{\bs{0}}$, and $\partial_j u_{\bs{\beta}}$ with $\bs{\beta}=\bs{e}_j$ gives the BPZ equation for $u_{\bs{0}}$.
This proves the asserted equivalence.
\medbreak
To prove injectivity, fix $\bs{x}_0\in\LX_n$ and suppose that $\zeta_{\bs{x}_0}(\LZ)=0$.
Set $\bs{u}=(\partial^{\bs{\beta}}\LZ)_{\bs{\beta}\in\{0,1\}^n}$.
For any $\bs{y}\in\LX_n$, the segment $\bs{\gamma}(t):=\bs{x}_0+t(\bs{y}-\bs{x}_0)$, $0\le t\le1$, lies in $\LX_n$ since $\LX_n$ is convex.
By~\eqref{eqn::chordal_first_order_system},
\[
    \frac{\ud }{\ud t}\bs{u}(\bs{\gamma}(t))
    =\left(\sum_{j=1}^n(y_j-x_{0,j})D_j(\bs{\gamma}(t))\right)
    \bs{u}(\bs{\gamma}(t)),
    \qquad \bs{u}(\bs{\gamma}(0))=0.
\]
Uniqueness for this linear ODE gives $\bs{u}(\bs{y})=0$.
Since $\bs{y}$ is arbitrary, we have $\LZ=0$, and this proves injectivity.
\end{proof}

To complete the proof of Proposition~\ref{prop::Dmatrix}, we construct an equivalent first-order system with explicit coefficient matrices and relating its coordinates to the original derivative coordinates. 

\paragraph*{Renormalization.}
The renormalization of the BPZ equations uses the half-watermelon partition function (see Appendix~\ref{subsec::halfwatermelon_SLE}):
\begin{align}\label{eqn::LZshuffle_def}
    \LZ_{\shuffle_n}(\bs{x}):=\LZ_{\shuffle_n}(\HH;\bs{x})
    =\prod_{j<\ell}(x_\ell-x_j)^{2/\kappa},\qquad \text{for }\bs{x}=(x_1, \ldots, x_n)\in\LX_n.
\end{align}
Note that $\LD_j\LZ_{\shuffle_n}=0$ for every $j$. We write $\LZ=\LZ_{\shuffle_n}\LF$, then a direct calculation gives
\begin{equation}\label{eqn::chordal_renormalized_BPZ}
    \LZ_{\shuffle_n}^{-1}\LD_j(\LZ_{\shuffle_n}\LF)
    =\frac{\kappa}{2}\partial_j^2\LF
    +2\sum_{\ell\ne j}\frac{\partial_j\LF-\partial_\ell \LF}{x_j-x_\ell}.
\end{equation}
Thus $\LZ=\LZ_{\shuffle_n}\LF$ satisfies the chordal BPZ system~\eqref{eqn::BPZ_chordal} if and only if $\LF$ satisfies
\begin{equation}\label{eqn::chordal_renormalized_system}
    \frac{\kappa}{2}\partial_j^2\LF
    +2\sum_{\ell\ne j}\frac{\partial_j\LF-\partial_\ell \LF}{x_j-x_\ell}
    =\frac{\lambda}{\kappa}\LF,\qquad j\in\{1,\ldots,n\}.
\end{equation}

The original first-order system~\eqref{eqn::chordal_first_order_system} uses the square-free derivatives of $\LZ$ as coordinates.
For the renormalized system~\eqref{eqn::chordal_renormalized_system}, we use triangular combinations of the square-free derivatives of $\LF$.
This choice leads to the explicit matrices below.
Identify $\bs{\beta}\in\{0,1\}^n$ with the subset $S=\{j:\beta_j=1\}$, write $\partial_S=\prod_{j\in S}\partial_j$, and index the coordinates of $\R^{2^n}$ by these subsets.
Throughout, we order subsets by increasing cardinality, with an arbitrary fixed order among subsets of the same size.


For $\bs{v}=(v_S)_S$, define the constant matrices $L_j^{(\lambda)}$ and $P_{j\ell}$ by
\begin{equation}\label{eqn::def_of_L_operator}
    \left(L_j^{(\lambda)}\bs{v}\right)_S=
    \begin{cases}
        v_{S\cup\{j\}},&j\notin S,\\[1ex]
        \displaystyle\frac{2\lambda}{\kappa^2}v_{S\setminus\{j\}},&j\in S,
    \end{cases}
    \qquad
    (P_{j\ell}\bs{v})_S=v_{\tau_{j\ell}(S)},
\end{equation}
where $\tau_{j\ell}(S)$ is obtained from $S$ by exchanging the labels $j$ and $\ell$.
Set
\begin{equation}\label{eqn::chordal_explicit_connection}
    \widetilde{D}^{(\lambda)}_j(\bs{x}):=L_j^{(\lambda)}+\frac{4}{\kappa}
    \sum_{\ell\ne j}\frac{P_{j\ell}-I}{x_j-x_\ell}.
\end{equation}
\begin{lemma}\label{lem::chordal_renormalized_first_order}
Fix $n\ge2$ and $\kappa>0$ and $\lambda\in\R$.
Consider the first-order system
\begin{equation}\label{eqn::chordal_first_order_system2}
    \partial_j\bs{v}(\bs{x})=\widetilde D_j^{(\lambda)}(\bs{x})\bs{v}(\bs{x}),
    \qquad \text{for }j\in\{1,\ldots,n\}.
\end{equation}
Its component equations are
\begin{equation}\label{eqn::recursion_of_v}
    \partial_jv_S=\mathbf{1}_{\{j\notin S\}}v_{S\cup\{j\}}
    +\frac{2\lambda}{\kappa^2}\mathbf{1}_{\{j\in S\}}v_{S\setminus\{j\}}
    +\frac{4}{\kappa}\sum_{\ell\ne j}
    \frac{v_{\tau_{j\ell}(S)}-v_S}{x_j-x_\ell}.
\end{equation}
\begin{itemize}
    \item The matrices $\widetilde D_j^{(\lambda)}(\bs{x})$ satisfy
    \begin{equation}\label{eqn::chordal_explicit_flatness}
    \partial_i\widetilde D^{(\lambda)}_j(\bs{x})-\partial_j\widetilde D^{(\lambda)}_i(\bs{x})=0,\qquad 
    \left[\widetilde D^{(\lambda)}_j(\bs{x}),\widetilde D^{(\lambda)}_i(\bs{x})\right]=0,\qquad \text{for }i\ne j.
\end{equation}
Thus, the first system~\eqref{eqn::chordal_first_order_system2} is compatible.
    \item The function $\LF$ satisfies the renormalized BPZ system~\eqref{eqn::chordal_renormalized_system} if and only if $\bs{v}=(\mathcal J_S\LF)_S$ satisfies~\eqref{eqn::recursion_of_v}, where $\mathcal J_\varnothing=I$ and, for each nonempty $S$,
\begin{equation}\label{eqn::chordal_corrected_jet_recursion}
    \mathcal J_S:=\partial_j\circ\mathcal J_{S\setminus\{j\}}
    -\frac{4}{\kappa}\sum_{\ell\in S\setminus\{j\}}
    \frac{1}{x_j-x_\ell}
    \left(\mathcal J_{S\setminus\{\ell\}}-\mathcal J_{S\setminus\{j\}}\right),
    \qquad j=\min S.
\end{equation}
\end{itemize}
\end{lemma}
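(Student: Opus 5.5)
We verify the two assertions of Lemma~\ref{lem::chordal_renormalized_first_order} separately. Write $\widetilde D_j=L_j+\frac4\kappa\Omega_j$ with $L_j=L_j^{(\lambda)}$ and $\Omega_j:=\sum_{\ell\ne j}\frac{P_{j\ell}-I}{x_j-x_\ell}$.

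\emph{Flatness.} The identity $\partial_i\widetilde D_j=\partial_j\widetilde D_i$ is immediate. The $L$'s are constant, and the only term of $\widetilde D_j$ depending on $x_i$ is $\frac{4}{\kappa}\frac{P_{ij}-I}{x_j-x_i}$. Its $x_i$-derivative is $\frac4\kappa\frac{P_{ij}-I}{(x_j-x_i)^2}$, which is symmetric in $i,j$.

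For the commutator, expand $[\widetilde D_j,\widetilde D_i]$ into three pieces:
\[
[L_j,L_i]+\tfrac4\kappa\bigl([L_j,\Omega_i]+[\Omega_j,L_i]\bigr)+\tfrac{16}{\kappa^2}[\Omega_j,\Omega_i].
\]

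\textbf{Piece 1.} $[L_j,L_i]=0$. Each $L_j$ acts only on the $j$-th tensor factor of $\R^{2^n}\cong(\R^2)^{\otimes n}$, as the matrix $\begin{pmatrix}0&1\\ 2\lambda/\kappa^2&0\end{pmatrix}$ tensored with the identity.

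\textbf{Piece 2.} $[\Omega_j,\Omega_i]=0$. This is the classical rational KZ (Gaudin) statement for the permutation $r$-matrix $P_{j\ell}$. It follows from the relations $[P_{ij},P_{ik}+P_{jk}]=0$ and the partial-fraction identity
\[
\frac{1}{(x_j-x_i)(x_j-x_k)}+\text{cyclic}=0.
\]
The $-I$ terms commute with everything.

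\textbf{Piece 3.} For the cross terms, note $P_{j\ell}L_iP_{j\ell}=L_{\tau_{j\ell}(i)}$. Hence $[P_{j\ell},L_i]=0$ when $i\notin\{j,\ell\}$, and $[P_{ji},L_i]=(L_j-L_i)P_{ji}$. Therefore
\[
[L_j,\Omega_i]+[\Omega_j,L_i]=\frac{[L_j,P_{ij}]}{x_i-x_j}+\frac{[P_{ji},L_i]}{x_j-x_i}.
\]
This vanishes because $[L_j,P_{ij}]+[P_{ij},L_i]=[L_j-L_i,P_{ij}]=0$: conjugation by $P_{ij}$ swaps $L_i$ and $L_j$, so $L_j-L_i$ anticommutes with $P_{ij}$ only up to a factor, and the two fractions carry opposite signs. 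This bookkeeping of signs is routine but must be done carefully.

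Once $\partial_i\widetilde D_j=\partial_j\widetilde D_i$ and $[\widetilde D_j,\widetilde D_i]=0$ hold, the Frobenius condition for~\eqref{eqn::chordal_first_order_system2} is satisfied, so the system is compatible.

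\emph{Equivalence.} This is the main obstacle, because $\mathcal J_S$ is defined through $j=\min S$ while~\eqref{eqn::recursion_of_v} involves $\partial_j$ for every $j$. We proceed by induction on $|S|$, proving the following claim for an arbitrary smooth $\LF$.

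\emph{Claim.} For every $S$ and every $j\notin S$,
\[
\partial_j\mathcal J_S\LF=\mathcal J_{S\cup\{j\}}\LF+\frac4\kappa\sum_{\ell\in S}\frac{\mathcal J_{\tau_{j\ell}(S)}\LF-\mathcal J_S\LF}{x_j-x_\ell}.
\]
This says the defining recursion~\eqref{eqn::chordal_corrected_jet_recursion} holds with any $j\in S$, not only $\min S$.

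\textbf{Proof of the claim.} Commute $\partial_j$ past the definition of $\mathcal J_S$, using the induction hypothesis on smaller sets. The $x$-derivatives of the coefficients $\frac{1}{x_j-x_\ell}$ produce double-pole terms. These cancel by the same partial-fraction identity used for $[\Omega_j,\Omega_i]=0$. Alternatively, one can use uniqueness and symmetry: $\mathcal J_S$ with a different choice of $j\in S$ satisfies the same triangular relations.

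The claim gives~\eqref{eqn::recursion_of_v} whenever $j\notin S$. The terms with $\ell\notin S$ vanish there, since $\tau_{j\ell}(S)=S$.

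\textbf{Case $j\in S$.} Write $S=S'\cup\{j\}$. By the claim applied to $S'$,
\[
\mathcal J_S=\partial_j\mathcal J_{S'}-\frac4\kappa\sum_{\ell\in S'}\frac{\mathcal J_{\tau_{j\ell}(S')}-\mathcal J_{S'}}{x_j-x_\ell},
\]
so
\[
\partial_j\mathcal J_S\LF=\mathcal J_{S'}\partial_j^2\LF+(\text{commutator terms}).
\]
Substitute the renormalized equation~\eqref{eqn::chordal_renormalized_system} for $\partial_j^2\LF$. The piece $\frac{2\lambda}{\kappa^2}\mathcal J_{S'}\LF$ gives the $L_j$ term. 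The piece $-\frac4\kappa\sum_\ell\frac{\partial_j\LF-\partial_\ell\LF}{x_j-x_\ell}$, propagated through $\mathcal J_{S'}$ via the claim, must reorganize into $\frac4\kappa\sum_{\ell}\frac{v_{\tau_{j\ell}(S)}-v_S}{x_j-x_\ell}$. Here $\tau_{j\ell}(S)$ equals $S$ when $\ell\in S$, and $S'\cup\{\ell\}$ when $\ell\notin S$. I expect this reorganization to be the heaviest computation.

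\textbf{Converse.} Suppose $\bs v=(\mathcal J_S\LF)_S$ satisfies~\eqref{eqn::recursion_of_v}. Taking $S=\{j\}$ and $\partial_j$, and using $\mathcal J_{\{j\}}=\partial_j$, gives exactly~\eqref{eqn::chordal_renormalized_system}.
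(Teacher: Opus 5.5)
\textbf{Flatness and the easy direction.} Your flatness argument is essentially the paper's, except for a sign slip in Piece~3. The commutator $[L_j-L_i,P_{ij}]$ is not zero. Since $P_{ij}$ swaps $L_i$ and $L_j$, the difference $L_j-L_i$ anticommutes with $P_{ij}$, so this commutator equals $2(L_j-L_i)P_{ij}$. What actually happens is that $[L_j,P_{ij}]$ and $[P_{ij},L_i]$ both equal $(L_j-L_i)P_{ij}$ and sit over the opposite denominators $x_i-x_j$ and $x_j-x_i$. The identity you need is $[L_i+L_j,P_{ij}]=0$. Your ``converse'' (taking $S=\{j\}$) is also correct.

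\textbf{The gap.} The gap is the other implication: if $\LF$ solves~\eqref{eqn::chordal_renormalized_system}, then $(\mathcal J_S\LF)_S$ satisfies every component equation~\eqref{eqn::recursion_of_v}. This is the only nontrivial content of the equivalence, and both of its load-bearing steps are announced rather than proved.
\begin{itemize}
\item For your Claim (that~\eqref{eqn::chordal_corrected_jet_recursion} may be run through any element of $S$), the induction step compares recursions through two different elements. That requires re-expanding both sides through the hypothesis at lower levels, and the asserted cancellation of double poles is never exhibited.
\item For the case $j\in S$, after substituting the BPZ equation for $\partial_j^2\LF$ you must push $\mathcal J_{S'}$ past $1/(x_j-x_\ell)$ for $\ell\in S'$. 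These do not commute, since $\mathcal J_{S'}$ contains $\partial_\ell$. You must also differentiate the $x_j$-dependent pieces of $\mathcal J_S$, and then show that everything collapses to $\frac4\kappa\sum_{\ell\notin S}(\mathcal J_{S'\cup\{\ell\}}\LF-\mathcal J_S\LF)/(x_j-x_\ell)$. You explicitly leave this as ``the heaviest computation''.
\item Your fallback for the Claim does not work. Two operators of the form $\partial_S+\sum_{U\subsetneq S}b_{S,U}\partial_U$ need not coincide, so ``satisfies the same triangular relations'' proves nothing.
\end{itemize}

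\textbf{The missing idea.} Let the compatibility you already proved do the work, combined with uniqueness for the original BPZ system.
\begin{itemize}
\item Fix $\bs{x}_0$ and let $\widehat{\bs v}$ solve~\eqref{eqn::chordal_first_order_system2} with $\widehat{\bs v}(\bs{x}_0)=(\mathcal J_S\LF)(\bs{x}_0)$.
\item By the easy direction, $\widehat\LF:=\widehat v_\varnothing$ solves~\eqref{eqn::chordal_renormalized_system}.
\item The component equation for $S\setminus\{j\}$ with $j=\min S$ is literally~\eqref{eqn::chordal_corrected_jet_recursion}, so induction gives $\widehat v_S=\mathcal J_S\widehat\LF$.
\item The triangular form $\mathcal J_S=\partial_S+\sum_{U\subsetneq S}b_{S,U}\partial_U$ then gives $\partial_S\widehat\LF(\bs{x}_0)=\partial_S\LF(\bs{x}_0)$ for all $S$. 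Hence $\LZ_{\shuffle_n}\widehat\LF$ and $\LZ_{\shuffle_n}\LF$ are BPZ solutions with the same square-free jet at $\bs{x}_0$.
\item Injectivity of $\zeta_{\bs{x}_0}$ (Lemma~\ref{lem::chordal_first_order_system}) yields $\widehat\LF=\LF$. That lemma is proved from the original system with matrices $D_j$ by ODE uniqueness along segments, so there is no circularity.
\item Therefore $(\mathcal J_S\LF)_S=\widehat{\bs v}$ solves the full system.
\end{itemize}
No computation beyond the triangular form is needed. Your Claim is in fact true and can be recovered afterwards, using this result together with surjectivity of $\zeta_{\bs{x}_0}$, but it is not needed as an input.
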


\begin{proof}
We write $\widetilde{D}_j=\widetilde{D}_j^{(\lambda)}(\bs{x})$ and $L_j=L_j^{(\lambda)}$.
We first prove compatibility. The definitions of $L_j$ and $P_{j\ell}$ give
\[
\begin{aligned}
    &[L_i,L_j]=0,\qquad [L_i+L_j,P_{ij}]=0, \qquad \text{for }i\ne j;\\
    &[L_i,P_{jk}]=0,\qquad [P_{ij},P_{ik}+P_{jk}]=0, 
    \qquad \text{for }i,j,k\text{ distinct};\\
    &[P_{ij},P_{k\ell}]=0, \qquad \text{for }i,j,k,\ell\text{ distinct}.
\end{aligned}
\]
Expanding the commutator and grouping the terms with three distinct indices, we obtain
\begin{align}
    \left[\widetilde D_i,\widetilde D_j\right]=\frac{16}{\kappa^2}\sum_{k\ne i,j}[P_{ij},P_{jk}] \times\left(
        \frac{1}{(x_i-x_j)(x_j-x_k)}
        -\frac{1}{(x_i-x_k)(x_i-x_j)}
        -\frac{1}{(x_i-x_k)(x_j-x_k)}
    \right)=0. \label{eqn::chordal_explicit_commutator}
\end{align}
Moreover,
\[
    \partial_i\widetilde D_j=\partial_j\widetilde D_i
    =\frac{4}{\kappa}\frac{P_{ij}-I}{(x_i-x_j)^2},
    \qquad \text{for }i\ne j.
\]
Hence, we obtain~\eqref{eqn::chordal_explicit_flatness} as desired.
Since $\LX_n$ is convex and the matrices $\widetilde D_j$ are smooth there, the condition~\eqref{eqn::chordal_explicit_flatness} guarantees a unique smooth solution of~\eqref{eqn::chordal_first_order_system2} on $\LX_n$ for every prescribed value at any point of $\LX_n$.
\medbreak
We next prove the equivalence with the renormalized BPZ system.
Let $\bs{v}$ solve~\eqref{eqn::chordal_first_order_system2}. Its component equations for $\varnothing$ and $\{j\}$ are
\[
    \partial_jv_\varnothing=v_{\{j\}},\qquad
    \partial_jv_{\{j\}}=\frac{2\lambda}{\kappa^2}v_\varnothing
    -\frac{4}{\kappa}\sum_{\ell\ne j}
    \frac{v_{\{j\}}-v_{\{\ell\}}}{x_j-x_\ell}.
\]
Thus $\LF=v_\varnothing$ satisfies~\eqref{eqn::chordal_renormalized_system}.
For each nonempty $S$, the component equation~\eqref{eqn::recursion_of_v} for $S\setminus\{j\}$, with $j=\min S$, determines $v_S$ from the components indexed by proper subsets of $S$, exactly as in~\eqref{eqn::chordal_corrected_jet_recursion}.
Induction therefore gives $v_S=\mathcal J_S\LF$. In particular, every solution of the first-order system is uniquely determined by its component $\LF=v_\varnothing$, which solves the renormalized BPZ system.

Conversely, let $\LF$ solve~\eqref{eqn::chordal_renormalized_system}, and put $\bs v=(\mathcal J_S\LF)_S$.
We must show that this vector satisfies the full first-order system.
Induction on $|S|$ in~\eqref{eqn::chordal_corrected_jet_recursion} gives
\begin{equation}\label{eqn::chordal_triangular_change_of_v}
    \mathcal J_S=\partial_S+\sum_{U\subsetneq S}b_{S,U}(\bs{x})\,\partial_U,
\end{equation}
where the coefficients $b_{S,U}$ are smooth on $\LX_n$.
Thus the change from $(\partial_S \LF)_S$ to $\bs{v} = (\LJ_S \LF)_S$ is a lower triangular change of coordinates with unit diagonal.

Fix $\bs{x}_0\in\LX_n$. By compatibility, there is a solution $\widehat{\bs{v}}$ of~\eqref{eqn::chordal_first_order_system2} with $\widehat{\bs{v}}(\bs{x}_0)=\bs{v}(\bs{x}_0)$.
By the preceding argument, $\widehat \LF:=\widehat v_\varnothing$ solves~\eqref{eqn::chordal_renormalized_system}, and $\widehat v_S=\mathcal J_S\widehat\LF$.
The equality of the initial vectors and the triangular relation~\eqref{eqn::chordal_triangular_change_of_v} imply
\[
    \partial_S\widehat \LF(\bs{x}_0)=\partial_S\LF(\bs{x}_0)
    \qquad\text{for every }S.
\]
It follows that $\LZ_{\shuffle_n}\widehat \LF$ and $\LZ_{\shuffle_n}\LF$ are solutions of the original BPZ system with the same square-free derivatives at $\bs{x}_0$.
The injectivity of $\zeta_{\bs{x}_0}$ in~\eqref{eqn::chordal_zeta} therefore gives $\widehat \LF=\LF$.
Thus $\widehat{\bs{v}}=(\mathcal J_S\widehat\LF)_S=(\mathcal J_S\LF)_S=\bs v$, so $\bs v$ satisfies the full first-order system. This proves the converse implication and thus asserted isomorphism.
\end{proof}

The renormalization $\LZ=\LZ_{\shuffle_n}\LF$, together with the triangular relation~\eqref{eqn::chordal_triangular_change_of_v}, gives an invertible lower triangular change from the coordinates $\bs v$ to $\bs u$. The following lemma records this change and the induced relation between the coefficient matrices of the two first-order systems.

\begin{lemma}\label{lem::chordal_triangular_change}
For each $\bs{x}\in \LX_n$, there is a smooth invertible lower triangular matrix $M(\bs{x})$, with diagonal entries $M_{S,S}=\LZ_{\shuffle_n}(\bs{x})$, such that for every smooth function $\LF$ and $\LZ=\LZ_{\shuffle_n}\LF$,
\begin{equation}\label{eqn::chordal_triangular_change}
    \bs{u}(\bs{x})=(\partial_S\LZ(\bs{x}))_S=M(\bs{x})\bs{v}(\bs{x}),
\end{equation}
where $\bs v=(\mathcal J_S\LF)_S$.
Moreover, the matrices of the two first-order systems satisfy
\begin{equation}\label{eqn::chordal_connection_change}
    \widetilde{D}^{(\lambda)}_j(\bs{x})=M^{-1}(\bs{x})D^{(\lambda)}_j(\bs{x})M(\bs{x})-M^{-1}(\bs{x})(\partial_jM(\bs{x})),
    \qquad \text{for }j\in\{1,\ldots,n\}.
\end{equation}
\end{lemma}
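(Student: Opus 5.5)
We will construct $M(\bs{x})$ by composing two triangular changes of coordinates and then read off the gauge relation~\eqref{eqn::chordal_connection_change} from the two first-order systems.

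\emph{First step: from $\bs v$ to $(\partial_S\LF)_S$.} Inverting the unit lower triangular relation~\eqref{eqn::chordal_triangular_change_of_v} gives
\[
\partial_S\LF=\mathcal J_S\LF+\sum_{U\subsetneq S}c_{S,U}(\bs{x})\,\mathcal J_U\LF .
\]
Here the coefficients $c_{S,U}$ are smooth. This is because inverting a unit triangular matrix with smooth entries only involves finitely many products and sums of those entries. So $(\partial_S\LF)_S=A(\bs{x})\bs v$, where $A$ is lower triangular with unit diagonal with respect to the fixed ordering by cardinality. Subsets $U\subsetneq S$ have $|U|<|S|$, so they precede $S$ in this ordering.

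\emph{Second step: from $(\partial_S\LF)_S$ to $\bs u$.} Leibniz' rule gives
\[
\partial_S(\LZ_{\shuffle_n}\LF)=\sum_{U\subseteq S}\big(\partial_{S\setminus U}\LZ_{\shuffle_n}\big)\,\partial_U\LF .
\]
This defines a lower triangular matrix $B(\bs{x})$ with $B_{S,U}=\partial_{S\setminus U}\LZ_{\shuffle_n}$ for $U\subseteq S$ and $B_{S,U}=0$ otherwise. Its diagonal entries are $\LZ_{\shuffle_n}$, and its entries are real analytic on $\LX_n$. Setting $M:=BA$ gives a lower triangular matrix with diagonal $\LZ_{\shuffle_n}\cdot 1=\LZ_{\shuffle_n}$. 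Since $\LZ_{\shuffle_n}>0$ on $\LX_n$, $M$ is invertible and $M^{-1}$ is smooth. The identity $\bs u=M\bs v$ holds pointwise for every smooth $\LF$, because both changes are identities of differential operators applied to an arbitrary smooth function.

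\emph{Third step: the gauge relation.} We apply $\partial_j$ to $\bs u=M\bs v$ for solutions, which gives $D_jM\bs v=(\partial_jM)\bs v+M\widetilde D_j\bs v$. Here we use Lemma~\ref{lem::chordal_first_order_system}, which says $\partial_j\bs u=D_j\bs u$ whenever $\LZ\in\LSchord{n}^{(\lambda)}$, and Lemma~\ref{lem::chordal_renormalized_first_order}, which says $\partial_j\bs v=\widetilde D_j\bs v$ whenever $\LF$ solves~\eqref{eqn::chordal_renormalized_system}. To turn this into a matrix identity, the vectors $\bs v(\bs{x})$ must range over all of $\R^{2^n}$ at each fixed $\bs x$. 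This follows from the compatibility in Lemma~\ref{lem::chordal_renormalized_first_order}. For any prescribed $\bs v_0$ there is a solution $\bs v$ of~\eqref{eqn::chordal_first_order_system2} with $\bs v(\bs x)=\bs v_0$. Its component $\LF=v_\varnothing$ solves the renormalized system, and $\bs v=(\mathcal J_S\LF)_S$. The corresponding $\LZ=\LZ_{\shuffle_n}\LF$ lies in $\LSchord{n}^{(\lambda)}$ by~\eqref{eqn::chordal_renormalized_BPZ}. Hence $D_jM=\partial_jM+M\widetilde D_j$ as matrices, which rearranges to~\eqref{eqn::chordal_connection_change}.

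\emph{Main obstacle.} The only delicate point is this surjectivity: the identity must hold as a matrix identity and not just along solutions. It is settled by the existence statement in Lemma~\ref{lem::chordal_renormalized_first_order}. All other steps are bookkeeping.
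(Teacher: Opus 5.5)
Your proposal is correct and takes essentially the same approach as the paper. The paper writes down $M^{-1}$ by combining the unit triangular relation for $\mathcal J_S$ with the Leibniz expansion of $\partial_U(\LZ_{\shuffle_n}^{-1}\LZ)$, whereas you build $M$ directly as the Leibniz matrix for multiplication by $\LZ_{\shuffle_n}$ times the inverse of that unit triangular matrix; this is the same factorization inverted, and your derivation of the gauge relation (differentiate $\bs u=M\bs v$ along solutions, then use compatibility so that $\bs v(\bs x)$ ranges over all of $\R^{2^n}$) is exactly the paper's.
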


\begin{proof}
We write $D_j=D^{(\lambda)}_j(\bs{x})$ and $\widetilde{D}_j=\widetilde{D}^{(\lambda)}_j(\bs{x})$.
Set $b_{S,S}:=1$. For an arbitrary smooth $\LZ=\LZ_{\shuffle_n}\LF$, applying~\eqref{eqn::chordal_triangular_change_of_v} to $\LF$ and using the product rule gives
\[
\begin{aligned}
    v_S=\mathcal J_S\LF
    =\sum_{U\subseteq S}b_{S,U}(\bs{x})\,
      \partial_U(\LZ_{\shuffle_n}^{-1}\LZ)=\sum_{U\subseteq S}b_{S,U}(\bs{x})
      \sum_{R\subseteq U}\partial_{U\setminus R}(\LZ_{\shuffle_n}^{-1})\,u_R.
\end{aligned}
\]
Collecting the coefficients of $u_R$, we obtain $\bs v=M^{-1}\bs u$, where
\begin{equation}\label{eqn::chordal_triangular_definition_of_M}
    (M^{-1})_{S,R}(\bs{x}):=
    \begin{cases}
        \displaystyle\sum_{R\subseteq U\subseteq S}
        b_{S,U}(\bs{x})\,\partial_{U\setminus R}(\LZ_{\shuffle_n}^{-1}),&R\subseteq S,\\[1ex]
        0,&R\not\subseteq S.
    \end{cases}
\end{equation}
This matrix is smooth and lower triangular, with diagonal entries $\LZ_{\shuffle_n}^{-1}>0$. Its inverse $M$ therefore has the asserted properties, proving~\eqref{eqn::chordal_triangular_change} for arbitrary smooth $\LF$.
\medbreak
Now suppose that $\LZ$ solves the chordal BPZ system~\eqref{eqn::BPZ_chordal}. Differentiating~\eqref{eqn::chordal_triangular_change} yields
\[
    D_jM\bs{v}=\partial_j\bs{u}
    =\left(\partial_jM+M\widetilde D_j\right)\bs{v}.
\]
By compatibility in Lemma~\ref{lem::chordal_renormalized_first_order}, solutions $\bs{v}$ can attain arbitrary values at any point of $\LX_n$.
Hence $D_jM=\partial_jM+M\widetilde D_j$, which gives~\eqref{eqn::chordal_connection_change}.
\end{proof}

\begin{proof}[Proof of Proposition~\ref{prop::Dmatrix}]
The construction of the matrices and the injectivity of $\zeta_{\bs{x}}$ are established in Lemma~\ref{lem::chordal_first_order_system}.
Fix $\bs{x}_0\in\LX_n$ and $\bs{a}\in\R^{2^n}$.
By Lemma~\ref{lem::chordal_renormalized_first_order}, the explicit system~\eqref{eqn::chordal_first_order_system2} has a solution $\bs{v}$ with
\[
    \bs{v}(\bs{x}_0)=M(\bs{x}_0)^{-1}\bs{a}.
\]
Set $\LF=v_\varnothing$ and $\LZ=\LZ_{\shuffle_n}\LF$.
Then $\LZ\in\LSchord{n}^{(\lambda)}$, and Lemma~\ref{lem::chordal_triangular_change} gives
\[
    \zeta_{\bs{x}_0}(\LZ)=M(\bs{x}_0)\bs{v}(\bs{x}_0)=\bs{a}.
\]
Thus $\zeta_{\bs{x}_0}$ is surjective.
Together with injectivity, this proves that $\zeta_{\bs{x}_0}$ is an isomorphism and gives $\dim\LSchord{n}^{(\lambda)}=2^n$.
\end{proof}

\subsection{Subspaces selected by the first Ward identity}
\label{subsec::Ward_1st}
In this subsection, we fix $n\ge2$, $\kappa>0$ and $\lambda\in\R$.
We first prove Proposition~\ref{prop::Tmatrix} by identifying $T^{(\lambda)}(\bs{x})$ as the representation of $\kappa\LT$ in the derivative coordinates.
We then use the triangular change of coordinates to obtain the explicit constant matrix $\widetilde T^{(\lambda)}$ in Lemma~\ref{lem::Tlambda_tilde}.
For $\lambda\ne0$, its eigenbasis and characteristic polynomial are computed in Lemma~\ref{lem::Tlambda_tilde_polynomial}, giving the decomposition in Theorem~\ref{thm::BPZ_decomposition}.

\begin{proof}[Proof of Proposition~\ref{prop::Tmatrix}]
We write $D_j=D^{(\lambda)}_j(\bs{x})$ and $T(\bs{x})=T^{(\lambda)}(\bs{x})$. By~\eqref{eqn::BPZ_dilation}, for every $\LZ\in\LSchord{n}^{(\lambda)}$, the function $\LT\LZ$ also satisfies the chordal BPZ system~\eqref{eqn::BPZ_chordal}. Since $\LT$ commutes with every $\partial^{\bs{\beta}}$, the first-order system~\eqref{eqn::chordal_first_order_system} gives
\begin{equation}\label{eqn::chordal_Ward_intertwining}
\begin{aligned}
    \zeta_{\bs{x}}(\kappa\LT\LZ)
    &=\kappa\bigl((\LT\partial^{\bs{\beta}}\LZ)(\bs{x})\bigr)_{\bs{\beta}\in\{0,1\}^n}=\kappa\sum_{j=1}^n D_j(\bs{x})\zeta_{\bs{x}}(\LZ)
    =T(\bs{x})\zeta_{\bs{x}}(\LZ).
\end{aligned}
\end{equation}
As $\zeta_{\bs{x}}$ is an isomorphism, this identity yields
\begin{equation}\label{eqn::chordal_Ward_representation}
    T(\bs{x})=\zeta_{\bs{x}}\circ(\kappa\LT)\circ\zeta_{\bs{x}}^{-1}.
\end{equation}
Thus $T(\bs{x})$ represents $\kappa\LT$ in the coordinates given by $\zeta_{\bs{x}}$.
\medbreak
Fix $\bs{x}\in\LX_n$ and $\nu\in\R$. If $\LZ\in\LSchord{n}^{(\lambda;\nu)}$, then $\kappa\LT\LZ=\nu\LZ$, and~\eqref{eqn::chordal_Ward_intertwining} implies
\[
    (T(\bs{x})-\nu I)\zeta_{\bs{x}}(\LZ)
    =\zeta_{\bs{x}}\bigl((\kappa\LT-\nu)\LZ\bigr)=0.
\]
Hence $\zeta_{\bs{x}}$ maps $\LSchord{n}^{(\lambda;\nu)}$ into $\ker(T(\bs{x})-\nu I)$, and its restriction is injective because $\zeta_{\bs{x}}$ is injective on $\LSchord{n}^{(\lambda)}$.

Conversely, let $\bs{u}\in\ker(T(\bs{x})-\nu I)$. The isomorphism~\eqref{eqn::chordal_zeta} gives a solution $\LZ\in\LSchord{n}^{(\lambda)}$ with $\zeta_{\bs{x}}(\LZ)=\bs{u}$. As $\LT$ preserves this solution space, $(\kappa\LT-\nu)\LZ\in\LSchord{n}^{(\lambda)}$, and~\eqref{eqn::chordal_Ward_intertwining} gives
\[
    \zeta_{\bs{x}}\bigl((\kappa\LT-\nu)\LZ\bigr)
    =(T(\bs{x})-\nu I)\bs{u}=0.
\]
Injectivity of $\zeta_{\bs{x}}$ implies $(\kappa\LT-\nu)\LZ=0$ on $\LX_n$. Thus $\LZ\in\LSchord{n}^{(\lambda;\nu)}$, proving surjectivity.
\end{proof}

\begin{lemma}\label{lem::Tlambda_tilde}
For every $\bs{x}\in\LX_n$, let $M(\bs{x})$ be the matrix from Lemma~\ref{lem::chordal_triangular_change}. Then
\begin{equation}\label{eqn::chordal_Ward_similarity}
    \widetilde{T}^{(\lambda)}:=M(\bs{x})^{-1}T^{(\lambda)}(\bs{x})M(\bs{x})=\kappa\sum_{j=1}^nL_j^{(\lambda)}.
\end{equation}
\end{lemma}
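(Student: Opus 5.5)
We use the gauge transformation rule~\eqref{eqn::chordal_connection_change} of Lemma~\ref{lem::chordal_triangular_change}. Summing it over $j$ and multiplying by $\kappa$ gives
\[
M^{-1}T^{(\lambda)}M=\kappa\sum_{j=1}^n\widetilde D_j^{(\lambda)}+\kappa\, M^{-1}\Bigl(\sum_{j=1}^n\partial_jM\Bigr).
\]
The proof therefore splits into two claims. The first is that $\sum_j\widetilde D_j^{(\lambda)}=\sum_jL_j^{(\lambda)}$. The second is that $\LT M=\sum_j\partial_jM=0$, i.e.\ $M$ is translation invariant.

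The first claim is purely algebraic. By~\eqref{eqn::chordal_explicit_connection},
\[
\sum_j\widetilde D_j^{(\lambda)}-\sum_jL_j^{(\lambda)}=\frac4\kappa\sum_{j\ne\ell}\frac{P_{j\ell}-I}{x_j-x_\ell}.
\]
Since $P_{j\ell}=P_{\ell j}$ and the denominator is antisymmetric in $(j,\ell)$, the terms indexed by $(j,\ell)$ and $(\ell,j)$ cancel in pairs, so this sum vanishes.

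For the second claim we trace the construction of $M$. The function $\LZ_{\shuffle_n}$ depends only on differences $x_\ell-x_j$, so $\LT\LZ_{\shuffle_n}^{-1}=0$. Every derivative $\partial_{U\setminus R}(\LZ_{\shuffle_n}^{-1})$ is also a function of differences, because $\LT$ commutes with constant-coefficient derivatives. Next, the recursion~\eqref{eqn::chordal_corrected_jet_recursion} involves only $\partial_j$ and coefficients $1/(x_j-x_\ell)$. By induction on $|S|$, the coefficients $b_{S,U}$ in~\eqref{eqn::chordal_triangular_change_of_v} are polynomials in the $1/(x_j-x_\ell)$ and their derivatives, hence also translation invariant. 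The precise inductive statement is the operator identity $[\LT,\mathcal J_S]=0$. This holds because $[\LT,\partial_j]=0$ and $\LT$ annihilates $1/(x_j-x_\ell)$.

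Consequently every entry of $M^{-1}$ in~\eqref{eqn::chordal_triangular_definition_of_M} is annihilated by $\LT$, so $\LT(M^{-1})=0$. The identity $\LT M=-M(\LT M^{-1})M$ then gives $\LT M=0$. This establishes~\eqref{eqn::chordal_Ward_similarity}. In particular the right-hand side is a constant matrix, so the left-hand side is independent of $\bs x$.

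The main point needing care is the induction showing that $b_{S,U}$ depends only on differences, since one has to track how the operator recursion expands. An alternative that avoids coefficient bookkeeping is to note that translation $\bs x\mapsto\bs x+c\bs 1$ preserves $\LZ_{\shuffle_n}$ and each $\mathcal J_S$ (as operators). Hence $M(\bs x+c\bs1)=M(\bs x)$, and differentiating in $c$ at $c=0$ gives $\LT M=0$.
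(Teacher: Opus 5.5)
Your proof is correct and follows essentially the same route as the paper. You sum the gauge relation \eqref{eqn::chordal_connection_change}, cancel the rational terms in $\sum_j\widetilde D_j^{(\lambda)}$ using $P_{j\ell}=P_{\ell j}$, and deduce $\sum_j\partial_jM=0$ from the translation invariance of $\mathcal{Z}_{\shuffle_n}$ and of the coefficients $b_{S,U}$. Your operator identity $[\sum_j\partial_j,\mathcal{J}_S]=0$ is a clean way to organize the induction that the paper only sketches.
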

\begin{proof}
The coefficients in~\eqref{eqn::chordal_corrected_jet_recursion} depend only on coordinate differences, and differentiation preserves this property. Induction shows that the coefficients $b_{S,U}(\bs{x})$ in~\eqref{eqn::chordal_triangular_change_of_v} have the same invariance.
The function $\LZ_{\shuffle_n}(\bs{x})$ is also translation invariant, so the formula~\eqref{eqn::chordal_triangular_definition_of_M} shows that $M^{-1}(\bs{x})$, and hence $M(\bs{x})$, is translation invariant. Thus
\[
    \sum_{j=1}^n\partial_jM(\bs{x})=0.
\]
Summing~\eqref{eqn::chordal_connection_change} and using $P_{j\ell}=P_{\ell j}$, we obtain
\[
\begin{aligned}
    M^{-1}(\bs{x})T^{(\lambda)}(\bs{x})M(\bs{x})
    &=\kappa\sum_{j=1}^n\widetilde{D}^{(\lambda)}_j(\bs{x})
      +\kappa M^{-1}(\bs{x})\sum_{j=1}^n\partial_jM(\bs{x})\\
    &=\kappa\sum_{j=1}^nL_j^{(\lambda)}
      +4\sum_{j<\ell}(P_{j\ell}-I)
      \left(\frac{1}{x_j-x_\ell}+\frac{1}{x_\ell-x_j}\right)\\
    &=\kappa\sum_{j=1}^nL_j^{(\lambda)}.
\end{aligned}
\]
\end{proof}

We now compute the spectrum of this constant matrix for $\lambda\ne0$.
\begin{lemma}\label{lem::Tlambda_tilde_polynomial}
Fix $\lambda\neq 0$.
The matrix $\widetilde{T}^{(\lambda)}$ in~\eqref{eqn::chordal_Ward_similarity} is diagonalizable with the characteristic polynomial
\begin{equation}\label{eqn::chordal_Ward_characteristic_polynomial}
    \det\left(t I-\widetilde{T}^{(\lambda)}\right) = \prod_{\mathfrak{r}=0}^n \left(t-(n-2\mathfrak{r})\sqrt{2\lambda}\right)^{\binom{n}{\mathfrak{r}}}.
\end{equation}
\end{lemma}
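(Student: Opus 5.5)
The matrix $\widetilde{T}^{(\lambda)}=\kappa\sum_{j}L_j^{(\lambda)}$ acts on $\R^{2^n}$, with coordinates indexed by subsets $S\subseteq\{1,\ldots,n\}$, and I will exploit its tensor-product structure. Identify $\R^{2^n}$ with $(\R^2)^{\otimes n}$ by letting the $j$-th tensor factor record whether $j\in S$. By~\eqref{eqn::def_of_L_operator}, $L_j^{(\lambda)}$ acts only on the $j$-th factor. There it is the $2\times 2$ matrix $A$ with $(A\bs{w})_0=w_1$ and $(A\bs{w})_1=\frac{2\lambda}{\kappa^2}w_0$, that is,
\[
A=\begin{pmatrix}0&1\\ \frac{2\lambda}{\kappa^2}&0\end{pmatrix},
\]
tensored with the identity on the remaining factors. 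Hence
\[
\widetilde T^{(\lambda)}=\kappa\sum_{j=1}^n I\otimes\cdots\otimes A\otimes\cdots\otimes I,
\]
a Kronecker sum of $n$ copies of $\kappa A$. In particular the $L_j^{(\lambda)}$ commute, which is consistent with $[L_i,L_j]=0$ recorded in the proof of Lemma~\ref{lem::chordal_renormalized_first_order}.

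Next I diagonalize the single-site matrix. We have $A^2=\frac{2\lambda}{\kappa^2}I$, so when $\lambda\neq0$ the minimal polynomial $t^2-\frac{2\lambda}{\kappa^2}$ has two distinct roots $\pm\mu$, where $\mu=\sqrt{2\lambda}/\kappa$ (complex, namely $\pm \ii\sqrt{-2\lambda}/\kappa$, when $\lambda<0$). Thus $A$ is diagonalizable over $\C$, with eigenvectors $\bs{w}_\pm=(1,\pm\mu)^{\top}$. Tensor products $\bs{w}_{\epsilon_1}\otimes\cdots\otimes\bs{w}_{\epsilon_n}$ with $\epsilon\in\{\pm\}^n$ then give a basis of $2^n$ eigenvectors of $\widetilde T^{(\lambda)}$, with eigenvalue $\kappa\mu\sum_j\epsilon_j$. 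Concretely, the vector with $v_S=\prod_{j\in S}\epsilon_j\mu$ is an eigenvector, and this explicit form can also be checked directly from~\eqref{eqn::def_of_L_operator}. If $\mathfrak{r}$ denotes the number of minus signs, the eigenvalue is $(n-2\mathfrak{r})\kappa\mu=(n-2\mathfrak{r})\sqrt{2\lambda}$, and it occurs for exactly $\binom{n}{\mathfrak{r}}$ sign vectors. This gives diagonalizability together with the characteristic polynomial~\eqref{eqn::chordal_Ward_characteristic_polynomial}.

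The only delicate point is the interpretation when $\lambda<0$. There $\sqrt{2\lambda}$ is imaginary, and diagonalizability holds over $\C$ but not over $\R$. The characteristic polynomial is still real, since it pairs $\mathfrak{r}$ with $n-\mathfrak{r}$. For $\lambda>0$, everything is real and the eigenvectors above are real, so the decomposition holds over $\R$ as well. I would state this explicitly, since Theorem~\ref{thm::BPZ_decomposition} will use the real kernel for $\lambda<0$, where only the eigenvalue $0$ (with $\mathfrak{r}=n/2$) is real.
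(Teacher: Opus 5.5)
Your proposal is correct and is essentially the paper's proof. The paper writes down the same product eigenvectors $(\bs{v}_{\bs{\sigma}})_{\bs{\beta}}=\prod_i(\sqrt{2\lambda}\,\sigma_i/\kappa)^{\beta_i}$, checks $L_j^{(\lambda)}\bs{v}_{\bs{\sigma}}=\frac{\sqrt{2\lambda}}{\kappa}\sigma_j\bs{v}_{\bs{\sigma}}$, and counts multiplicities. Your Kronecker-sum framing adds one small thing: the paper only asserts that these $2^n$ vectors are linearly independent, whereas in your setup this is automatic, since they form a tensor product of eigenbases of the $2\times 2$ matrix $A$.
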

\begin{proof}
For each $\bs{\sigma}\in\{\pm1\}^n$, define the vector $\bs{v}_{\bs{\sigma}}$ by its components
\begin{equation}\label{eqn::chordal_explicit_eigenvectors}
    (\bs{v}_{\bs{\sigma}})_{\bs{\beta}}
    :=\prod_{i=1}^n\left(\frac{\sqrt{2\lambda}}{\kappa}\sigma_i\right)^{\beta_i},
    \qquad \bs{\beta}\in\{0,1\}^n.
\end{equation}
It is directly verified from~\eqref{eqn::def_of_L_operator} that
\begin{equation}\label{eqn::chordal_explicit_joint_spectrum}
    L_j^{(\lambda)}\bs{v}_{\bs{\sigma}}
    =\frac{\sqrt{2\lambda}}{\kappa}\sigma_j\bs{v}_{\bs{\sigma}},
    \qquad j\in\{1,\ldots,n\}.
\end{equation}
Since $\widetilde{T}^{(\lambda)}=\kappa\sum_jL_j^{(\lambda)}$, we obtain
\begin{equation}\label{eqn::chordal_explicit_Ward_spectrum}
    \widetilde{T}^{(\lambda)}\bs{v}_{\bs{\sigma}}
    =\sqrt{2\lambda}\left(\sum_{j=1}^n\sigma_j\right)\bs{v}_{\bs{\sigma}}.
\end{equation}
It follows directly from the component formula~\eqref{eqn::chordal_explicit_eigenvectors} that the $2^n$ vectors $\bs{v}_{\bs{\sigma}}$ are linearly independent when $\lambda\ne0$.
Thus they form a basis of eigenvectors of $\widetilde{T}^{(\lambda)}$ over $\C$, and the eigenvalue $(n-2\mathfrak{r})\sqrt{2\lambda}$ has multiplicity $\binom{n}{\mathfrak{r}}$.
\end{proof}

\begin{proof}[Proof of Theorem~\ref{thm::BPZ_decomposition}]
The dimension of $\LSchord{n}^{(\lambda)}$ is derived in Proposition~\ref{prop::Tmatrix}. 
Fix $\bs{x}\in\LX_n$. By Lemma~\ref{lem::Tlambda_tilde} and Lemma~\ref{lem::Tlambda_tilde_polynomial}, when $\lambda\ne0$, the matrix $T^{(\lambda)}(\bs{x})$ is diagonalizable over $\C$, with eigenvalues
\[
    \nu_{\mathfrak{r}}=(n-2\mathfrak{r})\sqrt{2\lambda},
    \qquad \mathfrak{r}\in\{0,\ldots,n\},
\]
of multiplicities $\binom{n}{\mathfrak{r}}$.
\begin{itemize}
\item When $\lambda>0$, these eigenvalues are real, and the isomorphism in Proposition~\ref{prop::Tmatrix} therefore gives~\eqref{eqn::lambda_nu_dimension} and~\eqref{eqn::lambda_nu_decomposition}.
\item When $\lambda<0$, the only possible real eigenvalue is $0$, which occurs precisely when $n$ is even.
In that case, we have $\dim\ker T^{(\lambda)}(\bs{x})=\binom{n}{n/2}$.
Proposition~\ref{prop::Tmatrix} now gives~\eqref{eqn::lambda_negative_nu_dimension}.
\end{itemize}
The assertion about zeros when $\lambda<0$ follows from Lemma~\ref{lem::chordal_negative_lambda_zero} below.
\end{proof}

\begin{lemma}\label{lem::chordal_negative_lambda_zero}
Fix $\lambda<0$. Any solution $\LZ\in\LSchord{n}^{(\lambda)}$ has at least one zero in $\LX_n$.
\end{lemma}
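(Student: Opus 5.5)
We argue by contradiction. Suppose $\LZ\in\LSchord{n}^{(\lambda)}$ has no zero in $\LX_n$. Since $\LX_n$ is connected and $\LZ$ is continuous, $\LZ$ has constant sign. Replacing $\LZ$ by $-\LZ$ if necessary, we may assume $\LZ>0$.

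The first step is the renormalization $\LZ=\LZ_{\shuffle_n}\LF$. Then $\LF>0$ satisfies the renormalized system~\eqref{eqn::chordal_renormalized_system}, and we sum it over $j$. The antisymmetrized drift terms reorganize as
\[
2\sum_{j}\sum_{\ell\neq j}\frac{\partial_j\LF-\partial_\ell\LF}{x_j-x_\ell}=4\sum_j\partial_j\LF\sum_{\ell\neq j}\frac{1}{x_j-x_\ell}=2\kappa\,\nabla\log\LZ_{\shuffle_n}\cdot\nabla\LF .
\]
This uses $\partial_j\log\LZ_{\shuffle_n}=\frac{2}{\kappa}\sum_{\ell\ne j}(x_j-x_\ell)^{-1}$. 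Setting $w:=\LZ_{\shuffle_n}^4$, the summed equation becomes the divergence-form identity
\[
\nabla\cdot\bigl(w\nabla\LF\bigr)=c\,w\,\LF,\qquad c:=\frac{2n\lambda}{\kappa^2}<0 .
\]
I would double-check the constant $4$ in $w$ by direct differentiation, but only the homogeneity of $w$ matters below.

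The second step is to pass to $h:=\log\LF$, which is smooth because $\LF>0$. It satisfies $\nabla\cdot(w\nabla h)+w|\nabla h|^2=c\,w$. Take a cutoff $\varphi\in C_c^\infty(\LX_n)$, multiply by $\varphi^2$ and integrate by parts; there are no boundary terms since $\varphi$ has compact support in $\LX_n$, where $w$ is smooth and positive. This gives
\[
\int w|\nabla h|^2\varphi^2-2\int w\varphi\,\nabla h\cdot\nabla\varphi=c\int w\varphi^2 .
\]
Since $2|\varphi\nabla h\cdot\nabla\varphi|\le|\nabla h|^2\varphi^2+|\nabla\varphi|^2$, the left side is at least $-\int w|\nabla\varphi|^2$. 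We obtain the Agmon-type inequality
\[
|c|\int_{\LX_n} w\varphi^2\le\int_{\LX_n}w|\nabla\varphi|^2\qquad\text{for all }\varphi\in C_c^\infty(\LX_n).
\]

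The final step is a scaling argument. The cone $\LX_n$ is invariant under $\bs{x}\mapsto R\bs{x}$ for $R>0$, and $w$ is homogeneous of degree $a=\frac{8}{\kappa}\binom{n}{2}$. Fix a nonzero $\psi\in C_c^\infty(\LX_n)$ and set $\varphi_R(\bs{x})=\psi(\bs{x}/R)$, which is supported in $R\,\mathrm{supp}\,\psi\subset\LX_n$. Changing variables, the left side scales like $R^{n+a}$ and the right side like $R^{n+a-2}$. Hence
\[
|c|\int w\psi^2\le R^{-2}\int w|\nabla\psi|^2,
\]
and letting $R\to\infty$ forces $\int w\psi^2=0$, a contradiction.

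The only delicate point is the exact form of the summed equation: the drift must be a gradient $w^{-1}\nabla w$ with $w>0$ homogeneous, so that the operator is in divergence form and the Hardy-type scaling applies. The renormalization by $\LZ_{\shuffle_n}$ is precisely what makes the zeroth-order singular terms disappear and the drift a pure gradient. So I expect the main work to be verifying this identity carefully; the rest is routine.
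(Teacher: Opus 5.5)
Your proof is correct and follows essentially the same route as the paper. Both arguments renormalize by $\LZ_{\shuffle_n}$, sum the equations into the divergence form $\frac{\kappa^2}{2}\operatorname{div}(\LZ_{\shuffle_n}^4\nabla\LF)=n\lambda\LZ_{\shuffle_n}^4\LF$ (your constant $4$ is right), derive the weighted Hardy-type inequality, and rescale the test function to contradict $\lambda<0$. The only cosmetic difference is how the inequality is obtained: the paper tests against $f^2/\LF$ and gets an exact Picone-type identity with a nonnegative remainder, while you pass to $h=\log\LF$ and apply Young's inequality, which gives the same bound.
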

\begin{proof}
Suppose that $\LZ$ has no zero in $\LX_n$. Since $\LX_n$ is connected, replacing $\LZ$ by $-\LZ$ if necessary, we may assume that $\LZ>0$.
Write $\LZ=\LZ_{\shuffle_n}\LF$ as in Section~\ref{subsec::PDE_pre}, and $\LF>0$.
Summing the renormalized equations~\eqref{eqn::chordal_renormalized_system} and using
$\partial_j\log\LZ_{\shuffle_n}=(2/\kappa)\sum_{\ell\ne j}(x_j-x_\ell)^{-1}$ gives
\[
    \frac{\kappa^2}{2}\Delta \LF
    +2\kappa^2\nabla\log\LZ_{\shuffle_n}\cdot\nabla\LF=n\lambda \LF.
\]
Equivalently,
\begin{equation}\label{eqn::chordal_renormalized_divergence}
    \frac{\kappa^2}{2}\operatorname{div}(\LZ_{\shuffle_n}^4\nabla \LF)
    =n\lambda \LZ_{\shuffle_n}^4\LF.
\end{equation}
For any real-valued $f\in C_c^\infty(\LX_n)$, multiplying~\eqref{eqn::chordal_renormalized_divergence} by $f^2/\LF$ and integrating by parts gives
\begin{equation}\label{eqn::chordal_weighted_energy_identity}
\begin{aligned}
    n\lambda\int_{\LX_n}\LZ_{\shuffle_n}^4f^2\,\ud\bs{x}
    &=-\frac{\kappa^2}{2}\int_{\LX_n}
    \LZ_{\shuffle_n}^4\nabla\LF\cdot\nabla\left(\frac{f^2}{\LF}\right)\,\ud\bs{x}\\
    &=\frac{\kappa^2}{2}\int_{\LX_n}\LZ_{\shuffle_n}^4\LF^2
    \left|\nabla\left(\frac{f}{\LF}\right)\right|^2\,\ud\bs{x}
    -\frac{\kappa^2}{2}\int_{\LX_n}\LZ_{\shuffle_n}^4|\nabla f|^2\,\ud\bs{x}\\
    &\ge-\frac{\kappa^2}{2}\int_{\LX_n}\LZ_{\shuffle_n}^4|\nabla f|^2\,\ud\bs{x}.
\end{aligned}
\end{equation}
Fix a nonzero such $f$, and set $f_R(\bs{x}):=f(\bs{x}/R)$ for $R>0$.
Since $\LX_n$ is invariant under positive dilations and $\LZ_{\shuffle_n}(R\bs{x})=R^{n(n-1)/\kappa}\LZ_{\shuffle_n}(\bs{x})$, applying~\eqref{eqn::chordal_weighted_energy_identity} to $f_R$ and changing variables gives
\[
    n\lambda
    \ge-\frac{\kappa^2}{2}
    \frac{\displaystyle\int_{\LX_n}\LZ_{\shuffle_n}^4|\nabla f_R|^2\,\ud\bs{x}}
         {\displaystyle\int_{\LX_n}\LZ_{\shuffle_n}^4f_R^2\,\ud\bs{x}}
    =-\frac{\kappa^2}{2R^2}
    \frac{\displaystyle\int_{\LX_n}\LZ_{\shuffle_n}^4|\nabla f|^2\,\ud\bs{x}}
         {\displaystyle\int_{\LX_n}\LZ_{\shuffle_n}^4f^2\,\ud\bs{x}}.
\]
Letting $R\to\infty$ gives $n\lambda\ge0$, contradicting $\lambda<0$.
\end{proof}

\subsection{Proof of Theorem~\ref{thm::BPZ_decomposition_degenerate}}
\label{subsec::BPZ_decomposition_degenerate}
In this subsection, we fix $\kappa>0$ and $\lambda=0$.
Fix $n\ge2$ and recall that $\mathsf{P}_n=\{p\in\{0,1,\ldots, n\}: n-p\text{ is even}\}$ is defined in~\eqref{eqn::parityn_def}. For $p\in\mathsf{P}_n$, recall that $\mathsf{B}_{n,p}$ are defined in~\eqref{eqn::bnp_def}: 
\begin{align*}
\mathsf{B}_{n,p}:=\binom{n}{(n-p)/2}-\binom{n}{(n-p)/2-1}, 
\end{align*}
where $\binom{n}{-1}:=0$.
We compute the Jordan form of $\widetilde{T}^{(0)}$ in the following lemma and then deduce Theorem~\ref{thm::BPZ_decomposition_degenerate}.

\begin{lemma}\label{lem::chordal_Ward_Jordan_normal_form}
The matrix $\widetilde{T}^{(0)}$ in~\eqref{eqn::chordal_Ward_similarity} is conjugate over $\R$ to
\begin{equation}\label{eqn::chordal_explicit_Jordan_degenerate}
    \bigoplus_{p\in\mathsf{P}_n}J_{p+1}(0)^{\oplus\mathsf{B}_{n,p}},
\end{equation}
where $J_s(0)$ denotes a nilpotent Jordan block of size $s$.
\end{lemma}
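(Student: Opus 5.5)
At $\lambda=0$ the matrix $\widetilde T^{(0)}=\kappa\sum_j L_j^{(0)}$ is, up to the factor $\kappa$, the operator $F:=\sum_{j=1}^n L_j^{(0)}$. Here $(F\bs v)_S=\sum_{j\notin S}v_{S\cup\{j\}}$, since the term with $j\in S$ vanishes when $\lambda=0$. The plan is to identify $F$ with a standard $\mathfrak{sl}_2$ operator on $(\R^2)^{\otimes n}$ and read off its Jordan form from the decomposition into irreducibles. The scalar $\kappa\neq0$ does not affect the Jordan type of a nilpotent matrix, so it suffices to treat $F$.

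Concretely, identify the coordinate indexed by $S$ with the basis vector $e_S:=\bigotimes_{i=1}^n e_{\mathbf 1\{i\in S\}}$ of $(\R^2)^{\otimes n}$. Let $f_i$ act on the $i$th tensor factor by $e_1\mapsto e_0$ and $e_0\mapsto 0$. Then $L_j^{(0)}$ acts as $f_j$ in this basis: in terms of coordinates, $(L_j\bs v)_S=v_{S\cup\{j\}}$ for $j\notin S$ and $0$ otherwise, so $L_j$ sends $e_{S}$ with $j\in S$ to $e_{S\setminus\{j\}}$ and kills $e_S$ with $j\notin S$. Hence $F$ is the diagonal lowering operator $f=\sum_j f_j$. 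Next define $e=\sum_j e_j$, with $e_j$ the transpose of $f_j$, and $h=\sum_j h_j$, with $h_j=\operatorname{diag}(1,-1)$ in the basis $(e_1,e_0)$; equivalently, $h$ acts on $e_S$ by $2|S|-n$. These satisfy the $\mathfrak{sl}_2$ relations over $\R$. The representation $(\R^2)^{\otimes n}$ decomposes as $\bigoplus_{p} V_p^{\oplus m_p}$, where $V_p$ is the irreducible module of dimension $p+1$. The standard Clebsch--Gordan count, obtained by comparing weight-space dimensions, gives $m_p=\binom{n}{(n-p)/2}-\binom{n}{(n-p)/2-1}=\mathsf B_{n,p}$ for $p\in\mathsf P_n$ and $m_p=0$ otherwise. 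Indeed, the weight-$p$ space has dimension $\binom{n}{(n-p)/2}$, and the number of highest-weight vectors of weight $p$ is the difference of consecutive weight multiplicities.

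On each $V_p$, with basis $f^k w$ for $0\le k\le p$ where $w$ is a highest-weight vector, the operator $f$ acts as a single nilpotent Jordan block $J_{p+1}(0)$: it shifts $f^kw\mapsto f^{k+1}w$, and $f^{p+1}w=0$ while $f^pw\ne0$. This gives the conjugacy of $F$, and hence of $\widetilde T^{(0)}$, to $\bigoplus_{p\in\mathsf P_n}J_{p+1}(0)^{\oplus\mathsf B_{n,p}}$. Everything is over $\R$, because complete reducibility and the weight decomposition hold over any field of characteristic zero.

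The main point needing care is justifying complete reducibility and the highest-weight count over $\R$ without citing too much. An elementary alternative avoids this. The Jordan type of a nilpotent matrix is determined by the ranks of its powers, so it suffices to show that $f^k$ restricted to the weight-$m$ space is injective or surjective onto the weight-$(m-2k)$ space, according to which is smaller. This is the hard Lefschetz property of the Boolean lattice, which the $\mathfrak{sl}_2$ structure supplies. I expect to cite the standard $\mathfrak{sl}_2$ theory and verify only the relations $[e,f]=h$, $[h,e]=2e$, and $[h,f]=-2f$ on $e_S$.
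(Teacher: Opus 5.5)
Your proof is correct, but it takes a different route from the paper.

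\textbf{Your route.} You realize $\sum_j L_j^{(0)}$ as the diagonal lowering operator on $(\R^2)^{\otimes n}$. You then use complete reducibility and the weight-multiplicity count to get one block $J_{p+1}(0)$ per irreducible summand of highest weight $p$. The individual steps all check out:
\begin{itemize}
\item $L_j^{(0)}\bs{e}_S=\bs{e}_{S\setminus\{j\}}$ for $j\in S$, and $L_j^{(0)}\bs{e}_S=0$ otherwise;
\item the $\mathfrak{sl}_2$ relations hold;
\item $\mathsf B_{n,p}$ is the difference of consecutive weight multiplicities.
\end{itemize}
Working over $\R$ causes no trouble. Here $h$ is diagonal with integer eigenvalues, and in any case the Jordan type of a nilpotent matrix is determined by the ranks of its powers.

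\textbf{The paper's route.} The paper proves the lemma by induction on $n$ using only elementary linear algebra. Grouping coordinates by $\beta_n$ gives $N_n=\begin{pmatrix}N_{n-1}&I\\0&N_{n-1}\end{pmatrix}$. After putting $N_{n-1}$ in Jordan form, only two ingredients remain: the computation $\begin{pmatrix}J_s(0)&I_s\\0&J_s(0)\end{pmatrix}\sim J_{s+1}(0)\oplus J_{s-1}(0)$, and the recursion $\mathsf B_{n-1,p-1}+\mathsf B_{n-1,p+1}=\mathsf B_{n,p}$. That single computation is the Clebsch--Gordan rule $V_{s-1}\otimes V_1\cong V_s\oplus V_{s-2}$ (in your notation), applied one tensor factor at a time.

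\textbf{Comparison.} The paper's proof is your argument unwound into a self-contained calculation. It avoids Weyl's theorem and the classification of irreducible $\mathfrak{sl}_2$-modules; the paper cites the Clebsch--Gordan picture only as motivation. Your version is more conceptual. It identifies the Jordan chains with $\mathfrak{sl}_2$ strings through highest-weight vectors, which is exactly the structure ($L$, $R$, $H$, $K_m$) the paper builds later for the dilation operator. The only difference there is that raising and lowering are swapped relative to your convention.

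\textbf{A minor caveat.} Your ``elementary alternative'' via full rank of $f^k$ between weight spaces still depends on the representation theory. For maps crossing the middle level, full rank does not follow from composing one-step injectivity and surjectivity. Your main argument does not rely on it, so this is not a gap.
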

The Jordan form~\eqref{eqn::chordal_explicit_Jordan_degenerate} is reminiscent of the iterated Clebsch--Gordan decomposition
\[
    (\C^2)^{\otimes n}
    \cong\bigoplus_{p\in\mathsf{P}_n}
    \bigl(\operatorname{Sym}^p(\C^2)\bigr)^{\oplus\mathsf{B}_{n,p}}
\]
as $\mathfrak{sl}_2(\C)$-modules.
Each summand $\operatorname{Sym}^p(\C^2)$ is irreducible of dimension $p+1$, and a nonzero nilpotent element of $\mathfrak{sl}_2(\C)$ acts on it as a single Jordan block of size $p+1$.
The proof below uses only elementary linear algebra.
\begin{proof}
We determine the Jordan form of $N_n:=\kappa^{-1}\widetilde{T}^{(0)}=\sum_{j=1}^nL_j^{(0)}$ by induction on $n$.
Grouping the coordinates according to the last index $\beta_n\in\{0,1\}$ gives the block decomposition
\begin{equation}\label{eqn::chordal_explicit_block_recursion}
    N_n=\begin{pmatrix}N_{n-1}&I_{2^{n-1}}\\0&N_{n-1}\end{pmatrix},
    \qquad N_1=J_2(0)=\begin{pmatrix}0&1\\0&0\end{pmatrix}.
\end{equation}
We show that $N_n$ is conjugate to~\eqref{eqn::chordal_explicit_Jordan_degenerate}.
The base case $n=1$ is clear. For the induction step, assume the claim for $N_{n-1}$.
Apply the change of basis putting $N_{n-1}$ in Jordan form to both copies in~\eqref{eqn::chordal_explicit_block_recursion}, and then reorder the basis vectors to obtain a block diagonal matrix with blocks
\[
    M_s:=\begin{pmatrix}J_s(0)&I_s\\0&J_s(0)\end{pmatrix}.
\]
The multiplicity of $M_s$ is the same as that of $J_s(0)$ in $N_{n-1}$, namely $\mathsf{B}_{n-1,s-1}$.
A direct calculation gives
\[
    M_s\sim J_{s+1}(0)\oplus J_{s-1}(0),
\]
with the second block omitted when $s=1$.
For $p\in\mathsf{P}_n$, the multiplicity of $J_{p+1}(0)$ in $N_n$ is therefore
\[
    \mathsf{B}_{n-1,p-1}+\mathsf{B}_{n-1,p+1}=\mathsf{B}_{n,p}.
\]
This proves the induction step.
Finally, since $\kappa J_s(0)$ is conjugate to $J_s(0)$ for $\kappa\ne0$, the same Jordan form holds for $\widetilde{T}^{(0)}=\kappa N_n$.
\end{proof}

\begin{proof}[Proof of Theorem~\ref{thm::BPZ_decomposition_degenerate}]
Fix $\bs{x}\in\LX_n$ and write $T=T^{(0)}(\bs{x})$. Iterating~\eqref{eqn::chordal_Ward_intertwining} gives
\[
    \zeta_{\bs{x}}\bigl((\kappa\LT)^q\LZ\bigr)
    =T^q\zeta_{\bs{x}}(\LZ).
\]
Since $\zeta_{\bs{x}}$ is an isomorphism and $\kappa\ne0$, its restriction identifies $\LSchord{n}^{\langle q\rangle}$ with $\ker T^q$. Lemma~\ref{lem::Tlambda_tilde} and Lemma~\ref{lem::chordal_Ward_Jordan_normal_form} therefore gives the asserted dimension formula.
\end{proof}

\subsection{Subspaces selected by the second Ward identity}
\label{subsec::Ward_2nd}
In this subsection, we fix $n\ge2$, $\kappa>0$ and $\lambda=\nu=0$.
We first prove Proposition~\ref{prop::Ematrix}, using $E(\bs{x})$ to represent the action of $\kappa\LE$ in the derivative coordinates.
We then conjugate $E(\bs{x})$ by $M(\bs{x})$ and restrict the resulting matrix to $\ker\widetilde T^{(0)}$ in Lemma~\ref{lem::Ematrix_tilde}.
The characteristic and annihilating polynomials of this restriction, $\widehat E(\bs{x})$, are stated in Proposition~\ref{prop::widehatE} and yield Theorem~\ref{thm::xi_p_decomposition}.

%

\begin{proof}[Proof of Proposition~\ref{prop::Ematrix}]
The proof is similar to that of Proposition~\ref{prop::Tmatrix}. 
Using~\eqref{eqn::BPZ_dilation} and~\eqref{eqn::translation_dilation}, for every $\LZ\in\LSchord{n}^{(0;0)}$, the function $\LE\LZ$ also belongs to $\LSchord{n}^{(0;0)}$. Since
\begin{equation}\label{eqn::chordal_dilation_commutator_with_partial}
    [\partial^{\bs{\beta}},\LE] = |\bs{\beta}|\partial^{\bs{\beta}},
\end{equation}
the first-order system~\eqref{eqn::chordal_first_order_system} gives
\begin{equation}\label{eqn::chordal_dilation_intertwining}
    \zeta_{\bs{x}}(\kappa\LE\LZ)
    =E(\bs{x})\zeta_{\bs{x}}(\LZ).
\end{equation}

Fix $\bs{x}\in\LX_n$ and $\xi\in\R$, and write $T=T^{(0)}(\bs{x})$ and $E=E(\bs{x})$. If $\LZ\in\LSchord{n}^{(0;0;\xi)}$, then Proposition~\ref{prop::Tmatrix} and~\eqref{eqn::chordal_dilation_intertwining} give
\[
    T\zeta_{\bs{x}}(\LZ)=0,
    \qquad (E-\xi I)\zeta_{\bs{x}}(\LZ)
    =\zeta_{\bs{x}}\bigl((\kappa\LE-\xi)\LZ\bigr)=0.
\]
Thus $\zeta_{\bs{x}}$ maps $\LSchord{n}^{(0;0;\xi)}$ injectively into the stated intersection.

Conversely, let $\bs{u}\in\ker T\cap\ker(E-\xi I)$. By Proposition~\ref{prop::Tmatrix}, there is a unique $\LZ\in\LSchord{n}^{(0;0)}$ with $\zeta_{\bs{x}}(\LZ)=\bs{u}$. Eq.~\eqref{eqn::chordal_dilation_intertwining} gives
\[
    \zeta_{\bs{x}}\bigl((\kappa\LE-\xi)\LZ\bigr)=(E-\xi I)\bs{u}=0.
\]
Since $(\kappa\LE-\xi)\LZ\in\LSchord{n}^{(0;0)}$, injectivity of $\zeta_{\bs{x}}$ implies $\kappa\LE\LZ=\xi\LZ$. Hence $\LZ\in\LSchord{n}^{(0;0;\xi)}$, proving surjectivity.
\end{proof}

\begin{lemma}\label{lem::Ematrix_tilde}
For every $\bs{x}\in\LX_n$, let $M(\bs{x})$ be the matrix from Lemma~\ref{lem::chordal_triangular_change}. Then
\begin{equation}\label{eqn::chordal_dilation_similarity}
\begin{aligned}
    \widetilde E(\bs{x}):=M(\bs{x})^{-1}E(\bs{x})M(\bs{x})&=\kappa\sum_{j=1}^n x_jL_j^{(0)}+\kappa \Lambda
    +4\sum_{j<\ell}\left(P_{j\ell}-\frac12I\right),
\end{aligned}
\end{equation}
where $E(\bs{x})$ and $\Lambda$ are defined in~\eqref{eqn::chordal_dilation_matrix}.
Moreover, $\widetilde E(\bs{x})$ preserves the subspace $\ker\widetilde{T}^{(0)}$, which is independent of $\bs{x}$ and $\kappa$. Define the restriction
\begin{equation}\label{eqn::chordal_dilation_restriction}
    \widehat E(\bs{x}):=\widetilde E(\bs{x})|_{\ker\widetilde{T}^{(0)}}:\ker\widetilde{T}^{(0)}\longrightarrow \ker\widetilde{T}^{(0)}.
\end{equation}
For each $\xi\in\R$, the map $\LZ\mapsto M(\bs{x})^{-1}\zeta_{\bs{x}}(\LZ)$ is an isomorphism from $\LSchord{n}^{(0;0;\xi)}$ onto $\ker\left(\widehat E(\bs{x})-\xi I\right)$. In particular,
\begin{equation}\label{eqn::chordal_dilation_dimension}
    \dim\LSchord{n}^{(0;0;\xi)}
    =\dim\ker\left(\widehat E(\bs{x})-\xi I\right).
\end{equation}
\end{lemma}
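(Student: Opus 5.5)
To prove the similarity formula, I would use the same gauge-transformation argument as in Lemma~\ref{lem::Tlambda_tilde}. By \eqref{eqn::chordal_dilation_matrix}, $E=\kappa(\Lambda+\sum_j x_jD_j^{(0)})$. Using \eqref{eqn::chordal_connection_change} gives
\[
M^{-1}EM=\kappa M^{-1}\Lambda M+\kappa\sum_j x_j\widetilde D_j^{(0)}+\kappa M^{-1}\Big(\sum_j x_j\partial_jM\Big).
\]
The key input is the scaling behaviour of $M$. Each $b_{S,U}$ in \eqref{eqn::chordal_triangular_change_of_v} is homogeneous of degree $|U|-|S|$; this follows by induction on \eqref{eqn::chordal_corrected_jet_recursion}, since $\partial_j$ and the factors $1/(x_j-x_\ell)$ both lower the degree by one. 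Also, $\partial_{U\setminus R}(\LZ_{\shuffle_n}^{-1})$ is homogeneous of degree $-n(n-1)/\kappa-|U|+|R|$. Hence $(M^{-1})_{S,R}$ is homogeneous of degree $|R|-|S|-n(n-1)/\kappa$. Euler's identity then gives
\[
\sum_j x_j\partial_j M^{-1}=[M^{-1},\Lambda]-\tfrac{n(n-1)}{\kappa}M^{-1},
\]
and therefore
\[
M^{-1}\Big(\sum_j x_j\partial_jM\Big)=-M^{-1}\Lambda M+\Lambda+\tfrac{n(n-1)}{\kappa}I.
\]
The terms $\kappa M^{-1}\Lambda M$ cancel. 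From \eqref{eqn::chordal_explicit_connection}, the remaining sum is $\kappa\sum_jx_j\widetilde D_j^{(0)}=\kappa\sum_jx_jL_j^{(0)}+4\sum_{j<\ell}(P_{j\ell}-I)$, after symmetrizing $\frac{x_j}{x_j-x_\ell}+\frac{x_\ell}{x_\ell-x_j}=1$. Adding $\kappa\Lambda+n(n-1)I$ and noting that $4\binom n2\cdot\frac12=n(n-1)$, we get exactly \eqref{eqn::chordal_dilation_similarity}.

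For invariance, $\widetilde T^{(0)}=\kappa\sum_jL_j^{(0)}$ is a constant matrix independent of $\bs x$ (and its kernel is independent of $\kappa$). I would check three commutators directly:
\begin{itemize}
\item $[\sum_jL_j^{(0)},P_{j\ell}]=0$, since $\sum_j L_j$ is symmetric under relabelling;
\item $[L_i^{(0)},L_j^{(0)}]=0$;
\item $[\sum_iL_i^{(0)},\Lambda]=\sum_iL_i^{(0)}$, because $L^{(0)}_i$ raises $|S|$ by one: $(L_i v)_S=v_{S\cup\{i\}}$.
\end{itemize}
Together these give $[\widetilde T^{(0)},\widetilde E]=\widetilde T^{(0)}$, so $\widetilde E$ preserves $\ker\widetilde T^{(0)}$. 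This is the matrix analogue of \eqref{eqn::translation_dilation}. Alternatively, the invariance follows by transporting $[\LT,\LE]=\LT$ through $\zeta_{\bs x}$ and $M$.

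The final isomorphism is a composition. Proposition~\ref{prop::Ematrix} identifies $\LSchord{n}^{(0;0;\xi)}$ with $\ker T^{(0)}\cap\ker(E-\xi I)$. Conjugation by $M(\bs x)$ maps this onto $\ker\widetilde T^{(0)}\cap\ker(\widetilde E-\xi I)$, using Lemma~\ref{lem::Tlambda_tilde}, and this equals $\ker(\widehat E-\xi I)$. The dimension formula \eqref{eqn::chordal_dilation_dimension} follows.

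The main obstacle is the homogeneity bookkeeping for $M$: getting the Euler identity with the correct sign convention for $\Lambda$ and the constant $n(n-1)/\kappa$. I would verify it on $n=2$ with $S=\{1\}$ as a sanity check.
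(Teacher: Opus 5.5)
Your proof is correct. For the similarity formula it is the paper's computation in different packaging. The paper proves the operator identity $[\mathcal J_S,\LE]=|S|\mathcal J_S$ by induction on \eqref{eqn::chordal_corrected_jet_recursion}. It then compares two expressions for $(\partial_S\LE(\LZ_{\shuffle_n}\LF))_S$ to get $\LE M+\Lambda M=\frac{n(n-1)}{\kappa}M+M\Lambda$. You instead read off from \eqref{eqn::chordal_triangular_definition_of_M} that $(M^{-1})_{S,R}$ is homogeneous of degree $|R|-|S|-n(n-1)/\kappa$, and apply Euler's identity. The two are equivalent, since $[\mathcal J_S,\LE]=|S|\mathcal J_S$ says exactly that $\LE b_{S,U}=(|U|-|S|)b_{S,U}$.

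Where you genuinely differ is the invariance of $\ker\widetilde T^{(0)}$. The paper transports it from the solution space: for $\bs w\in\ker\widetilde T^{(0)}$ it picks $\LZ\in\LSchord{n}^{(0;0)}$ with $\bs w=M(\bs x)^{-1}\zeta_{\bs x}(\LZ)$. It then uses that $\LE$ preserves $\LSchord{n}^{(0;0)}$, together with \eqref{eqn::chordal_dilation_intertwining}. That route needs no explicit formula for $\widetilde E$, but relies on surjectivity of $\zeta_{\bs x}$. Your commutator check is purely finite-dimensional once \eqref{eqn::chordal_dilation_similarity} is known.

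There are two harmless slips in that check. Write $L=\sum_iL_i^{(0)}$.
\begin{itemize}
\item You have $\widetilde T^{(0)}=\kappa L$ and $[L,\kappa\Lambda]=\kappa L$. So the correct relation is $[\widetilde T^{(0)},\widetilde E]=\kappa\widetilde T^{(0)}$, matching $[\kappa\LT,\kappa\LE]=\kappa\cdot\kappa\LT$, not $[\widetilde T^{(0)},\widetilde E]=\widetilde T^{(0)}$.
\item $L_i^{(0)}$ lowers degree as an operator: $L_i^{(0)}\bs e_S=\bs e_{S\setminus\{i\}}$ for $i\in S$. That is why $[L,\Lambda]=+L$; a raising operator would give $-L$.
\end{itemize}
Either way $\ker\widetilde T^{(0)}$ is preserved. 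The final composition with Proposition~\ref{prop::Ematrix} is exactly as in the paper.
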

\begin{proof}
We write $T=T^{(0)}$, $\widetilde T=\widetilde T^{(0)}$, $\widetilde D_j=\widetilde D_j^{(0)}$ and $L_j=L_j^{(0)}$ throughout this proof.
Using the equations~\eqref{eqn::chordal_connection_change} and~\eqref{eqn::chordal_dilation_matrix}, we obtain
\[
    M^{-1}EM
    =\kappa\sum_{j=1}^n x_j\widetilde D_j+\kappa M^{-1}\Lambda M
      +\kappa M^{-1}(\LE M).
\]
We calculate these terms separately.
\begin{itemize}
    \item We first calculate the terms involving $M$.
    Put $\bs u=(\partial_S(\LZ_{\shuffle_n}\LF))_S$ and $\bs v=(\mathcal J_S\LF)_S$, so that $\bs u=M\bs v$ by Lemma~\ref{lem::chordal_triangular_change}.
    In the original coordinates, Eq.~\eqref{eqn::chordal_dilation_commutator_with_partial} gives
    \[
        \bigl(\partial_S(\LE(\LZ_{\shuffle_n}\LF))\bigr)_S
        =(\LE+\Lambda)\bs u=(\LE+\Lambda)(M\bs v) = \left(\LE M\right)\bs v+M\LE \bs{v} + \Lambda M\bs v.
    \]
    In the new coordinates, since $\LZ_{\shuffle_n}$ is homogeneous of degree $n(n-1)/\kappa$, for every smooth function $\LF$ we have
    \[
        \LZ_{\shuffle_n}^{-1}\LE(\LZ_{\shuffle_n}\LF)
        =\left(\LE+\frac{n(n-1)}{\kappa}\right)\LF.
    \]
    The identities $[\partial_j,\LE]=\partial_j$ and $[(x_j-x_\ell)^{-1},\LE]=(x_j-x_\ell)^{-1}$ give, by induction in~\eqref{eqn::chordal_corrected_jet_recursion},
    \[
        [\mathcal J_S,\LE]=|S|\mathcal J_S.
    \]
    Thus we obtain
    \[
    \begin{aligned}
        \bigl(\partial_S(\LE(\LZ_{\shuffle_n}\LF))\bigr)_S
        &=M\left(\mathcal J_S\left(\left(\LE+\frac{n(n-1)}{\kappa}\right)\LF\right)\right)_S\\
        &=M\left(\LE+\frac{n(n-1)}{\kappa}I+\Lambda\right)\bs v.
    \end{aligned}
    \]
    Compare the two expressions of  $\bigl(\partial_S(\LE(\LZ_{\shuffle_n}\LF))\bigr)_S$. Since the triangular relation~\eqref{eqn::chordal_triangular_change_of_v} allows $\bs v(\bs{x})$ to be prescribed arbitrarily, we have
    \begin{equation*}\label{eqn::chordal_dilation_of_M}
        \LE M+\Lambda M=\frac{n(n-1)}{\kappa}M+M\Lambda.
    \end{equation*}
    \item The explicit formula~\eqref{eqn::chordal_explicit_connection} and $P_{j\ell}=P_{\ell j}$ give
    \[
    \kappa\sum_{j=1}^n x_j\widetilde D_j = \kappa\sum_{j=1}^n x_jL_j+4\sum_{j<\ell}(P_{j\ell}-I).
    \]
\end{itemize}
Combining the two identities above proves~\eqref{eqn::chordal_dilation_similarity}.

At $\lambda=0$, the matrices $L_j$ are independent of $\bs{x}$ and $\kappa$, so the same holds for $\ker\widetilde{T}$.
For any $\bs{w}\in \ker\widetilde{T}$, there exists a unique $\LZ\in\LSchord{n}^{(0;0)}$ for which
\[
\bs{w} = M(\bs{x})^{-1}\zeta_{\bs{x}}(\LZ),
\]
and since $\LE$ preserves $\LSchord{n}^{(0;0)}$, the intertwining identity~\eqref{eqn::chordal_dilation_intertwining} gives
\[
    \widetilde E(\bs{x})M(\bs{x})^{-1}\zeta_{\bs{x}}(\LZ)
    =M(\bs{x})^{-1}\zeta_{\bs{x}}(\kappa\LE\LZ)\in \ker\widetilde{T}.
\]
Thus $\widetilde E(\bs{x})$ preserves $\ker\widetilde{T}$, and $\widehat E(\bs{x})$ is well-defined. Finally, for each $\xi\in\R$,
\[
    M^{-1}\bigl(\ker T\cap\ker(E-\xi I)\bigr)
    =\ker\widetilde{T}\cap\ker\left(\widetilde E-\xi I\right)
    =\ker\left(\widehat E-\xi I\right).
\]
Together with Proposition~\ref{prop::Ematrix}, this identity proves the asserted isomorphism and the dimension formula~\eqref{eqn::chordal_dilation_dimension}.
\end{proof}

To derive the dimension of $\LSchord{n}^{(0;0;\xi)}$, we will analyze the kernel of $\widehat{E}(\bs{x})-\xi I$. 
\begin{proposition}\label{prop::widehatE}
Fix $n\ge 2$ and $\kappa>0$ and $\lambda=\nu=0$.
Recall that $\xi_p$ are defined in~\eqref{eqn::xi_p_def}. 
The characteristic polynomial for $\widehat{E}(\bs{x})$ is 
\begin{equation}\label{eqn::chordal_dilation_characteristic}
    \det\left(tI-\widehat E(\bs{x})\right)
    =\prod_{p\in\mathsf P_n}(t-\xi_p)^{\mathsf B_{n,p}},
\end{equation}
and $\widehat{E}(\bs{x})$ is annihilated by the following polynomial:
\begin{equation}\label{eqn::chordal_dilation_annihilating}
    Q_\kappa(t):=\prod_{p\in\mathsf{P}_n}(t-\xi_p),\qquad Q_\kappa\bigl(\widehat E(\bs{x})\bigr)=0.
\end{equation}
\end{proposition}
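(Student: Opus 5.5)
The plan is to view $\ker\widetilde T^{(0)}$ through the $\mathfrak{sl}_2$ structure on $\R^{2^n}\cong(\R^2)^{\otimes n}$ behind Lemma~\ref{lem::chordal_Ward_Jordan_normal_form}. We then show that $\widehat E(\bs{x})$ is block triangular for the grading by $\Lambda$, with scalar diagonal blocks equal to the $\xi_p$. Both claims follow from this.

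\textbf{Step 1: splitting $\widetilde E$ and locating $\ker N$.} Write $N=\sum_j L_j^{(0)}$, so $(Nv)_S=\sum_{j\notin S}v_{S\cup\{j\}}$. Thus $N$ maps the degree-$(k+1)$ part to the degree-$k$ part, where the degree of $v_S$ means $|S|$. Split $\widetilde E=A+B(\bs{x})$ with
\[
A=\kappa\Lambda+4\sum_{j<\ell}\left(P_{j\ell}-\tfrac12 I\right),\qquad B(\bs{x})=\kappa\sum_j x_jL_j^{(0)}.
\]
The $P_{j\ell}$ permute tensor factors, so they commute with $N$. We also have $[\Lambda,N]=-N$, so $\Lambda$ preserves $\ker N$. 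Hence $A$ preserves $\ker N$. Since $\widetilde E$ preserves $\ker N$ by Lemma~\ref{lem::Ematrix_tilde}, so does $B(\bs{x})$. Moreover $B(\bs{x})$ lowers degree by exactly one, while $A$ preserves degree.

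Next identify $\ker N$. Take $N$ as the lowering operator $f=\sum_j f_j$, with $f_j$ sending $e_1\mapsto e_0$ in the $j$-th factor after identifying the factor for $\beta_j$ with $\R^2$. Take $h=\sum_j h_j$ with $h_j=\pm1$ according to $\beta_j$. Then $\Lambda=(h+n)/2$, and $\ker N$ is the space of lowest-weight vectors. By the Clebsch--Gordan decomposition (equivalently, the Jordan form in Lemma~\ref{lem::chordal_Ward_Jordan_normal_form}), $\ker N=\bigoplus_{p\in\mathsf P_n}K_p$. Here $K_p$ is the lowest-weight space of the spin-$p/2$ isotypic component, it sits in degree $(n-p)/2$, and $\dim K_p=\mathsf B_{n,p}$. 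Distinct $p$ give distinct degrees, so $K_p=\ker N\cap\{\text{degree }(n-p)/2\}$.

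\textbf{Step 2: $A$ is the scalar $\xi_p$ on $K_p$.} Use $P_{j\ell}=\tfrac12(I+\vec\sigma_j\cdot\vec\sigma_\ell)$ and the Casimir identity $\sum_{j<\ell}\vec\sigma_j\cdot\vec\sigma_\ell=\tfrac12\bigl(p(p+2)-3n\bigr)$ on the spin-$p/2$ component. These give $4\sum_{j<\ell}(P_{j\ell}-\tfrac12 I)=p(p+2)-3n$ on $K_p$. Adding $\kappa\Lambda=\kappa(n-p)/2$ yields
\[
p(p+2)-3n+\tfrac{\kappa}{2}(n-p)=\xi_p.
\]
I expect this to be the main obstacle: the Casimir normalization and the sign conventions must be checked against the real matrices $P_{j\ell}$ and $L_j^{(0)}$. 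As a sanity check I would verify the $p=n$ case: $K_n=\{v_\varnothing\}$ must give $\xi_n$, and there $P_{j\ell}$ acts as $1$, which produces $n(n-1)-n(n-1)+\ldots$ consistently. I would also check the case $n=2$, $p=0$ by hand.

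\textbf{Step 3: triangularity and conclusion.} Order the $K_p$ by increasing degree, i.e.\ decreasing $p$. Then $\widehat E(\bs{x})=A|_{\ker N}+B(\bs{x})|_{\ker N}$, where the first term is $\bigoplus_p\xi_p I_{K_p}$ and the second maps $K_p$ into $K_{p+2}$. So $\widehat E(\bs{x})$ is block triangular with scalar diagonal blocks $\xi_p I$ of size $\mathsf B_{n,p}$, which gives~\eqref{eqn::chordal_dilation_characteristic}. For~\eqref{eqn::chordal_dilation_annihilating}, use the filtration $F_k=\bigoplus_{\deg K_p\le k}K_p$. It is preserved by $\widehat E(\bs{x})$, and $\widehat E(\bs{x})-\xi_p$ kills $F_{(n-p)/2}/F_{(n-p)/2-1}$. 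Hence the product $\prod_p(\widehat E(\bs{x})-\xi_p)$, applied successively, maps each $F_k$ to $0$, so $Q_\kappa(\widehat E(\bs{x}))=0$. This holds whether or not the $\xi_p$ are distinct.
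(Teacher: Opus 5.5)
Your proposal is correct and follows essentially the same route as the paper: decompose $\ker\widetilde T^{(0)}$ into the extremal-weight spaces of the $\mathfrak{sl}_2$ action, show via the Casimir that the degree-preserving part of $\widetilde E(\bs{x})$ acts on each such space as the scalar $\xi_p$, and use that $\kappa\sum_j x_jL_j^{(0)}$ lowers degree by one to obtain a block-triangular form that yields both polynomial identities. The differences are only in how the ingredients are verified. The paper gets the Casimir scalar from the elementary identity $\sum_{j<\ell}P_{j\ell}=RL+\Lambda^2-(n+1)\Lambda+\tfrac{n(n-1)}{2}I$ rather than Pauli matrices, and it obtains $\dim K_m$ from a norm identity rather than Clebsch--Gordan; your normalization $4\sum_{j<\ell}(P_{j\ell}-\tfrac12 I)=p(p+2)-3n$ on $K_p$ checks out.
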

We will prove Proposition~\ref{prop::widehatE} in Section~\ref{subsec::dilation}.
In fact, Proposition~\ref{prop::widehatE} could be proved using the solutions constructed in Proposition~\ref{prop::LZmix_BPZ}, see Remark~\ref{rem::E_polynomial_alternativeproof}. But as we need the detailed structure of the dilation operator for Theorem~\ref{thm::UST_decomposition}, we give the proof for Proposition~\ref{prop::widehatE} using the analysis in Section~\ref{subsec::dilation}.
Assuming Proposition~\ref{prop::widehatE}, we are ready to prove Theorem~\ref{thm::xi_p_decomposition}.

\begin{proof}[Proof of Theorem~\ref{thm::xi_p_decomposition}]
For $p_1<p_2$ in $\mathsf{P}_n$, we have
\[
    \xi_{p_2}-\xi_{p_1}=(p_2-p_1)\left(p_2+p_1+2-\frac{\kappa}{2}\right).
\]
Thus for $\kappa\notin \{2(p_1+p_2+2):p_1, p_2\in\mathsf{P}_n,\ p_1<p_2\}$, the exponents $\xi_p$ are distinct, and Theorem~\ref{thm::xi_p_decomposition} follows from Lemma~\ref{lem::Ematrix_tilde} and Proposition~\ref{prop::widehatE}.
\end{proof}

\begin{proof}[Proof of Corollary~\ref{cor::BPZ_Catalan}]
Eq.~\eqref{eqn::solution_Catalan_dimension} follows from Theorem~\ref{thm::xi_p_decomposition}.
The construction of pure partition functions $\{\LZ_\alpha:\alpha\in\LP_N\}$ using multiple SLEs of~\cite[Theorem~1.1]{PeltolaWuGlobalMultipleSLEs} and~\cite[Theorem~1.10]{FengLiuPeltolaWu2024} tells that they are linearly independent (due to the asymptotics), hence they form a basis of $\LSchord{n}^{(0;0;\xi_0)}$. 
Their M\"obius covariance implies that every function in this space satisfies the third Ward identity~\eqref{eqn::Ward_3rd}.  
\end{proof}

\begin{remark}\label{rem::xi_p_fail}
The conclusion in Theorem~\ref{thm::xi_p_decomposition} fails at $\kappa=8$ when $n=2$. For $n=2$ and $\kappa=8$, we have $\xi_0=\xi_2=2$ and
\[
    \LSchord{2}^{(0;0)}
    =\operatorname{span}_{\R}\{(x_2-x_1)^{1/4},\, (x_2-x_1)^{1/4}\log (x_2-x_1)\},\qquad 
    \LSchord{2}^{(0;0;2)}
    =\operatorname{span}_{\R}\{(x_2-x_1)^{1/4}\}.
\]
The eigenspace $ \LSchord{2}^{(0;0;2)}$ therefore does not span $\LSchord{2}^{(0;0)}$. The decomposition~\eqref{eqn::solution_xi_decomposition} in Theorem~\ref{thm::xi_p_decomposition} fails in this case.
\end{remark}

\subsection{Structure of the dilation operator}
\label{subsec::dilation}
We study the structure of the dilation operator through its matrix $\widehat E(\bs{x})$ on $\ker\widetilde T^{(0)}$. Decomposing this kernel into the highest-weight spaces of an $\mathfrak{sl}_2$ representation gives a block upper triangular form with scalar diagonal blocks and a lowering operator linking adjacent spaces. This description yields the polynomial identities in Proposition~\ref{prop::widehatE}. It also allows us to determine the eigenspace at the collision $\xi_0=\xi_2$ when $\kappa=8$ and $n=2N$, proving the first dimension formula in Theorem~\ref{thm::UST_decomposition}.

We begin with arbitrary $n\ge2$ and $\kappa>0$, keeping $\lambda=\nu=0$. The $\mathfrak{sl}_2$ structure motivates the following definitions, while the proofs in this section use only elementary calculations.
Let $\{\bs{e}_S:S\subseteq\{1,\ldots,n\}\}$ be the standard basis of $\R^{2^n}$, indexed by subsets. For $0\le m\le n$, set
\[
    V_m:=\operatorname{Span}\{\bs{e}_S:|S|=m\}.
\]
Define the operators $L$ and $R$ by
\begin{alignat*}{3}
    L&:=\frac{1}{\kappa}\tilde{T}^{(0)}=\sum_{j=1}^nL_j^{(0)}:V_m\to V_{m-1},
    &\qquad (L\bs{v})_S&=\sum_{j\notin S}v_{S\cup\{j\}},
    &\qquad L\bs{e}_S&=\sum_{j\in S}\bs{e}_{S\setminus\{j\}},\\
    R&:=L^{\mathsf T}:V_{m-1}\to V_m,
    &\qquad (R\bs{v})_S&=\sum_{j\in S}v_{S\setminus\{j\}},
    &\qquad R\bs{e}_S&=\sum_{j\notin S}\bs{e}_{S\cup\{j\}}.
\end{alignat*}
Set $H:=nI-2\Lambda$. A direct calculation gives
\[
    [L,R]=H,\qquad [H,L]=2L,\qquad [H,R]=-2R.
\]
These relations define an $\mathfrak{sl}_2$ representation, in which $V_m$ is the weight space of weight $n-2m$. We denote the corresponding space of highest-weight vectors by
\[
    K_m:=V_m\cap\ker L = \{\bs{v}: L\bs{v}=0, H\bs{v}=(n-2m)\bs{v}\}.
\]
The next lemma decomposes $\ker\widetilde T^{(0)}$ into these highest-weight spaces and gives their dimensions.

\begin{lemma}\label{lem::chordal_graded_kernel}
Set $N:=\lfloor n/2\rfloor$. Then
\begin{equation}\label{eqn::chordal_UST_graded_kernel}
    \ker\widetilde{T}^{(0)}=\bigoplus_{m=0}^{N} K_m,\qquad
    \dim K_m=\binom{n}{m}-\binom{n}{m-1},\qquad 0\le m\le N,
\end{equation}
where $\binom{n}{-1}:=0$.
\end{lemma}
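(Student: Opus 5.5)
Since $\widetilde T^{(0)}=\kappa L$ with $\kappa\neq0$, we have $\ker\widetilde T^{(0)}=\ker L$. The operator $L$ lowers the grading, $L(V_m)\subseteq V_{m-1}$, and $\R^{2^n}=\bigoplus_{m=0}^n V_m$. So $\ker L=\bigoplus_{m=0}^n (V_m\cap\ker L)=\bigoplus_{m=0}^n K_m$: writing $\bs v=\sum_m\bs v_m$ with $\bs v_m\in V_m$, the components $L\bs v_m$ lie in distinct graded pieces, so $L\bs v=0$ forces $L\bs v_m=0$ for every $m$. It then remains to show two things: $K_m=0$ for $m>N$, and $\dim K_m=\binom nm-\binom n{m-1}$ for $m\le N$.

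Both follow from the rank of $L|_{V_m}:V_m\to V_{m-1}$. The plan is to prove the following claim: $L|_{V_m}$ is surjective when $m\ge n-m+1$ (i.e. $2m>n$), and injective when $2m\le n$ and $m\ge 1$ (where $\dim V_m\ge\dim V_{m-1}$). Equivalently, since $R=L^{\mathsf T}$, we show that $R|_{V_{m-1}}:V_{m-1}\to V_m$ is injective whenever $2(m-1)<n$. With this claim in hand, rank--nullity gives
\[
\dim K_m=\dim V_m-\operatorname{rank}(L|_{V_m})=\dim V_m-\operatorname{rank}(R|_{V_{m-1}}).
\]
For $m\le N$ we have $2(m-1)<n$, so $R|_{V_{m-1}}$ is injective and $\dim K_m=\binom nm-\binom n{m-1}$. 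For $m>N$ we have $2m>n$; by the symmetry $m\leftrightarrow n-m$, the map $L:V_m\to V_{m-1}$ is the transpose of $R:V_{m-1}\to V_m$, and we need $R|_{V_{m-1}}$ surjective, i.e. $L|_{V_m}$ injective. This reduces to the standard statement that the inclusion matrix between $k$-sets and $(k+1)$-sets has full rank.

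To prove full rank, I would use the $\mathfrak{sl}_2$ relations rather than combinatorics. From $[L,R]=H$, on $V_k$ we get $LR-RL=(n-2k)I$. Hence for $\bs v\in V_k$,
\[
\|R\bs v\|^2=\langle LR\bs v,\bs v\rangle=\|L\bs v\|^2+(n-2k)\|\bs v\|^2,
\]
where $\langle\cdot,\cdot\rangle$ is the standard inner product and we use $R=L^{\mathsf T}$. If $n-2k>0$, then $R\bs v=0$ forces $\bs v=0$, so $R|_{V_k}$ is injective for $2k<n$. Symmetrically, $\|L\bs v\|^2=\|R\bs v\|^2+(2k-n)\|\bs v\|^2$, so $L|_{V_k}$ is injective for $2k>n$. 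This gives $K_m=0$ for $m>N$ directly.

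For $m\le N$, take $k=m-1$; then $2k\le n-2<n$, so $R|_{V_{m-1}}$ is injective. Therefore $\operatorname{rank}(L|_{V_m})=\operatorname{rank}(R|_{V_{m-1}})=\binom n{m-1}$, and $\dim K_m=\binom nm-\binom n{m-1}$. The case $m=0$ is trivial, since $V_0\subseteq\ker L$ and $\dim K_0=1$.

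The only step that needs care is checking the commutator $[L,R]=H$ on $V_k$, which the paper asserts as a direct calculation. As a sanity check, $LR\bs e_S=\sum_{j\notin S}\sum_{i\in S\cup\{j\}}\bs e_{(S\cup\{j\})\setminus\{i\}}$, whose diagonal part is $(n-k)\bs e_S$. Likewise $RL\bs e_S$ has diagonal part $k\,\bs e_S$, and the off-diagonal swap terms cancel between the two. This confirms the eigenvalue $n-2k$. I expect no real obstacle.
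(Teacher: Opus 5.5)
Your argument is correct and is essentially the paper's proof. The identity $\|R\bs v\|^2-\|L\bs v\|^2=(n-2k)\|\bs v\|^2$ on $V_k$, which comes from $[L,R]=H$ and $R=L^{\mathsf T}$, gives $K_m=\{0\}$ for $2m>n$, and it gives injectivity of $R:V_{m-1}\to V_m$ (hence surjectivity of $L:V_m\to V_{m-1}$) for $1\le m\le N$, exactly as in the paper.

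One slip: your first-stated ``claim'' swaps injective and surjective for $L|_{V_m}$. For $m\ge1$ and $2m\le n$ one has $\dim V_m>\dim V_{m-1}$, so $L|_{V_m}$ is surjective rather than injective, while for $2m>n$ it is injective. The ``equivalent'' reformulation in terms of $R$, and everything you actually prove afterwards, are the correct statements.
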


\begin{proof}
We write $\widetilde T=\widetilde T^{(0)}$ throughout this proof.
Since $R=L^{\mathsf T}$ and $[L,R]=H=nI-2\Lambda$, for $\bs{v}\in V_m$ the standard Euclidean norm satisfies
\begin{equation}\label{eqn::chordal_UST_norm_identity}
    \|R\bs{v}\|^2-\|L\bs{v}\|^2
    =\langle\bs{v},[L,R]\bs{v}\rangle
    =(n-2m)\|\bs{v}\|^2.
\end{equation}
For $m>N$, we have $n-2m<0$, and the identity shows that $K_m=\{0\}$. Thus we obtain
\[
    \ker\widetilde{T}=\ker L
    =\bigoplus_{m=0}^nK_m
    =\bigoplus_{m=0}^{N}K_m.
\]
For $1\le m\le N$, applying~\eqref{eqn::chordal_UST_norm_identity} to $\bs{v}\in V_{m-1}$ gives $\|R\bs{v}\|^2\geq \|\bs{v}\|^2$. Thus $R:V_{m-1}\to V_m$ is injective, and its transpose $L:V_m\to V_{m-1}$ is surjective. It follows that
\[
    \dim K_m=\dim V_m-\dim V_{m-1}
    =\binom{n}{m}-\binom{n}{m-1}.
\]
Since $K_0=V_0$ has dimension $1$, the same formula holds for $m=0$ with the convention $\binom{n}{-1}=0$.
This proves~\eqref{eqn::chordal_UST_graded_kernel}.
\end{proof}

To express $\widetilde E(\bs{x})$ in terms of this representation, we introduce its Casimir operator:
\begin{equation}\label{eqn::chordal_casimir_operator}
    \Omega:=H^2+2H+4RL.
\end{equation}
It commutes with $L$, $R$ and $H$.
For $\bs{w}\in K_m$, we have $L\bs{w}=0$ and $H\bs{w}=(n-2m)\bs{w}$, thus
\[
    \Omega\bs{w}=(n-2m)(n-2m+2)\bs{w}.
\]
We also introduce a family of weighted lowering operators. For each $\bs{x}\in\LX_n$, set
\begin{equation}\label{eqn::chordal_weighted_lowering}
    X_r:=\sum_{j=1}^n x_j^rL_j^{(0)},\qquad \text{for }r=0,1,\ldots,
    \qquad \text{where }X_0=L.
\end{equation}
The matrices $L_j^{(0)}$ commute, giving $X_rX_s=X_sX_r$ for all $r,s\ge0$, and in particular $LX_r=X_rL$. Each $X_r$ lowers the degree by one, thus
\[
    X_rK_0=\{0\},\qquad X_rK_m\subseteq K_{m-1},
    \qquad r\ge0,\quad 1\le m\le N.
\]
The next lemma expresses the full matrix $\widetilde E(\bs{x})$ in terms of $\Omega$, $\Lambda$ and $X_1$.

\begin{lemma}\label{lem::chordal_dilation_casimir}
For every $\bs{x}\in\LX_n$, the matrix $\widetilde E(\bs{x})$ defined in~\eqref{eqn::chordal_dilation_similarity} satisfies
\begin{equation}\label{eqn::chordal_dilation_casimir}
    \widetilde E(\bs{x})=\Omega+\kappa\Lambda-3nI+\kappa X_1.
\end{equation}
\end{lemma}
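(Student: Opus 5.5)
Comparing with the formula for $\widetilde E(\bs{x})$ in Lemma~\ref{lem::Ematrix_tilde}, namely $\kappa X_1+\kappa\Lambda+4\sum_{j<\ell}(P_{j\ell}-\tfrac12 I)$, the identity~\eqref{eqn::chordal_dilation_casimir} reduces to the $\bs{x}$-independent statement
\[
    4\sum_{j<\ell}P_{j\ell}-n(n-1)I=\Omega-3nI,
    \qquad\text{equivalently}\qquad
    \Omega=4\sum_{j<\ell}P_{j\ell}-n(n-4)I .
\]
The plan is to verify this by computing both sides on each basis vector $\bs{e}_S$ with $|S|=m$.

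For the left side, $P_{j\ell}\bs{e}_S$ is $\bs{e}_{\tau_{j\ell}(S)}$, since $P_{j\ell}$ is a permutation matrix and an involution. If $j,\ell$ are both in $S$ or both outside $S$, then $\tau_{j\ell}(S)=S$. The number of such pairs is $\binom{m}{2}+\binom{n-m}{2}$. If exactly one of $j,\ell$ lies in $S$, then $\tau_{j\ell}(S)=(S\setminus\{a\})\cup\{b\}$ with $a\in S$ and $b\notin S$, and each such swap arises from exactly one pair. Therefore
\[
    \sum_{j<\ell}P_{j\ell}\bs{e}_S=\Bigl(\tbinom{m}{2}+\tbinom{n-m}{2}\Bigr)\bs{e}_S+\sum_{a\in S,\ b\notin S}\bs{e}_{(S\setminus\{a\})\cup\{b\}}.
\]

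For the right side, use the formulas for $L\bs{e}_S$ and $R\bs{e}_S$. First $L\bs{e}_S=\sum_{a\in S}\bs{e}_{S\setminus\{a\}}$. Applying $R$ adds back any $b\notin S\setminus\{a\}$. Taking $b=a$ returns $\bs{e}_S$, which gives the diagonal contribution $m\,\bs{e}_S$; the other choices give the same off-diagonal sum as above. Hence
\[
    RL\bs{e}_S=m\,\bs{e}_S+\sum_{a\in S,\ b\notin S}\bs{e}_{(S\setminus\{a\})\cup\{b\}}.
\]
Since $H^2+2H$ acts on $\bs{e}_S$ by the scalar $(n-2m)(n-2m+2)$, the off-diagonal parts of $\Omega$ and $4\sum_{j<\ell}P_{j\ell}$ agree exactly.

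It remains to check the scalar identity on the diagonal:
\[
    (n-2m)(n-2m+2)+4m=2m(m-1)+2(n-m)(n-m-1)-n(n-4).
\]
Expanding the left side gives $n^2+2n-4mn+4m^2$. Expanding the right side gives $2m^2-2m+2n^2-4mn+2m^2-2n+2m-n^2+4n$, which simplifies to the same expression. This completes the identity.

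The only delicate step is the bookkeeping of the diagonal term in $RL$: the removed index may be re-added, which produces the $m\,\bs{e}_S$ term. I also need to confirm the identity $\kappa\sum_j x_jL_j^{(0)}=\kappa X_1$, which holds by the definition~\eqref{eqn::chordal_weighted_lowering}.
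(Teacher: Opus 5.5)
Your proof is correct and follows essentially the same route as the paper: both compute $\sum_{j<\ell}P_{j\ell}\bs{e}_S$ and $RL\bs{e}_S$ on basis vectors, observe that the off-diagonal replacement terms coincide, and reduce to a scalar identity on the diagonal. The only difference is cosmetic. The paper first records the matrix identity $\sum_{j<\ell}P_{j\ell}=RL+\Lambda^2-(n+1)\Lambda+\tfrac{n(n-1)}{2}I$ and then substitutes $H=nI-2\Lambda$, whereas you compare directly against $\Omega$ on each $\bs{e}_S$.
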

\begin{proof}
We express the permutation term in~\eqref{eqn::chordal_dilation_similarity} using $L$, $R$ and $\Lambda$.
Fix $S$ with $|S|=m$. A transposition fixes $S$ when both labels lie in $S$ or both lie outside $S$; otherwise, it replaces one element of $S$ by one outside $S$. Also, in $RL\bs e_S$, first removing and then restoring the same label gives $m\bs e_S$, while all other terms make a replacement. Thus
\[
\begin{aligned}
    \sum_{i<j}P_{ij}\bs e_S
    &=\left[\binom{m}{2}+\binom{n-m}{2}\right]\bs e_S
      +\sum_{\substack{i\in S\\j\notin S}}
         \bs e_{(S\setminus\{i\})\cup\{j\}},\\
    RL\bs e_S
    &=m\bs e_S+\sum_{\substack{i\in S\\j\notin S}}
         \bs e_{(S\setminus\{i\})\cup\{j\}}.
\end{aligned}
\]
Subtracting these two identities gives
\[
    \left(\sum_{j<\ell}P_{j\ell}-RL\right)\bs{e}_S
    =\left[\binom{m}{2}+\binom{n-m}{2}-m\right]\bs{e}_S.
\]
Since $\Lambda\bs{e}_S=m\bs{e}_S$, this yields the matrix identity
\[
    \sum_{j<\ell}P_{j\ell}
    =RL+\Lambda^2-(n+1)\Lambda+\frac{n(n-1)}2I.
\]
Substituting this identity into~\eqref{eqn::chordal_dilation_similarity} and using $H=nI-2\Lambda$ and the definition of $\Omega$, we obtain
\[
\begin{aligned}
    \widetilde E(\bs{x})-\kappa X_1
    &=\kappa\Lambda+4RL+4\Lambda^2-4(n+1)\Lambda+n(n-1)I\\
    &=\kappa\Lambda+\Omega-3nI.
\end{aligned}
\]
This proves~\eqref{eqn::chordal_dilation_casimir}.
\end{proof}

Restricting this identity to $\ker\widetilde T^{(0)}=\bigoplus_{m=0}^N K_m$ gives the action of $\widehat E(\bs{x})$. Since $\Omega$ and $\Lambda$ act by scalars on each $K_m$, while $X_1$ maps $K_m$ into $K_{m-1}$, the following lemma gives a block upper triangular representation of $\widehat E(\bs{x})$.

\begin{lemma}\label{lem::chordal_UST_triangular_matrix}
For each $\bs{x}\in\LX_n$ and $N:=\lfloor n/2 \rfloor$, we have
\begin{equation}\label{eqn::chordal_UST_graded_dilation}
    \widehat E(\bs{x})\bs{v}
    =\xi_{n-2m}\bs{v}+\kappa X_1\bs{v},
    \qquad \text{for }\bs{v}\in K_m,\quad 0\le m\le N.
\end{equation}
Write $d_m:=\dim K_m$ as given in~\eqref{eqn::chordal_UST_graded_kernel}, and choose a basis $\mathcal B_m$ of each $K_m$. For $1\le m\le N$, let $Y_m(\bs{x})$ be the $d_{m-1}\times d_m$ matrix representing $X_1:K_m\to K_{m-1}$ in these bases.
In the concatenated basis $\mathcal B=(\mathcal B_0,\ldots,\mathcal B_N)$, the matrix of $\widehat E(\bs{x})$ is block upper triangular:
\begin{equation}\label{eqn::chordal_UST_triangular_matrix}
    [\widehat E(\bs{x})]_{\mathcal B}
    =
    \begin{pmatrix}
        \xi_n I_{d_0} & \kappa Y_1(\bs{x}) & 0 & \cdots & 0\\
        0 & \xi_{n-2}I_{d_1} & \kappa Y_2(\bs{x}) & \ddots & \vdots\\
        \vdots & \ddots & \ddots & \ddots & 0\\
        0 & \cdots & 0 & \xi_{n-2N+2}I_{d_{N-1}} & \kappa Y_N(\bs{x})\\
        0 & \cdots & \cdots & 0 & \xi_{n-2N}I_{d_N}
    \end{pmatrix}.
\end{equation}
\end{lemma}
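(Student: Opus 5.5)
The plan is to restrict the identity of Lemma~\ref{lem::chordal_dilation_casimir}, namely $\widetilde E(\bs{x})=\Omega+\kappa\Lambda-3nI+\kappa X_1$, to each summand $K_m$ of the decomposition $\ker\widetilde T^{(0)}=\bigoplus_{m=0}^N K_m$ from Lemma~\ref{lem::chordal_graded_kernel}, and to evaluate each of the four terms on a vector $\bs v\in K_m$.

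The three diagonal terms act by scalars. Since $L\bs v=0$ and $H\bs v=(n-2m)\bs v$, the Casimir gives $\Omega\bs v=(n-2m)(n-2m+2)\bs v$, as already noted. Since $\bs v\in V_m$, we have $\Lambda\bs v=m\bs v$. Setting $p=n-2m$, so that $m=(n-p)/2$, the total scalar is
\[
p(p+2)+\tfrac{\kappa}{2}(n-p)-3n=p\bigl(p+2-\tfrac{\kappa}{2}\bigr)-n\bigl(3-\tfrac{\kappa}{2}\bigr)=\xi_p .
\]
This is exactly $\xi_{n-2m}$ by \eqref{eqn::xi_p_def}, and the algebra is a one-line check.

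For the off-diagonal term, each $L_j^{(0)}$ maps $V_m$ into $V_{m-1}$ (at $\lambda=0$ only the removal branch survives), so $X_1 V_m\subseteq V_{m-1}$. Because the $L_j^{(0)}$ commute, $LX_1=X_1L$, hence $LX_1\bs v=X_1L\bs v=0$. Therefore $X_1\bs v\in V_{m-1}\cap\ker L=K_{m-1}$, and $X_1\bs v=0$ when $m=0$. Combining the two computations gives \eqref{eqn::chordal_UST_graded_dilation}, and it also confirms directly that $\widehat E(\bs{x})$ preserves $\ker\widetilde T^{(0)}$.

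For the block form, write $\widehat E(\bs{x})$ in the concatenated basis $\mathcal B$. The image of a vector of $\mathcal B_m$ has component $\xi_{n-2m}$ times itself in $K_m$ and component $\kappa X_1\bs v$ in $K_{m-1}$, and nothing else. So column block $m$ has $\xi_{n-2m}I_{d_m}$ on the diagonal and $\kappa Y_m(\bs x)$ in block row $m-1$, with all other blocks zero. This is precisely \eqref{eqn::chordal_UST_triangular_matrix}.

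No step is a real obstacle. The only point requiring care is the scalar bookkeeping that identifies the eigenvalue with $\xi_{n-2m}$, together with remembering that commutativity of the $L_j^{(0)}$ is what keeps $X_1$ inside the highest-weight spaces.
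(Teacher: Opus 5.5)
Your proposal is correct and follows essentially the same route as the paper: restrict the identity $\widetilde E(\bs{x})=\Omega+\kappa\Lambda-3nI+\kappa X_1$ of Lemma~\ref{lem::chordal_dilation_casimir} to each $K_m$, read off the scalar $(n-2m)(n-2m+2)+\kappa m-3n=\xi_{n-2m}$, and use $X_1K_m\subseteq K_{m-1}$, $X_1K_0=\{0\}$ to obtain the block upper triangular form. The only addition on your side is that you justify $X_1K_m\subseteq K_{m-1}$ explicitly via the commutation of the $L_j^{(0)}$, which the paper records just before the lemma.
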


\begin{proof}
For $0\le m\le N$ and $\bs{v}\in K_m$, we have $\Lambda\bs{v}=m\bs{v}$ and $\Omega\bs{v}=(n-2m)(n-2m+2)\bs{v}$. Since $\widehat E(\bs{x})$ is the restriction of $\widetilde E(\bs{x})$ to $\ker\widetilde T^{(0)}$, Lemma~\ref{lem::chordal_dilation_casimir} and~\eqref{eqn::xi_p_def} give
\[
    \widehat E(\bs{x})\bs{v}
    =\bigl[(n-2m)(n-2m+2)+\kappa m-3n\bigr]\bs{v}+\kappa X_1\bs{v}
    =\xi_{n-2m}\bs{v}+\kappa X_1\bs{v}.
\]
This proves~\eqref{eqn::chordal_UST_graded_dilation}. Together with $X_1K_0=\{0\}$ and $X_1K_m\subseteq K_{m-1}$, it gives the matrix form~\eqref{eqn::chordal_UST_triangular_matrix}.
\end{proof}

The block representation immediately gives the polynomial identities in Proposition~\ref{prop::widehatE}.
\begin{proof}[Proof of Proposition~\ref{prop::widehatE}]
The characteristic polynomial~\eqref{eqn::chordal_dilation_characteristic} immediately follows from the block upper triangular form~\eqref{eqn::chordal_UST_triangular_matrix}.
To prove the annihilating identity, observe that~\eqref{eqn::chordal_UST_graded_dilation} and $X_1:K_m\to K_{m-1}$ gives
\[
    \bigl(\widehat E(\bs{x})-\xi_{n-2m}I\bigr)
    \left(\bigoplus_{j=0}^mK_j\right)
    \subseteq\bigoplus_{j=0}^{m-1}K_j,\qquad 1\le m\le N,
\]
and $\bigl(\widehat E(\bs{x})-\xi_nI\bigr)K_0=\{0\}$. Applying these factors successively for $m=N,N-1,\ldots,0$ therefore annihilates $\bigoplus_{j=0}^NK_j=\ker\widetilde T^{(0)}$. Since all factors are polynomials in $\widehat E(\bs{x})$, they commute, and their product is $Q_\kappa(\widehat E(\bs{x}))$. This proves~\eqref{eqn::chordal_dilation_annihilating}.
\end{proof}

\medbreak

We now turn to the first dimension formula in Theorem~\ref{thm::UST_decomposition}. From now on, assume $n=2N\ge2$, still with $\lambda=\nu=0$. The key step is to prove that $X_1:K_N\to K_{N-1}$ is injective.
To prove this injectivity, we use the family $X_r$ defined in~\eqref{eqn::chordal_weighted_lowering}, together with the weighted degree matrix $\Lambda_{\bs{x}}$, the weighted analogue of $\Lambda$:
\begin{equation}\label{eqn::chordal_degree_matrices}
    (\Lambda_{\bs{x}}\bs{v})_S=\left(\sum_{j\in S}x_j\right)v_S,
    \qquad
    \Lambda_{\bs{x}}\bs{e}_S=\left(\sum_{j\in S}x_j\right)\bs{e}_S.
\end{equation}

\begin{lemma}\label{lem::chordal_middle_injectivity}
For every $\bs{x}\in\LX_n$, the map
\[
    X_1:K_N\longrightarrow K_{N-1}
\]
is injective.
\end{lemma}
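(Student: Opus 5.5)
The plan is to show that any $\bs{w}\in K_N$ with $X_1\bs{w}=0$ is killed by every single $L_j^{(0)}$, and hence vanishes. Write $L_j=L_j^{(0)}$; on the basis, $L_j\bs{e}_S=\bs{e}_{S\setminus\{j\}}$ if $j\in S$ and $L_j\bs{e}_S=0$ otherwise. The argument uses only commutators with $R$ and with the weighted degree matrix $\Lambda_{\bs{x}}$ from~\eqref{eqn::chordal_degree_matrices}, followed by a Vandermonde argument.

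\emph{Step 1: two commutator identities.} A direct check on basis vectors gives $[L_j,R]=I-2\Pi_j$, where $\Pi_j$ is the diagonal projection with $\Pi_j\bs{e}_S=\mathbf{1}_{\{j\in S\}}\bs{e}_S$. Multiplying by $x_j$ and summing over $j$ yields
\[
    [X_1,R]=\Bigl(\sum_{j=1}^n x_j\Bigr)I-2\Lambda_{\bs{x}}.
\]
Similarly, $[L_j,\Lambda_{\bs{x}}]\bs{e}_S=x_j\bs{e}_{S\setminus\{j\}}$ for $j\in S$, so $[L_j,\Lambda_{\bs{x}}]=x_jL_j$. Multiplying by $x_j^r$ and summing gives
\[
    [X_r,\Lambda_{\bs{x}}]=X_{r+1},\qquad r\ge0.
\]

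\emph{Step 2: $\bs{w}$ is a joint eigenvector of $\Lambda_{\bs{x}}$.} Let $\bs{w}\in K_N$ with $X_1\bs{w}=0$. Since $\bs{w}\in V_N$ has weight $n-2N=0$, the norm identity~\eqref{eqn::chordal_UST_norm_identity} gives $\|R\bs{w}\|^2=\|L\bs{w}\|^2=0$, so $R\bs{w}=0$. Applying the first identity of Step~1 to $\bs{w}$ gives $X_1R\bs{w}-RX_1\bs{w}=0$. Hence $\Lambda_{\bs{x}}\bs{w}=c\,\bs{w}$ with $c=\tfrac12\sum_j x_j$.

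\emph{Step 3: all $X_r$ kill $\bs{w}$.} I argue by induction on $r\ge1$, the base case being the hypothesis $X_1\bs{w}=0$. If $X_r\bs{w}=0$, the second identity of Step~1 gives
\[
    X_{r+1}\bs{w}=X_r\Lambda_{\bs{x}}\bs{w}-\Lambda_{\bs{x}}X_r\bs{w}=c\,X_r\bs{w}-0=0 .
\]
Together with $X_0\bs{w}=L\bs{w}=0$, this gives $\sum_j x_j^r L_j\bs{w}=0$ for all $r\ge0$.

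\emph{Step 4: conclude.} Take $r=0,\ldots,n-1$. The coefficient matrix $(x_j^r)$ is a Vandermonde matrix, and it is invertible because the $x_j$ are distinct on $\LX_n$. Therefore $L_j\bs{w}=0$ for every $j$, which means $w_{S\cup\{j\}}=0$ whenever $j\notin S$. Every nonempty $S$ arises this way, so all components of $\bs{w}$ with nonempty index vanish. Since $\bs{w}\in V_N$ with $N\ge1$, this forces $\bs{w}=0$.

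The only real obstacle is noticing the two commutator identities of Step~1. The key point is that $[X_r,\Lambda_{\bs{x}}]=X_{r+1}$ lets one bootstrap from $X_1$ to all $X_r$, once Step~2 makes $\bs{w}$ an eigenvector of $\Lambda_{\bs{x}}$. After that, everything is routine linear algebra.
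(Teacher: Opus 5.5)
Your proof is correct and follows essentially the same route as the paper. You get $R\bs{w}=0$ from the norm identity, use $[X_1,R]=(\sum_j x_j)I-2\Lambda_{\bs{x}}$ to make $\bs{w}$ an eigenvector of $\Lambda_{\bs{x}}$, bootstrap $X_r\bs{w}=0$ for all $r$ via $[X_r,\Lambda_{\bs{x}}]=X_{r+1}$, and conclude $L_j\bs{w}=0$ for every $j$ by the Vandermonde argument. The only differences are cosmetic: you verify $[L_j,R]=I-2\Pi_j$ explicitly, and you start the induction at $r=1$ rather than $r=0$.
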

\begin{proof}
We write $L_j=L_j^{(0)}$ throughout this proof. Let $\bs{w}\in K_N$.
Since $L\bs{w}=0$ and $n=2N$, the norm identity~\eqref{eqn::chordal_UST_norm_identity} gives $R\bs{w}=0$.
\medbreak
Now assume that $X_1\bs{w}=0$, and we show that $X_r \bs{w}=0$ for every $r\ge 0$ by induction. The key is that $[\Lambda_{\bs{x}},L_j]=-x_jL_j$, and hence $[\Lambda_{\bs{x}},X_r]=-X_{r+1}$ for $r\ge0$. If $X_r\bs{w}=0$, then
\[
X_{r+1}\bs{w} = X_r \Lambda_{\bs{x}}\bs{w}.
\]
A direct calculation gives $[R,X_1]=2\Lambda_{\bs{x}}-\sum_{j=1}^n x_j I$.
Since $R\bs{w}=X_1\bs{w}=0$, this identity yields
\[
    \Lambda_{\bs{x}}\bs{w}=\frac12 \sum_{j=1}^n x_j\bs{w}, \qquad X_{r+1}\bs{w} = \frac12\sum_{j=1}^n x_jX_r\bs{w} = 0.
\]
Starting from $X_0\bs{w}=L\bs{w}=0$, induction therefore gives $X_r\bs{w}=0$ for every $r\ge0$.
This shows that $\sum_{j=1}^nx_j^rL_j\bs{w}=0$ for $r=0,\ldots,n-1$.
Since $x_1,\ldots,x_n$ are distinct, the Vandermonde matrix is invertible, thus $L_j\bs{w}=0$ for every $j$.
For any $S$ with $|S|=N$ and any $j\in S$, we obtain
\[
    w_S=(L_j\bs{w})_{S\setminus\{j\}}=0.
\]
Since $\bs{w}\in V_N$, this proves $\bs{w}=0$ and hence injectivity.
\end{proof}

When $\kappa=8$, the eigenvalues of $\xi_0$ and $\xi_2$ coincide. The following lemma identifies the corresponding eigenspace by projection onto $K_{N-1}$ and yields the first dimension formula in Theorem~\ref{thm::UST_decomposition}.
\begin{lemma}\label{lem::chordal_UST_eigenspace}
Suppose $\kappa=8$ and fix $\bs{x}\in\LX_n$.
For $\bs{v}\in\ker\widetilde T^{(0)}$, write $\bs{v}=\sum_{m=0}^N\bs{v}_m$ with $\bs{v}_m\in K_m$.
The projection is an isomorphism
\begin{equation}\label{eqn::chordal_UST_eigenspace_decomposition}
\begin{aligned}
    \ker\bigr(\widehat E(\bs{x})-nI\bigr)
    \longrightarrow K_{N-1},\qquad
    \bs{v}\longmapsto \bs{v}_{N-1}.
\end{aligned}
\end{equation}
In particular,
\[
    \dim\LSchord{2N}^{(0;0;\xi_0)}
    =\dim\ker\bigr(\widehat E(\bs{x})-nI\bigr)
    =\binom{2N}{N-1}-\binom{2N}{N-2}
    =\frac{3N}{N+2}\Cat_N.
\]
\end{lemma}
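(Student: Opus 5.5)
The plan is to solve the eigenvalue equation $(\widehat E(\bs{x})-nI)\bs{v}=0$ block by block, using the block upper triangular form in Lemma~\ref{lem::chordal_UST_triangular_matrix} together with the injectivity in Lemma~\ref{lem::chordal_middle_injectivity}.

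\textbf{Step 1: the diagonal entries.} Set $\kappa=8$ and $n=2N$. Then~\eqref{eqn::xi_p_def} becomes
\[
    \xi_p=p(p-2)+n .
\]
Hence $\xi_0=\xi_2=n$, while for even $p\ge4$ we have $\xi_p-n=p(p-2)>0$. On $K_m$ the diagonal entry is $\xi_{n-2m}$. So $\xi_{n-2m}-n$ vanishes exactly for $m\in\{N-1,N\}$ and is nonzero for $0\le m\le N-2$.

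\textbf{Step 2: the component equations.} Write $\bs{v}=\sum_m\bs{v}_m$ with $\bs{v}_m\in K_m$. By~\eqref{eqn::chordal_UST_graded_dilation}, $(\widehat E(\bs{x})-nI)\bs{v}=0$ splits into one equation for each $K_m$-component:
\[
    (\xi_{n-2m}-n)\bs{v}_m+8X_1\bs{v}_{m+1}=0,\qquad 0\le m\le N,
\]
with the convention $\bs{v}_{N+1}=0$. We read these equations from the top down.

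\textbf{Step 3: reading off the components.}
\begin{itemize}
\item The equation for $m=N$ is trivial.
\item The equation for $m=N-1$ reduces to $X_1\bs{v}_N=0$. Lemma~\ref{lem::chordal_middle_injectivity} then forces $\bs{v}_N=0$. This is the only non-routine input; the injectivity of $X_1:K_N\to K_{N-1}$ is what prevents a Jordan block from forming at the collision.
\item The component $\bs{v}_{N-1}\in K_{N-1}$ is unconstrained.
\item For $m\le N-2$, the coefficient is nonzero, so $\bs{v}_m=-8(\xi_{n-2m}-n)^{-1}X_1\bs{v}_{m+1}$. This is well defined in $K_m$ because $X_1K_{m+1}\subseteq K_m$.
\end{itemize}

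\textbf{Step 4: the projection is an isomorphism.} By downward induction, every eigenvector is uniquely determined by $\bs{v}_{N-1}$, so the projection $\bs{v}\mapsto\bs{v}_{N-1}$ is injective. Conversely, start from an arbitrary $\bs{w}\in K_{N-1}$, set $\bs{v}_N=0$ and $\bs{v}_{N-1}=\bs{w}$, and define the lower components by the recursion. The resulting vector satisfies every component equation, so the projection is also surjective. This proves~\eqref{eqn::chordal_UST_eigenspace_decomposition}. When $N=1$ there are no components with $m\le N-2$, and the argument reduces to $K_0$.

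\textbf{Step 5: the dimension count.} The first equality in the displayed formula is~\eqref{eqn::chordal_dilation_dimension} of Lemma~\ref{lem::Ematrix_tilde}, using $\xi_0=n$. The second is $d_{N-1}$ from Lemma~\ref{lem::chordal_graded_kernel}. For the third, use
\[
    \binom{2N}{N-1}=\frac{N}{N+1}\binom{2N}{N},\qquad
    \binom{2N}{N-2}=\frac{N(N-1)}{(N+1)(N+2)}\binom{2N}{N}.
\]
Their difference is
\[
    \frac{N}{N+1}\cdot\frac{3}{N+2}\binom{2N}{N}=\frac{3N}{N+2}\Cat_N .
\]
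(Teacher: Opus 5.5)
Your proposal is correct and follows the paper's proof essentially step for step: the component recursion $(\xi_{n-2m}-n)\bs{v}_m+8X_1\bs{v}_{m+1}=0$ from the block upper triangular form, Lemma~\ref{lem::chordal_middle_injectivity} to force $\bs{v}_N=0$, and the nonvanishing of $\xi_{n-2m}-n=(n-2m)(n-2m-2)$ for $m\le N-2$ so that $\bs{v}_{N-1}$ determines all lower components. Your explicit surjectivity check and your binomial computation giving $\frac{3N}{N+2}\Cat_N$ spell out details that the paper leaves implicit.
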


\begin{proof}
Write $\widehat E=\widehat E(\bs{x})$ throughout this proof. At $\kappa=8$, we have $\xi_p=n+p(p-2)$, thus $\xi_0=\xi_2=n$.
By~\eqref{eqn::chordal_UST_graded_dilation}, the equation $\bigr(\widehat E-nI\bigr)\bs{v}=0$ is equivalent to
\begin{equation}\label{eqn::chordal_UST_eigen_recursion}
    (\xi_{n-2m}-n)\bs{v}_m+8X_1\bs{v}_{m+1}=0,
    \qquad 0\le m\le N,
\end{equation}
where $\bs{v}_{N+1}:=0$.
The equation at degree $N$ is automatic, while the equation at degree $N-1$ gives $X_1\bs{v}_N=0$. Lemma~\ref{lem::chordal_middle_injectivity} therefore implies $\bs{v}_N=0$.
For $0\le m\le N-2$, we have
\[
    \bs{v}_m
    =-\frac{8}{(n-2m)(n-2m-2)}X_1\bs{v}_{m+1}.
\]
The inclusion $X_1K_{m+1}\subseteq K_m$ ensures $\bs{v}_m\in K_m$ inductively, thus $\bs{v}_{N-1}$ uniquely determines all lower components, proving the isomorphism~\eqref{eqn::chordal_UST_eigenspace_decomposition}.
The stated dimension formula now follows from~\eqref{eqn::chordal_UST_graded_kernel} and Lemma~\ref{lem::Ematrix_tilde}.
\end{proof}

\subsection{Subspaces selected by the third Ward identity}\label{subsec::Ward_3rd}
Throughout this subsection, assume $\kappa=8$ and $n=2N\ge2$, with $\lambda=\nu=0$. We introduce the differential operator in the third Ward identity~\eqref{eqn::Ward_3rd} by setting
\begin{equation}\label{eqn::chordal_third_Ward_operator}
    \LW:=\sum_{j=1}^n x_j^2\partial_j-\frac14\sum_{j=1}^n x_j.
\end{equation}
Direct computation gives
\begin{equation}\label{eqn::chordal_third_Ward_operator_relations}
    [\LD_j,\LW]=4x_j\LD_j+6\LT,
    \qquad [\LT,\LW]=2\LE-\frac n4,
    \qquad [\LE,\LW]=\LW.
\end{equation}
We first obtain the matrix counterpart $W$ of $\LW$ in the first-order system~\eqref{eqn::chordal_first_order_system} in Proposition~\ref{prop::Wmatrix}.
We then derive its renormalized matrix $\widetilde{W}$ in the first-order system~\eqref{eqn::chordal_first_order_system2} in Lemma~\ref{lem::Wmatrix_tilde}. For $N\ge2$, Lemma~\ref{lem::chordal_UST_third_Ward_matrix} expresses the third Ward operator restricted in highest-weight components as an explicit map $Q_{\bs{x}}:K_{N-1}\to K_{N-2}$. Lemma~\ref{lem::chordal_UST_third_Ward} determines the kernel of $Q_{\bs{x}}$ and shows that it has dimension $\Cat_N$. Together with a direct check for $N=1$, this completes the proof of Theorem~\ref{thm::UST_decomposition}.

\begin{proposition}\label{prop::Wmatrix}
Fix $\kappa=8$ and $n=2N\ge 2$ and $\lambda=\nu=0$. 
Define 
\begin{equation}\label{eqn::chordal_third_Ward_matrix}
    W(\bs{x}):=\sum_{j=1}^n x_j^2D_j^{(0)}(\bs{x})
        -\frac14\sum_{j=1}^n x_j I+2\Lambda_{\bs{x}}-\frac14R.
\end{equation}
Then for each $\bs{x}\in\LX_n$, the restriction of $\zeta_{\bs{x}}$ in~\eqref{eqn::chordal_zeta} fives an isomorphism 
\[
  \zeta_{\bs{x}}: \LSchord{n}^{\mathrm{UST}} \longrightarrow \ker T^{(0)}(\bs{x})\cap\ker(E(\bs{x})-nI)\cap\ker W(\bs{x}).
\]
\end{proposition}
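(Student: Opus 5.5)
The argument follows the template of Propositions~\ref{prop::Tmatrix} and~\ref{prop::Ematrix}. The main task is an intertwining identity
\[
    \zeta_{\bs{x}}(\LW\LZ)=W(\bs{x})\,\zeta_{\bs{x}}(\LZ),\qquad \LZ\in\LSchord{n}^{(0;0)}\text{ with }\kappa\LE\LZ=n\LZ,
\]
together with the fact that $\LW$ maps $\LSchord{n}^{(0;0;\xi_0)}$ into a space of BPZ solutions on which $\zeta_{\bs{x}}$ is injective.

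First, I would check that $\LW\LZ$ is again a BPZ solution. By~\eqref{eqn::chordal_third_Ward_operator_relations}, $\LD_j\LW\LZ=\LW\LD_j\LZ+4x_j\LD_j\LZ+6\LT\LZ=0$ whenever $\LD_j\LZ=0$ and $\LT\LZ=0$. Hence $\LW\LZ\in\LSchord{n}^{(0)}$, and Lemma~\ref{lem::chordal_first_order_system} makes $\zeta_{\bs{x}}$ injective there. No Ward identity for $\LW\LZ$ is needed: the target statement only requires that $\LW\LZ=0$ be equivalent to $\zeta_{\bs{x}}(\LW\LZ)=0$.

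Second, I would compute $\partial_S\LW\LZ$ for square-free $S$ via commutators. One has
\[
    [\partial_j,x_j^2\partial_j]=2x_j\partial_j,\qquad [\partial_j,x_j]=1 .
\]
Commuting $\partial_S$ past $\LW$ then gives
\[
    \partial_S\LW\LZ=\LW\partial_S\LZ+2\sum_{j\in S}x_j\partial_j\partial_S\LZ-\frac14\sum_{j\in S}\partial_{S\setminus\{j\}}\LZ,
\]
with no second-order commutator terms because each index occurs once in $S$.

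Third, I would rewrite each term in the first-order coordinates at $\bs{x}$. The term $\LW\partial_S\LZ$ becomes $\sum_j x_j^2 D_j^{(0)}\bs{u}-\frac14\sum_j x_j\,\bs{u}$, using $\partial_j\bs{u}=D^{(0)}_j\bs{u}$. The last sum is exactly $-\frac14(R\bs{u})_S$ by the formula for $R$. The middle term is the delicate one. For $j\in S$, the quantity $\partial_j\partial_S\LZ$ is a second derivative in $x_j$, and it is not yet a coordinate.

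I would handle the middle term by rewriting $2x_j\partial_j\partial_S\LZ$ as $2x_j\partial_j u_S$ with $u_S=\partial_S\LZ$ and $j\in S$. Summing over $j\in S$, the unwanted pieces should be absorbed by using $\LT\LZ=0$ and $\kappa\LE\LZ=n\LZ$, plus the recursion~\eqref{eqn::chordal_jet_recursion}. In this way the middle term should collapse to the diagonal contribution $2\Lambda_{\bs{x}}\bs{u}$ in the stated formula. If the bare identity does not close, the formula for $W$ must already encode the restriction to $\ker T^{(0)}\cap\ker(E-nI)$. In that case I would prove the intertwining only on $\zeta_{\bs{x}}(\LSchord{n}^{(0;0;\xi_0)})$, which suffices. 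I expect this bookkeeping step to be the main obstacle.

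Finally, injectivity of the restricted $\zeta_{\bs{x}}$ is inherited from Proposition~\ref{prop::Ematrix}. For surjectivity, I would take $\bs{u}$ in the triple intersection. Proposition~\ref{prop::Ematrix} gives $\LZ\in\LSchord{n}^{(0;0;\xi_0)}$ with $\zeta_{\bs{x}}(\LZ)=\bs{u}$. The intertwining identity then gives $\zeta_{\bs{x}}(\LW\LZ)=W\bs{u}=0$, so injectivity on $\LSchord{n}^{(0)}$ yields $\LW\LZ=0$. At $\kappa=8$ this is exactly the third Ward identity~\eqref{eqn::Ward_3rd}, so $\LZ\in\LSchord{n}^{\mathrm{UST}}$.
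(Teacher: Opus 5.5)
Your overall plan (show $\LW\LZ$ is again a BPZ solution, establish $\zeta_{\bs{x}}(\LW\LZ)=W(\bs{x})\zeta_{\bs{x}}(\LZ)$, then transfer injectivity and surjectivity from Proposition~\ref{prop::Ematrix}) is the paper's. Your first and last steps are correct, and needing only $\LW\LZ\in\LSchord{n}^{(0)}$ rather than $\LW\LZ\in\LSchord{n}^{(0;0;n+8)}$ is a harmless economy.

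The gap is in the commutator computation that is supposed to produce $W(\bs{x})$. From $[\partial_j,x_j^2\partial_j]=2x_j\partial_j$ one gets, for $j\in S$,
\[
\partial_S\bigl(x_j^2\partial_j\LZ\bigr)=\partial_{S\setminus\{j\}}\bigl(x_j^2\partial_j^2\LZ+2x_j\partial_j\LZ\bigr)=x_j^2\partial_j\partial_S\LZ+2x_j\partial_S\LZ .
\]
So the middle term is $2\sum_{j\in S}x_j\,\partial_S\LZ$, not $2\sum_{j\in S}x_j\,\partial_j\partial_S\LZ$; you applied $\partial_j$ once too often. With the correct term nothing is delicate: it equals $2(\Lambda_{\bs{x}}\bs{u})_S$ outright. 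Combine it with $(\LW\partial_S\LZ)(\bs{x})=\sum_j x_j^2(D_j^{(0)}\bs{u})_S-\frac14\sum_j x_j u_S$ and $-\frac14\sum_{j\in S}u_{S\setminus\{j\}}=-\frac14(R\bs{u})_S$. This gives the intertwining identity for every $\LZ\in\LSchord{n}^{(0)}$, without using $\LT\LZ=0$ or $\kappa\LE\LZ=n\LZ$; it is exactly the paper's formula~\eqref{eqn::chordal_third_Ward_solution_map_derivative}.

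As written, however, the step you flag as the main obstacle would fail, and restricting to $\LSchord{n}^{(0;0;\xi_0)}$ does not rescue it. To close your version you would need $\sum_{j\in S}x_j\partial_j\partial_S\LZ=\sum_{j\in S}x_j\partial_S\LZ$ for all $\bs{x}$ and all $\LZ$ in that space. At $\kappa=8$, the identity $\kappa\LE\LZ=n\LZ$ makes $\LZ$ homogeneous of degree $n/8$. Then the left side is homogeneous of degree $n/8-|S|$ and the right side of degree $n/8-|S|+1$, so both would have to vanish identically. For $S=\{1\}$ this forces $\partial_1\LZ\equiv0$, which is already false for $(x_2-x_1)^{1/4}\in\LSchord{2}^{(0;0;2)}$ (Remark~\ref{rem::xi_p_fail}).

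So your argument leaves the key identity unproved. Once the commutator is corrected, the rest of your proof goes through exactly as you describe.
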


\begin{proof}
The proof is similar to that of Proposition~\ref{prop::Ematrix}.
Using~\eqref{eqn::chordal_third_Ward_operator_relations}, for every $\LZ\in\LSchord{n}^{(0;0;n)}$, the function $\LW\LZ$ belongs to $\LSchord{n}^{(0;0;n+8)}$. Thus
\begin{equation}\label{eqn::chordal_third_Ward_solution_map}
    \LW:\LSchord{n}^{(0;0;n)}
    \longrightarrow\LSchord{n}^{(0;0;n+8)}.
\end{equation}
Since, for $\bs{\beta}\in\{0,1\}^n$,
\begin{equation}\label{eqn::chordal_third_Ward_solution_map_derivative}
    \partial^{\bs{\beta}}(\LW\LZ)
    =\LW\partial^{\bs{\beta}}\LZ
      +2\left(\sum_{j=1}^n\beta_jx_j\right)\partial^{\bs{\beta}}\LZ
      -\frac14\sum_{j:\beta_j=1}\partial^{\bs{\beta}-\bs{e}_j}\LZ,
\end{equation}
the first-order system~\eqref{eqn::chordal_first_order_system} gives
\begin{equation}\label{eqn::chordal_third_Ward_original_intertwining}
    \zeta_{\bs{x}}(\LW\LZ)=W(\bs{x})\zeta_{\bs{x}}(\LZ).
\end{equation}

Fix $\bs{x}\in\LX_n$, and write $T=T^{(0)}(\bs{x})$, $E=E(\bs{x})$ and $W=W(\bs{x})$. If $\LZ\in\LSchord{n}^{\mathrm{UST}}$, then Proposition~\ref{prop::Ematrix} and~\eqref{eqn::chordal_third_Ward_original_intertwining} give
\[
    T\zeta_{\bs{x}}(\LZ)=0,
    \qquad (E-nI)\zeta_{\bs{x}}(\LZ)=0,
    \qquad W\zeta_{\bs{x}}(\LZ)=\zeta_{\bs{x}}(\LW\LZ)=0.
\]
Thus $\zeta_{\bs{x}}$ maps $\LSchord{n}^{\mathrm{UST}}$ injectively into the stated intersection.

Conversely, let $\bs{u}\in\ker T\cap\ker(E-nI)\cap\ker W$. By Proposition~\ref{prop::Ematrix}, there is a unique $\LZ\in\LSchord{n}^{(0;0;n)}$ with $\zeta_{\bs{x}}(\LZ)=\bs{u}$. Eq.~\eqref{eqn::chordal_third_Ward_original_intertwining} gives
\[
    \zeta_{\bs{x}}(\LW\LZ)=W\bs{u}=0.
\]
Since $\LW\LZ\in\LSchord{n}^{(0;0;n+8)}$, injectivity of $\zeta_{\bs{x}}$ implies $\LW\LZ=0$. Hence $\LZ\in\LSchord{n}^{\mathrm{UST}}$, proving surjectivity.
\end{proof}

\begin{lemma}\label{lem::Wmatrix_tilde}
For every $\bs{x}\in\LX_n$, let $M(\bs{x})$ be the matrix from Lemma~\ref{lem::chordal_triangular_change}. Then
\begin{equation}\label{eqn::chordal_third_Ward_similarity}
\begin{aligned}
    \widetilde W(\bs{x})
    &:=M(\bs{x})^{-1}W(\bs{x})M(\bs{x})\\
    &=X_2+\frac12\sum_{i<j}(x_i+x_j)(P_{ij}-I)
      +2\Lambda_{\bs{x}}+\frac{N-1}{2}\sum_{j=1}^n x_jI
      +\frac12R\bigl((N-1)I-\Lambda\bigr),
\end{aligned}
\end{equation}
where $W(\bs{x})$ is defined in~\eqref{eqn::chordal_third_Ward_matrix}. Moreover, $\widetilde W(\bs{x})$ maps $\ker(\widehat E(\bs{x})-nI)$ into $\ker(\widehat E(\bs{x})-(n+8)I)$. Denote this restriction by
\begin{equation}\label{eqn::chordal_third_Ward_restriction}
    \widehat W(\bs{x}):
    \ker(\widehat E(\bs{x})-nI)
    \longrightarrow\ker(\widehat E(\bs{x})-(n+8)I).
\end{equation}
The map $\LZ\mapsto M(\bs{x})^{-1}\zeta_{\bs{x}}(\LZ)$ restricts to an isomorphism from $\LSchord{n}^{\mathrm{UST}}$ onto $\ker\widehat W(\bs{x})$. In particular,
\begin{equation}\label{eqn::chordal_third_Ward_kernel_dimension}
    \dim\LSchord{n}^{\mathrm{UST}}
    =\dim\ker\widehat W(\bs{x}).
\end{equation}
\end{lemma}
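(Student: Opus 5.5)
The plan has two parts: first compute the conjugated matrix $\widetilde W(\bs{x})$ explicitly, following the template of Lemma~\ref{lem::Ematrix_tilde}, and then deduce the mapping and isomorphism statements from Proposition~\ref{prop::Wmatrix}.

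\textbf{Step 1 (splitting the conjugate).} Lemma~\ref{lem::chordal_triangular_change} gives $\partial_j M=D_j^{(0)}M-M\widetilde D_j^{(0)}$. Using this in \eqref{eqn::chordal_third_Ward_matrix}, write
\[
M^{-1}WM=\sum_j x_j^2\widetilde D_j^{(0)}+M^{-1}\Bigl(\sum_j x_j^2\partial_j M\Bigr)-\tfrac14\sum_j x_j I+M^{-1}\bigl(2\Lambda_{\bs{x}}-\tfrac14 R\bigr)M .
\]

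\textbf{Step 2 (the explicit part).} By \eqref{eqn::chordal_explicit_connection} with $\kappa=8$, and grouping the pairs $j<\ell$ via $(x_j^2-x_\ell^2)/(x_j-x_\ell)=x_j+x_\ell$,
\[
\sum_j x_j^2\widetilde D_j^{(0)}=X_2+\tfrac12\sum_{j<\ell}(x_j+x_\ell)(P_{j\ell}-I).
\]

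\textbf{Step 3 (the terms involving $M$).} These should be handled exactly as in Lemma~\ref{lem::Ematrix_tilde}, by comparing two expansions of $\bigl(\partial_S(\LW(\LZ_{\shuffle_n}\LF))\bigr)_S$ for an arbitrary smooth $\LF$.
\begin{itemize}
\item In the $\bs u$-coordinates, formula \eqref{eqn::chordal_third_Ward_solution_map_derivative} gives $(\LW+2\Lambda_{\bs{x}}-\frac14 R)(M\bs v)$.
\item In the $\bs v$-coordinates, one first needs $\LZ_{\shuffle_n}^{-1}\LW\LZ_{\shuffle_n}=\LW+\frac14\sum_{j<\ell}(x_j+x_\ell)$, using $\sum_j x_j^2\partial_j\log(x_\ell-x_j)=x_j+x_\ell$. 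Note that $\frac14\sum_{j<\ell}(x_j+x_\ell)=\frac{n-1}{4}\sum_j x_j$.
\item One then needs the commutator $[\mathcal J_S,\LW]$. This is to be computed by induction on $|S|$ from \eqref{eqn::chordal_corrected_jet_recursion}, using $[\partial_j,\LW]=2x_j\partial_j-\frac14$ and $[(x_j-x_\ell)^{-1},\LW]=(x_j+x_\ell)(x_j-x_\ell)^{-1}$. The expected form is $[\mathcal J_S,\LW]=2(\sum_{j\in S}x_j)\mathcal J_S+(\text{lower terms})$, where the lower terms are a combination of $\mathcal J_{S\setminus\{i\}}$ with coefficients depending on $|S|$. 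Matching these against \eqref{eqn::chordal_third_Ward_similarity} should produce the $\frac12R((N-1)I-\Lambda)$ term, together with the constant shift $(\frac{n-1}{4}-\frac14)\sum x_j=\frac{N-1}{2}\sum x_j$.
\end{itemize}
Since $\bs v(\bs{x})$ can be prescribed arbitrarily, the two expansions give a matrix identity for $\sum x_j^2\partial_jM$. This is the \textbf{main obstacle}: the bookkeeping of the lower-order $R$-type terms and of their $|S|$-dependent coefficient in the induction for $[\mathcal J_S,\LW]$. I would first check the identity on $|S|\le 2$ by hand, to fix the coefficients, and then run the induction.

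\textbf{Step 4 (mapping and isomorphism).} By \eqref{eqn::chordal_third_Ward_solution_map}, $\LW$ maps $\LSchord{n}^{(0;0;n)}$ into $\LSchord{n}^{(0;0;n+8)}$. Combined with the intertwining \eqref{eqn::chordal_third_Ward_original_intertwining} and Lemma~\ref{lem::Ematrix_tilde}, this gives
\[
\widetilde W M^{-1}\zeta_{\bs{x}}(\LZ)=M^{-1}\zeta_{\bs{x}}(\LW\LZ)\in\ker\bigl(\widehat E-(n+8)I\bigr).
\]
Since $M^{-1}\zeta_{\bs{x}}$ maps $\LSchord{n}^{(0;0;n)}$ onto $\ker(\widehat E-nI)$, the restriction $\widehat W$ is well defined. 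Proposition~\ref{prop::Wmatrix}, conjugated by $M$, then identifies $M^{-1}\zeta_{\bs{x}}(\LSchord{n}^{\mathrm{UST}})$ with $\ker(\widehat E-nI)\cap\ker\widetilde W=\ker\widehat W$. This yields \eqref{eqn::chordal_third_Ward_kernel_dimension}.
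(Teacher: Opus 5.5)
Your outline follows the paper's proof: the same splitting of $M^{-1}WM$, the same comparison of two expansions of $\bigl(\partial_S(\LW(\LZ_{\shuffle_n}\LF))\bigr)_S$, and the same argument for the restriction and the kernel. Steps 1, 2 and 4 are correct. Step 3, however, would not close as you have written it.

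The conjugation gives $\LZ_{\shuffle_n}^{-1}\LW\LZ_{\shuffle_n}=\LW+\frac{n-1}{4}\sum_i x_i$. Multiplication by $\sum_i x_i$ does not commute with $\mathcal J_S$. The same induction on \eqref{eqn::chordal_corrected_jet_recursion}, using $[\partial_j,\sum_i x_i]=1$ and $[(x_j-x_\ell)^{-1},\sum_i x_i]=0$, gives $[\mathcal J_S,\sum_i x_i]=\sum_{i\in S}\mathcal J_{S\setminus\{i\}}$. The commutator you plan to compute comes out as
\[
[\mathcal J_S,\LW]=2\Bigl(\sum_{j\in S}x_j\Bigr)\mathcal J_S-\frac{2|S|-1}{4}\sum_{i\in S}\mathcal J_{S\setminus\{i\}},
\]
for instance $2(x_1+x_2)\mathcal J_{\{1,2\}}-\frac34(\partial_1+\partial_2)$ for $S=\{1,2\}$. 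Its coefficients do not involve $N$. On its own it produces the term $-\frac14R(2\Lambda+I)$, so no matching can give the $N$-dependent term $\frac12R((N-1)I-\Lambda)$.

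The missing piece comes from commuting $\mathcal J_S$ past $\frac{n-1}{4}\sum_i x_i$. This contributes an extra $\frac{2N-1}{4}\sum_{i\in S}\mathcal J_{S\setminus\{i\}}$. It raises the coefficient to $\frac{N-|S|}{2}$, which is exactly $\frac12R((N-1)I-\Lambda)$.

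The paper handles this by computing $[\mathcal J_S,\LW+\frac{2N-1}{4}\sum_i x_i]$ directly. It starts from $[\partial_j,\LW+\frac{2N-1}{4}\sum_i x_i]=2x_j\partial_j+\frac{N-1}{2}$, which leads to \eqref{eqn::chordal_third_Ward_jet_commutator}. Your scalar coefficient $\frac{N-1}{2}\sum_j x_j$ is correct, so only the $R$-term needs this correction; with it, your argument goes through as in the paper.
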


\begin{proof}
We write $T=T^{(0)}$, $\widetilde T=\widetilde T^{(0)}$ and $\widetilde D_j=\widetilde D_j^{(0)}$ throughout this proof.
Using~\eqref{eqn::chordal_connection_change} and~\eqref{eqn::chordal_third_Ward_matrix}, we obtain
\[
\begin{aligned}
    M^{-1}WM
    &=\sum_{j=1}^n x_j^2\widetilde D_j+M^{-1}\left(\LW M\right)+
      M^{-1}\left(2\Lambda_{\bs{x}}-\frac14R\right)M.
\end{aligned}
\]
We calculate these terms separately.
\begin{itemize}
    \item We first calculate the terms involving $M$.
    For a smooth function $\LF$, put $\bs u=(\partial_S(\LZ_{\shuffle_n}\LF))_S$ and $\bs v=(\mathcal J_S\LF)_S$, so that $\bs u=M\bs v$ by Lemma~\ref{lem::chordal_triangular_change}.
    In the original coordinates, Eq.~\eqref{eqn::chordal_third_Ward_solution_map_derivative} gives
    \[
    \begin{aligned}
        \bigl(\partial_S(\LW(\LZ_{\shuffle_n}\LF))\bigr)_S
        &=\left(\mathcal W+2\Lambda_{\bs{x}}-\frac14R\right)\bs u=\left(\mathcal W+2\Lambda_{\bs{x}}-\frac14R\right)(M\bs v) \\
        &=\left(\LW M\right)\bs{v} + M\LW \bs{v} + \left(\frac14\sum_{j=1}^n x_j+2\Lambda_{\bs{x}}-\frac14R\right)M\bs v.
    \end{aligned}
    \]
    In the new coordinates, since $\partial_j\log\LZ_{\shuffle_n}=\frac14\sum_{\ell\ne j}(x_j-x_\ell)^{-1}$, pairing the terms indexed by $j,\ell$ gives
    \[
        \LZ_{\shuffle_n}^{-1}\LW(\LZ_{\shuffle_n}\LF)
        =\left(\mathcal W+\frac{2N-1}{4}\sum_{j=1}^n x_j\right)\LF.
    \]
    The identities
    \[
    \begin{aligned}
        \left[\partial_j,\LW+\frac{2N-1}{4}\sum_{i=1}^n x_i\right]
        &=2x_j\partial_j+\frac{N-1}{2}I,\\
        \left[(x_j-x_\ell)^{-1},\LW+\frac{2N-1}{4}\sum_{i=1}^n x_i\right]
        &=\frac{x_j+x_\ell}{x_j-x_\ell}.
    \end{aligned}
    \]
    give, by induction in~\eqref{eqn::chordal_corrected_jet_recursion},
    \begin{equation}\label{eqn::chordal_third_Ward_jet_commutator}
        \left[\mathcal J_S,\LW+\frac{2N-1}{4}\sum_{j=1}^n x_j\right]
        =2\left(\sum_{j\in S}x_j\right)\mathcal J_S
          +\frac{N-|S|}{2}
           \sum_{i\in S}\mathcal J_{S\setminus\{i\}}.
    \end{equation}
    Thus we obtain
    \[
    \begin{aligned}
        \bigl(\partial_S(\LW(\LZ_{\shuffle_n}\LF))\bigr)_S
        &=M\left(\mathcal J_S\left(\left(\LW+\frac{2N-1}{4}\sum_{j=1}^n x_j\right)\LF\right)\right)_S\\
        &=M\left(\LW+2\Lambda_{\bs{x}}+\frac{2N-1}{4}\sum_{j=1}^n x_jI
          +\frac12R\bigl((N-1)I-\Lambda\bigr)\right)\bs v.
    \end{aligned}
    \]
    Compare the two expressions of  $\bigl(\partial_S(\LW(\LZ_{\shuffle_n}\LF))\bigr)_S$.
    Since the triangular relation~\eqref{eqn::chordal_triangular_change_of_v} allows $\bs v(\bs{x})$ to be prescribed arbitrarily, we have
    \[
        \LW M+\left(2\Lambda_{\bs{x}}-\frac14R\right)M
        =M\left(2\Lambda_{\bs{x}}+\frac{N-1}{2}\sum_{j=1}^n x_jI
          +\frac12R\bigl((N-1)I-\Lambda\bigr)\right).
    \]
    \item The explicit formula~\eqref{eqn::chordal_explicit_connection} and $P_{j\ell}=P_{\ell j}$ give
    \[
    \begin{aligned}
        \sum_{j=1}^n x_j^2\widetilde D_j
        &=X_2+\frac12\sum_{j<\ell}(x_j+x_\ell)(P_{j\ell}-I).
    \end{aligned}
    \]
\end{itemize}
Combining the two identities above proves~\eqref{eqn::chordal_third_Ward_similarity}.
\medbreak
For any $\bs w\in\ker(\widehat E(\bs{x})-nI)$, Lemma~\ref{lem::Ematrix_tilde} gives a unique $\LZ\in\LSchord{n}^{(0;0;n)}$ for which
\[
    \bs w=M(\bs{x})^{-1}\zeta_{\bs{x}}(\LZ).
\]
Since $\LW\LZ\in\LSchord{n}^{(0;0;n+8)}$ by~\eqref{eqn::chordal_third_Ward_solution_map}, the intertwining identity~\eqref{eqn::chordal_third_Ward_original_intertwining} and Lemma~\ref{lem::Ematrix_tilde} give
\[
    \widetilde W(\bs{x})M(\bs{x})^{-1}\zeta_{\bs{x}}(\LZ)
    =M(\bs{x})^{-1}\zeta_{\bs{x}}(\LW\LZ)
    \in\ker(\widehat E(\bs{x})-(n+8)I).
\]
Thus $\widetilde W(\bs{x})$ maps $\ker(\widehat E(\bs{x})-nI)$ into $\ker(\widehat E(\bs{x})-(n+8)I)$, and $\widehat W(\bs{x})$ is well-defined. Finally, suppressing the dependence on $\bs{x}$, the similarity identities give
\[
\begin{aligned}
    M^{-1}\bigl(\ker T\cap\ker(E-nI)\cap\ker W\bigr)
    &=\ker\widetilde T\cap\ker(\widetilde E-nI)\cap\ker\widetilde W=\ker\widehat W.
\end{aligned}
\]
Combining this identity with Proposition~\ref{prop::Wmatrix} proves the isomorphism and~\eqref{eqn::chordal_third_Ward_kernel_dimension}.
\end{proof}

Lemma~\ref{lem::chordal_UST_eigenspace} identifies the source space of $\widehat W(\bs{x})$ with $K_{N-1}$ by a projection. For $N\ge2$, the next lemma identifies the target space with $K_{N-2}$ and expresses $\widehat W(\bs{x})$ as a map $Q_{\bs{x}}:K_{N-1}\to K_{N-2}$ under these identifications. For $\bs v\in\ker\widetilde T^{(0)}$, we write $\bs v=\sum_{m=0}^N\bs v_m$, with $\bs v_m\in K_m$, according to the decomposition~\eqref{eqn::chordal_UST_graded_kernel}.
\begin{lemma}\label{lem::chordal_UST_third_Ward_matrix}
Suppose $N\ge2$. For each $\bs{x}\in\LX_n$, define
\begin{equation}\label{eqn::chordal_UST_third_Ward_matrix}
    Q_{\bs{x}}
    :=X_2+\left(\Lambda_{\bs{x}}-\frac12\sum_{j=1}^n x_jI\right)X_1
      -\frac16RX_1^2:K_{N-1}\longrightarrow K_{N-2}.
\end{equation}
Here $X_1^2$ means $X_1\circ X_1$. For $\bs v\in\ker(\widehat E(\bs{x})-nI)$, we have
\begin{equation}\label{eqn::chordal_UST_third_Ward_intertwining}
    \bigl(\widehat W(\bs{x})\bs v\bigr)_{N-2}
    =Q_{\bs{x}}\bs v_{N-1}.
\end{equation}
The projection isomorphism $\bs v\mapsto\bs v_{N-1}$ in Lemma~\ref{lem::chordal_UST_eigenspace} restricts to an isomorphism from $\ker\widehat W(\bs{x})$ onto $\ker Q_{\bs{x}}$. Consequently, the map
\[
    \LZ\longmapsto\bigl(M(\bs{x})^{-1}\zeta_{\bs{x}}(\LZ)\bigr)_{N-1}
\]
is an isomorphism from $\LSchord{n}^{\mathrm{UST}}$ onto $\ker Q_{\bs{x}}$. In particular,
\begin{equation}\label{eqn::chordal_UST_third_Ward_dimension}
    \dim\LSchord{n}^{\mathrm{UST}}=\dim\ker Q_{\bs{x}}.
\end{equation}
\end{lemma}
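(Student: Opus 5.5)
The plan is to analyse the target space of $\widehat W(\bs{x})$ the same way Lemma~\ref{lem::chordal_UST_eigenspace} analysed the source space, and then read off one graded component of $\widetilde W(\bs{x})\bs v$.

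\textbf{Step 1: the target space projects isomorphically onto $K_{N-2}$.} At $\kappa=8$ we have $\xi_p=n+p(p-2)$, so $n+8=\xi_4$. Let $\bs w\in\ker(\widehat E(\bs{x})-(n+8)I)$. By~\eqref{eqn::chordal_UST_graded_dilation}, its components satisfy
\[
(\xi_{n-2m}-n-8)\bs w_m+8X_1\bs w_{m+1}=0 .
\]
\begin{itemize}
\item For $m=N$ and $m=N-1$ the coefficients are $\xi_0-n-8=\xi_2-n-8=-8\neq0$. This forces $\bs w_N=0$ and then $\bs w_{N-1}=0$.
\item For $m\le N-3$ we have $\xi_{n-2m}\ne\xi_4$, since $p(p-2)=8$ only for $p=4$ among $p\ge0$. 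So each $\bs w_m$ is determined by $\bs w_{m+1}$, and $X_1K_{m+1}\subseteq K_m$ keeps it in $K_m$.
\end{itemize}
Hence $\bs w\mapsto\bs w_{N-2}$ is injective on the target space. It follows that $\ker\widehat W(\bs{x})=\{\bs v:(\widehat W(\bs{x})\bs v)_{N-2}=0\}$. Composing with the source isomorphism $\bs v\mapsto\bs v_{N-1}$ of Lemma~\ref{lem::chordal_UST_eigenspace} and with Lemma~\ref{lem::Wmatrix_tilde} gives all the "consequently" statements, once~\eqref{eqn::chordal_UST_third_Ward_intertwining} is proved.

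\textbf{Step 2: explicit components of the source vector.} For $\bs v\in\ker(\widehat E(\bs{x})-nI)$, Lemma~\ref{lem::chordal_UST_eigenspace} gives $\bs v_N=0$. Its recursion then gives, using $(n-2m)(n-2m-2)=8$ for $m=N-2$ and $=24$ for $m=N-3$,
\[
\bs v_{N-2}=-X_1\bs v_{N-1},\qquad \bs v_{N-3}=\tfrac13X_1^2\bs v_{N-1}.
\]
Since $\widetilde W\bs v\in\ker\widetilde T^{(0)}$ and $K_m\subseteq V_m$, the $K_{N-2}$-component of $\widetilde W\bs v$ is simply its $V_{N-2}$-component. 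In~\eqref{eqn::chordal_third_Ward_similarity}, $X_2$ lowers degree by one, $R$ raises it by one, and the permutation, $\Lambda_{\bs{x}}$ and scalar terms preserve degree. Therefore
\[
(\widetilde W\bs v)_{N-2}=X_2\bs v_{N-1}-A_{\bs{x}}X_1\bs v_{N-1}+\tfrac13 RX_1^2\bs v_{N-1},
\]
where $A_{\bs{x}}:=\frac12\sum_{i<j}(x_i+x_j)(P_{ij}-I)+2\Lambda_{\bs{x}}+\frac{N-1}2\sum_jx_jI$. The last term uses $\frac12R\bigl((N-1)-(N-3)\bigr)\cdot\frac13=\frac13R$.

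\textbf{Step 3 (main obstacle): simplify $A_{\bs{x}}$ on $X_1\bs v_{N-1}\in K_{N-2}$.} The goal is to rewrite the weighted permutation sum in the $\mathfrak{sl}_2$ language, as Lemma~\ref{lem::chordal_dilation_casimir} did for the unweighted sum. I would attempt to derive an identity of the form
\[
\tfrac12\sum_{i<j}(x_i+x_j)P_{ij}=\alpha\,RX_1+(\text{polynomial in }\Lambda,\Lambda_{\bs{x}},\textstyle\sum_j x_j)+\beta\, X_1R .
\]
The derivation would mirror that lemma: a transposition that swaps $i\in S$ with $j\notin S$ carries weight $x_i+x_j$, while $RX_1$ and $X_1R$ carry weights $x_i$ and $x_j$ respectively.

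On $K_{N-2}$, the element $\bs u=X_1\bs v_{N-1}$ satisfies $L\bs u=0$, so the $X_1R$ term can be handled using $[R,X_1]=2\Lambda_{\bs{x}}-\sum_jx_jI$. Terms with $RX_1\bs u=RX_1^2\bs v_{N-1}$ should then combine with the $\frac13$ to give the coefficient $-\frac16$, and the diagonal terms should reduce to $-(\Lambda_{\bs{x}}-\frac12\sum_j x_j)X_1$. This bookkeeping is where I expect the real work. If the direct identity is messy, I would instead verify~\eqref{eqn::chordal_UST_third_Ward_intertwining} coefficient-wise on basis vectors $\bs e_S$ with $|S|=N-2$.

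Finally, I would check that $Q_{\bs{x}}$ indeed lands in $K_{N-2}$. This follows automatically from $(\widehat W\bs v)_{N-2}\in K_{N-2}$ together with the surjectivity of $\bs v\mapsto\bs v_{N-1}$ onto $K_{N-1}$.
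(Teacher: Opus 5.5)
You got Steps 1 and 2 right, and they follow the paper's proof. Because $n+8=\xi_4$, the target space projects injectively onto $K_{N-2}$. The source components are $\bs v_N=0$, $\bs v_{N-2}=-X_1\bs w$ and $\bs v_{N-3}=\frac13X_1^2\bs w$ (for $N=2$ read $X_1^2\bs w=0$). Reading off the $V_{N-2}$-component then reduces everything to evaluating $A_{\bs{x}}$ on $\bs u=X_1\bs w\in K_{N-2}$.

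The gap is in Step 3, which carries the actual content of the intertwining identity, and the route you sketch does not work. It is not true that $RX_1$ and $X_1R$ carry the weights $x_i$ and $x_j$ respectively. Since $[R,X_1]=2\Lambda_{\bs{x}}-\sum_jx_jI$ is diagonal, the two operators have the same off-diagonal part. Expanding gives, for $|S|=m$,
\[
(RX_1\bs u)_S=\Bigl(\sum_{i\in S}x_i\Bigr)u_S+\sum_{i\in S,\,j\notin S}x_j\,u_{(S\setminus\{i\})\cup\{j\}},
\]
so both weight a swap by the index $j\notin S$ that is added. The left side of your proposed identity has off-diagonal coefficient $\frac12(x_i+x_j)$, which depends on $i$. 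Any combination $\alpha RX_1+\beta X_1R+(\text{diagonal})$ has coefficient $(\alpha+\beta)x_j$ instead. So no global identity of that form holds, and the $x_i$-part would need an operator like $X_1^{\mathsf T}L$.

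The missing idea is that the highest-weight condition $L\bs u=0$ is used exactly on this $x_i$-weighted part, not for handling $X_1R$. For $\bs u\in K_m$,
\[
\sum_{j\notin S}u_{(S\setminus\{i\})\cup\{j\}}=(L\bs u)_{S\setminus\{i\}}-u_S=-u_S,
\qquad\text{hence}\qquad
\sum_{i\in S,\,j\notin S}x_i\,u_{(S\setminus\{i\})\cup\{j\}}=-(\Lambda_{\bs{x}}\bs u)_S .
\]
Combine this with the display above and with the diagonal contribution $-\bigl[(n-2m)\sum_{i\in S}x_i+m\sum_jx_j\bigr]u_S$ from the $-I$ terms. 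This yields the paper's identity
\[
\sum_{i<j}(x_i+x_j)(P_{ij}-I)\bs u=RX_1\bs u-(n-2m+2)\Lambda_{\bs{x}}\bs u-m\Bigl(\sum_jx_j\Bigr)\bs u,\qquad \bs u\in K_m .
\]
At $m=N-2$ this gives $A_{\bs{x}}\bs u=\frac12RX_1\bs u-\bigl(\Lambda_{\bs{x}}-\frac12\sum_jx_j\bigr)\bs u$. With your Step 2 expression, the coefficient of $RX_1^2\bs w$ becomes $-\frac12+\frac13=-\frac16$, which produces $Q_{\bs{x}}\bs w$.

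Your fallback of checking coordinates directly can succeed only if $L\bs u=0$ is used in this way. The needed identity is false for general $\bs u\in V_{N-2}$, so it cannot be tested on individual basis vectors $\bs e_S$.
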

\begin{proof}
We first compute the action of $\widetilde W(\bs{x})$ on each $K_m$. A weighted version of the permutation calculation in Lemma~\ref{lem::chordal_dilation_casimir} gives, for $\bs u\in K_m$,
\begin{equation}\label{eqn::chordal_weighted_permutation_sum}
    \sum_{i<j}(x_i+x_j)(P_{ij}-I)\bs u
    =RX_1\bs u-(n-2m+2)\Lambda_{\bs{x}}\bs u
      -m\left(\sum_{j=1}^n x_j\right)\bs u.
\end{equation}
Indeed, fix $S$ with $|S|=m$. As in the proof of Lemma~\ref{lem::chordal_dilation_casimir}, only transpositions exchanging a label in $S$ with one outside $S$ contribute. Hence
\[
\begin{aligned}
    \left(\sum_{i<j}(x_i+x_j)(P_{ij}-I)\bs u\right)_S
    &=\sum_{\substack{i\in S\\j\notin S}}
        (x_i+x_j)u_{(S\setminus\{i\})\cup\{j\}}-\left[(n-2m)\sum_{i\in S}x_i
         +m\sum_{j=1}^n x_j\right]u_S,
\end{aligned}
\]
where we used $\sum_{i\in S,j\notin S}(x_i+x_j)=(n-m)\sum_{i\in S}x_i+m\sum_{j\notin S}x_j$.
The identities $(L\bs u)_{S\setminus\{i\}}=0$ and $(RX_1\bs u)_S=\sum_{i\in S}(X_1\bs u)_{S\setminus\{i\}}$ give
\[
\begin{aligned}
    \sum_{\substack{i\in S\\j\notin S}}
       x_i u_{(S\setminus\{i\})\cup\{j\}}
       &=-\left(\sum_{i\in S}x_i\right)u_S,\\
    \sum_{\substack{i\in S\\j\notin S}}
       x_j u_{(S\setminus\{i\})\cup\{j\}}
       &=(RX_1\bs u)_S-\left(\sum_{i\in S}x_i\right)u_S.
\end{aligned}
\]
Substituting these into the preceding expansion and using $(\Lambda_{\bs{x}}\bs u)_S=(\sum_{i\in S}x_i)u_S$ proves~\eqref{eqn::chordal_weighted_permutation_sum}.
\medbreak
Substituting~\eqref{eqn::chordal_weighted_permutation_sum} into~\eqref{eqn::chordal_third_Ward_similarity}, we obtain
\begin{equation}\label{eqn::chordal_UST_third_Ward_graded_action}
\begin{aligned}
    \widetilde W(\bs{x})\bs u
    &=X_2\bs u+
      \left[\frac12RX_1+(m-N+1)
        \left(\Lambda_{\bs{x}}-\frac12\sum_{j=1}^n x_jI\right)\right]\bs u+\frac{N-1-m}{2}R\bs u,
    \quad \bs u\in K_m.
\end{aligned}
\end{equation}
The three terms have degrees $m-1$, $m$ and $m+1$, respectively.
\medbreak We next show that the projection
\begin{equation}\label{eqn::chordal_UST_third_Ward_target_projection}
    \ker\bigl(\widehat E(\bs{x})-(n+8)I\bigr)
    \longrightarrow K_{N-2},\qquad \bs v\longmapsto\bs v_{N-2}.
\end{equation}
is an isomorphism. The proof is similar to that of Lemma~\ref{lem::chordal_UST_eigenspace}. For $\bs v=\sum_{m=0}^N\bs v_m\in\ker\widetilde T^{(0)}$, Eq.~\eqref{eqn::chordal_UST_graded_dilation} shows that $\widehat E(\bs{x})\bs v=(n+8)\bs v$ is equivalent to
\[
    \bigl[(n-2m)(n-2m-2)-8\bigr]\bs v_m
       +8X_1\bs v_{m+1}=0,\qquad 0\le m\le N,
\]
where $\bs v_{N+1}:=0$.
Solving from degree $N$ downwards gives $\bs v_N=\bs v_{N-1}=0$, and the component $\bs v_{N-2}$ is arbitrary and uniquely determines all lower components. This gives the desired isomorphism.
\medbreak
Finally, we prove~\eqref{eqn::chordal_UST_third_Ward_intertwining}. Due to the isomorphism~\eqref{eqn::chordal_UST_third_Ward_target_projection}, the action of $\widehat W(\bs{x})$ is determined by its degree-$(N-2)$ component. Take $\bs v\in\ker(\widehat E(\bs{x})-nI)$ and put $\bs w:=\bs v_{N-1}$. Lemma~\ref{lem::chordal_UST_eigenspace} and~\eqref{eqn::chordal_UST_eigen_recursion} give
\[
    \bs v_N=0,\qquad \bs v_{N-2}=-X_1\bs w,
    \qquad \bs v_{N-3}=\frac13X_1^2\bs w.
\]
For $N=2$, the last identity is understood with $\bs v_{-1}:=0$, since $X_1^2\bs w=0$ for $\bs w\in K_1$. Reading the degree-$(N-2)$ component in~\eqref{eqn::chordal_UST_third_Ward_graded_action} therefore gives
\[
\begin{aligned}
    \bigl(\widehat W(\bs{x})\bs v\bigr)_{N-2}
    &=X_2\bs{v}_{N-1}+\left(\frac12RX_1
      -\Lambda_{\bs{x}}+\frac12\sum_{j=1}^n x_jI\right)\bs{v}_{N-2}
      +R\bs{v}_{N-3}\\
    &=X_2\bs w-\frac12RX_1^2\bs w
      +\left(\Lambda_{\bs{x}}-\frac12\sum_{j=1}^n x_jI\right)X_1\bs w
      +\frac13RX_1^2\bs w\\
    &=Q_{\bs{x}}\bs w.
\end{aligned}
\]
This proves~\eqref{eqn::chordal_UST_third_Ward_intertwining} and, combining the isomorphisms~\eqref{eqn::chordal_UST_eigenspace_decomposition} and~\eqref{eqn::chordal_UST_third_Ward_target_projection}, gives the asserted isomorphism from $\ker\widehat W(\bs{x})$ onto $\ker Q_{\bs{x}}$.
Composing with the isomorphism in Lemma~\ref{lem::Wmatrix_tilde} proves the remaining claims.
\end{proof}

It remains to determine $\ker Q_{\bs{x}}$ as a subspace of $K_{N-1}$. The next lemma identifies this kernel with $X_1K_N$; the injectivity of $X_1$ on $K_N$ from Lemma~\ref{lem::chordal_middle_injectivity} then gives its Catalan dimension.
\begin{lemma}\label{lem::chordal_UST_third_Ward}
For $N\ge2$ and every $\bs{x}\in\LX_n$, the map $Q_{\bs{x}}$ in~\eqref{eqn::chordal_UST_third_Ward_matrix} satisfies
\begin{equation}\label{eqn::chordal_UST_third_Ward_kernel}
    \ker Q_{\bs{x}}=X_1K_N,
    \qquad \dim\ker Q_{\bs{x}}=\dim K_N=\Cat_N.
\end{equation}
\end{lemma}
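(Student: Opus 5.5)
The plan is to prove the inclusion $X_1K_N\subseteq\ker Q_{\bs{x}}$ by direct computation, and then to match dimensions by establishing that $\operatorname{rank}Q_{\bs{x}}\ge\dim K_{N-1}-\Cat_N$. Lemma~\ref{lem::chordal_middle_injectivity} shows that $X_1$ is injective on $K_N$, so $\dim X_1K_N=\dim K_N=\binom{2N}{N}-\binom{2N}{N-1}=\Cat_N$. Both claims of~\eqref{eqn::chordal_UST_third_Ward_kernel} therefore follow once we show $\ker Q_{\bs{x}}\subseteq X_1K_N$ together with the reverse inclusion.

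\textbf{Step 1 (inclusion).} Take $\bs w\in K_N$. As in the proof of Lemma~\ref{lem::chordal_middle_injectivity}, we have $R\bs w=0$. We will use the commutators $[\Lambda_{\bs{x}},X_r]=-X_{r+1}$ and $[R,X_1]=2\Lambda_{\bs{x}}-\sum_j x_jI$. With their help we compute
\[
X_2X_1\bs w,\qquad \Bigl(\Lambda_{\bs{x}}-\tfrac12\sum_j x_j\Bigr)X_1^2\bs w,\qquad RX_1^3\bs w .
\]
The middle expression becomes $X_1^2(\Lambda_{\bs{x}}-\frac12\sum x_j)\bs w-2X_2X_1\bs w$. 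For the last one, we move $R$ to the right through the three factors of $X_1$. Each passage produces a term $(2\Lambda_{\bs{x}}-\sum x_j)$, and commuting these terms through the remaining factors of $X_1$ produces further $X_2X_1$ contributions. All the terms should then combine into a multiple of $X_1^2\bigl(\Lambda_{\bs{x}}-\frac12\sum x_j\bigr)\bs w$ plus a multiple of $X_2X_1\bs w$. The coefficient $\frac16$ in $Q_{\bs{x}}$ is presumably chosen exactly so that the $X_2X_1$ terms cancel. The remaining term contains $(\Lambda_{\bs{x}}-\frac12\sum x_j)\bs w$, and we need it to vanish. This is \emph{not} automatic from $R\bs w=0$ alone; the identity $\Lambda_{\bs{x}}\bs w=\frac12\sum x_j\bs w$ used in Lemma~\ref{lem::chordal_middle_injectivity} also required $X_1\bs w=0$. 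So we must instead check that its coefficient cancels as well, and I expect both coefficients to cancel simultaneously. If they do not, the fallback is to verify the inclusion directly on the spanning set of $K_N$ given by products of antisymmetric pairs $\bigotimes(\bs e_{\{a\}}-\bs e_{\{b\}})$, that is, $\mathfrak{sl}_2$ singlets indexed by link patterns, where $X_r$ acts explicitly.

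\textbf{Step 2 (reverse inclusion, main obstacle).} We need $\dim\ker Q_{\bs{x}}\le\Cat_N$. The plan is to exploit the fact that $\dim\ker Q_{\bs{x}}$ is lower semicontinuous and independent of $\bs{x}$, since the isomorphism of Lemma~\ref{lem::chordal_UST_third_Ward_matrix} makes it equal to $\dim\LSchord{n}^{\mathrm{UST}}$. It therefore suffices to bound the kernel at a convenient point or in a degenerate limit. I would try a scaling limit in which one point is sent far away, or points are clustered, so that $Q_{\bs{x}}$ becomes block triangular with respect to the decomposition $(\C^2)^{\otimes n}=(\C^2)^{\otimes(n-2)}\otimes(\C^2)^{\otimes 2}$. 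An induction on $N$ would then use the Catalan recursion, with the base case $N=2$ checked by hand: there $K_1$ has dimension $9$, $K_0$ has dimension $1$, and we need $\operatorname{rank}Q_{\bs{x}}=1$, i.e.\ $Q_{\bs{x}}\neq0$. An alternative is to show directly that if $Q_{\bs{x}}\bs u=0$, then $\bs u$ lies in the image of $X_1$ on $K_N$. The argument would mimic the Vandermonde argument of Lemma~\ref{lem::chordal_middle_injectivity}: one builds a candidate preimage via $R$ (using that $R:K_{N-1}\to V_N$ composed with projection onto $K_N$) and applies $X_r$ iteratively to show that the defect is annihilated by all $L_j$. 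This inclusion is the step I expect to be hardest.
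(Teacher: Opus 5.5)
Your Step 1 is correct and is the paper's argument. The coefficients do cancel, and you never need $(\Lambda_{\bs x}-\frac12\sum_jx_j)\bs w=0$. Write $A:=\Lambda_{\bs x}-\frac12\sum_jx_jI$. Then $[R,X_1]=2A$ and $X_1A=AX_1+X_2$, so $R\bs w=0$ gives $RX_1^3\bs w=2(AX_1^2+X_1AX_1+X_1^2A)\bs w=6AX_1^2\bs w+6X_2X_1\bs w$. This is exactly six times the first two terms of $Q_{\bs x}X_1\bs w$.

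The genuine gap is Step 2, which carries all the content of the lemma. You set up the right framework: $\dim\ker Q_{\bs x}$ is constant in $\bs x$, and semicontinuity lets a degenerate point bound it. But you give no argument for $\dim\ker Q_{\bs x}\le\Cat_N$. You offer only candidate strategies: a block-triangular structure along $(\C^2)^{\otimes(n-2)}\otimes(\C^2)^{\otimes 2}$ with an induction via the Catalan recursion, or an unspecified preimage construction through $R$. Nothing shows that either one closes. There are also two slips. First, it is the rank that is lower semicontinuous; $\dim\ker$ is upper semicontinuous, and that is the direction you need. Second, for $N=2$ one has $\dim K_1=\binom{4}{1}-\binom{4}{0}=3$, not $9$. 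With $9$, rank one would leave an $8$-dimensional kernel rather than $\Cat_2=2$.

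The missing idea is to choose a degenerate configuration at which $Q$ collapses to a single $L_j$. The paper uses $\bs x_*=(0,1,\ldots,1)$ in the closure of $\LX_n$. Since $K_{N-1},K_{N-2}$ do not depend on $\bs x$ and the entries of $Q_{\bs x}$ are polynomial, $Q_{\bs x_*}$ is defined. At this point $X_1=X_2=L-L_1$, so on $K_{N-1}$ we get $X_1\bs w=X_2\bs w=-L_1\bs w$ and $X_1^2\bs w=L_1^2\bs w=0$. Moreover $\Lambda_{\bs x_*}$ acts by $N-2$ on the range of $L_1$. Hence $Q_{\bs x_*}=\frac12L_1$ on $K_{N-1}$.

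Its kernel consists of vectors supported on sets avoiding $1$ and annihilated by $\sum_{j\ge2}L_j$. This is the degree-$(N-1)$ highest-weight space for $2N-1$ sites, of dimension $\binom{2N-1}{N-1}-\binom{2N-1}{N-2}=\Cat_N$ by Lemma~\ref{lem::chordal_graded_kernel}. Since $\dim\ker Q_{\bs x}$ is constant on $\LX_n$ and can only increase in the limit $\bs x\to\bs x_*$, this gives $\dim\ker Q_{\bs x}\le\Cat_N$.

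Your idea of sending one point far away also works, with no induction needed. Set $x_n=t\to\infty$. The $t$-part of $\Lambda_{\bs x}$ annihilates the range of $L_n$, and $L_n^2=0$. So the top coefficient of $Q_{\bs x}$, at order $t^2$, is $L_n-\frac12L_n=\frac12L_n$. Thus $t^{-2}Q_{\bs x}\to\frac12L_n$ on $K_{N-1}$, and the same count applies.
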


\begin{proof}
We write $L_j=L_j^{(0)}$ throughout this proof.
We first prove that $X_1K_N\subseteq \ker Q_{\bs{x}}$, and $\dim\ker Q_{\bs{x}}\geq \dim K_N = \Cat_N$. For $\bs u\in K_N$, the norm identity~\eqref{eqn::chordal_UST_norm_identity} and $n=2N$ give $R\bs u=0$. Using the commutator identities from the proof of Lemma~\ref{lem::chordal_middle_injectivity}, together with $[X_1,X_2]=0$, we calculate
\[
\begin{aligned}
    RX_1^3\bs u
    &=\bigl([R,X_1]X_1^2+X_1[R,X_1]X_1+X_1^2[R,X_1]\bigr)\bs u\\
    &=6\left(\Lambda_{\bs{x}}-\frac12\sum_{j=1}^n x_jI\right)X_1^2\bs u
      +6X_2X_1\bs u.
\end{aligned}
\]
Thus $Q_{\bs{x}}X_1\bs u=0$, and thus $X_1K_N\subseteq\ker Q_{\bs{x}}$. By the injectivity of $X_1:K_N\to K_{N-1}$ in Lemma~\ref{lem::chordal_middle_injectivity}, we obtain
\[
\dim\ker Q_{\bs{x}}\ge\dim X_1K_N = \dim K_N=\Cat_N.
\]
\medbreak
Conversely, we prove that $\dim\ker Q_{\bs{x}}\le\Cat_N$ for all $\bs{x}\in\LX_n$. Due to~\eqref{eqn::chordal_UST_third_Ward_dimension}, $\dim\ker Q_{\bs{x}}$ is independent of $\bs{x}\in\LX_n$. We calculate $\dim\ker Q_{\bs{x}}$ at a specific point.
\begin{itemize}
    \item The map $Q_{\bs{x}}: K_{N-1}\to K_{N-2}$ is well-defined even when coordinates coincide. Observe that $K_{N-1}$ and $K_{N-2}$ are independent of $\bs{x}$, and the matrix entries of $Q_{\bs{x}}$ in fixed bases are polynomials in $\bs{x}$, see~\eqref{eqn::chordal_UST_third_Ward_matrix}. Thus the formula for $Q_{\bs{x}}$ extends to coincident coordinates.
Using $[L,\Lambda_{\bs{x}}]=X_1$, $[L,X_1]=[L,X_2]=0$ and $[L,R]=nI-2\Lambda$, we obtain, for $\bs w\in K_{N-1}$,
\[
    LQ_{\bs{x}}\bs w
    =X_1^2\bs w-\frac16LRX_1^2\bs w
    =X_1^2\bs w-\frac16\cdot6X_1^2\bs w=0.
\]
Here $X_1^2\bs w\in K_{N-3}$ when $N\ge3$; for $N=2$, $X_1^2\bs w=0$. Thus $Q_{\bs{x}}$ still maps $K_{N-1}$ to $K_{N-2}$.
\item We evaluate $Q_{\bs{x}}$ at $\bs{x}_*:=(0,1,\ldots,1)$. In this case,
\[
    Q_{\bs{x}_*}=-\frac12X_1=\frac12L_1
    \quad\text{on }K_{N-1}.
\]
The condition $L_1\bs w=0$ says exactly that all coordinates of $\bs w$ indexed by sets containing $1$ vanish. So $L{\bs w}=0$ reduces to $\left(\sum_{j=2}^{2N}L_{j}\right)\bs w=0$. This forms the degree-$(N-1)$ kernel of the lowering operator $\left(\sum_{j=2}^{2N}L_{j}\right)$. Applying Lemma~\ref{lem::chordal_graded_kernel} with $n=2N-1, m=N-1$ gives
\[
    \dim\ker Q_{\bs{x}_*}
    =\binom{2N-1}{N-1}-\binom{2N-1}{N-2}
    =\Cat_N.
\]
\item The limit decreases the rank of a matrix, so $\dim\ker Q_{\bs{x}}\le\dim\ker Q_{\bs{x}_*}=\Cat_N$ for all $\bs{x}\in\LX_n$.
\end{itemize}
Thus we prove that $\dim\ker Q_{\bs{x}}\le\Cat_N$ for all $\bs{x}\in\LX_n$. Combining this with the lower bound $\dim\ker Q_{\bs{x}}\ge\Cat_N$ proves~\eqref{eqn::chordal_UST_third_Ward_kernel}.
\end{proof}

\begin{proof}[Proof of Theorem~\ref{thm::UST_decomposition}]
The first dimension formula follows from Lemma~\ref{lem::chordal_UST_eigenspace}.
For $N\ge2$, Lemmas~\ref{lem::chordal_UST_third_Ward_matrix} and~\ref{lem::chordal_UST_third_Ward} give
\[
    \dim\LSchord{n}^{\mathrm{UST}}=\dim\ker Q_{\bs{x}}=\Cat_N.
\]
For $N=1$, Remark~\ref{rem::xi_p_fail} gives $\LSchord{2}^{(0;0;2)}=\operatorname{span}_{\mathbb{R}}\{(x_2-x_1)^{1/4}\}$. This function is annihilated by $\LW$, so $\LSchord{2}^{\mathrm{UST}}=\LSchord{2}^{(0;0;2)}$ and $\dim\LSchord{2}^{\mathrm{UST}}=1=\Cat_1$.
\end{proof}

\section{Coulomb gas integral and Gaussian free field level lines}
\label{sec::GFF}
Fix $n\ge 2$ and $\kappa>0$ and $\lambda>0$. In this section, we first construct solutions to the BPZ system~\eqref{eqn::BPZ_chordal} using Coulomb gas integral. 
For $\mathfrak{r}\in \{0,1,\ldots, n\}$ and $\bs{x}=(x_1, \ldots, x_n)\in\LX_n$ and $\bs{u}=(u_1, \ldots, u_{\mathfrak{r}})$, define the (multivalued) integrand function by
\begin{align}\label{eqn::CGI_integrand}
\begin{split}
\integrandzero{n}(\bs{x}; \bs{u}):=&\prod_{1\le i<j\le n}(x_j-x_i)^{2/\kappa}\prod_{\substack{1\le \ell\le n\\1\le m\le \mathfrak{r}}}(u_m-x_{\ell})^{-4/\kappa}\prod_{1\le p<q\le \mathfrak{r}}(u_q-u_p)^{8/\kappa},\\
\integrand{n}(\bs{x}; \bs{u}):=&\exp\left(\frac{\sqrt{2\lambda}}{\kappa}\left(\sum_{j=1}^n x_j-2\sum_{m=1}^{\mathfrak{r}}u_m\right)\right)\times\integrandzero{n}(\bs{x}; \bs{u}). 
\end{split}
\end{align}
For $\sigma=(\sigma_1, \ldots, \sigma_n)\in\{\pm 1\}^n$, suppose $\sum_{j=1}^n\sigma_j=n-2\mathfrak{r}$ for some $\mathfrak{r}\in\{0,1,\ldots, n\}$ and let $s_1<\cdots<s_{\mathfrak{r}}$ be the indexes in $\{j\in\{1, \ldots, n\}: \sigma_j=-1\}$. 
Define the integral by 
\begin{align}\label{eqn::CGI}
\LGchord{n}(\bs{x})=\cstchord{n}\ointclockwise_{\vartheta_{s_1}}\ud u_1\cdots\ointclockwise_{\vartheta_{s_{\mathfrak{r}}}}\ud u_{\mathfrak{r}} \;\integrand{n}(\bs{x}; \bs{u}),
\end{align}
where 
\begin{align}\label{eqn::CGI_cst_def}
\cstchord{n}=(2\pi \ii)^{-\mathfrak{r}}
       \exp\left(\frac{4\pi \ii}{\kappa}
          \left(\sum_{j: \sigma_j=-1}(n-j)-\mathfrak{r}\right)\right),
\end{align}
and $\vartheta_s$ is an integration contour defined in the follow way: we fix an angle $\theta\in (0,\pi/2)$ and take $\vartheta_s$ to be a clockwise Hankel contour around the ray $x_s+\ee^{\ii\theta}[0,\infty)$, see Figure~\ref{fig::CGI_Hankel}.

\begin{figure}[htbp]
\centering
\begin{tikzpicture}[x=0.95cm,y=0.95cm,>=stealth,color=black]
    \draw[->] (-1.5,0)--(10.7,0) node[right] {};
    \foreach \x/\j in {0/1,3/2,6/3} {
        \fill (\x,0) circle (1.6pt);
        \node[below=5pt] at (\x,0) {$x_{\j}$};
        \begin{scope}[shift={(\x,0)},rotate=35]
            \draw[dashed,black] (0,0)--(4.9,0);
            \draw[thick] (4.6,-0.16)--(0,-0.16)
                arc[start angle=-90,end angle=-270,radius=0.16]
                --(4.6,0.16);
            \draw[thick,->] (3.6,-0.16)--(2.8,-0.16);
            \draw[thick,->] (2.8,0.16)--(3.6,0.16);
            \node[above] at (3.9,0.2) {$\vartheta_{\j}$};
        \end{scope}
    }
    \draw (0.8,0) arc[start angle=0,end angle=35,radius=0.8];
    \node at (1.1,0.26) {$\theta$};
\end{tikzpicture}
\caption{Disjoint clockwise Hankel contours around parallel rays. it is the boundary of a thin tubular neighborhood of this ray: it approaches $x_s$ from infinity along the lower side, turns clockwise around $x_s$, and returns to infinity along the upper side.
Choose the width sufficiently small that these neighborhoods are pairwise disjoint and contain no other marked points. Only the contours $\vartheta_{s_1},\ldots,\vartheta_{s_{\mathfrak r}}$, corresponding to $\sigma_j=-1$, are used in~\eqref{eqn::CGI}.}
\label{fig::CGI_Hankel}
\end{figure}

\begin{proposition}\label{prop::CGI}
Fix $n\ge 2$ and $\kappa>0$ and $\lambda>0$. 
The integral $\LGchord{n}$ in~\eqref{eqn::CGI} is well-defined and satisfies the BPZ system~\eqref{eqn::BPZ_chordal} and the first Ward identity~\eqref{eqn::Ward} with $\nu=\sqrt{2\lambda}\sum_{j=1}^n\sigma_j$. Moreover, the collection of $2^n$ functions
\begin{align}\label{eqn::CGI_basis}
\left\{\Re\LGchord{n}(\bs{x}):\sigma\in\{\pm 1\}^n\right\}
\end{align}
is a basis for the space $\LSchord{n}^{(\lambda)}$.
\end{proposition}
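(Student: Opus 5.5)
The plan has three parts: show the integral is well defined, show that it satisfies the required PDEs, and show that the $2^n$ real parts are linearly independent. Independence then gives a basis, because $\dim\LSchord{n}^{(\lambda)}=2^n$ by Proposition~\ref{prop::Dmatrix}.

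\emph{Well-definedness.} Along each Hankel contour $\vartheta_s$, for large $|u|$, the integrand carries the factor $\exp(-2\sqrt{2\lambda}u_m/\kappa)$. Since $\Re u_m\to+\infty$ along the ray direction $\ee^{\ii\theta}$ with $\theta\in(0,\pi/2)$, this factor decays exponentially. The algebraic factors grow at most polynomially, so the integral converges absolutely and locally uniformly in $\bs{x}$. The contours are closed around the branch point $x_s$ and extend to infinity, so no boundary terms arise. We may therefore differentiate under the integral sign. Deforming the contours slightly shows that the integral is real analytic on $\LX_n$.

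\emph{PDEs.} For the first Ward identity, translate all $x_j$ and all $u_m$ by the same amount $c$. The factor $\integrandzero{n}$ is unchanged and the exponential picks up $\exp(\sqrt{2\lambda}(n-2\mathfrak{r})c/\kappa)$. Shifting the contours back changes nothing, since the Hankel contours are attached to the points $x_s$. Hence $\kappa\LT\LGchord{n}=(n-2\mathfrak{r})\sqrt{2\lambda}\LGchord{n}=\sqrt{2\lambda}\sum_j\sigma_j\LGchord{n}$. For the BPZ equations, I would use the standard Coulomb gas identity in the form
\[
\bigl(\LD_j-\tfrac{\lambda}{\kappa}\bigr)\integrand{n}=\sum_{m=1}^{\mathfrak{r}}\partial_{u_m}\bigl(g_{j,m}\,\integrand{n}\bigr),\qquad g_{j,m}=\frac{-2}{u_m-x_j}\ \text{(up to the standard normalization)},
\]
and check that the exponential prefactor is compatible with it. Writing $\integrand{n}=\ee^{a(\sum x-2\sum u)}\integrandzero{n}$ with $a=\sqrt{2\lambda}/\kappa$, the extra terms produced by $\frac{\kappa}{2}\partial_j^2$ and by the first-order terms should cancel against $\partial_{u_m}$ hitting the exponential in the total-derivative terms. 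The $\lambda/\kappa$ constant is matched by $\frac{\kappa}{2}a^2=\lambda/\kappa$. This algebraic verification is routine but delicate, and I would do it first in the cases $\mathfrak{r}=0$ (which gives \eqref{eqn::halfwatermelon_pf_drift}) and $\mathfrak{r}=1$. The total derivatives then integrate to zero over the Hankel contours by the exponential decay at infinity. There is no boundary contribution near $x_s$, because the contour is closed around the branch point. Taking real parts preserves all the real linear equations.

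\emph{Basis.} This is the main obstacle. By Theorem~\ref{thm::BPZ_decomposition}, the space decomposes according to $\nu_{\mathfrak{r}}$ with dimensions $\binom{n}{\mathfrak{r}}$. It therefore suffices to show, for each fixed $\mathfrak{r}$, that the $\binom{n}{\mathfrak{r}}$ functions $\Re\LGchord{n}$ with $|\{\sigma_j=-1\}|=\mathfrak{r}$ are independent. I would use asymptotic separation of scales. Take $x_{j+1}-x_j\to\infty$ successively, or equivalently examine the leading behaviour as points are sent far apart, and evaluate each contour integral by a steepest-descent or Laplace argument near $x_s$. Each Hankel integral localizes at its own point $x_s$ and contributes a factor $\exp(-2a x_s)$ times an explicit nonzero Gamma-type constant. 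The constant $\cstchord{n}$ is chosen so that the phase from the branch cuts at the $x_\ell$ with $\ell>s$ is cancelled, which makes the leading coefficient real and positive. The leading term of $\Re\LGchord{n}$ is then a nonzero multiple of $\exp(a\sum_j\sigma_jx_j)$ times powers of the distances. Because distinct $\sigma$ give distinct exponential rates in the independent directions $x_j$, a triangular leading-order argument yields linear independence.

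An alternative for this step is to compute $M(\bs{x})^{-1}\zeta_{\bs{x}}$ of each function, in the limit, and compare it with the eigenvectors $\bs{v}_{\bs{\sigma}}$ of Lemma~\ref{lem::Tlambda_tilde_polynomial}, which are joint eigenvectors of the individual $L_j$. In the regime of large separation, $\partial_j\LGchord{n}\approx a\sigma_j\LGchord{n}$, which matches the components of $\bs{v}_{\bs{\sigma}}$. The hard part in either approach is controlling the error terms uniformly and confirming that the Gamma constants do not vanish; for some rational $\kappa$ they might, and then one may need to renormalize the contour.
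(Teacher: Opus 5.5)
Your proposal is correct and follows essentially the same route as the paper: absolute convergence from the exponential decay along the Hankel contours; the Coulomb gas total-derivative identity $(\LD_j-\tfrac{\lambda}{\kappa})\integrand{n}=-2\sum_m\partial_{u_m}\bigl(\integrand{n}/(u_m-x_j)\bigr)$ for BPZ and translation for the Ward identity; and, for the basis, a large-separation asymptotic with distinct exponential rates, a positive leading coefficient after the phase normalization, and the count $\dim\LSchord{n}^{(\lambda)}=2^n$. Your worry about vanishing constants is unfounded, since each Hankel integral equals $2\pi\ii\,\ee^{4\pi\ii/\kappa}(2\sqrt{2\lambda}/\kappa)^{4/\kappa-1}/\Gamma(4/\kappa)\neq0$ for every $\kappa>0$; and the paper avoids both splitting by $\mathfrak{r}$ and taking successive limits by sending $\bs{x}=R\bs{y}$ with $y_j=2^j$, which makes all $2^n$ signed sums $\sum_j\sigma_jy_j$ distinct at once.
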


\begin{proposition}\label{prop::CGI_kappa4}
Fix $n\ge2$ and $\kappa=4$ and $\lambda>0$. For $\sigma\in\{\pm 1\}^n$, recall that $\LFchord{n}$ is defined in~\eqref{eqn::LFchord_sigma_def}.
Then
\begin{equation}\label{eqn::CGI_kappa4_residue}
    \LGchordGFF{n}(\bs x)
    =\LFchord{n}(\bs x),
    \qquad \text{for all }\bs x\in\LX_n.
\end{equation}
\end{proposition}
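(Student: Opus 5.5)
At $\kappa=4$ the integrand exponents are integers: $(u_m-x_\ell)^{-1}$ and $(u_q-u_p)^{2}$. The only multivalued factor is $\prod_{i<j}(x_j-x_i)^{1/2}$, which does not depend on $\bs u$. The plan is therefore to evaluate each Hankel integral by residues. First, the normalizing constant at $\kappa=4$ is
\[
\cstchordGFF{n}=(2\pi\ii)^{-\mathfrak r}\exp\left(\pi\ii\left(\sum_{j:\sigma_j=-1}(n-j)-\mathfrak r\right)\right),
\]
a sign times $(2\pi\ii)^{-\mathfrak r}$. In each variable $u_m$, the integrand is single-valued and meromorphic, with simple poles only at the points $x_\ell$. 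Because of the factor $\exp(-\sqrt{2\lambda}\,u_m)$, it decays exponentially along the ray $x_{s_m}+\ee^{\ii\theta}[0,\infty)$ (note $\Re \ee^{\ii\theta}>0$). The two sides of the Hankel contour $\vartheta_{s_m}$ therefore cancel, and the contour collapses to a small clockwise circle around $x_{s_m}$. Since the contours are disjoint, the iterated integral equals $(-2\pi\ii)^{\mathfrak r}$ times the iterated residue at $u_m=x_{s_m}$, $m=1,\ldots,\mathfrak r$.

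Next I compute this residue. Taking the residue in $u_m$ at $x_{s_m}$ sets $u_m=x_{s_m}$ in every factor except $(u_m-x_{s_m})^{-1}$. This gives
\[
\prod_{\ell\ne s_m}(x_{s_m}-x_\ell)^{-1}\,\exp\left(-\tfrac{\sqrt{2\lambda}}{2}x_{s_m}\right).
\]
The coupling $(u_q-u_p)^2$ becomes $(x_{s_q}-x_{s_p})^2$. Combining with the prefactor $\prod_{i<j}(x_j-x_i)^{1/2}\exp(\tfrac{\sqrt{2\lambda}}{4}\sum x_j)$ gives:
\begin{itemize}
\item For a pair with both $\sigma=-1$: exponent $\tfrac12-1-1+2=\tfrac12$.
\item For a pair with one $\sigma=-1$: exponent $\tfrac12-1=-\tfrac12$.
\item For a pair with no $\sigma=-1$: exponent $\tfrac12$.
\end{itemize}
In every case the exponent is $\tfrac12\sigma_i\sigma_\ell$. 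The exponential becomes $\exp(\tfrac{\sqrt{2\lambda}}{4}\sum_j\sigma_j x_j)$. So, up to sign, the result matches $\LFchord{n}$ in~\eqref{eqn::LFchord_sigma_def}.

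The main obstacle is the sign bookkeeping, which must make the sign exactly $+1$. Three sources contribute. First, the factor $(-1)^{\mathfrak r}$ from clockwise orientation combines with $(2\pi\ii)^{-\mathfrak r}$ and with $\exp(-\pi\ii\mathfrak r)$ to give $+1$. Second, writing $(x_{s_m}-x_\ell)^{-1}$ with positive differences $|x_{s_m}-x_\ell|$ produces a factor $(-1)$ for each $\ell>s_m$. This totals $(-1)^{\sum_m(n-s_m)}$, which cancels $\exp(\pi\ii\sum_{\sigma_j=-1}(n-j))$. Third, the squared factors $(x_{s_q}-x_{s_p})^2$ need no sign. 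I would also check that the branch chosen for $(u_m-x_\ell)^{-4/\kappa}$ in the general definition reduces at $\kappa=4$ to the principal single-valued rational function. If instead it contributes extra phases $\ee^{\mp\ii\pi}$ for $\ell>s_m$, these are exactly what the constant's phase is designed to absorb, so the total must still be $+1$. I would verify the final sign on the case $n=2$, $\sigma=(-1,1)$ as a sanity check.
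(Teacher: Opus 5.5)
Your proposal is correct and follows essentially the same route as the paper: use the exponential decay of $\exp(-\tfrac{\sqrt{2\lambda}}{2}u_m)$ (not $\exp(-\sqrt{2\lambda}u_m)$ as you wrote) to close each Hankel contour, take clockwise residues at $u_m=x_{s_m}$, and check that the phase in $\cstchordGFF{n}$ cancels both the $(-1)^{\mathfrak r}$ from orientation and the signs $(-1)^{n-s_m}$ of $\prod_{\ell\ne s_m}(x_{s_m}-x_\ell)^{-1}$; the paper does this via~\eqref{eqn::CGI_gap_product} at $\kappa=4$. Your branch hedge is unnecessary: at $\kappa=4$ every exponent involving $\bs u$ is an integer, so the integrand is an unambiguous rational function of $\bs u$ and your sign count stands as computed.
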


\subsection{Proof of Proposition~\ref{prop::CGI}}
\label{subsec::CGI_basis}

\begin{lemma}\label{lem::CGI_equations}
Assume the same setup as in Proposition~\ref{prop::CGI}. 
The integral $\LGchord{n}$ in~\eqref{eqn::CGI} is well-defined and satisfies the BPZ system~\eqref{eqn::BPZ_chordal} and the first Ward identity~\eqref{eqn::Ward} with $\nu=\sqrt{2\lambda}\sum_{j=1}^n\sigma_j$.
\end{lemma}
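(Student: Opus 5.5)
The plan is to prove convergence of the integrals first, then derive the equations from a pointwise identity for the integrand combined with integration by parts. On each Hankel contour $\vartheta_{s}$, the exponential factor $\exp(-\frac{2\sqrt{2\lambda}}{\kappa}u_m)$ decays exponentially along $x_s+\ee^{\ii\theta}[0,\infty)$. This uses $\lambda>0$ and $\theta\in(0,\pi/2)$, which give $\Re(\ee^{\ii\theta})>0$. The algebraic factors grow at most polynomially in $|u_m|$. Since the contour keeps a positive distance from the marked points, near $x_s$ the integrand is bounded on the small circle and integrable along both sides. Hence the iterated integral converges absolutely, locally uniformly in $\bs{x}\in\LX_n$. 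Because the contours can be deformed locally without crossing singularities, the same bounds hold after small perturbations of $\bs{x}$. This justifies differentiating under the integral sign any number of times, so $\LGchord{n}$ is smooth. The branch of the multivalued integrand is fixed continuously along the contours, with the normalization $\cstchord{n}$. The constant only matters later for reality and basis properties, not here.

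For the first Ward identity, observe that $\integrand{n}(\bs{x}+t\bs{1};\bs{u}+t\bs{1})=\exp\bigl(\frac{\sqrt{2\lambda}}{\kappa}(n-2\mathfrak{r})t\bigr)\integrand{n}(\bs{x};\bs{u})$. The contours $\vartheta_{s}$ translate rigidly with the $x_s$. Changing variables $u_m\mapsto u_m+t$ therefore gives $\LGchord{n}(\bs{x}+t\bs{1})=\exp\bigl(\frac{\sqrt{2\lambda}}{\kappa}(n-2\mathfrak{r})t\bigr)\LGchord{n}(\bs{x})$. Differentiating at $t=0$ yields $\kappa\LT\LGchord{n}=\sqrt{2\lambda}(n-2\mathfrak{r})\LGchord{n}$, and $n-2\mathfrak{r}=\sum_j\sigma_j$.

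For the BPZ equations, I would use the standard Coulomb gas identity. For each $j$ there are explicit functions $g_m$ such that
\[
\Bigl(\LD_j-\frac{\lambda}{\kappa}\Bigr)\integrand{n}(\bs{x};\bs{u})=\sum_{m=1}^{\mathfrak{r}}\partial_{u_m}\Bigl(\frac{c\,\integrand{n}(\bs{x};\bs{u})}{u_m-x_j}\Bigr),
\]
with a constant $c$ ($c=-2$ in the $\lambda=0$ theory). Here $\LD_j$ includes the screening charges at the $u_m$: since $x_j$ carries charge with $\alpha(\alpha-2/\sqrt\kappa\cdots)$ matching $h_{1,2}$, the $u$-variables enter exactly as in~\cite{DubedatEulerIntegralsCommutingSLEs}. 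The main task is to check that the exponential factor does not spoil this identity. Write $\integrand{n}=e^{\phi}\integrandzero{n}$ with $\phi$ linear. Then $\frac{\kappa}{2}\partial_j^2$ produces the extra terms $\frac{\kappa}{2}\bigl(2\partial_j\phi\,\partial_j\log\integrandzero{n}+(\partial_j\phi)^2\bigr)$. The term $(\partial_j\phi)^2\kappa/2=\lambda/\kappa$ cancels the right-hand side. The drift terms $\sum_{\ell}\frac{2}{x_\ell-x_j}\partial_\ell\phi$ and the cross term $\sqrt{2\lambda}\,\partial_j\log\integrandzero{n}$ must combine with $\partial_{u_m}$ hitting $e^{\phi}$ in the total derivative, which contributes $-\frac{2\sqrt{2\lambda}}{\kappa}\frac{c}{u_m-x_j}$. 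In $\partial_j\log\integrandzero{n}$, the $x_\ell$ terms cancel with the $\ell$-drift terms because $\frac{2}{\kappa}\sqrt{2\lambda}$ matches $\frac{2}{x_\ell-x_j}\cdot\frac{\sqrt{2\lambda}}{\kappa}$. The $u_m$ terms $\frac{4/\kappa}{u_m-x_j}$ should cancel the total-derivative contribution for the right value of $c$. I expect this bookkeeping of the $\lambda$-dependent terms to be the main obstacle, and I would verify it carefully on the case $n=2$, $\mathfrak{r}=1$ first.

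Finally, integrating the identity over the contours, each total derivative $\partial_{u_m}$ integrates to zero over the Hankel contour $\vartheta_{s_m}$. The boundary terms at infinity vanish by exponential decay, and the contour is closed around the branch point, so single-valuedness along the contour causes no boundary term. This gives $(\LD_j-\lambda/\kappa)\LGchord{n}=0$. Taking real parts preserves the real equations, so $\Re\LGchord{n}$ also lies in $\LSchord{n}^{(\lambda;\nu)}$.
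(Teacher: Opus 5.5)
Your proposal is correct and follows essentially the paper's route. Convergence comes from exponential decay along the Hankel tails, and the BPZ equations come from integrating the total-derivative identity $(\mathcal{D}_j-\lambda/\kappa)\Phi=-2\sum_m\partial_{u_m}\bigl(\Phi/(u_m-x_j)\bigr)$ over the contours. Your bookkeeping does close with $c=-2$: the cross term $4\sqrt{2\lambda}\,\Phi/(\kappa(u_m-x_j))$ is exactly what $\partial_{u_m}$ produces when it hits the exponential. The Ward identity comes from translation covariance of the integrand, which you use in finite form rather than infinitesimally.

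Two wording slips should be fixed. The operator on the left is the plain BPZ operator in $\boldsymbol{x}$ alone; the screening-charge terms are what turn into the total derivative. The Hankel contour is not closed: the boundary terms vanish because both of its ends go to infinity, where the integrand decays exponentially.
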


\begin{proof}
	We first prove the absolute convergence of the integral~\eqref{eqn::CGI}. On the chosen contours, the algebraic factors grow at most polynomially along the tails, whereas
$\exp(-2\tfrac{\sqrt{2\lambda}}{\kappa}\sum_m\Re u_m)$ decays exponentially.
Thus the integral converges absolutely, and thus is well-defined.
\medbreak
It is clear that the integral is real analytic on $\LX_n$.
Recall that the differential operators $\LD_j$ and $\LT$ are defined in~\eqref{eqn::chordal_BPZ_operators} and~\eqref{eqn::chordal_Ward_12}.
A direct calculation gives the total-derivative identity
\begin{equation}\label{eqn::CGI_BPZ_total_derivative}
    \left(\LD_j-\frac{\lambda}{\kappa}\right)
      \integrand{n}(\bs x;\bs u)
    =-2\sum_{m=1}^{\mathfrak r}\partial_{u_m}
      \left(\frac{\integrand{n}(\bs x;\bs u)}{u_m-x_j}\right).
\end{equation}
Integrating along the Hankel contours gives zero on the right, since the boundary terms vanish at both ends.
This proves the BPZ equations. Furthermore, simultaneous translation of all $x_j$ and $u_m$ leaves the algebraic factors unchanged, thus
\[
    \left(\sum_{j=1}^n\partial_{x_j}+
          \sum_{m=1}^{\mathfrak r}\partial_{u_m}\right)
      \integrand{n}
    =\frac{\sqrt{2\lambda}}{\kappa}(n-2\mathfrak r)\integrand{n}.
\]
Integrating again gives
$\kappa\LT\LGchord{n}=\sqrt{2\lambda}(n-2\mathfrak{r})\LGchord{n}$. This proves the first Ward identity. 
\end{proof}
\begin{lemma}\label{lem::CGI_independence}
Assume the same setup as in Proposition~\ref{prop::CGI}. 
The collection of $2^n$ functions
\begin{align}\label{eqn::CGI_complex_family}
\left\{\LGchord{n}(\bs{x}):
        \sigma\in\{\pm 1\}^n\right\}
\end{align}
is linearly independent over $\C$. 
\end{lemma}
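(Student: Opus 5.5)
Since different values of $\sum_j\sigma_j$ give functions in different eigenspaces of $\kappa\LT$ (Lemma~\ref{lem::CGI_equations}), and eigenvectors of $\LT$ with distinct eigenvalues are linearly independent (over $\C$, after complexifying $\LSchord{n}^{(\lambda)}$), it suffices to fix $\mathfrak r$ and show that the $\binom{n}{\mathfrak r}$ functions $\LGchord{n}$ with $\sum_j\sigma_j=n-2\mathfrak r$ are linearly independent. By Theorem~\ref{thm::BPZ_decomposition}, this space has complex dimension exactly $\binom{n}{\mathfrak r}$, so independence is equivalent to spanning. The plan is to detect independence through asymptotics in a degeneration where the Hankel contours decouple.

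The degeneration I would use is to send the gaps to infinity, for example along $\bs x=(t,2t,\ldots,nt)$ with $t\to+\infty$, or more flexibly with widely separated points. Rescale each $u_m=x_{s_m}+w_m$. The exponential weight $\exp(-2\sqrt{2\lambda}\,u_m/\kappa)$ localizes each integral to a region of size $O(1)$ near $x_{s_m}$. On that region the factors $(u_m-x_\ell)^{-4/\kappa}$ with $\ell\ne s_m$ and $(u_q-u_p)^{8/\kappa}$ are, to leading order, constants $|x_{s_m}-x_\ell|^{-4/\kappa}$ times phases determined by the branch choice. What remains is a product of one-variable Hankel integrals
\[
\ointclockwise w^{-4/\kappa}\ee^{-2\sqrt{2\lambda}w/\kappa}\,\ud w,
\]
each of which is a nonzero multiple of $1/\Gamma(4/\kappa)$ times a power of $\lambda$. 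This is nonzero for every $\kappa>0$, since $1/\Gamma$ at a positive argument does not vanish. Thus $\LGchord{n}(\bs x)\sim c_\sigma\,\prod_{i<j}(x_j-x_i)^{2/\kappa}\prod_{m,\ell\ne s_m}|x_{s_m}-x_\ell|^{-4/\kappa}\prod_{p<q}|x_{s_q}-x_{s_p}|^{8/\kappa}\,\exp\bigl(\tfrac{\sqrt{2\lambda}}{\kappa}(\sum_j x_j-2\sum_m x_{s_m})\bigr)$ with $c_\sigma\neq0$. The main obstacle is making this leading-order replacement rigorous, i.e.\ controlling the error uniformly on the contour tails. I would handle it by dominated convergence, using the exponential decay along the rays $\ee^{\ii\theta}[0,\infty)$ together with polynomial bounds on the remaining factors.

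These leading terms alone do not separate different $\sigma$ with the same $\mathfrak r$, since the exponential factor depends only on $\sum_m x_{s_m}$, which is not the same for different index sets. Rather than matching the prefactors, I would use a different degeneration that separates different sets $\{s_m\}$ directly. Take $\bs x=t\bs y$ with $\bs y$ generic and $t\to\infty$. The exponent $-\tfrac{2\sqrt{2\lambda}}{\kappa}t\sum_{m}y_{s_m}$ then differs between distinct $\mathfrak r$-subsets of $\{1,\ldots,n\}$, as long as $\bs y$ is chosen so that all subset sums $\sum_{s\in S}y_s$ with $|S|=\mathfrak r$ are distinct, which is a generic condition. Suppose $\sum_\sigma a_\sigma\LGchord{n}=0$. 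The terms have distinct exponential rates $\exp(-c\,t\sum_{s\in S}y_s)$, each multiplied by a power of $t$ and a nonzero constant. Ordering the subsets by their subset sum and extracting the dominant exponential inductively forces every $a_\sigma=0$.

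Finally, this argument also yields the basis statement of Proposition~\ref{prop::CGI}. The $2^n$ complex functions are independent, the complexified space has dimension $2^n$ by Proposition~\ref{prop::Dmatrix}, and the conjugates $\overline{\LGchord{n}}$ also lie in the complexified space. It follows that the real parts span $\LSchord{n}^{(\lambda)}$. That step, however, belongs to the later argument and is not needed for the present lemma.
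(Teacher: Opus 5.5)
Your argument is correct and is essentially the paper's proof. Both rescale $\bs{x}=R\bs{y}$, shift each screening variable to $u_m=Ry_{s_m}+w_m$, and obtain a leading term with a nonzero Hankel/$\Gamma$ coefficient and exponential rate $\frac{\sqrt{2\lambda}}{\kappa}R\sum_j\sigma_jy_j$, then peel off the dominant rate. Your preliminary split by $\mathfrak{r}$ via the eigenvalues of $\kappa\LT$ is valid but unnecessary, since the paper's choice $y_j=2^j$ already makes all $2^n$ signed sums $\sum_j\sigma_jy_j$ distinct.

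One caution about your closing aside, which is not needed for this lemma. Complex independence of the $\LGchord{n}$ does not by itself make their real parts a basis of $\LSchord{n}^{(\lambda)}$; only the real and imaginary parts together are guaranteed to span. The paper closes this gap by noting that the normalized leading coefficient is positive, so $\Re\LGchord{n}$ has the same leading asymptotics and the same argument applies.
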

\begin{proof}
Fix $\sigma\in\{\pm1\}^n$, with $\mathfrak r$ and $s_1<\cdots<s_{\mathfrak r}$ as in the definition of~\eqref{eqn::CGI}.
Fix $\bs{y}=(y_1, \ldots, y_n)\in\LX_n$ and $\bs{t}=(t_1, \ldots, t_{\mathfrak{r}})$, and for $R>0$, we set $x_j=Ry_j$ for $j\in\{1, \ldots, n\}$ and $u_m=t_m+Ry_{s_m}$ for $m\in\{1, \ldots, \mathfrak{r}\}$. Let $R\to\infty$,
expanding the algebraic factors gives
\[
\begin{aligned}
    \integrand{n}(\bs x;\bs u)
    ={}&R^{((n-2\mathfrak r)^2-n)/\kappa}
       \exp\left(\frac{\sqrt{2\lambda} R}{\kappa}
          \left(\sum_jy_j-2\sum_my_{s_m}\right)
          \right)\\
    &\times
       \frac{\displaystyle\prod_{i<j}(y_j-y_i)^{2/\kappa}
             \prod_{p<q}(y_{s_q}-y_{s_p})^{8/\kappa}}
            {\displaystyle\prod_{m=1}^{\mathfrak r}
             \prod_{\ell\ne s_m}(y_{s_m}-y_\ell)^{4/\kappa}}\times\prod_{m=1}^{\mathfrak r}
        \left(\ee^{-2\sqrt{2\lambda} t_m/\kappa}t_m^{-4/\kappa}\right)
       \left(1+O(R^{-1})\right).
\end{aligned}
\]
Note that $\sum_jy_j-2\sum_my_{s_m}=\sum_j\sigma_jy_j$.
	 Taking argument $\pi$ for each negative real factor, we have
	\begin{equation}\label{eqn::CGI_gap_product}
\begin{aligned}
\prod_{m=1}^{\mathfrak r}\prod_{\ell\ne s_m}(y_{s_m}-y_\ell)^{4/\kappa}
={}&\exp\left(\frac{4\pi \ii}{\kappa}\sum_m(n-s_m)\right)
  \prod_{\substack{i<j\\\sigma_i=\sigma_j=-1}}(y_j-y_i)^{8/\kappa}
  \prod_{\substack{i: \sigma_i=-1\\j: \sigma_j=+1}}|y_i-y_j|^{4/\kappa}.
\end{aligned}
\end{equation}
Since $\ud u_m=\ud t_m$, the integrated $\bs t$-dependent factor is (see~\cite[Eq.~(5.9.2)]{NISTDLMF})
\[
\begin{aligned}
	\ointclockwise_{\mathcal{H}_{s_1}}\ud t_1\cdots\ointclockwise_{\mathcal{H}_{s_{\mathfrak{r}}}}\ud t_{\mathfrak{r}} \;\prod_{m=1}^{\mathfrak r}
          \left(\ee^{-2\sqrt{2\lambda} t_m/\kappa}t_m^{-4/\kappa}\right)
    &=\left(\ointclockwise_{\mathcal{H}}
          \ee^{-2\sqrt{2\lambda} t/\kappa}t^{-4/\kappa}\,\ud t\right)^{\mathfrak r}\\
    &=\left(\frac{2\pi \ii\,\ee^{4\pi \ii/\kappa}}{\Gamma(4/\kappa)}
          \left(\frac{2\sqrt{2\lambda}}{\kappa}\right)^{4/\kappa-1}\right)^{\mathfrak r},
\end{aligned}
\]
where $\mathcal{H}_{s}$ is the translate of the Hankel contour.
The normalization in~\eqref{eqn::CGI_cst_def} cancels the phase factors, thus the normalized integral has the asymptotic expansion
\begin{equation}\label{eqn::CGI_large_separation}
\begin{split}
    \LGchord{n}(R\bs y)
    ={}&
    \left(\frac{(2\sqrt{2\lambda}/\kappa)^{4/\kappa-1}}{\Gamma(4/\kappa)}\right)^{\mathfrak r}
    R^{((n-2\mathfrak r)^2-n)/\kappa}
    \exp\left(\frac{\sqrt{2\lambda} R}{\kappa}\sum_{j=1}^n\sigma_jy_j\right)
      \prod_{i<j}(y_j-y_i)^{2\sigma_i\sigma_j/\kappa}(1+o(1)),
\end{split}
\end{equation}
whose leading coefficient is positive for every $\kappa>0$ and $\lambda>0$.
\medbreak
Choose $y_j=2^j$.
The signed sums $\sum_j\sigma_jy_j$ are distinct as $\sigma$ ranges over $\{\pm1\}^n$, so the $2^n$ functions have distinct exponential rates $\sum_j\sigma_jy_j$.
Suppose there is a nontrivial complex linear relation, and choose a term with nonzero coefficient whose exponential rate is maximal.
Divide the relation by this term and let $R\to\infty$.
Every other term with nonzero coefficient tends to zero, since its smaller exponential rate dominates any difference in powers of $R$.
The limit therefore forces the chosen coefficient to vanish, a contradiction.
\end{proof}

\begin{proof}[Proof of Proposition~\ref{prop::CGI}]
Lemma~\ref{lem::CGI_equations} proves the assertions about the integral and its equations.
Since the equations have real coefficients, $\Re\LGchord{n}$ satisfies the same BPZ equations and first Ward identities.
The leading coefficient in~\eqref{eqn::CGI_large_separation} is positive, so taking the real part preserves the leading asymptotic.
The argument in the proof of Lemma~\ref{lem::CGI_independence} therefore also proves linear independence of the real parts.
Together with the dimension formula in Theorem~\ref{thm::BPZ_decomposition}, this shows that the collection~\eqref{eqn::CGI_basis} is a basis of $\LSchord{n}^{(\lambda)}$.
\end{proof}

\begin{samepage}
\subsection{Proof of Propositions~\ref{prop::CGI_kappa4} and~\ref{prop::LFchordsigma}}
\begin{proof}[Proof of Proposition~\ref{prop::CGI_kappa4}]
At $\kappa=4$, the integrand is meromorphic in the screening variables.
Exponential decay allows us to close each Hankel contour at infinity and apply the residue theorem at its only enclosed pole $u_m=x_{s_m}$.
Since $\sum_jx_j-2\sum_mx_{s_m}=\sum_j\sigma_jx_j$, successive residues give
\[
\begin{aligned}
    \LGchordGFF{n}(\bs x)
    ={}&\cstchordGFF{n}(-2\pi \ii)^{\mathfrak r}
      \exp\left(\frac{\sqrt{2\lambda}}{4}\sum_j\sigma_jx_j\right)
       \times
      \frac{\displaystyle\prod_{i<j}(x_j-x_i)^{1/2}
             \prod_{p<q}(x_{s_q}-x_{s_p})^2}
           {\displaystyle\prod_{m=1}^{\mathfrak r}
              \prod_{\ell\ne s_m}(x_{s_m}-x_\ell)}\\
    ={}&\LFchord{n}(\bs x).
\end{aligned}
\]
Here the last equality uses~\eqref{eqn::CGI_gap_product} with $\kappa=4$, and the normalization in~\eqref{eqn::CGI_cst_def} cancels the phase factors.
\end{proof}
\end{samepage}
\begin{proof}[Proof of Proposition~\ref{prop::LFchordsigma}]
The assertion follows from Propositions~\ref{prop::CGI}-\ref{prop::CGI_kappa4}, and Theorem~\ref{thm::BPZ_decomposition}.
\end{proof}

\subsection{Proof of Proposition~\ref{prop::GFF_chord}}
\label{subsec::GFF_coupling}

\paragraph*{Half-plane capacity.}
An $\HH$-hull is a relatively closed subset $K$ of $\HH$ such that $K$ is bounded and $\HH\setminus K$ is simply connected. 
Let $g_K$ be
the conformal map from $\HH\setminus K$ onto $\HH$ with the normalization $\lim_{z\to\infty}|g_K(z)-z|=0$. We call $g_K$ the mapping-out function of $K$. Note that there exists a constant $c_K\ge 0$ such that 
\[g_K(z)=z+\frac{2c_K}{z}+o(1/|z|), \qquad \text{as }z\to\infty.\]
The constant $c_K$ is called the half-plane capacity and denoted by $\mathrm{hcap}(K)$. 

\paragraph*{Loewner chain.}
Fix $x\in\R$ and $T>0$. Let $\eta: [0,T]\to\overline{\HH}$ be a continuous simple curve such that $\eta_0=x$ and $\eta_{(0,T)}\subset\HH$. For each $t\in[0,T]$, let $g_t$ be the mapping-out function of $\eta_{[0,t]}$. We say that $\eta$ is parameterized by the half-plane capacity if, for all $t\in [0,T]$, 
\[g_t(z)=z+\frac{2t}{z}+o(1/|z|), \qquad\text{as }z\to\infty.\]
These maps $\{g_t: t\in [0,T]\}$ solve the Loewner equation
\begin{equation*}
	\partial_t g_t(z)=\frac{2}{g_t(z)-W_t}, \qquad g_0(z)=z,
\end{equation*}
where $W: [0,T]\to \R$ is a continuous function called the driving function of $\eta$. 

Fix $\kappa>0$ and $x\in\R$. The process $\SLE_{\kappa}$ in $(\HH; x, \infty)$ is defined as Loewner chain with driving function $W_t=\sqrt{\kappa}B_t+x$ where $B_t$ is standard one-dimensional Brownian motion. When $\kappa\in (0,4]$, it is almost surely a continuous simple curve. 

\paragraph*{Green's function.}
For the upper half-plane $\HH$, the Green's function is defined as 
\begin{equation*}
	\Green_{\HH}(z,w)= \log\left|\frac{z-\overline{w}}{z-w}\right|,\qquad z,w\in\HH. 
\end{equation*}
For a simply connected domain $D\subsetneq \C$, we define the Green's function by conformal invariance:
\begin{equation*}
	\Green(D;z,w)=\Green(\HH;\varphi(z),\varphi(w)), \qquad z,w\in D,
\end{equation*}
where $\varphi$ is any conformal map from $D$ onto $\HH$. 

\paragraph*{Gaussian free field (GFF).}
For a domain $D\subsetneq \mathbb{C}$ and $f,g\in L^2(D)$, we denote their $L^2(D)$ inner product by $(f,g)$.
We denote by $H_s(D)$ the space of real-valued smooth functions compactly supported in $D$. We endow this space with the Dirichlet inner product
\[
(f, g)_{\nabla} := \frac{1}{2\pi} \int_D \nabla f(z) \cdot \nabla g(z)d^2z.
\]
We denote by $H(D)$ the Hilbert space completion of \( H_s(D) \) with respect to the Dirichlet inner product.

The Dirichlet GFF on $D$ is a random sum of the form $\Gamma = \sum_{j=1}^\infty f_j X_j$, where $X_j$ are i.i.d. standard normal random variables and $ (f_j)_{j \geq 1} $ is an orthonormal basis for $H(D)$. This sum almost surely diverges in $H(D)$; however, it converges almost surely in the space of distributions. See e.g.~\cite{SheffieldGFFMath} and~\cite{BerestyckiPowellGFFandLQG} for more details.  
In general, for any harmonic function $\varphi$ on $D$, we define the GFF with boundary data $\varphi$ as the Dirichlet GFF plus $\varphi$. 
Note that $\Green(D;\cdot,\cdot)$ is the covariance kernel of the Dirichlet GFF $\Gamma$ on $D$: for $f,g\in H_s(D)$,
\begin{equation*}
	\operatorname{Cov}\big(\langle\Gamma,f\rangle,\langle\Gamma,g\rangle\big) =\int_D\int_D f(z)\Green(D;z,w)g(w)\,d^2z\,d^2w.
\end{equation*}

\paragraph*{Level lines of the GFF.}
The coupling between $\mathrm{SLE}_4$ and GFF level line is characterized by a domain Markov property. The coupling between $\mathrm{SLE}_4$ and GFF level lines was established in~\cite{DubedatSLEFreeField,SchrammSheffieldContinuumGFF} and subsequently developed for more general boundary data, see e.g., \cite{WangWuLevellinesGFFI,PeltolaWuGlobalMultipleSLEs,AruSepulvedaWernerBTLStwoDGFF,AruLupuSepulvedaFPSGFF,KarrilaPeltolaSchougGFFdegeneratelevelline}.
Conditional on an initial segment of the level line, the restriction of the field to the remaining domain is an independent Dirichlet GFF plus the harmonic function obtained by transporting the boundary data under the corresponding Loewner map. Equivalently, along the Loewner evolution the harmonic mean is a local martingale, and its quadratic covariation exactly compensates for the decrease of the Green's function.

\begin{proof}[Proof of Proposition~\ref{prop::GFF_chord}]
Assume $\eta$ is driven by the Loewner chain whose driving function satisfies~\eqref{eqn::LFchord_sigma_SDE}:
\begin{align} \label{eqn::GFF_chord_aux2}
	\begin{split}
		\ud W_t= & 2\ud B_t+4\left(\partial_j\log\LFchord{n}\right)(g_t(x_1), \ldots, g_t(x_{j-1}), W_t, g_t(x_{j+1}), \ldots, g_t(x_{n}))\ud t \\
		= & 2\ud B_t+\left(2\sigma_j\sum_{i\neq j}\frac{\sigma_i}{W_t-g_t(x_i)}+\sqrt{2\lambda}\sigma_j\right)\ud t.
	\end{split}
\end{align}

First, we derive a martingale process for $\eta$. 
For $z\in\HH$, let us prove that the process
\begin{align}\label{eqn::GFF_chord_aux3}
	M_t(z):={}&\pi\sum_{i=1}^n\sigma_i-\sigma_j\arg(g_t(z)-W_t)
	-\sum_{i\neq j}\sigma_i\arg(g_t(z)-g_t(x_i))-\frac{\sqrt{2\lambda}}{2}\Im(g_t(z)),
\end{align}
is a local martingale.
By~\eqref{eqn::GFF_chord_aux2} and It\^o's formula, we have
\begin{align*}
	\ud\arg(g_t(z)-W_t)
	={}&-2\Im\left(\frac{1}{g_t(z)-W_t}\right)\ud B_t-\left(2\sigma_j\sum_{i\neq j}\frac{\sigma_i}{W_t-g_t(x_i)}+\sqrt{2\lambda}\sigma_j\right)
	\Im\left(\frac{1}{g_t(z)-W_t}\right)\ud t,\\
	\ud\arg(g_t(z)-g_t(x_i))
	={}&-\frac{2}{g_t(x_i)-W_t}\Im\left(\frac{1}{g_t(z)-W_t}\right)\ud t,
	\qquad i\neq j,\\
	\ud\Im(g_t(z))
	={}&2\Im\left(\frac{1}{g_t(z)-W_t}\right)\ud t.
\end{align*}
Plugging these identities into~\eqref{eqn::GFF_chord_aux3}, we have
\begin{align}\label{eqn::GFF_chord_martingale_diff}
	\ud M_t(z)=2\sigma_j\Im\left(\frac{1}{g_t(z)-W_t}\right)\ud B_t.
\end{align}
Thus, $M_t(z)$ is a local martingale.

\medbreak
Second, we derive the quadratic variation of the local martingale. Recalling that $\Green_{\HH}(z,w)=\log|(z-\overline{w})/(z-w)|$. For $z,w\in\HH$,~\eqref{eqn::GFF_chord_martingale_diff} gives
\begin{align}\label{eqn::GFF_chord_Hadamard}
	\ud\langle M_t(z),M_t(w)\rangle
	=4\Im\left(\frac{1}{g_t(z)-W_t}\right)
	\Im\left(\frac{1}{g_t(w)-W_t}\right)\ud t
	=-\ud \Green_{\HH}(g_t(z),g_t(w)).
\end{align}

\medbreak
Third, we derive the coupling with $\eta$ and the GFF $\Phi$. Combining~\eqref{eqn::GFF_chord_aux3} and~\eqref{eqn::GFF_chord_Hadamard}, the pair $(\Phi, \eta)$ can be coupled such that given $\eta_{[0,t]}$, the field $\Phi(\cdot)+\varphi_{\sigma}(\bs{x};\cdot)-\frac{\sqrt{2\lambda}}{2}\Im(\cdot)$ restricted to $\HH\setminus\eta_{[0,t]}$ has the same law as
\begin{align*}
	\bigg(\Phi(\cdot)+\varphi_{\sigma}(g_t(x_1),\ldots,g_t(x_{j-1}),W_t,
	g_t(x_{j+1}),\ldots,g_t(x_n);\cdot)-\frac{\sqrt{2\lambda}}{2}\Im(\cdot)\bigg)\circ g_t.
\end{align*}
This confirms the coupling so that $\eta$ is the level line of $\Phi+\varphi_{\sigma}(\bs{x};\cdot)-\frac{\sqrt{2\lambda}}{2}\Im(\cdot)$ starting from $x_j$.
\end{proof}

\appendix
\section{Dub\'edat's commutation relation}
\label{appendix::Dubedat}
\paragraph*{Axioms of commutation relation.}
Let $H\subset \HH$ be a simply connected domain such that $\HH\setminus H$ is a hull and let $x_1,\cdots,x_n$ be distinct prime ends of $\partial H$. Let $U_1,\cdots,U_n$ be, respectively, compact neighborhoods of $x_1,\cdots,x_n$ in $\overline H$ and such that $U_i\cap U_j=\emptyset$ for $1\le i<j\le n$. We will consider probability measures $\PP_{(H;x_1,\ldots,x_n)}^{(U_1,\ldots,U_n)}$ on $n$-tuples of unparametrized continuous curves in $U_1,\ldots,U_n$ starting from $x_1,\ldots,x_n$ and exiting $U_1,\ldots,U_n$ almost surely. We call such a family of measures indexed by different choices of $(U_1,\ldots,U_n)$ compatible if for all $U_j\subset U'_j$, $1\le j\le n$, we have $\PP_{(H;x_1,\ldots,x_n)}^{(U_1,\ldots,U_n)}$ is obtained by restricting the curves under $\PP_{(H;x_1,\ldots,x_n)}^{(U'_1,\ldots,U'_n)}$ to their parts before they first exit $U_1,\ldots,U_n$, respectively.

Fix $\kappa>0$ and $n\ge2$. A locally (weak) commuting $n$-chordal $\SLE_\kappa$ is a compatible family of measures $\PP_{(H; x_1,\ldots, x_n)}^{(U_1, \ldots, U_n)}$ on $n$-tuples of continuous non-self-crossing curves $(\eta^1, \ldots, \eta^n)$ for all $(H; x_1, \ldots, x_n)$ and $(U_1, \ldots, U_n)$ as above, and satisfies additionally the (CI) ((wCI) for the weak case), (DMP), (MARG) conditions below.

\begin{itemize}
\item[(CI)] Conformal invariance. If $\varphi:H\to H'$ is a conformal map fixing $\infty$, then
\begin{equation}\label{eqn::chordal_CI}
	\varphi^*\PP_{(H';\varphi(x_1),\ldots,\varphi(x_n))}^{(\varphi(U_1),\ldots,\varphi(U_n))}
	=\PP_{(H;x_1,\ldots,x_n)}^{(U_1,\ldots,U_n)}.
\end{equation}
\item[(wCI)] Weak conformal invariance. The same identity~\eqref{eqn::chordal_CI} only holds for conformal maps $\varphi:H\to H'$ fixing infinity, with the normalization $\varphi(z)/z\to1$ as $z\to \infty$.
\item[(DMP)] Domain Markov property: Let $\left(\eta^{1},\ldots,\eta^{n}\right)\sim \PP_{(H; x_1,\ldots, x_n)}^{(U_1,\ldots,U_n)}$ and parametrize $\eta^{1},\ldots,\eta^{n}$ by their own half-plane capacity. Let $\bs{t}=(t_1,\ldots,t_n)$, where $t_j$ is a stopping time for $\eta^{j}$ and $\eta^{j}_{[0,t_j]}$ is contained in the interior of $U_j$. Let
\begin{equation*}
	\tilde{U}_j=U_j\setminus \eta^{j}_{[0,t_j)}, \quad \tilde{\eta}^{j}=\eta^{j}\setminus \eta^{j}_{[0,t_j]}, \quad 1\le j\le n; \quad \tilde{H}=H \setminus\left(\bigcup_{j=1}^n \eta^{j}_{[0,t_j]} \right).
\end{equation*}
Then conditionally on $\bigcup_{j=1}^n \eta^{j}_{[0,t_j]}$, we have 
\begin{equation*}
	\left(\tilde{\eta}^{1},\ldots,\tilde{\eta}^{n}\right) \sim \PP_{(\tilde{H};\eta_{t_1}^{1},\ldots,\eta_{t_n}^{n})}^{(\tilde{U}_1,\ldots,\tilde{U}_n)}.
\end{equation*}
\item[(MARG)] Marginal laws: Let $\left( \eta^{1},\ldots,\eta^{n} \right)\sim \PP_{(\HH;x_1,\ldots,x_n)}^{(U_1,\ldots,U_n)}$. For $1\le j\le n$, we assume that there exists a $C^2$ function $b_j:\LX_n \to \R$ such that the half-plane capacity parametrized Loewner driving function $t\mapsto W_t^{j}$ of $\eta^{j}$ and the Loewner chain $g_t^{j}$ associated with $\eta^{j}$ satisfy
\begin{equation}
	\label{eqn::MAGN_SDE}
	\begin{cases}
		\ud W_t^{j}=\sqrt{\kappa} \ud B_t^{j} + b_j\left(g_t^{j}(x_1),\ldots,g_t^{j}(x_{j-1}),W_t^{j},g_t^{j}(x_{j+1}),\ldots,g_t^{j}(x_n)\right) \ud t,\quad W_0^{j}=x_j; \\
		\ud g_t^{j}(x_k)= \frac{2}{g_t^{j}(x_k)-W_t^{j}} \ud t \text{ for } k\neq j,\quad g_0^{j}(x_k)=x_k \text{ for } k\neq j;
	\end{cases}
\end{equation}
where $B^{j}$ is a one-dimensional standard Brownian motion.
\end{itemize}

The difference between the two conformal invariance axioms is important here. For M\"obius transformations of $\HH$, a map fixing infinity has the form $\varphi(z)=az+b$, with $a>0$. The normalization in (wCI) forces $a=1$. Among M\"obius transformations of $\HH$, the axiom (wCI) implies translation invariance of the law; whereas, the axiom (CI) implies translation invariance as well as certain scaling invariance requirement.

\begin{proposition}\label{prop::chordal_commutation_BPZ}
Fix $\kappa>0$ and $n\ge2$. 
\begin{itemize}
	\item The existence of locally weak commuting $n$-chordal $\SLE_{\kappa}$ is equivalent to the existence of $\LZ: \LX_n\to \R_{>0}$, called partition function, such that
	$b_j=\kappa\partial_j\log\LZ$ for all $1\le j\le n$ 
	and $\LZ$ satisfies the BPZ system~\eqref{eqn::BPZ_chordal} and the first Ward identity~\eqref{eqn::Ward} for some constants $\lambda,\nu\in\R$.
	\item The existence of locally commuting $n$-chordal $\SLE_{\kappa}$ is equivalent to the existence of $\LZ: \LX_n\to \R_{>0}$, called partition function, such that
	$b_j=\kappa\partial_j\log\LZ$ for all $1\le j\le n$ 
	and $\LZ$ satisfies the BPZ system~\eqref{eqn::BPZ_chordal} with $\lambda=0$, the first Ward identity~\eqref{eqn::Ward} with $\nu=0$, and the second Ward identity~\eqref{eqn::Ward_2nd} for some constant $\xi\in\R$.
\end{itemize}
\end{proposition}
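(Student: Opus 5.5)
The plan is to follow Dub\'edat's argument~\cite{DubedatCommutationSLE} and adapt it to the two invariance axioms. For the forward direction of both items, we fix $i\neq j$. We grow $\eta^i$ and $\eta^j$ by small capacities $\epsilon$ and $\delta$, in either order, and use (DMP) together with (MARG) to compute the generators. Growing $\eta^i$ and then $\eta^j$ must give the same law as the reverse order. Comparing the second-order terms in the expansion of a test function $f(x_1,\ldots,x_n)$ gives Dub\'edat's compatibility relations on the drifts $b_j$. The first relation says that $\partial_i b_j = \partial_j b_i$. Hence, since $\LX_n$ is simply connected, $b_j = \kappa\partial_j\log\LZ$ for some positive $\LZ$. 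The remaining relation says that $\LZ^{-1}\LD_j\LZ$ is the same function $h(\bs x)$ for every $j$. Together with the commutator identity~\eqref{eqn::chordal_operator_commutator}, which is applied to $\LZ$ after normalisation, this shows that $\LD_j\LZ = h\LZ$ with $h$ independent of $j$. The Loewner expansion also carries the weight $h_{1,2}$, and this produces the constant $\mathfrak b$ term in $\LD_j$.

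The next step is to show that $h$ is constant, and this is where the conformal invariance axiom enters. Under (wCI), translation invariance of the laws gives $\sum_k\partial_k b_j = 0$. Hence $\LT\log\LZ$ has vanishing gradient, and so $\LT\LZ = (\nu/\kappa)\LZ$ for some constant $\nu$. This is the first Ward identity~\eqref{eqn::Ward}. Dub\'edat's argument then shows that $h$ is a function annihilated by all the $\partial_j$. I expect this to come from comparing the symmetric second-order terms once more, using that $h$ may be replaced by $h + c$ only through a multiplicative freedom in $\LZ$. The conclusion is that $h\equiv\lambda/\kappa$ is constant, and this gives~\eqref{eqn::BPZ_chordal}.

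Under the stronger axiom (CI), the scaling invariance gives in addition that $b_j$ is homogeneous of degree $-1$. Thus $\LE\log\LZ$ is constant, which is the second Ward identity~\eqref{eqn::Ward_2nd} with some $\xi$. Since $\LD_j\LZ = (\lambda/\kappa)\LZ$, the dilation relation $[\LD_j,\LE] = 2\LD_j$ from~\eqref{eqn::BPZ_dilation} gives $\lambda\xi/\kappa = \lambda(\xi+2\kappa)/\kappa$, and hence $\lambda=0$. Similarly, the relation $[\LT,\LE] = \LT$ from~\eqref{eqn::translation_dilation} applied to $\LZ$ gives $\nu = 0$.

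For the converse, given a positive $\LZ$ satisfying the stated equations, we define the marginals by~\eqref{eqn::MAGN_SDE} with $b_j = \kappa\partial_j\log\LZ$. Conformal transport, combined with the Ward identities, makes the resulting family consistent under translations in the (wCI) case, and under affine maps in the (CI) case. General domains $H$ are handled by mapping them to $\HH$. The commutation then follows from Dub\'edat's reverse construction, namely the $\LZ$-weighted martingale coupling: the BPZ equations give the required local martingale property for two curves grown in either order. Compatibility across the neighbourhoods $U_j$ and (DMP) follow by construction.

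The main obstacle I expect is the step showing that $h$ is constant, that is, that the BPZ right-hand side is a constant $\lambda$ rather than a function. I would first try to derive $\partial_k h = 0$ from the commutator identity~\eqref{eqn::chordal_operator_commutator}, applied to the relation $\LD_j\LZ = h\LZ$.
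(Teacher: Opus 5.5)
There is a genuine gap, at exactly the step you flag as the main obstacle. It starts one step earlier, with a misreading of what the first-order terms give. With $b_j=\kappa\partial_j\log\LZ$ and $H_j:=\LD_j\LZ/\LZ$, the marginal generators can be written $\mathcal L_j=\LZ^{-1}\circ(\LD_j-H_j)\circ\LZ$. Substituting this into $[\mathcal L_i,\mathcal L_j]=\frac{4}{(x_i-x_j)^2}(\mathcal L_j-\mathcal L_i)$ and using~\eqref{eqn::chordal_operator_commutator}, the first-order terms give $\partial_iH_j=0$ for $i\neq j$. Thus $\LD_j\LZ=F_j(x_j)\LZ$, where $F_j$ depends on the single variable $x_j$.

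This does not say that $\LZ^{-1}\LD_j\LZ$ is one function $h(\bs x)$ common to all $j$. For instance, the remaining zeroth-order terms are consistent with $F_j(x_j)=x_j^2$ for every $j$. So your plan of first extracting a $j$-independent $h$ from~\eqref{eqn::chordal_operator_commutator}, and then proving it constant, cannot be carried out. Applied to $\LD_j\LZ=h_j\LZ$ with non-constant $h_j$, the commutator identity produces derivatives of the $h_j$, not the equality $h_i=h_j$. Your suggested remedies also fail. The second-order terms are already exhausted by $\partial_ib_j=\partial_jb_i$. Replacing $\LZ$ by $c\LZ$ leaves $\LD_j\LZ/\LZ$ unchanged, so there is no freedom $h\mapsto h+c$ to exploit.

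The missing idea is to use the one-variable structure together with translation invariance, in that order. The first Ward identity gives $\LZ(\bs x+a)=\ee^{\nu a/\kappa}\LZ(\bs x)$, and $\LD_j$ commutes with translations. Hence $F_j(x_j)=\LD_j\LZ/\LZ$ is translation invariant, and since it depends on $x_j$ alone, it is constant. Only then does~\eqref{eqn::chordal_operator_commutator} finish the argument. With constant $F_j$ one has $[\LD_i,\LD_j]\LZ=\LD_i(F_j\LZ)-\LD_j(F_i\LZ)=0$, so $\frac{4(F_j-F_i)}{(x_i-x_j)^2}\LZ=0$, and all $F_j$ equal a common constant $\lambda/\kappa$. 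This is the paper's argument in Lemma~\ref{lem::weak_commutation}.

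Your (CI) step, deducing $\nu=\lambda=0$ from $[\LT,\LE]=\LT$ and $[\LD_j,\LE]=2\LD_j$, coincides with Lemma~\ref{lem::commutation}. In the converse, note that for $\lambda\neq0$ the process $\LZ(\cdots)\prod_\ell g_t'(x_\ell)^{\mathfrak b}$ is not itself a local martingale. The weighting must include the factor $\exp(-\frac{\lambda}{\kappa}\hcap(K_{\bs t}))$, and the Brownian loop term when several curves are grown, as in~\eqref{eqn::chordal_multitime_martingale}, before you apply Girsanov.
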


The second item is proved in \cite[Sections~3.1 and~4.1]{DubedatCommutationSLE}. We briefly summarize the proof below. 
\begin{lemma}\label{lem::weak_commutation}
Suppose that the curves satisfy local weak commutation, i.e. (wCI), (DMP) and (MARG). Then there exists $\LZ: \LX_n\to \R_{>0}$ such that $b_j=\kappa\partial_j\log\LZ$ for all $1\le j\le n$ and $\LZ$ satisfies the BPZ system~\eqref{eqn::BPZ_chordal} and the first Ward identity~\eqref{eqn::Ward} for some constants $\lambda,\nu\in\R$.
\end{lemma}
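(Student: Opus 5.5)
We follow Dub\'edat's argument in \cite[Sections~3.1 and~4.1]{DubedatCommutationSLE}, replacing (CI) by (wCI) wherever it enters. First, we produce a partition function. Grow $\eta^i$ for a small capacity time $\epsilon$ and then $\eta^j$ for time $\epsilon$, and compare with the reverse order. By (DMP), the two resulting two-curve laws agree. By (MARG), each law is described by the generator of the corresponding Loewner diffusion in the variables $(x_1,\ldots,x_n)$. Expanding both to order $\epsilon^2$, we use Itô's formula and the expansion of $g_t$ near the marked points. This gives Dub\'edat's compatibility relations between the drifts $b_i,b_j$. Their antisymmetric part gives
\[
\partial_i b_j-\partial_j b_i=0 ,
\]
up to the precise form recorded in Dub\'edat's computation. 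Since $\LX_n$ is simply connected, there is a $C^2$ function $\LZ>0$ with $b_j=\kappa\partial_j\log\LZ$.

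Next, the remaining compatibility relation becomes a statement about $\LZ$. Substituting $b_j=\kappa\partial_j\log\LZ$, it says that
\[
h_j:=\frac{\LD_j\LZ}{\LZ}
\]
satisfies $h_i=h_j$ on $\LX_n$ for all $i\ne j$. Equivalently, there is a single function $h$ with $\LD_j\LZ=h\,\LZ$ for every $j$, and Dub\'edat shows that $h$ is otherwise undetermined by commutation alone. This is the place where conformal invariance is used. Under (CI), covariance under dilations forces $h$ to be homogeneous of degree $-2$, and then translation invariance forces $h\equiv 0$. Under (wCI), only translations are available. By (wCI) applied to $\varphi(z)=z+a$, the drifts are translation invariant: $b_j(\bs{x}+a\bs{1})=b_j(\bs{x})$. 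Hence $\kappa\LT\log\LZ=\sum_j b_j\cdot$(const) is such that $\partial_\ell(\LT\log\LZ)=\LT(\partial_\ell\log\LZ)=0$ for all $\ell$. So $\LT\log\LZ$ is a constant $\nu/\kappa$, which is the first Ward identity~\eqref{eqn::Ward}.

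It remains to show that $h$ is constant. From the definition $h=\LZ^{-1}\LD_j\LZ$, the function $h$ is expressible through $\partial\log\LZ$, $\partial^2\log\LZ$ and the differences $x_\ell-x_j$. All of these are translation invariant, so $h$ is translation invariant. We then use the commutator identity~\eqref{eqn::chordal_operator_commutator}:
\[
[\LD_i,\LD_j]\LZ=-\frac{4}{(x_i-x_j)^2}(\LD_i-\LD_j)\LZ=0 .
\]
On the other hand, expanding the left side with $\LD_j\LZ=h\LZ$ gives
\[
[\LD_i,\LD_j]\LZ=\LD_i(h\LZ)-\LD_j(h\LZ).
\]
Only the derivative terms in $\partial_i,\partial_j$ survive, yielding
\[
\frac{\kappa}{2}\bigl(\partial_i^2 h-\partial_j^2 h\bigr)\LZ+\kappa\bigl(\partial_ih\,\partial_i\LZ-\partial_jh\,\partial_j\LZ\bigr)+\text{(first-order terms in }h)=0 .
\]
This alone does not force $h$ to be constant; it is the main obstacle. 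Dub\'edat handles this step with an additional local argument: the second-order expansion of the commutation at order $\epsilon^2$ also matches the cross terms. Under the full (CI) this yields $h=0$; under (wCI) one retains exactly the statement that $h$ is a constant $\lambda/\kappa$, which is the BPZ system~\eqref{eqn::BPZ_chordal}. I would extract this from \cite[Section~3.1]{DubedatCommutationSLE}, checking that scaling was used there only to kill the constant. If that check fails, the fallback is to argue directly: combine translation invariance of $h$ with the above commutator constraints for all pairs $i\ne j$, and analyze the resulting overdetermined system to conclude $\nabla h=0$.
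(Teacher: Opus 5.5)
There is a genuine gap, and it is in your second step. The first-order part of the commutation relation does not give $h_i=h_j$; it gives $\partial_i h_j=0$ for $i\neq j$. To see this, set $\mathcal{L}_j=\frac{\kappa}{2}\partial_j^2+b_j\partial_j+\sum_{\ell\ne j}\frac{2}{x_\ell-x_j}\partial_\ell$ and $b_j=\kappa\partial_j\log\LZ$. Then $\mathcal{L}_j=\LZ^{-1}\circ\LD_j\circ\LZ-h_j$ as operators, and the commutator identity~\eqref{eqn::chordal_operator_commutator} yields
\[
[\mathcal{L}_i,\mathcal{L}_j]-\frac{4(\mathcal{L}_j-\mathcal{L}_i)}{(x_i-x_j)^2}
=\kappa(\partial_j h_i)\,\partial_j-\kappa(\partial_i h_j)\,\partial_i .
\]
There is no zeroth-order term, because every $\mathcal{L}_j$ annihilates constants.

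So commutation is equivalent to $h_j=F_j(x_j)$, i.e.\ $h_j$ depends on $x_j$ alone, and nothing more. Your ``single function $h$'' is not what the computation delivers. Combined with~\eqref{eqn::chordal_operator_commutator}, it only forces the $F_j$ to be one common polynomial of degree at most $2$, so $h_i\neq h_j$ is still allowed at this stage.

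Starting from your formulation you are genuinely stuck. The commutator identity reduces to $\mathcal{L}_ih=\mathcal{L}_jh$, which does not force $h$ to be constant. The proposal then closes only by deferring to an unspecified step in Dub\'edat or to an unanalysed overdetermined system, so as written the BPZ conclusion is not proved.

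With the correct form the obstacle disappears, and this is exactly the paper's argument. You already observed that $h_j=\LZ^{-1}\LD_j\LZ$ is translation invariant under (wCI). A translation-invariant function of the single variable $x_j$ is constant, so $h_j\equiv c_j$. Then $[\LD_i,\LD_j]\LZ=\LD_i(c_j\LZ)-\LD_j(c_i\LZ)=0$, while~\eqref{eqn::chordal_operator_commutator} gives $[\LD_i,\LD_j]\LZ=-\frac{4(c_i-c_j)}{(x_i-x_j)^2}\LZ$. Positivity of $\LZ$ forces all $c_j$ to equal a common constant $\lambda/\kappa$, which is~\eqref{eqn::BPZ_chordal}. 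The non-trivial right-hand side of the commutator identity, which your formulation made vacuous, is precisely what identifies the constants.

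Your first step ($\partial_ib_j=\partial_jb_i$ from the $\partial_i\partial_j$ coefficients) and your derivation of the first Ward identity agree with the paper.
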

\begin{proof}
 The infinitesimal generators associated with~\eqref{eqn::MAGN_SDE} are
\begin{equation*}
	\mathcal{L}_j=\frac{\kappa}{2}\partial_j^2+b_j\partial_j+\sum_{\ell\neq j}\frac{2}{x_\ell-x_j}\partial_\ell,\qquad \text{for }j\in\{1,\ldots,n\}.
\end{equation*}
Comparing the two orders of growing short initial segments of $\eta^i$ and $\eta^j$, \textup{(DMP)} gives
\begin{equation} \label{eqn::commutation_chordal_classify_aux1}
	[\mathcal{L}_i,\mathcal{L}_j]=\frac{4(\mathcal{L}_j-\mathcal{L}_i)}{(x_i-x_j)^2},\qquad \text{for }i\neq j.
\end{equation}
Comparing the coefficients of $\partial_i\partial_j$, we obtain $\partial_i b_j=\partial_j b_i$. Then there exists a positive function $\LZ$ such that $b_j=\kappa\partial_j\log\LZ$ for $1\le j\le n$. Plugging $b_j=\kappa\partial_j\log\LZ$ into the first-order coefficients~\eqref{eqn::commutation_chordal_classify_aux1}, we obtain
\begin{equation*}
	\partial_i\left(\frac{\LD_j\LZ}{\LZ}\right)=0,\qquad \text{for }i\neq j,
\end{equation*}
and $\LD_j$ is defined in~\eqref{eqn::chordal_BPZ_operators}. Hence $\LD_j\LZ=F_j(x_j)\LZ$ for some functions $F_j$. Applying \textup{(wCI)} to translations of $\HH$ shows that the drift functions are translation-invariant:
\begin{equation*}
	b_j(\bs{x}+a)=b_j(\bs{x}),\qquad\text{for all }j\in\{1, \ldots, n\}\text{ and }a\in\R.
\end{equation*}
It follows that $\kappa\partial_j(\LT\log\LZ)=\LT b_j=0$ for all $j$, where $\LT=\sum_j\partial_j$ is defined in~\eqref{eqn::chordal_Ward_12}. Therefore $\kappa\LT\log\LZ=\nu$ for some $\nu\in\R$, which gives~\eqref{eqn::Ward}. Moreover, $\LD_j\LZ/\LZ$ is translation-invariant, so each $F_j$ is a constant. Applying the commutator identity~\eqref{eqn::chordal_operator_commutator} to $\LZ$ now gives
\begin{equation*}
	0=[\LD_i,\LD_j]\LZ=\frac{4(F_j-F_i)}{(x_i-x_j)^2}\LZ.
\end{equation*}
Since $\LZ>0$, all the constants $F_j$ agree. Writing their common value as $\lambda/\kappa$, we obtain~\eqref{eqn::BPZ_chordal}. This completes the proof.
\end{proof}

\begin{lemma}\label{lem::commutation}
Suppose that the curves satisfy local commutation, i.e. (CI), (DMP) and (MARG). Then there exists $\LZ: \LX_n\to \R_{>0}$ such that $b_j=\kappa\partial_j\log\LZ$ for all $1\le j\le n$ and $\LZ$ satisfies the BPZ system~\eqref{eqn::BPZ_chordal} with $\lambda=0$, the first Ward identity~\eqref{eqn::Ward} with $\nu=0$, and the second Ward identity~\eqref{eqn::Ward_2nd} for some constant $\xi\in\R$.
\end{lemma}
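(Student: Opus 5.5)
Lemma~\ref{lem::weak_commutation} already gives most of the conclusion, because (CI) implies (wCI). It provides a positive $\LZ$ with $b_j=\kappa\partial_j\log\LZ$, the BPZ system~\eqref{eqn::BPZ_chordal} for some $\lambda$, and the first Ward identity~\eqref{eqn::Ward} for some $\nu$. Only two things remain: to extract $\lambda=\nu=0$ and the second Ward identity from the extra scaling invariance in (CI), namely invariance under $\varphi(z)=az$ with $a>0$.

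\textbf{Step 1: scale covariance of the drifts.} Apply (CI) to the map $z\mapsto az$. Under this map, half-plane capacity scales by $a^2$ and the driving function scales by $a$. If $W^j_t$ drives $\eta^j$, then $aW^j_{t/a^2}$ drives $a\eta^j$. Comparing the SDE~\eqref{eqn::MAGN_SDE} for the image curves with the SDE at the scaled points $a\bs{x}$, and using uniqueness of the drift given by (MARG), gives
\[
b_j(a\bs{x})=a^{-1}b_j(\bs{x}),\qquad a>0.
\]
Differentiating at $a=1$ yields $(\LE+1)b_j=0$. Since $b_j=\kappa\partial_j\log\LZ$ and $[\partial_j,\LE]=\partial_j$, this is equivalent to $\partial_j(\LE\log\LZ)=0$ for all $j$. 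Hence $\kappa\LE\log\LZ=\xi$ is constant, which is the second Ward identity~\eqref{eqn::Ward_2nd}.

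\textbf{Step 2: $\nu=0$.} Write $\LZ=e^{\nu\sum_j x_j/(n\kappa)}G$. The first Ward identity says $\LT G=0$. Apply the commutator relation $[\LT,\LE]=\LT$ from~\eqref{eqn::translation_dilation} to $\LZ$. The left side is $\LT(\xi\LZ/\kappa)-\LE(\nu\LZ/\kappa)=0$, since both Ward identities have constant eigenvalues. The right side is $\LT\LZ=\nu\LZ/\kappa$. Since $\LZ>0$, this forces $\nu=0$.

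\textbf{Step 3: $\lambda=0$.} Use $[\LD_j,\LE]=2\LD_j$ from~\eqref{eqn::BPZ_dilation} applied to $\LZ$. The left side is $\LD_j(\xi\LZ/\kappa)-\LE(\lambda\LZ/\kappa)=0$, while the right side is $2\lambda\LZ/\kappa$. Hence $\lambda=0$.

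\textbf{Main obstacle.} The only delicate point is Step 1: checking the time-change bookkeeping carefully, in particular that the marginal law of the image curve is again of the form~\eqref{eqn::MAGN_SDE} with the same functions $b_j$, now evaluated at the scaled configuration $a\bs{x}$. That argument is the chordal analogue of Dub\'edat's, and I would follow~\cite[Section~3.1]{DubedatCommutationSLE}. Everything after that is the purely algebraic use of the commutator identities.
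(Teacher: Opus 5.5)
Your proposal is correct and follows essentially the same route as the paper: invoke Lemma~\ref{lem::weak_commutation} via (wCI), use (CI) under dilations to get $b_j(a\bs{x})=a^{-1}b_j(\bs{x})$ and hence $\kappa\LE\log\LZ=\xi$, then apply $[\LT,\LE]=\LT$ and $[\LD_j,\LE]=2\LD_j$ to $\LZ$ to force $\nu=\lambda=0$. The factorization $\LZ=e^{\nu\sum_j x_j/(n\kappa)}G$ at the start of your Step~2 is never used and can be dropped.
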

\begin{proof}
If the curves satisfy \textup{(CI)}, then they in particular satisfy \textup{(wCI)}, thus the analysis in Lemma~\ref{lem::weak_commutation} apply. Applying \textup{(CI)} to scaling of $\HH$ shows that the drift functions are homogeneous of degree $-1$:
\begin{equation*}
	b_j(a\bs{x})=a^{-1}b_j(\bs{x}),\qquad\text{for all }j\in\{1,\ldots,n\}\text{ and }a>0.
\end{equation*}
It follows that $\kappa\partial_j(\LE\log\LZ)=\LE b_j+b_j=0$ for all $j$, where $\LE=\sum_jx_j\partial_j$ is defined in~\eqref{eqn::chordal_Ward_12}. Therefore $\kappa\LE\log\LZ=\xi$ for some $\xi\in\R$, which gives~\eqref{eqn::Ward_2nd}. Applying the commutator identities~\eqref{eqn::BPZ_dilation} and~\eqref{eqn::translation_dilation} to $\LZ$ gives
\begin{equation*}
	0=[\LT,\LE]\LZ=\LT\LZ=\frac{\nu}{\kappa}\LZ,
	\qquad
	0=[\LD_j,\LE]\LZ=2\LD_j\LZ=\frac{2\lambda}{\kappa}\LZ.
\end{equation*}
Since $\LZ>0$, we obtain $\nu=\lambda=0$.
\end{proof}

To address the inverse direction in Proposition~\ref{prop::chordal_commutation_BPZ}, we introduce Brownian loop measure and multi-time martingale. 
\paragraph*{Brownian loop measure.} Brownian loop measure is a $\sigma$-finite measure on planar unrooted Brownian loops. We denote this measure by $\blm^\mathrm{loop}$ and refer to~\cite{LawlerWernerBrownianLoopsoup} for its definition and properties. The total mass of $\blm^\mathrm{loop}$ is infinite, but the mass on macroscopic loops is finite: suppose $\Omega$ is a domain and $K_1, K_2$ are two disjoint compact subsets of $\overline{\Omega}$, the total mass of Brownian loops that stay in $\Omega$ and intersect both $K_1$ and $K_2$ is finite and we denote this quantity by $\blm(\Omega; K_1, K_2)$. 
In general, for $n\ge 2$, suppose $K_1, \ldots, K_n$ are disjoint compact subsets of $\overline{\Omega}$, we denote
\begin{equation}\label{eqn::blm_def}
\blm(\Omega; K_1, \ldots, K_n)=\sum_{j=2}^n \blm^{\mathrm{loop}}\left(\ell\subset\Omega: \ell\cap K_i\neq \emptyset\text{ for at least }j \text{ of the }i\in\{1, \ldots, n\}\right). 
\end{equation}
See~\cite{LawlerPartitionFunctionsSLE} for its properties and see~\cite{DubedatEulerIntegralsCommutingSLEs, DubedatCommutationSLE, KozdronLawlerMultipleSLEs, PeltolaWuGlobalMultipleSLEs} for its alternative forms. 

\paragraph*{Multi-time martingale.}
We fix
\begin{align}
\lambda\in\R, \qquad \kappa>0, \qquad \mathfrak{b}=\frac{6-\kappa}{2\kappa}, \qquad \mathfrak{c}=\frac{(6-\kappa)(3\kappa-8)}{2\kappa}. 
\end{align}
We construct the locally commuting chordal $\SLE$ by tilting independent chordal SLEs with the following multi-time martingale constructed from $\LZ$. Let $\eta^1,\ldots,\eta^n$ be independent chordal $\SLE_\kappa$ curves in $(\HH;x_1,\infty),\ldots,(\HH;x_n,\infty)$, respectively, parametrized by their own half-plane capacities and with driving functions $W^1,\ldots,W^n$. For $\bs{t}=(t_1,\ldots,t_n)$, let $g^j_{t_j}$ be the mapping-out function of $\eta^j_{[0,t_j]}$, and let $g_{\bs{t}}$ be the mapping-out function of $K_{\bs{t}}=\bigcup_{j=1}^n\eta^j_{[0,t_j]}$.
On the event $\mathcal E_{\bs{t}}$ that these initial segments are pairwise disjoint, define $g_{\bs{t},j}$ by $g_{\bs{t}}=g_{\bs{t},j}\circ g^j_{t_j}$ and let $\bs{X}_{\bs{t}}=(X^1_{\bs{t}},\ldots,X^n_{\bs{t}})$, where $X^j_{\bs{t}}=g_{\bs{t},j}(W^j_{t_j})$. 
\begin{lemma}[{\cite[Proposition~2.1]{HuangWuYangMultipleSLEsDysonBM}}]
\label{lem::multitime_mart}
The process
\begin{equation}\label{eqn::chordal_multitime_martingale}
	M_{\bs{t}}(\LZ)
	=\one_{\mathcal E_{\bs{t}}}\LZ(\bs{X}_{\bs{t}})
	\prod_{j=1}^n g'_{\bs{t},j}(W^j_{t_j})^{\mathfrak{b}}
	\exp\left(\frac{\mathfrak{c}}{2}\blm\big(\HH;\eta^1_{[0,t_1]},\ldots,\eta^n_{[0,t_n]}\big)-\frac{\lambda}{\kappa}\hcap(K_{\bs{t}})\right)
\end{equation}
is a multi-time local martingale if and only if $\LZ$ is smooth and satisfies the BPZ system~\eqref{eqn::BPZ_chordal}. 
\end{lemma}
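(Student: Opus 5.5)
The plan is to invoke It\^o calculus along the multi-time evolution, one time coordinate at a time, so that the multi-time statement reduces to $n$ one-time statements. Fix $j$ and freeze $t_i$ for $i\ne j$. By the domain Markov property of independent chordal SLEs, conditionally on $\eta^i_{[0,t_i]}$ ($i\ne j$), the curve $\eta^j$ is chordal SLE$_\kappa$ from $x_j$ in $\HH$, independent of the other curves. Its image under $g_{\bs{t},j}$ is a Loewner chain in $\HH$ with driving process $X^j_{\bs t}$, after a time change by $g'_{\bs t,j}(W^j_{t_j})^2$. The multi-time process is a local martingale in each coordinate exactly when $M_{\bs t}(\LZ)$ is a local martingale in $t_j$ for every $j$. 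Since the exponents and the loop term are symmetric, I only need to compute the $t_j$-drift.

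For the single-coordinate computation I would use the standard Loewner-chain formulas for the covering map $h=g_{\bs t,j}$ evaluated at $W=W^j_{t_j}$. These are the relations $\partial_{t_j}X^k = 2h'(W)^2/(X^k-X^j)$ for $k\ne j$, and $\ud X^j = h'(W)\,\ud W + (\tfrac{\kappa}{2}-3)h''(W)\,\ud t$. There are also the known evolutions of $h'(W)$ and of $h''(W)$, the latter through the Schwarzian term $Sh(W)$. For the loop term, $\partial_{t_j}\blm = -\tfrac{1}{3}Sh(W)\,\ud t$, up to the standard normalization in Lawler's formula. For the capacity term, $\partial_{t_j}\hcap(K_{\bs t}) = h'(W)^2$.

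Applying It\^o's formula to $\log M$, the choice $\mathfrak b=(6-\kappa)/(2\kappa)$ cancels the $h''$ cross terms, and the choice of $\mathfrak c$ cancels the Schwarzian terms. The remaining drift is
\[
h'(W)^2\Big(\frac{\kappa}{2}\partial_j^2\LZ+\sum_{k\ne j}\frac{2}{X^k-X^j}\partial_k\LZ-\sum_{k\ne j}\frac{2\mathfrak b}{(X^k-X^j)^2}\LZ-\frac{\lambda}{\kappa}\LZ\Big)(\bs X_{\bs t}).
\]
The last term, $-\frac{\lambda}{\kappa}\LZ$, comes exactly from the hcap factor. The $-2\mathfrak b/(X^k-X^j)^2$ term comes from the factors $g'_{\bs t,k}(W^k)^{\mathfrak b}$ for $k\ne j$: their $t_j$-derivative is the interaction $-2h'(W)^2/(X^k-X^j)^2$ times $\mathfrak b$. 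Thus $M$ has zero drift in $t_j$ if and only if $\LD_j\LZ=\frac{\lambda}{\kappa}\LZ$ holds at $\bs X_{\bs t}$.

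For the ``only if'' direction, I use that the state $\bs X_{\bs t}$ can reach an open set of $\LX_n$ with positive probability, since at small times it is near $\bs x$ and the starting point is arbitrary. This forces the BPZ equation everywhere, and smoothness is needed for It\^o anyway (or follows from the hypothesis). For the ``if'' direction, a vanishing drift gives a local martingale in each coordinate. Combined with the conditional independence structure, this yields the multi-time local martingale property, as in the cited reference.

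The main obstacle is the bookkeeping of the loop-measure and $h''$ terms, namely verifying that $\mathfrak c$ exactly absorbs the Schwarzian contributions. I would take that cancellation from the known single-curve computation (Dub\'edat, Lawler) rather than redo it. The only new ingredient relative to the $\lambda=0$ case is the hcap factor, whose derivative is simply $h'(W)^2$.
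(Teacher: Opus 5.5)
The paper gives no proof of this lemma; it only cites Proposition~2.1 of Huang--Wu--Yang. Your coordinate-by-coordinate It\^o computation is the standard argument behind that result, and the constants check out:
\begin{itemize}
\item $\kappa\mathfrak b=3-\frac{\kappa}{2}$ cancels the $(\frac{\kappa}{2}-3)h''(W)\partial_j\LZ$ drift of $X^j$ against the cross-variation with $h'(W)^{\mathfrak b}$.
\item The relative drift of $h'(W)^{\mathfrak b}$ is $\frac{\mathfrak c}{6}Sh(W)$, which is cancelled by $\frac{\mathfrak c}{2}\partial_{t_j}\blm=-\frac{\mathfrak c}{6}Sh(W)$.
\item In the paper's normalization $\partial_{t_j}\hcap(K_{\bs t})=h'(W)^2$.
\end{itemize}
So the drift is $h'(W)^2\bigl((\LD_j-\frac{\lambda}{\kappa})\LZ\bigr)(\bs X_{\bs t})$ times the remaining factors. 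Passing from coordinatewise to multi-time local martingales also works as you indicate. Localize by coordinatewise stopping times, e.g.\ exit times of fixed disjoint neighborhoods of the $x_j$, which keep $M$ bounded. Then condition one coordinate at a time using independence of the curves.

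The genuine gap is the regularity half of the ``only if'' direction. The lemma says the local martingale property \emph{forces} $\LZ$ to be smooth. Your argument assumes $\LZ\in C^2$ in order to apply It\^o's formula, so it only proves ``$C^2$ and local martingale $\Rightarrow$ BPZ''. From there, smoothness does follow by elliptic regularity, as in Lemma~\ref{lem::chordal_first_order_system}. Without a priori regularity you need a different input, along these lines:
\begin{itemize}
\item Set $t_i=0$ for $i\ne j$. The process reduces to $\LZ(g_t(x_1),\ldots,W_t,\ldots,g_t(x_n))\prod_{\ell\ne j}g_t'(x_\ell)^{\mathfrak b}e^{-\lambda t/\kappa}$.
\item Show that its local martingale property makes $\LZ$ a distributional solution of $(\LD_j-\frac{\lambda}{\kappa})\LZ=0$.
\item Use that $\frac{\kappa}{2}\partial_j^2+\sum_{\ell\ne j}\frac{2}{x_\ell-x_j}\partial_\ell$ satisfies H\"ormander's bracket condition. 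Iterated brackets with $\partial_j$ give, up to constants, $\sum_{\ell\ne j}(x_\ell-x_j)^{-k}\partial_\ell$ for $k\ge1$, and these together with $\partial_j$ span by a Vandermonde argument. Hypoellipticity then yields smoothness.
\end{itemize}
This matters in this paper. Exactly this direction, via Corollary~\ref{cor::SLE_mart}, is applied in Lemmas~\ref{lem::LZmix_BPZ_s} and~\ref{lem::LZmix_BPZ_v} to $\LZmix$, which is a priori only defined as an expectation. If $C^2$ regularity of $\LZ$ is taken as a standing hypothesis, your proof is complete.
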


\begin{proof}[Proof of Proposition~\ref{prop::chordal_commutation_BPZ}]
Lemmas~\ref{lem::weak_commutation} and~\ref{lem::commutation} give the existence of the partition functions. 
Next, suppose that $\LZ>0$ satisfies~\eqref{eqn::BPZ_chordal} and~\eqref{eqn::Ward}, and let $b_j=\kappa\partial_j\log\LZ$. 
We weight independent chordal $\SLE_{\kappa}$ $(\eta^1, \ldots, \eta^n)$ by the multi-time local martingale $M_{\bs{t}}(\LZ)$ defined by~\eqref{eqn::chordal_multitime_martingale}.
Girsanov's theorem gives the marginal laws~\eqref{eqn::MAGN_SDE}. The martingale property gives \textup{(DMP)}. The first Ward identity gives the translation invariance of the measure, resulting in \textup{(wCI)}. If in addition $\lambda=\nu=0$ and $\LZ$ satisfies~\eqref{eqn::Ward_2nd}, then the second Ward identity further gives the scaling invariance of the measure, resulting in \textup{(CI)}. This yields the required locally (weak) commuting system.
\end{proof}

A special case of Lemma~\ref{lem::multitime_mart} will be used later and we record it here. 
\begin{corollary}\label{cor::SLE_mart}
Fix $j\in\{1,\ldots,n\}$, let $\eta$ be chordal $\SLE_{\kappa}$ in $(\HH;x_j,\infty)$. For a function $\LZ:\LX_{n}\to \R$, define
\begin{equation}\label{eqn::BPZ_chordal_mart}
	M_t(\LZ)=\LZ (g_t(x_1), \ldots, g_t(x_{j-1}), W_t, g_t(x_{j+1}), \ldots, g_t(x_{n})) \prod_{\ell \neq j} g'_t(x_{\ell})^{\mathfrak{b}}.
\end{equation}
Then the process $M(\LZ)$ is a local martingale with respect to $\eta$ if and only if $\LZ$ is smooth and satisfies the following BPZ equation:
\begin{equation}\label{eqn::BPZ_chordal_j}
	\frac{\kappa}{2}\partial_j^2\LZ	+\sum_{\ell\neq j}\left( \frac{2}{x_\ell-x_j}\partial_\ell\LZ -\frac{2\mathfrak{b}}{(x_\ell-x_j)^2}\LZ \right) =0.
\end{equation}
\end{corollary}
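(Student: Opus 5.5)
This is a direct specialization of Lemma~\ref{lem::multitime_mart}. Take all times except the $j$-th equal to zero: $t_\ell=0$ for $\ell\neq j$ and $t_j=t$. Then $K_{\bs t}=\eta_{[0,t]}$ and $g_{\bs t}=g_t$. We have $g^j_{t}=g_t$, so $g_{\bs t,j}=\mathrm{id}$ and $X^j_{\bs t}=W_t$. For $\ell\neq j$ the curve $\eta^\ell$ is the single point $x_\ell$, so $g_{\bs t,\ell}=g_t$ and $X^\ell_{\bs t}=g_t(x_\ell)$.

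In this restriction the Brownian loop term $\blm(\HH;\eta_{[0,t]},\{x_\ell\}_{\ell\neq j})$ vanishes, since Brownian loops a.s.\ do not hit a fixed point. The indicator $\one_{\mathcal E_{\bs t}}$ equals $1$ up to the swallowing time of the other points. The derivative factor at $W_t$ is $g'_{\bs t,j}(W_t)=1$. Thus $M_{\bs t}(\LZ)$ reduces to $M_t(\LZ)\exp(-\lambda t/\kappa)$, which is~\eqref{eqn::BPZ_chordal_mart} for $\lambda=0$.

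I would nevertheless give a direct one-time It\^o verification rather than lean on the multi-time statement, which is cleaner. Write $Y_t=(g_t(x_1),\ldots,W_t,\ldots,g_t(x_n))$. The dynamics are $\ud W_t=\sqrt\kappa\,\ud B_t$ and $\ud g_t(x_\ell)=\frac{2}{g_t(x_\ell)-W_t}\ud t$. Differentiating the Loewner equation gives $\ud \log g_t'(x_\ell)=-\frac{2}{(g_t(x_\ell)-W_t)^2}\ud t$. For smooth $\LZ$, It\^o's formula then yields
\[
\ud M_t(\LZ)=\sqrt\kappa\,\partial_j\LZ(Y_t)\prod_{\ell\neq j}g_t'(x_\ell)^{\mathfrak b}\,\ud B_t+\Bigl(\LD_j\LZ\Bigr)(Y_t)\prod_{\ell\neq j}g_t'(x_\ell)^{\mathfrak b}\,\ud t,
\]
with $\LD_j$ as in~\eqref{eqn::chordal_BPZ_operators}. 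This proves the ``if'' direction.

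For the converse, if $M(\LZ)$ is a local martingale then its drift vanishes a.s.\ for all $t$. The process $Y_t$ started from any $\bs x\in\LX_n$ has continuous paths, and $\prod_{\ell\neq j} g_t'(x_\ell)^{\mathfrak b}>0$. Letting $t\downarrow0$ therefore forces $\LD_j\LZ(\bs x)=0$ at every point, which is~\eqref{eqn::BPZ_chordal_j}.

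The main obstacle is the regularity in the ``only if'' direction, where $\LZ$ is only assumed to be a function. I would handle it as in Lemma~\ref{lem::multitime_mart}'s source: the martingale property gives that $\LZ$ is a fixed point of the transition semigroup of $Y$ (with multiplicative weight). Its generator is hypoelliptic, since the diffusion acts in $x_j$ and the drift $\sum_\ell \frac{2}{x_\ell-x_j}\partial_\ell$ spans the remaining directions through brackets. H\"ormander's theorem then gives smoothness of $\LZ$, and the drift computation above applies.
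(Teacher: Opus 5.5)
Your proposal is correct. The paper gives no separate proof: it records the statement as the one-curve specialization of Lemma~\ref{lem::multitime_mart}, which is exactly your first paragraph (other times frozen at $0$, $g_{\bs t,j}=\mathrm{id}$, loop term zero, $\hcap=t$).

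Redoing the one-time It\^o computation is the more accurate route. As stated, Lemma~\ref{lem::multitime_mart} pairs the multi-time local martingale property with the full BPZ system. The corollary involves only the $t_j$-direction and only the $j$-th equation, so strictly speaking it follows from the proof of that lemma, not from its statement. Your computation supplies this step directly: the drift of $M_t(\LZ)$ is $(\LD_j\LZ)(Y_t)\prod_{\ell\neq j}g_t'(x_\ell)^{\mathfrak b}$.

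For regularity in the ``only if'' direction, the paper simply inherits the claim from the cited source. Your H\"ormander mechanism is right. Writing $X_0=\sum_{\ell\neq j}\frac{2}{x_\ell-x_j}\partial_\ell$ for the drift field, one has $\mathrm{ad}_{\partial_j}^k X_0=\sum_{\ell\neq j}2\,k!\,(x_\ell-x_j)^{-k-1}\partial_\ell$, and these span the directions $\partial_\ell$, $\ell\neq j$, by a Vandermonde argument. To make the sketch rigorous, add two hypotheses before invoking hypoellipticity. First, require the local martingale property from every starting point $\bs x\in\LX_n$. Second, require $\LZ$ to be measurable and locally bounded. Then optional stopping at exit times of small balls gives the mean-value identity, which shows $\LZ$ is a distributional solution of $\LD_j\LZ=0$.
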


\section{Positive solutions with \texorpdfstring{$\lambda=\nu=0$}{lambda = nu = 0}}
\label{sec::solutions_lambdazero}
In this section, we first give preliminaries in Section~\ref{subsec::SLE_pre}. 
We recall conclusions for multi-chordal $\SLE_{\kappa}$ in Section~\ref{subsec::multichordal_SLE}. 
We recall conclusions for half-watermelon $\SLE_{\kappa}$ in Section~\ref{subsec::halfwatermelon_SLE}. 
We construct positive solutions for the space $\LSchord{n}^{(0;0)}$ in Section~\ref{subsec::LZmix} using results from Sections~\ref{subsec::multichordal_SLE}-\ref{subsec::halfwatermelon_SLE}. As our construction relies crucially on the boundary perturbation of multi-chordal SLE (Lemma~\ref{lem::multichordalSLE_bp}) and of half-watermelon SLE (Lemma~\ref{lem::halfwatermelonSLE_bp}), the construction only works for $\kappa\in (0,4]$. 
We fix
\begin{align}
\kappa\in (0,4], \qquad \mathfrak{b}=\frac{6-\kappa}{2\kappa}, \qquad \mathfrak{c}=\frac{(6-\kappa)(3\kappa-8)}{2\kappa}. 
\end{align}

\subsection{Preliminaries}
\label{subsec::SLE_pre}

\paragraph*{Polygon.} We say that $(\Omega; \bs{x})=(\Omega; x_1, \ldots, x_n)$ 
is a (topological) $n$-polygon if $\Omega\subsetneq\C$ is simply connected, $\partial\Omega$ is locally connected, and $x_1, x_2, \ldots, x_n \in \partial\Omega$ are distinct points lying counterclockwise along the boundary. 
We say that $(\Omega; \bs{x})=(\Omega; x_1, \ldots, x_n)$ 
is a nice polygon if we assume further that the marked boundary points $x_1, x_2, \ldots, x_n$ lie on $C^{1+\eps}$-boundary segments, for some $\eps>0$, so that derivatives of conformal maps on $\Omega$ are defined there.

\paragraph*{Poisson kernel.}
For 1-polygon $(\HH;x)$ with $z\in\HH$, the Poisson kernel is defined as 
\begin{equation} \label{eqn::Poisson_H}
	\Poisson(\HH;x;z):=\frac{2\Im(z)}{|z-x|^2}, \qquad x\in \R,z\in \HH.
\end{equation}
For nice 1-polygon $(\Omega;x)$ with $z\in\Omega$, we extend its definition via conformal covariance:
\begin{equation} \label{eqn::Poisson_cov}
	\Poisson(\Omega;x;z):= |\varphi'(x)| \Poisson(\HH;\varphi(x);\varphi(z)),
\end{equation}
where $\varphi$ is any conformal map from $\Omega$ to $\HH$.

\paragraph*{Boundary Poisson kernel.}
For the 2-polygon $(\HH; x_1, x_2)$, the boundary Poisson kernel is defined as
\begin{equation}\label{eqn::bPoisson_H}
\Poisson(\HH; x_1, x_2)=|x_1-x_2|^{-2},\qquad x_1, x_2\in\R. 
\end{equation}
For nice 2-polygon $(\Omega; x_1, x_2)$, we extend its definition via conformal covariance: 
\begin{equation}\label{eqn::bPoisson_cov}
\Poisson(\Omega; x_1, x_2)=|\varphi'(x_1)\varphi'(x_2)|\Poisson(\HH; \varphi(x_1), \varphi(x_2)), 
\end{equation}
where $\varphi$ is any conformal map from $\Omega\to \HH$. 
Poisson kernel satisfies the following monotonicity.
Let $(\Omega; x_1, x_2)$ be a nice 2-polygon and let $U\subset\Omega$ be a simply connected subdomain that agrees with $\Omega$ in neighborhoods of $x_1$ and of $x_2$. Then
\begin{equation}\label{eqn::bPoisson_mono}
\Poisson(U; x_1, x_2)\le \Poisson(\Omega; x_1, x_2).
\end{equation}

For a $2$-polygon $(\Omega; x_1, x_2)$, let $\varphi: \Omega\to\HH$ be a conformal map with $\varphi(x_1)=x$ and $\varphi(x_2)=\infty$. Then $\SLE_{\kappa}$ in $(\Omega; x_1, x_2)$ is defined as the image of $\SLE_{\kappa}$ in $(\HH; x, \infty)$ under $\varphi^{-1}$. 
We denote the law of chordal $\SLE_{\kappa}$ in $(\Omega; x_1, x_2)$ by 
$\Pchordtwo(\Omega; x_1, x_2)$.
Its partition function is given by 
\begin{equation*}
\LZtwo(\Omega; x_1, x_2)=\Poisson(\Omega; x_1, x_2)^{\mathfrak{b}}. 
\end{equation*}

\paragraph*{SLE with force points.} Fix $\kappa>0$ and $\ell \ge 1, r\ge 1$. Denote $\bs{x}^L=(x^L_{\ell}, \ldots, x^L_1), \bs{x}^R=(x^R_1, \ldots, x^R_r)$ with $x^L_{\ell}<\cdots<x^L_1<0<x_1^R<\cdots<x_r^R$ and $\bs{\rho}^L=(\rho^L_{\ell}, \ldots, \rho^L_1)\in\R^{\ell}, \bs{\rho}^R=(\rho^R_1, \ldots, \rho^R_r)\in \R^r$. The process $\SLE_{\kappa}(\bs{\rho}^L; \bs{\rho}^R)$ in $\HH$ from $0$ to $\infty$ with force points $(\bs{x}^L; \bs{x}^R)$ is defined as the Loewner chain with driving function $W_t$ solves the SDE
\begin{align}
	\begin{cases}
		\displaystyle\ud W_t=\sqrt{\kappa}\ud B_t+\sum_{j=1}^{\ell}\frac{\rho_j^L\ud t}{W_t-V_t^{L, j}}+\sum_{j=1}^{r}\frac{\rho_j^R\ud t}{W_t-V_t^{R, j}}, \qquad W_0=0;\\
		\displaystyle\ud V_t^{L, j}=\frac{2\ud t}{V_t^{L, j}-W_t},\qquad V_0^{L, j}=x_j^L, \quad 1\le j\le \ell; \\
		\displaystyle\ud V_t^{R, j}=\frac{2\ud t}{V_t^{R,j}-W_t}, \qquad V_0^{R, j}=x_j^R, \quad 1\le j\le r. 
	\end{cases}
\end{align}
For a $(\ell+r+2)$-polygon $(\Omega; \bs{x}^L, x, \bs{x}^R, y)$ with $\bs{x}^L=(x^L_{\ell}, \ldots, x^L_1), \bs{x}^R=(x^R_1, \ldots, x^R_r)$, let $\varphi:\Omega\to\HH$ be a conformal map with $\varphi(y)=\infty$. 
$\SLE_{\kappa}(\bs{\rho}^L; \bs{\rho}^R)$ in $(\Omega; \bs{x}^L, x, \bs{x}^R, y)$ is defined as the image of $\SLE_{\kappa}(\bs{\rho}^L; \bs{\rho}^R)$ in $\HH$ from $\varphi(x)$ to $\infty$ with force points $(\varphi(\bs{x}^L); \varphi(\bs{x}^R))$ under $\varphi^{-1}$.

\paragraph{Curve Spaces. } For $2$-polygon $(\Omega; x, y)$, we denote by $\chamber(\Omega; x, y)$ the space of continuous simple curves $\eta\subset\overline{\Omega}$ from $x$ to $y$ such that $\eta\cap\partial\Omega=\{x,y\}$. 
We will consider the following two general curve spaces. 
\begin{itemize}
\item Fix $N\ge 1$, and for $2N$-polygon $(\Omega; \bs{x})$ with $\bs{x}=(x_1, \ldots, x_{2N})$, we consider $N$ disjoint curves in $\Omega$ connecting $\{x_1, \ldots, x_{2N}\}$ pairwise. 
Topologically, these $N$ curves form a planar pair partition, which we call a link pattern. More precisely, a link pattern $\alpha$ of size $N$ is a pair partition $\alpha=\{\{a_1,b_1\},\ldots,\{a_N,b_N\}\}$ of the set $\{1,\ldots,2N\}$, where there do not exist two links $\{a_i,b_i\},\{a_j,b_j\}\in\alpha$ such that $a_i<a_j<b_i<b_j$. We denote the set of such link patterns by $\LP_N$; in particular,
\[\LP_0=\{\emptyset\}, \qquad 
\#\LP_N=\Cat_N=\frac{1}{N+1}\binom{2N}{N}.
\]
Fix a $2N$-polygon $(\Omega;\bs{x})$ with $\bs{x}=(x_1,\ldots,x_{2N})$ and $\alpha\in\LP_N$. We denote by $\chamber_{\alpha}(\Omega;\bs{x})$ the space of continuous simple curves $\bs{\eta}=(\eta^1,\ldots,\eta^N)$ associated with $\alpha$ such that $\eta^j\in\chamber(\Omega;x_{a_j},x_{b_j})$ for all $j$ and $\eta^i\cap\eta^j=\emptyset$ for $i\neq j$.
\item Fix $p\ge 2$, and for $(p+1)$-polygon $(\Omega; \bs{x}, y)$ with $\bs{x}=(x_1, \ldots, x_p)$, we denote by $\chamberfusion{p}(\Omega; \bs{x}, y)$ the space of $p$-tuples of continuous simple curves $\bs{\gamma}=(\gamma^1, \ldots, \gamma^p)$ such that $\gamma^j\in\chamber(\Omega; x_j, y)$ for all $1\le j\le p$ and that $\gamma^i\cap\gamma^j=\{y\}$ for all $i\neq j$. 
\end{itemize}

\subsection{Multi-chordal SLE}
\label{subsec::multichordal_SLE}

\paragraph*{Pure partition functions.}
Fix $\kappa\in (0,8)$. 
Pure partition functions of multi-chordal $\SLE_{\kappa}$ are the recursive collection $\{\LZ_{\alpha} \colon \alpha \in \bigsqcup_{N\geq 0} \LP_N\}$ 
of functions $\LZ_{\alpha}(\HH; \cdot) \colon \LX_{2N}\to\R$
uniquely determined by the following four properties:
\begin{itemize}
	\item Chordal BPZ system~\eqref{eqn::BPZ_chordal} with $n=2N$ and $\lambda=0$. 

	\item M\"{o}bius covariance: for all M\"obius maps $\varphi$ of the upper half-plane $\HH$ such that $\varphi(x_{1}) < \cdots < \varphi(x_{2N})$, we have
	\begin{align*}
		\LZ_{\alpha}(\HH; x_{1},\ldots,x_{2N}) = 
		\prod_{j=1}^{2N} \varphi'(x_{j})^{\mathfrak{b}} 
		\times \LZ_{\alpha}(\HH; \varphi(x_{1}),\ldots,\varphi(x_{2N})).
	\end{align*}
	\item Asymptotics: with $\LZ_{\emptyset} \equiv 1$ for the empty link pattern $\emptyset \in \LP_0$, the collection $\{\LZ_{\alpha} \colon \alpha\in\LP_N\}$ satisfies the following recursive asymptotics property. Fix $j \in \{1,2, \ldots, 2N-1\}$ and $y \in (x_{j-1}, x_{j+2})$ (with the convention that $x_0 = -\infty$ and  $x_{2N+1} = +\infty$). We have
	\begin{align*}
		\lim_{x_j,x_{j+1}\to y} \frac{\LZ_{\alpha}(\HH; x_1,\ldots, x_{2N})}{ (x_{j+1}-x_j)^{-2\mathfrak{b}} }
		= 
		\begin{cases}
			\LZ_{\alpha/\{j,j+1\}}(\HH; x_1, \ldots, x_{j-1}, x_{j+2}, \ldots, x_{2N}), 
			& \quad \text{if }\{j, j+1\}\in\alpha , \\
			0 ,
			& \quad \text{if }\{j, j+1\} \not\in \alpha ,
		\end{cases}
	\end{align*}
	where $\alpha/\{k,l\}$ denotes the link pattern in $\LP_{N-1}$ obtained by removing $\{k,l\}$ from $\alpha$ and then relabeling the remaining indices so that they are the first $2(N-1)$ positive integers. 
\end{itemize}
The existence of pure partition functions is proved in~\cite{ KytolaPeltolaPurePartitionFunctions, PeltolaWuGlobalMultipleSLEs, WuHyperSLE, AngHoldenSunYu2023, FengLiuPeltolaWu2024} for different range of $\kappa$ and they together gives the existence for $\kappa\in (0,8)$. 
The uniqueness of pure partition functions when $\kappa\in (0,8)$ was originally proved in~\cite{FloresKlebanPDE2} under an extra constraint of power-law bound. Now the uniqueness without such constraint follows from Corollary~\ref{cor::BPZ_Catalan}.

We extend the definition of $\LZ_{\alpha}$ to general nice $2N$-polygon $(\Omega; \bs{x})=(\Omega; x_1, \ldots, x_{2N})$ as 
\begin{align}\label{eqn::PPF_COV}
	\LZ_{\alpha}(\Omega; x_1, \ldots, x_{2N}):=\prod_{j=1}^{2N}|\varphi'(x_j)|^{\mathfrak{b}}\times \LZ_{\alpha}(\HH; \varphi(x_1), \ldots, \varphi(x_{2N})),
\end{align}
where $\varphi$ is any conformal map from $\Omega$ onto $\HH$ with $\varphi(x_1)<\cdots<\varphi(x_{2N})$. 
The asymptotics become
\begin{align}\label{eqn::PPF_ASY}
\lim_{x_j, x_{j+1}\to \xi}\frac{\LZ_{\alpha}(\Omega; \bs{x})}{\Poisson(\Omega; x_j, x_{j+1})^{\mathfrak{b}}}=\begin{cases}
\LZ_{\alpha/\{j,j+1\}}(\Omega; \ddot{\bs{x}}_j), &\text{if }\{j,j+1\}\in\alpha,\\
0,&\text{if }\{j,j+1\}\not\in\alpha,
\end{cases}
\end{align}
where $\ddot{\bs{x}}_j=(x_1, \ldots, x_{j-1}, x_{j+2}, \ldots, x_{2N})$. 
When $\kappa\in (0,6]$, pure partition functions also satisfy the following power-law bound (proved in~\cite{WuHyperSLE}): 
\begin{equation}\label{eqn::PPF_PLB}
0<\LZ_{\alpha}(\Omega; \bs{x})\le \prod_{\{a,b\}\in\alpha}\Poisson(\Omega; x_a, x_b)^{\mathfrak{b}}. 
\end{equation}

\paragraph*{Multi-chordal SLE.}
Fix $\kappa\in (0,8)$ and $N\ge 1$. Fix $2N$-polygon $(\Omega; \bs{x})$ with $\bs{x}=(x_1, \ldots, x_{2N})$. 
The $N$-chordal $\SLE_{\kappa}$ in polygon $(\Omega; \bs{x})$ associated with link pattern $\alpha\in\LP_N$ is the unique probability measure on the configuration space $\chamber_{\alpha}(\Omega; \bs{x})$ with resampling property: for each $j\in\{1, \ldots, N\}$, the conditional law of $\eta^j$ given $\{\eta^i: i\neq j\}$ is chordal $\SLE_{\kappa}$ connecting $x_{a_j}$ and $x_{b_j}$ in the connected component $\Omega_j$ of the domain $\Omega\setminus\cup_{i\neq j}\eta^i$ having the end points $x_{a_j}$ and $x_{b_j}$ on its boundary. We denote its law by $\Pchord_{\alpha}=\Pchord_{\alpha}(\Omega; \bs{x})$. 
There are several literature about the existence and uniqueness of $N$-chordal $\SLE_{\kappa}$ for different range of $\kappa$: \cite{MillerSheffieldIG2, PeltolaWuGlobalMultipleSLEs, WuHyperSLE, BeffaraPeltolaWuUniqueness, AngHoldenSunYu2023, ZhanExistenceUniquenessMultipleSLE, FengLiuPeltolaWu2024}. 
They guarantee the existence and uniqueness of $N$-chordal $\SLE_{\kappa}$ for $\kappa\in (0,8)$. 

Suppose that $\bs{\eta}=(\eta^1,\ldots,\eta^N)$ is $N$-chordal $\SLE_{\kappa}$ in $(\HH;x_1,\ldots,x_{2N})$ associated with $\alpha$, with $\eta^1$ starting from $x_1$. Then the law of $\eta^1$ is the same as chordal $\SLE_{\kappa}$ in $(\HH;x_1,\infty)$ weighted by the local martingale (see~\cite[Lemma~4.8]{PeltolaWuGlobalMultipleSLEs})
\begin{equation}\label{eqn::PPF_mart}
	M_t(\LZ_{\alpha}) =\LZ_{\alpha}(\HH;W_t,g_t(x_2),\ldots,g_t(x_{2N})) \prod_{j=2}^{2N}g'_t(x_j)^{\mathfrak{b}}. 
\end{equation}


\begin{lemma}[Boundary perturbation]
\label{lem::multichordalSLE_bp} 
Fix $\kappa\in (0,4]$ and $N\ge 1$. 
Fix $\alpha=\{\{a_1, b_1\}, \ldots, \{a_N, b_N\}\}\in\LP_N$ and $2N$-polygon $(\Omega;\bs{x})$ with $\bs{x}=( x_1, \ldots, x_{2N})$. 
Suppose $K$ is a compact subset of $\overline{\Omega}$ such that $K$ has a positive distance from $\{x_1, \ldots, x_{2N}\}$ and $\{\bs{\eta}\cap K=\emptyset\}$ is not empty. Then, the $N$-chordal $\SLE_{\kappa}$ in the smaller domain $(\Omega\setminus K; \bs{x})$ is absolutely continuous with respect to that in $(\Omega; \bs{x})$, with Radon-Nikodym derivative
\begin{align}
	\label{eqn::multichordalSLESLEbp}
\frac{\ud \Pchord_{\alpha}(\Omega\setminus K; \bs{x})}{\ud\Pchord_{\alpha}(\Omega;\bs{x})}=	\frac{\LZ_{\alpha}(\Omega; \bs{x})}{\LZ_{\alpha}(\Omega\setminus K; \bs{x})}\one\left\{ \bs{\eta}\cap K=\emptyset \right\} \exp\left(\frac{\mathfrak{c}}{2} \blm\left(\Omega; \bs{\eta},K\right)\right), 
	\end{align}
where $\LZ_{\alpha}(\Omega\setminus K; \bs{x})$ is the product of pure partition functions of connected components of $\Omega\setminus K$. 
\end{lemma}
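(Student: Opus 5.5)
The plan is to establish absolute continuity and identify the density by a martingale argument on one curve, then use the resampling characterization of $N$-chordal $\SLE_\kappa$ to pass to all curves. By conformal invariance (the covariance rule \eqref{eqn::PPF_COV} together with conformal invariance of $\blm$), I may take $\Omega=\HH$.

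The first step is to define a candidate law $\QQ$ on $\chamber_\alpha(\HH;\bs{x})$ by
\[
\frac{\ud\QQ}{\ud\Pchord_\alpha(\HH;\bs{x})}=\frac{1}{Z}\one\{\bs{\eta}\cap K=\emptyset\}\exp\Big(\frac{\mathfrak{c}}{2}\blm(\HH;\bs{\eta},K)\Big).
\]
For $\kappa\in(0,4]$ we have $\mathfrak{c}\le1$. I first need $Z<\infty$, and I would obtain it from the $N$-curve version of Lawler's estimate: on $\{\bs{\eta}\cap K=\emptyset\}$, the loop term is controlled by the Poisson-kernel monotonicity \eqref{eqn::bPoisson_mono} combined with the power-law bound \eqref{eqn::PPF_PLB}. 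Positivity of $Z$ comes from the assumption that $\{\bs{\eta}\cap K=\emptyset\}$ is nonempty.

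Next, I check that $\QQ$ has the resampling property in $\HH\setminus K$. Fix $j$ and condition on $\{\eta^i:i\neq j\}$. Split the loop term as the loops hitting $K$ and $\bs\eta^{(-j)}$, which is a constant given the other curves, plus the extra contribution $\blm(\Omega_j;\eta^j,K)$ of loops in $\Omega_j$ that hit both $\eta^j$ and $K$. Under $\Pchord_\alpha$, $\eta^j$ given the others is chordal $\SLE_\kappa$ in $\Omega_j$. The one-curve boundary perturbation of Lawler--Schramm--Werner restriction says that chordal $\SLE_\kappa$ in $\Omega_j$ tilted by $\one\{\eta^j\cap K=\emptyset\}\exp(\frac{\mathfrak c}{2}\blm(\Omega_j;\eta^j,K))$ is chordal $\SLE_\kappa$ in $\Omega_j\setminus K$. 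So $\QQ$ resamples correctly. The uniqueness of $N$-chordal $\SLE_\kappa$ for $\kappa\in(0,8)$, extended to the case where $\HH\setminus K$ has several components by taking the product over components, then gives $\QQ=\Pchord_\alpha(\HH\setminus K;\bs{x})$.

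The last step identifies $Z=\LZ_\alpha(\HH\setminus K;\bs{x})/\LZ_\alpha(\HH;\bs{x})$. I would grow $\eta^1$ from $x_1$ under both measures. Under $\Pchord_\alpha(\HH)$ its law is $\SLE_\kappa$ weighted by \eqref{eqn::PPF_mart}; under $\Pchord_\alpha(\HH\setminus K)$ it is the same weighting with $\LZ_\alpha(\HH\setminus K;\cdot)$. The ratio of the two martingales, multiplied by the standard restriction martingale $h_t'(W_t)^{h}\exp(\frac{\mathfrak c}{2}\blm)$, is a martingale. At time $0$ its value is the ratio of partition functions, and as the curves complete, the fusion asymptotics \eqref{eqn::PPF_ASY} send it to the density above; this fixes $Z$.

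I expect the main obstacle to be the uniform integrability needed to pass to the terminal time. I would handle it with the power-law bound \eqref{eqn::PPF_PLB} and Poisson-kernel monotonicity, which keep the martingale bounded by a constant when $\kappa\le4$ (here $\mathfrak c\le1$ and $h\ge0$), and I would argue by induction on $N$ using the cascade of fusion asymptotics.
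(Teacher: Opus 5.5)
Your proposal takes a different route from the proof the paper cites, and it has a genuine gap: it does not establish the normalizing constant. Your first steps are sound for identifying the density \emph{up to a constant}. Given $\bs{\eta}^{(-j)}$, your splitting of $\blm(\HH;\bs{\eta},K)$ is correct. The one-curve restriction formula then gives the resampling property in $\HH\setminus K$. Uniqueness then identifies the normalized tilt with $\Pchord_\alpha(\HH\setminus K;\bs{x})$, provided you apply it to the conditional law of the curves in one component of $\HH\setminus K$ given those in the others, which also yields independence across components.

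The substance of the lemma, however, is the value $Z=\LZ_\alpha(\HH\setminus K;\bs{x})/\LZ_\alpha(\HH;\bs{x})$, and this is where the argument breaks. Comparing the two Girsanov descriptions of $\eta^1$ only shows that $Z\,\LZ_\alpha(\HH;\bs{x})/\LZ_\alpha(\HH\setminus K;\bs{x})$ is unchanged when $\HH$ is replaced by $\HH_t=\HH\setminus\eta^1[0,t]$, with the tip as the new marked point. Everything therefore hinges on the limit as $\eta^1$ reaches $x_{b_1}$, and \eqref{eqn::PPF_ASY} does not give it. That asymptotic is a two-point fusion at a fixed boundary point of a fixed polygon, with the other marked points held fixed. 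Here $\HH_t$ pinches off the pocket between $\eta^1$ and $[x_1,x_{b_1}]$, which contains $x_2,\ldots,x_{b_1-1}$ and possibly part of $K$. In $g_t$-coordinates, the points $W_t,g_t(x_2),\ldots,g_t(x_{b_1})$ and $g_t(K\cap\text{pocket})$ all collapse together. You would need the factorization $\LZ_\alpha(\HH_t\setminus K;\cdot)/\LZ_\alpha(\HH_t;\cdot)\to\LZ_{\hat\alpha}(\HH\setminus(\eta^1\cup K);\cdot)/\LZ_{\hat\alpha}(\HH\setminus\eta^1;\cdot)$, with $\hat\alpha=\alpha/\{1,b_1\}$, under this pinching degeneration. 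That is a separate, nontrivial statement.

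Two further claims fail for $\kappa\in(8/3,4]$, where $\mathfrak{c}>0$. First, your martingale is not bounded. The factor $\exp(\frac{\mathfrak{c}}{2}\blm(\HH;\eta^1[0,t],K))$ blows up when $\eta^1$ passes close to $K$, and \eqref{eqn::PPF_PLB} gives no lower bound on the denominator $\LZ_\alpha(\HH_t;\cdot)$. Second, $Z<\infty$ cannot be deduced from \eqref{eqn::PPF_PLB} and \eqref{eqn::bPoisson_mono} until $Z$ has been expressed through $\LZ_\alpha(\HH\setminus K;\cdot)$, which is what you are trying to prove.

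The missing idea is the one behind the references the paper cites: \cite[Proposition~3.4]{PeltolaWuGlobalMultipleSLEs}, and \cite[Lemma~2.4]{HuangWuCommutationRadial} for disconnected $\Omega\setminus K$. For $\kappa\le4$, $\Pchord_\alpha(\Omega;\bs{x})$ is the law of independent chordal $\SLE_\kappa$ curves $\eta^j$ in $(\Omega;x_{a_j},x_{b_j})$, tilted by $\one\{\eta^i\cap\eta^j=\emptyset,\ i\neq j\}\exp(\frac{\mathfrak{c}}{2}\blm(\Omega;\eta^1,\ldots,\eta^N))$. The total mass of this tilted measure is $\LZ_\alpha(\Omega;\bs{x})/\prod_j\Poisson(\Omega;x_{a_j},x_{b_j})^{\mathfrak{b}}$. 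Apply the one-curve restriction formula to each independent curve, together with the loop identity
\[
\blm(\Omega;\eta^1,\ldots,\eta^N)+\blm(\Omega;\bs{\eta},K)=\blm(\Omega\setminus K;\eta^1,\ldots,\eta^N)+\sum_{j=1}^N\blm(\Omega;\eta^j,K).
\]
The Poisson-kernel factors then cancel, and $\LZ_\alpha(\Omega;\bs{x})/\LZ_\alpha(\Omega\setminus K;\bs{x})$ appears directly as a ratio of total masses. Finiteness comes for free, and no terminal-time asymptotics are needed.
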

\begin{proof}
See~\cite[Proposition~3.4]{PeltolaWuGlobalMultipleSLEs} for the case when $\Omega\setminus K$ is simply connected. 
See~\cite[Lemma~2.4]{HuangWuCommutationRadial} for the general case. \end{proof}

\subsection{Half-watermelon SLE}
\label{subsec::halfwatermelon_SLE}

\paragraph*{Partition functions.}
Fix $\kappa\in (0,8)$ and $p\ge 1$. 
We first give the partition functions for half-$p$-watermelon $\SLE_{\kappa}$ in $\HH$. Define, for $\bs{x}=(x_1, \ldots, x_p)\in\LX_p$, 
\begin{align}
&\LZ_{\shuffle_p}(\HH;\bs{x})=\prod_{1\le i<\ell\le p}(x_{\ell}-x_i)^{\frac{2}{\kappa}}, \qquad\text{for } y=\infty, \\
&\LZfusion{p}(\HH;\bs{x},y)=\prod_{1\le i<\ell\le p}(x_{\ell}-x_i)^{\frac{2}{\kappa}}\times \prod_{j=1}^p(y-x_j)^{1-\frac{2}{\kappa}(p+2)}, \qquad\text{for }y\in (x_p, \infty). 
\label{eqn::LZfusion_def}
\end{align}
For $p=0$, we set its partition function equal to $1$. It is clear that $\LZ_{\shuffle_p}(\bs{x})$ is the special case of $\LZ_{\shuffle_p}^{(\lambda)}(\bs{x})$ defined in~\eqref{eqn::halfwatermelon_pf_drift} and it satisfies the BPZ system~\eqref{eqn::BPZ_chordal} with $n=p$ and $\lambda=0$. For general $(p+1)$-polygon $(\Omega; \bs{x}, y)$ with $\bs{x}=(x_1, \ldots, x_p)$, define
\begin{align} \label{eqn::halfwatermelon_COV}
\LZfusion{p}(\Omega;\bs{x},y)=\prod_{j=1}^p|\varphi'(x_j)|^{\mathfrak{b}}\times|\varphi'(y)|^{\frac{p(2p+4-\kappa)}{2\kappa}}\times\LZfusion{p}(\HH;\varphi(\bs{x}), \varphi(y)),
\end{align}
where $\varphi$ is any conformal map from $\Omega$ onto $\HH$ with $\varphi(x_1)<\cdots<\varphi(x_p)<\varphi(y)$. 

\paragraph*{Half-watermelon SLE.}
Fix $\kappa\in (0,8)$ and $p\ge 1$. Fix $(p+1)$-polygon $(\Omega; \bs{x}, y)$ with $\bs{x}=(x_1, \ldots, x_p)$. The half-$p$-watermelon $\SLE_{\kappa}$ in $(\Omega; \bs{x}, y)$ is the probability measure on $\bs{\gamma}=(\gamma^1, \ldots, \gamma^p)\in\chamberfusion{p}(\Omega; \bs{x}, y)$ which is characterized by the following recursion relation:
\begin{itemize}
\item $\gamma^1$ is chordal $\SLE_{\kappa}(2, \ldots, 2)$ in $\Omega$ from $x_1$ to $y$ with force points $(x_2, \ldots, x_p)$; 
\item given $\gamma^1$, the conditional law of $(\gamma^2, \ldots, \gamma^p)$ is half-$(p-1)$-watermelon $\SLE_{\kappa}$ in $(\Omega\setminus\gamma^1; x_2, \ldots, x_p; y)$. 
\end{itemize}
Such measure exists and is unique, see~\cite{MillerSheffieldIG2} and~\cite{HuangPeltolaWuMultiradialSLE}.
We denote the law of half-$p$-watermelon $\SLE_{\kappa}$ in $(\Omega;\bs{x},y)$ by $\Pfusion{p}(\Omega;\bs{x},y)$. For $p=1$, this means ordinary chordal $\SLE_{\kappa}$ from $x_1$ to $y$. 

Suppose that $\bs{\gamma}=(\gamma^1,\ldots,\gamma^p)$ is half-$p$-watermelon $\SLE_{\kappa}$ in $(\HH;x_1,\ldots,x_p;\infty)$ with $\gamma^j$ starting from $x_j$. Then the law of $\gamma^j$ is the same as chordal $\SLE_{\kappa}$ in $(\HH;x_j,\infty)$ weighted by the local martingale (see~\cite[Lemma~2.5]{HuangWuYangMultipleSLEsDysonBM})
\begin{equation}\label{eqn::halfwatermelon_mart}
	M_t(\LZ_{\shuffle_{p}})=\LZ_{\shuffle_{p}}(\HH;g_t(x_1),\ldots,g_t(x_{j-1}),W_t,g_t(x_{j+1}),\ldots,g_t(x_p)) \prod_{i\neq j} g'_t(x_i)^{\mathfrak{b}}. 
\end{equation}

\begin{lemma}[Boundary perturbation {\cite[Proposition~3.4]{HuangPeltolaWuMultiradialSLE}}]
\label{lem::halfwatermelonSLE_bp}
Fix $\kappa\in (0,4]$ and $p\ge 1$. Fix $(p+1)$-polygon $(\Omega;\bs{x},y)$ with $\bs{x}=(x_1, \ldots, x_p)$. Suppose $K$ is a compact subset of $\overline{\Omega}$ such that $K$ has a positive distance from $\{x_1, \ldots, x_p, y\}$ and $\Omega\setminus K$ is simply connected. Then, the half-$p$-watermelon $\SLE_{\kappa}$ in the smaller domain $(\Omega\setminus K; \bs{x}, y)$ is absolutely continuous with respect to that in $(\Omega; \bs{x},y)$, with Radon-Nikodym derivative 
\begin{align}
\frac{\ud \Pfusion{p}(\Omega\setminus K;\bs{x},y)}{\ud \Pfusion{p}(\Omega;\bs{x},y)}
=\frac{\LZfusion{p}(\Omega;\bs{x},y)}{\LZfusion{p}(\Omega\setminus K;\bs{x},y)}\one\{\bs{\gamma}\cap K=\emptyset\} \exp\left(\frac{\mathfrak{c}}{2}\blm(\Omega; \bs{\gamma}, K)\right).
\end{align}
\end{lemma}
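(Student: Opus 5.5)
The statement is a cited boundary perturbation result for half-$p$-watermelon $\SLE_\kappa$. I would prove it by induction on $p$, following the recursive definition of $\Pfusion{p}$.

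\textbf{Base case $p=1$.} Here the measure is chordal $\SLE_\kappa$ from $x_1$ to $y$. The claim is the classical restriction formula (Lawler--Schramm--Werner, Lawler's loop-measure formulation): for $\kappa\in(0,4]$,
\[
\frac{\ud\Pchordtwo(\Omega\setminus K)}{\ud\Pchordtwo(\Omega)}
=\one\{\gamma\cap K=\emptyset\}\exp\left(\tfrac{\mathfrak c}{2}\blm(\Omega;\gamma,K)\right)\frac{\Poisson(\Omega;x_1,y)^{\mathfrak b}}{\Poisson(\Omega\setminus K;x_1,y)^{\mathfrak b}}.
\]
The last factor matches $\LZfusion{1}(\Omega)/\LZfusion{1}(\Omega\setminus K)$, since the exponent at $y$ in~\eqref{eqn::halfwatermelon_COV} equals $\mathfrak b$ when $p=1$.

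\textbf{Inductive step for the first curve.} Write the Radon--Nikodym derivative as (law of $\gamma^1$) times (conditional law of the rest given $\gamma^1$). For $\gamma^1$, I would work in $\HH$ with $y=\infty$ and $\varphi=g_K$ the mapping-out function. The marginal of $\gamma^1$ is chordal $\SLE_\kappa$ weighted by $\LZfusion{p}$ through a martingale of the form~\eqref{eqn::halfwatermelon_mart}. Its image under $g_K$ is weighted by $\LZfusion{p}(\Omega\setminus K;\cdot)$, defined via covariance~\eqref{eqn::halfwatermelon_COV}.

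The standard Itô computation for $h_t=g_{K_t}$ (the map for $g_t(K)$) applies. Since $\LZfusion{p}$ satisfies the BPZ equation at $x_1$ and has the correct covariance at the other marked points and at $y$, the process
\[
\one\{\gamma^1_{[0,t]}\cap K=\emptyset\}\,\frac{\LZfusion{p}(\Omega_t\setminus K)}{\LZfusion{p}(\Omega_t)}\,\exp\left(\tfrac{\mathfrak c}{2}\blm(\Omega;\gamma^1_{[0,t]},K)\right)
\]
is a local martingale. Here the factor $h_t'(W_t)^{\mathfrak b}$ and the Schwarzian term $-\frac{\mathfrak c}{6}Sh_t(W_t)$ are absorbed by the loop measure term.

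\textbf{Gluing with the conditional law.} Evaluating this martingale at the terminal time would give the $\gamma^1$-marginal density, with the terminal ratio replaced by the ratio of $\LZfusion{p-1}$ in the slit domains $\Omega\setminus\gamma^1$ and $\Omega\setminus(K\cup\gamma^1)$. Multiplying by the induction hypothesis applied in $\Omega\setminus\gamma^1$ telescopes the partition functions. The loop terms combine via the identity
\[
\blm(\Omega;\gamma^1,K)+\blm(\Omega\setminus\gamma^1;\gamma^2,\ldots,\gamma^p,K)=\blm(\Omega;\bs\gamma,K),
\]
which is the decomposition of loops hitting $K$ and some curve according to whether they hit $\gamma^1$; this uses definition~\eqref{eqn::blm_def}. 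Because $\Omega\setminus\gamma^1$ need not be simply connected near $K$, I would restrict to the component containing the remaining points. This is consistent because $\gamma^1$ avoids $K$ on the relevant event.

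\textbf{Main obstacle.} The hard part is justifying the terminal value, i.e., uniform integrability, with the curve reaching $y$ and all $p$ curves merging there. I would first show the martingale is bounded by $1$ times a constant. This uses $\kappa\le4$ (so $\mathfrak c\le1$, and $\mathfrak c\le0$ is not needed since the loop mass is bounded away from $K$), together with monotonicity of the partition functions analogous to~\eqref{eqn::bPoisson_mono}. I would then identify the limit as $t\to\infty$ using the boundary behavior of $\LZfusion{p}$ as the tip approaches $y$, and the vanishing of the loop-mass increments.
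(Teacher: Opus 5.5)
The paper gives no proof of this lemma; it is quoted from \cite[Proposition~3.4]{HuangPeltolaWuMultiradialSLE}. Your architecture is sensible: induction along the recursive definition, the $\SLE_\kappa(2,\ldots,2)$ marginal of $\gamma^1$, the conditional $(p-1)$-watermelon in $\Omega\setminus\gamma^1$, and splitting loops according to whether they hit $\gamma^1$. The local-martingale claim is also fine: with $y=\infty$ and $h_t=g_{g_t(K)}$ hydrodynamically normalized, only the BPZ equation at the driving point is needed.

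\textbf{The uniform-integrability step fails as you state it.} For $\kappa\in(8/3,4]$ one has $\mathfrak c>0$, and $\blm(\Omega;\gamma^1_{[0,t]},K)$ is \emph{not} bounded on the avoidance event. If $\gamma^1$ passes within distance $\delta$ of $K$ (say a slit attached to $\partial\Omega$) without touching it, this mass is of order $\log(1/\delta)$. Already for $p=1$, the terminal value on $\{\gamma\cap K=\emptyset\}$ is $\exp(\frac{\mathfrak c}{2}\blm(\Omega;\gamma,K))\ge1$. This is unbounded, although its mean is $M_0\le1$, so no bound $M_t\le C$ is available. Also, $\mathfrak c\le1$ holds for every $\kappa>0$, so this is not where $\kappa\le4$ enters.

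The standard repair is to stop at $\tau_\epsilon=\inf\{t:\dist(\gamma^1_t,K)\le\epsilon\}$. The stopped process is bounded, together with a bound on the partition-function ratio up to $\tau_\epsilon$ (e.g.\ the monotonicity you invoke, which for $\LZfusion{p}$ must itself be proved). Girsanov then identifies the tilted law of $\gamma^1_{[0,\tau_\epsilon]}$ with the $\gamma^1$-marginal of $\Pfusion{p}(\Omega\setminus K;\bs x,y)$. For $\kappa\le4$ this curve is simple and a.s.\ at positive distance from $K$, so $\Pfusion{p}(\Omega\setminus K;\bs x,y)[\tau_\epsilon<\infty]\to0$ as $\epsilon\to0$. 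That is the genuine use of $\kappa\le4$.

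\textbf{The terminal value is only asserted.} The mechanism you name, the boundary behaviour of $\LZfusion{p}$ as the tip approaches $y$, is not what is needed, because the domain $\Omega_t=\Omega\setminus\gamma^1_{[0,t]}$ itself pinches at $y$. What you must show is
\[
\lim_{t\to\infty}\frac{\LZfusion{p}(\Omega_t\setminus K;\gamma^1_t,x_2,\ldots,x_p,y)}{\LZfusion{p}(\Omega_t;\gamma^1_t,x_2,\ldots,x_p,y)}
=\frac{\LZfusion{p-1}(D\setminus K;x_2,\ldots,x_p,y)}{\LZfusion{p-1}(D;x_2,\ldots,x_p,y)},
\]
with $D$ the component of $\Omega\setminus\gamma^1$ containing $x_2,\ldots,x_p$ (only $K\cap D$ matters). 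Note that $\LZfusion{p-1}(D;\cdot)$ is not defined on its own, because $y$ is the endpoint of the fractal arc $\gamma^1\subset\partial D$. Only the ratio makes sense, computed in a chart $D\to\HH$, and your induction hypothesis must be formulated in this ratio form.

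In the chart $\Omega=\HH$, $y=\infty$, the factorization $\LZ_{\shuffle_p}(w,v_2,\ldots,v_p)=\prod_{j\ge2}(v_j-w)^{2/\kappa}\LZ_{\shuffle_{p-1}}(v_2,\ldots,v_p)$ splits the left side into the tip factor
\[
h_t'(W_t)^{\mathfrak b}\prod_{j\ge2}\Bigl(\frac{h_t(V^j_t)-h_t(W_t)}{V^j_t-W_t}\Bigr)^{2/\kappa},\qquad V^j_t=g_t(x_j),
\]
and the $(p-1)$-ratio in $\Omega_t$. You then have to show two things:
\begin{enumerate}
\item the tip factor tends to $1$, since $V^j_t-W_t\to\infty$ while $g_t(K)$ does not grow;
\item the $(p-1)$-ratio converges to the ratio in $D$. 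This needs a Carath\'eodory argument for suitably rescaled maps $g_t$ on $D$, uniformly near $K$ and the marked points; the points $g_t(x_j)$ collapse together, so there is no limit without rescaling.
\end{enumerate}
Without this, the telescoping of partition functions in your gluing step is unjustified.

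Finally, the second term in your loop identity must be read as the two-set mass $\blm(\Omega\setminus\gamma^1;\bigcup_{j\ge2}\gamma^j,K)$. Read via \eqref{eqn::blm_def} with $\gamma^2,\ldots,\gamma^p$ as separate sets, it is infinite, since these curves all meet at $y$.
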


\subsection{Mixture of half-watermelon and multi-chordal SLE}\label{subsec::LZmix}
For $n\ge 2$, recall that $\mathsf{P}_n=\{p\in\{0,1,\ldots, n\}: n-p\text{ is even}\}$ is defined in~\eqref{eqn::parityn_def}. 
For $p\in\mathsf{P}_n$, set $2N=n-p$. Fix $\alpha\in\LP_N$. Suppose $\bs{s}=(s_1, \ldots, s_p)$ is a subset of $\{1, \ldots, n\}$ such that $s_1<\cdots<s_p$. We write $\{v_1, \ldots, v_{2N}\}=\{1, \ldots, n\}\setminus\{s_1, \ldots, s_p\}$ such that $v_1<\cdots<v_{2N}$. Suppose $(\Omega; \bs{x}, y)$ with $\bs{x}=(x_1, \ldots, x_n)$ is nice $(n+1)$-polygon. 
\begin{itemize}
\item Suppose $\bs{\gamma}=(\gamma^1, \ldots, \gamma^p)$ is half-$p$-watermelon $\SLE_{\kappa}$ in $(\Omega; x_{s_1}, \ldots, x_{s_p}; y)$. We also write $\bs{\gamma}=\cup_{j=1}^p\gamma^j$ and denote its law by $\Pfusion{p}=\Pfusion{p}(\Omega; x_{s_1}, \ldots, x_{s_p}; y)$. Its partition function is given by 
$\LZfusion{p}(\Omega; x_{s_1}, \ldots, x_{s_p}; y)$. 
\item Suppose $\bs{\eta}=(\eta^1, \ldots, \eta^N)$ is $N$-chordal $\SLE_{\kappa}$ in $(\Omega; x_{v_1}, \ldots, x_{v_{2N}})$ associated with $\alpha$. We also write $\bs{\eta}=\cup_{j=1}^N\eta^j$ and denote its law by $\Pchord_{\alpha}=\Pchord_{\alpha}(\Omega; x_{v_1}, \ldots, x_{v_{2N}})$. Its partition function is given by 
$\LZ_{\alpha}(\Omega; x_{v_1}, \ldots, x_{v_{2N}})$.
\item Let $\Pfusion{p}\otimes\Pchord_{\alpha}$ be the measure under which $\bs{\gamma}\sim\Pfusion{p}$ and $\bs{\eta}\sim\Pchord_{\alpha}$ are independent. We say that the pair $(\bs{s}, \alpha)$ is allowable if $\{\bs{\gamma}\cap\bs{\eta}=\emptyset\}$ is not empty. When $(\bs{s},\alpha)$ is allowable, define 
\begin{align}\label{eqn::LZmix_def}
\begin{split}
\LZmix(\Omega; x_1, \ldots, x_n; y):=&\LZfusion{p}(\Omega; x_{s_1}, \ldots, x_{s_p}; y)\times \LZ_{\alpha}(\Omega; x_{v_1}, \ldots, x_{v_{2N}})\\
&\times\Efusion{p}\otimes\Echord_{\alpha}\left[\one\{\bs{\gamma}\cap\bs{\eta}=\emptyset\}\exp\left(\frac{\mathfrak{c}}{2}\blm(\Omega; \bs{\gamma}, \bs{\eta})\right)\right]. 
\end{split}
\end{align}
\end{itemize}

\begin{proposition}\label{prop::LZmix_BPZ}
Fix $\kappa\in (0,4]$ and $n\ge 2$. 
\begin{itemize}
\item The expectation in~\eqref{eqn::LZmix_def} is finite. 
We write $\LZmix(\bs{x})=\LZmix(\HH; \bs{x},\infty)$ for $\bs{x}\in\LX_n$.
\item Fix $p\in\mathsf{P}_n$ and recall that $\xi_p$ is defined in~\eqref{eqn::xi_p_def}. The collection 
\begin{align}\label{eqn::solutions_zerolambda}
\{\LZmix(\bs{x}): (\bs{s}, \alpha)\text{ is allowable}\}
\end{align}
is a basis for the space $\LSchord{n}^{(0;0;\xi_p)}$ and they are all positive. 
\end{itemize}
\end{proposition}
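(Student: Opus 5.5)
The proof has three parts: finiteness and positivity, the equations, and the count. Finiteness and positivity are the easy part. Since $\kappa\in(0,4]$ we have $\mathfrak{c}\le 1$. The standard bound for the Brownian loop term is that $\one\{\bs\gamma\cap\bs\eta=\emptyset\}\exp(\frac{\mathfrak c}{2}\blm(\Omega;\bs\gamma,\bs\eta))$ is integrable. One way to see this is Lemma~\ref{lem::multichordalSLE_bp}: condition on $\bs\gamma$ and apply the lemma with $K=\bs\gamma$ to each connected component. This identifies the conditional expectation as $\LZ_\alpha(\Omega\setminus\bs\gamma;\cdot)/\LZ_\alpha(\Omega;\cdot)$. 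The power-law bound~\eqref{eqn::PPF_PLB}, together with monotonicity~\eqref{eqn::bPoisson_mono}, bounds this ratio by $\prod_{\{a,b\}}\Poisson(\Omega;x_a,x_b)^{\mathfrak b}/\LZ_\alpha(\Omega;\cdot)<\infty$. Positivity then follows from allowability, since the event $\{\bs\gamma\cap\bs\eta=\emptyset\}$ has positive probability.

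\textbf{The equations.} I would establish the BPZ equations, translation invariance and homogeneity through the martingale route of Corollary~\ref{cor::SLE_mart}. Fix an index $j$; by symmetry take $j=s_i$ to be a watermelon index (the chordal index case is the same). Let $\gamma^i$ be grown from $x_j$. By Lemma~\ref{lem::halfwatermelonSLE_bp} and the domain Markov property, the process $\LZmix(\HH\setminus\gamma^i_{[0,t]};\ldots)$, normalized by conformal covariance, is proportional to $\Efusion{p}\otimes\Echord_\alpha[\,\cdot\,\mid\gamma^i_{[0,t]}]$. After mapping out by $g_t$, this gives
\[
M_t=\LZmix(g_t(x_1),\ldots,W_t,\ldots,g_t(x_n))\prod_{\ell\ne j}g_t'(x_\ell)^{\mathfrak b},
\]
and $M_t$ is a martingale for chordal $\SLE_\kappa$ from $x_j$, because it is $M_t(\LZ_{\shuffle_p})$ times a conditional expectation. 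The loop-measure cascade is handled by the restriction formula: the change from $\Omega$ to $\Omega\setminus K$ produces $\blm(\Omega;\bs\gamma,\bs\eta,K)$-type terms that recombine through~\eqref{eqn::blm_def}.

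Smoothness is not known a priori, so I would get it from hypoellipticity. Sum over $j$ and view the result as a distributional solution of the summed elliptic equation (as in Lemma~\ref{lem::chordal_first_order_system}); Corollary~\ref{cor::SLE_mart} then gives~\eqref{eqn::BPZ_chordal} with $\lambda=0$. Translation invariance and scaling follow from the conformal covariance~\eqref{eqn::PPF_COV} and~\eqref{eqn::halfwatermelon_COV} with $\varphi(z)=az+b$ fixing $\infty$. The weight at $\infty$ of the watermelon is $\frac{p(2p+4-\kappa)}{2\kappa}$ and each $x_j$ carries weight $\mathfrak b$, which gives degree $\frac{p(2p+4-\kappa)}{2\kappa}-n\mathfrak b=\xi_p/\kappa$. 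The loop term is conformally invariant. Hence $\LZmix\in\LSchord{n}^{(0;0;\xi_p)}$.

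\textbf{The count, which is the main obstacle.} First, the number of allowable pairs should equal $\mathsf B_{n,p}$. I expect allowable pairs to be those in which no link of $\alpha$ nests over a defect $s_i$, because the $\gamma$-curves go to $\infty$. These are exactly the planar link patterns with $p$ defects, and their number is $\mathsf B_{n,p}$. By Theorem~\ref{thm::xi_p_decomposition} it then suffices to prove linear independence. I would argue by induction on $n$ using fusion asymptotics. As $x_k,x_{k+1}\to y$, $\LZmix/(x_{k+1}-x_k)^{-2\mathfrak b}$ should tend to the analogous function with $\{k,k+1\}$ removed when that pair is a link of $\alpha$, and to $0$ otherwise. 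When $k,k+1$ are both defects the exponent is instead $2/\kappa$, which is a weaker singularity. This asymptotic analysis is the step I expect to be hardest; I would attack it via~\eqref{eqn::PPF_ASY} and dominated convergence using the power-law bounds.

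Given these asymptotics, suppose there is a linear relation. Choose a pattern in it that has an innermost link $\{k,k+1\}$ and take the limit $x_k,x_{k+1}\to y$. Only the patterns containing that link survive, so the induction hypothesis for $n-2$ kills their coefficients. Iterating removes all coefficients except the pattern with no links; this remaining pattern is the pure watermelon $\LZ_{\shuffle_p}\neq0$, which forces its coefficient to vanish as well.
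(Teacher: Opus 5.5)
Your proposal is correct and follows essentially the same route as the paper. Finiteness and positivity come from Lemma~\ref{lem::multichordalSLE_bp} conditioned on $\bs\gamma$, together with~\eqref{eqn::PPF_PLB} and~\eqref{eqn::bPoisson_mono}; the BPZ equations come from the one-curve local martingales and Corollary~\ref{cor::SLE_mart}; homogeneity comes from covariance; and linear independence comes from recursive fusion asymptotics, the count $\mathsf B_{n,p}$, and Theorem~\ref{thm::xi_p_decomposition}.

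The only real difference is bookkeeping. The paper first absorbs the loop term into the cascade identities~\eqref{eqn::LZmix_cascade1}--\eqref{eqn::LZmix_cascade2} and then conditions those, whereas you condition the joint expectation and split the loop measure by hand. One citation slip: when you grow a watermelon curve, the perturbation you need is Lemma~\ref{lem::multichordalSLE_bp} applied to $\bs\eta$, and Lemma~\ref{lem::halfwatermelonSLE_bp} belongs to the chordal-index case.
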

The proof of Proposition~\ref{prop::LZmix_BPZ} is split into five lemmas. 
\begin{lemma}\label{lem::LZmix_bound}
Assume the same setup as in~\eqref{eqn::LZmix_def}. 
We have
\begin{align}\label{eqn::LZmix_cascade1}
\LZmix(\Omega; x_1, \ldots, x_n; y)
	=&\LZfusion{p}(\Omega; x_{s_1}, \ldots, x_{s_p}; y)
	\Efusion{p}\left[\LZ_{\alpha}(\Omega\setminus\bs{\gamma}; x_{v_1}, \ldots, x_{v_{2N}})\right];\\
\label{eqn::LZmix_cascade2}
	\LZmix(\Omega; x_1, \ldots, x_n; y)
	=&\LZ_{\alpha}(\Omega; x_{v_1}, \ldots, x_{v_{2N}})
	\Echord_{\alpha}\left[\LZfusion{p}(\Omega\setminus\bs{\eta}; x_{s_1}, \ldots, x_{s_p}; y)\right]. 
\end{align}
In particular, we have
\begin{align}\label{eqn::LZmix_bound}
0<\LZmix(\Omega; \bs{x}; y)\le \LZfusion{p}(\Omega; x_{s_1}, \ldots, x_{s_p}; y)\times\prod_{\{a,b\}\in\alpha}\Poisson(\Omega; x_{v_a}, x_{v_b})^{\mathfrak{b}}.
\end{align}
\end{lemma}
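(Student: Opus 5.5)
The plan is to apply Fubini to the expectation in~\eqref{eqn::LZmix_def}, integrating out one family of curves with the other held fixed, and to recognize the inner conditional expectation as a boundary-perturbation normalization. The key tools are Lemma~\ref{lem::multichordalSLE_bp} and Lemma~\ref{lem::halfwatermelonSLE_bp}. The bound~\eqref{eqn::LZmix_bound} will then follow from the power-law bound~\eqref{eqn::PPF_PLB} and the monotonicity~\eqref{eqn::bPoisson_mono} of the boundary Poisson kernel.

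For~\eqref{eqn::LZmix_cascade1}, fix a realization of $\bs{\gamma}$ and set $K=\bs{\gamma}$. To avoid touching the marked points $y$ and $x_{s_j}$, first take $K$ to be $\bs{\gamma}$ truncated away from small neighborhoods of these points, and afterwards let the truncation shrink. Since a probability measure integrates to one, Lemma~\ref{lem::multichordalSLE_bp} applied to $N$-chordal $\SLE_\kappa$ in $(\Omega;x_{v_1},\ldots,x_{v_{2N}})$ gives
\[
\Echord_{\alpha}\left[\one\{\bs{\eta}\cap K=\emptyset\}\exp\left(\tfrac{\mathfrak{c}}{2}\blm(\Omega;\bs{\eta},K)\right)\right]=\frac{\LZ_{\alpha}(\Omega\setminus K;x_{v_1},\ldots,x_{v_{2N}})}{\LZ_{\alpha}(\Omega;x_{v_1},\ldots,x_{v_{2N}})}.
\]
This identity holds whenever the event $\{\bs{\eta}\cap K=\emptyset\}$ is nonempty. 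When that event is empty, the left side is $0$, and we interpret $\LZ_{\alpha}(\Omega\setminus K;\cdot)$ as $0$ in that case (this happens when $K$ separates some pair $\{a,b\}\in\alpha$). Here $\LZ_\alpha(\Omega\setminus K;\cdot)$ is the product over connected components, as stated in the lemma. Multiplying by $\LZfusion{p}\LZ_\alpha$ and integrating over $\bs{\gamma}$ by Fubini yields~\eqref{eqn::LZmix_cascade1}. Fubini applies because the integrand is nonnegative.

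The main obstacle is that $\bs{\gamma}$ touches $y$ and the points $x_{s_j}$, so Lemma~\ref{lem::multichordalSLE_bp} does not apply to it directly. I would pass to the limit through an increasing family of truncations. Both the indicator and the loop term are monotone in $K$, so monotone convergence handles the left side. On the right side, $\LZ_\alpha(\Omega\setminus K;\cdot)$ converges because the pure partition functions are continuous under Carathéodory convergence of the domain near the marked points, which stay away from $K$. The identity~\eqref{eqn::LZmix_cascade2} is symmetric: fix $\bs{\eta}$, apply Lemma~\ref{lem::halfwatermelonSLE_bp} with $K=\bs{\eta}$, and integrate over $\bs{\eta}$. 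This needs a bit more care, because that lemma requires $\Omega\setminus K$ to be simply connected. I would handle this by restricting to the component that contains $y$ and all the $x_{s_j}$; on the event where no such component exists, both sides vanish.

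For the bound, start from~\eqref{eqn::LZmix_cascade1}. Applying~\eqref{eqn::PPF_PLB} (valid since $\kappa\le4\le6$) in each component, and then~\eqref{eqn::bPoisson_mono}, gives
\[
\LZ_\alpha(\Omega\setminus\bs{\gamma};\cdot)\le\prod_{\{a,b\}\in\alpha}\Poisson(\Omega\setminus\bs{\gamma};x_{v_a},x_{v_b})^{\mathfrak{b}}\le\prod_{\{a,b\}\in\alpha}\Poisson(\Omega;x_{v_a},x_{v_b})^{\mathfrak{b}}.
\]
Both inequalities use $\mathfrak{b}>0$. In the monotonicity step, the subdomain agrees with $\Omega$ near the marked points, up to the truncation limit. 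Taking expectations over $\bs{\gamma}$ then gives the upper bound in~\eqref{eqn::LZmix_bound}, which in particular shows the expectation is finite.

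Positivity also follows from~\eqref{eqn::LZmix_cascade1}. Allowability says the event $\{\bs{\gamma}\cap\bs{\eta}=\emptyset\}$ is nonempty, and I would upgrade this to positive probability. Concretely, I would argue that with positive $\Pfusion{p}$-probability $\bs{\gamma}$ stays inside a neighborhood of a fixed admissible configuration, using the support property of SLE curves. On that event $\LZ_\alpha(\Omega\setminus\bs{\gamma};\cdot)>0$, so the expectation in~\eqref{eqn::LZmix_cascade1} is strictly positive.
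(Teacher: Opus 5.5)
Your core argument is the paper's: condition on one family of curves, use the boundary-perturbation lemmas to identify the conditional expectation of the weight, integrate out, and then bound via \eqref{eqn::PPF_PLB} and \eqref{eqn::bPoisson_mono}. Your treatment of \eqref{eqn::LZmix_cascade2}, passing to the component of $\Omega\setminus\bs{\eta}$ that contains $y$, is slightly more careful than the paper. The paper glosses over the simple-connectivity hypothesis of Lemma~\ref{lem::halfwatermelonSLE_bp}.

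However, the ``main obstacle'' you identify does not exist, and the detour you propose would not work as written.

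\emph{The obstacle.} Lemma~\ref{lem::multichordalSLE_bp} only requires $K$ to stay a positive distance from the marked points of the multichordal SLE, namely $x_{v_1},\ldots,x_{v_{2N}}$. The points $x_{s_j}$ and $y$ are not among them. The general form of that lemma also explicitly allows $\Omega\setminus K$ to split into several simply connected components. So you may take $K=\bs{\gamma}$ directly, after mapping to a bounded domain so that $\bs{\gamma}$ is compact. This is what the paper does.

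\emph{Why the truncation fails.} Cutting $\bs{\gamma}$ away from neighbourhoods of both the $x_{s_j}$ and $y$ leaves interior slits. Then:
\begin{itemize}
\item $\Omega\setminus K_\epsilon$ is not simply connected, so $\mathcal{Z}_\alpha(\Omega\setminus K_\epsilon;\cdot)$ is not even defined via \eqref{eqn::PPF_COV}.
\item For $\kappa\in(8/3,4]$ one has $\mathfrak{c}>0$. The indicator then decreases in $K$ while $\exp(\frac{\mathfrak{c}}{2}\blm)$ increases, so their product is not monotone and monotone convergence does not apply.
\item Continuity of pure partition functions under convergence to a \emph{disconnected} limit domain is not an available tool.
\end{itemize}
Simply drop the truncation.

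\emph{Positivity.} This needs no support theorem. Which boundary arcs lie on which component of $\Omega\setminus\bs{\gamma}$ is determined by $\bs{s}$ alone. So allowability means that, for every realization of $\bs{\gamma}$, each link of $\alpha$ joins two points on the boundary of a single component. Hence $\mathcal{Z}_\alpha(\Omega\setminus\bs{\gamma};\cdot)>0$ for every realization, by \eqref{eqn::PPF_PLB}, and the expectation in \eqref{eqn::LZmix_cascade1} is positive. For the same reason, your case ``event empty, interpret as $0$'' never occurs.
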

\begin{proof}
Let us consider the pair $(\bs{\gamma}, \bs{\eta})$ in the construction above Proposition~\ref{prop::LZmix_BPZ}. The cases $p=0$ or $N=0$ follow directly from the empty-family conventions, so we assume $p,N\ge1$ below. By conformal covariance, it suffices to work in a bounded domain $\Omega$.
\begin{itemize}
\item From the boundary perturbation in Lemma~\ref{lem::multichordalSLE_bp}, the conditional law of $\bs{\eta}$ given $\bs{\gamma}$ weighted by 
\[\one\{\bs{\gamma}\cap\bs{\eta}=\emptyset\}\exp\left(\frac{\mathfrak{c}}{2}\blm(\Omega; \bs{\gamma}, \bs{\eta})\right)\]
is the same as $N$-chordal $\SLE_{\kappa}$ in the connected components of $\Omega\setminus\bs{\gamma}$. Therefore, 
\begin{equation}\label{eqn::etagivengamma}
\LZ_{\alpha}(\Omega; x_{v_1}, \ldots, x_{v_{2N}})\Echord_{\alpha}\left[\one\{\bs{\gamma}\cap\bs{\eta}=\emptyset\}\exp\left(\frac{\mathfrak{c}}{2}\blm(\Omega; \bs{\gamma}, \bs{\eta})\right)\cond \bs{\gamma}\right]=\LZ_{\alpha}(\Omega\setminus\bs{\gamma}; x_{v_1}, \ldots, x_{v_{2N}}). 
\end{equation}
This gives~\eqref{eqn::LZmix_cascade1} as desired. 
\item From the boundary perturbation in Lemma~\ref{lem::halfwatermelonSLE_bp}, the conditional law of $\bs{\gamma}$ given $\bs{\eta}$ weighted by 
\[\one\{\bs{\gamma}\cap\bs{\eta}=\emptyset\}\exp\left(\frac{\mathfrak{c}}{2}\blm(\Omega; \bs{\gamma}, \bs{\eta})\right)\]
is the same as half-$p$-watermelon $\SLE_{\kappa}$ in the connected component of $\Omega\setminus\bs{\eta}$ having $y$ on the boundary. Therefore, 
\begin{align*}
\LZfusion{p}(\Omega; x_{s_1}, \ldots, x_{s_p}; y)
	\Efusion{p}\left[\one\{\bs{\gamma}\cap\bs{\eta}=\emptyset\}
	\exp\left(\frac{\mathfrak{c}}{2}\blm(\Omega;\bs{\gamma},\bs{\eta})\right) \cond\bs{\eta}\right] 
	= \LZfusion{p}(\Omega\setminus\bs{\eta}; x_{s_1}, \ldots, x_{s_p}; y).
\end{align*}
This gives~\eqref{eqn::LZmix_cascade2} as desired. 
\end{itemize}
For the RHS of~\eqref{eqn::etagivengamma}, using~\eqref{eqn::PPF_PLB} and~\eqref{eqn::bPoisson_mono}, we have
\begin{align}\label{eqn::PLB_aux}
0<\LZ_{\alpha}(\Omega\setminus\bs{\gamma}; x_{v_1}, \ldots, x_{v_{2N}})
\le& \prod_{\{a,b\}\in\alpha}\Poisson(\Omega\setminus\bs{\gamma}; x_{v_a}, x_{v_b})^{\mathfrak{b}}
\le \prod_{\{a,b\}\in\alpha}\Poisson(\Omega; x_{v_a}, x_{v_b})^{\mathfrak{b}}.
\end{align}
Plugging into~\eqref{eqn::LZmix_cascade1}, we obtain~\eqref{eqn::LZmix_bound} as desired. 
\end{proof}

\begin{lemma}\label{lem::LZmix_ASY}
Assume the same setup as in~\eqref{eqn::LZmix_def}.
The function $\LZmix(\Omega; \bs{x}; y)$ has the following asymptotics: fix $j\in \{1, \ldots, n-1\}$ and $u\in (x_{j-1},x_{j+2})$, then
\begin{align}\label{eqn::LZmix_ASY}
\lim_{x_j, x_{j+1}\to u}\frac{\LZmix(\Omega; \bs{x}; y)}{\Poisson(\Omega; x_j, x_{j+1})^{\mathfrak{b}}}=
\begin{cases}
\LZmixhat(\Omega; \ddot{\bs{x}}_j; y), &\text{if \{j,j+1\} are paired in $\alpha$}, \\
0, &\text{if else},
\end{cases}
\end{align}
where by $\{j,j+1\}$ paired in $\alpha$, we mean that there is $i$ such that $v_i=j, v_{i+1}=j$ and $\{i,i+1\}\in\alpha$, and $\hat{\alpha}=\alpha/\{i,i+1\}$ and $\ddot{\bs{x}}_j=(x_1, \ldots, x_{j-1}, x_{j+2}, \ldots, x_n)$.
\end{lemma}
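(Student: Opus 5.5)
We prove Lemma~\ref{lem::LZmix_ASY} from the two cascade representations in Lemma~\ref{lem::LZmix_bound}, choosing between them according to whether the colliding points carry watermelon curves or chordal curves. By conformal covariance we may work in a nice bounded $\Omega$, where $\Poisson(\Omega;x_j,x_{j+1})\to\infty$ as $x_j,x_{j+1}\to u$. There are three configurations of the pair $\{j,j+1\}$.

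\emph{Case 1: both indices lie in $\{v_i\}$.} Use~\eqref{eqn::LZmix_cascade2}:
\[
\LZmix=\LZ_{\alpha}(\Omega;x_{v_1},\ldots,x_{v_{2N}})\,\Echord_{\alpha}\bigl[\LZfusion{p}(\Omega\setminus\bs{\eta};x_{s_1},\ldots,x_{s_p};y)\bigr].
\]
Divide by $\Poisson(\Omega;x_j,x_{j+1})^{\mathfrak b}$. The first factor converges by~\eqref{eqn::PPF_ASY}. Its limit is $0$ unless $\{i,i+1\}\in\alpha$, and in that case it is $\LZ_{\hat\alpha}$.
\begin{itemize}
\item If the limit is $0$, we use the bound $\LZfusion{p}(\Omega\setminus\bs\eta;\cdot)\le \LZfusion{p}(\Omega;\cdot)$, obtained from Lemma~\ref{lem::halfwatermelonSLE_bp} since the Radon--Nikodym weight is at most $1$ for $\kappa\le 4$ (so $\mathfrak c\le 0$). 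The expectation is then bounded uniformly near the collision, and the ratio tends to $0$.
\item If $\{i,i+1\}\in\alpha$, the law $\Pchord_\alpha$ converges weakly to $\Pchord_{\hat\alpha}$ on the remaining curves. This holds because the chord joining $x_j,x_{j+1}$ shrinks to the point $u$, by the standard cascade/marginal argument of~\cite{PeltolaWuGlobalMultipleSLEs}. The map $\bs\eta\mapsto \LZfusion{p}(\Omega\setminus\bs\eta;\cdot)$ is continuous off a null set and bounded. Hence the expectation converges to $\Echord_{\hat\alpha}[\cdots]$, and~\eqref{eqn::LZmix_cascade2} for the reduced configuration gives $\LZmixhat$.
\end{itemize}

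\emph{Case 2: one index lies in $\bs s$.} Here $\{j,j+1\}$ cannot be paired, so the limit should be $0$. Use~\eqref{eqn::LZmix_bound}. If both indices are in $\bs s$, the factor $\LZfusion{p}$ contains $(x_{j+1}-x_j)^{2/\kappa}$. This is much smaller than $\Poisson^{\mathfrak b}\asymp (x_{j+1}-x_j)^{-2\mathfrak b}$, while every other factor stays bounded. If exactly one index is in $\bs s$, then $\LZfusion{p}$ stays bounded but $\Poisson(\Omega;x_{v_a},x_{v_b})$ for the chord at the other point does not blow up, so the ratio tends to $0$ again.

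The main obstacle is the weak convergence in the paired sub-case of Case 1. The cleanest route is to avoid general weak convergence and argue by resampling. Condition on the remaining chords $\bs\eta'$, so that the chord $\eta^{i}$ is chordal $\SLE_\kappa$ in its component $\Omega_i$. Then
\[
\LZ_\alpha\,\Echord_\alpha[\cdots]=\Poisson(\Omega_i;x_j,x_{j+1})^{\mathfrak b}\times(\cdots),
\]
and the ratio $\Poisson(\Omega_i;x_j,x_{j+1})/\Poisson(\Omega;x_j,x_{j+1})$ converges to $1$ with a uniform bound. Dominated convergence then finishes the argument, because a chordal $\SLE_\kappa$ between merging points converges to the point $u$ in Hausdorff distance, so its effect on $\LZfusion{p}$ vanishes.
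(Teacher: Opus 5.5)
Your Case 2 matches the paper: it handles the unpaired configurations through~\eqref{eqn::LZmix_bound}. The paired sub-case of Case 1, however, has a genuine gap, and it comes from choosing the wrong cascade.

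\emph{The measure depends on the merging points.} In~\eqref{eqn::LZmix_cascade2} the expectation is taken under $\Pchord_{\alpha}(\Omega;x_{v_1},\ldots,x_{v_{2N}})$, whose law depends on $x_j,x_{j+1}$. You therefore need a limit theorem for this family of measures, and none is established.
\begin{itemize}
\item You do not justify the weak convergence to $\Pchord_{\hat\alpha}$. The claim that $\bs\eta\mapsto\LZfusion{p}(\Omega\setminus\bs\eta;\cdot)$ is bounded and a.s.\ continuous is also unproved; it contains a disjointness indicator.
\item The resampling sketch does not close the gap. Conditioning on $\bs\eta'$ only identifies the conditional law of the shrinking chord. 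As written, your display has a deterministic left side and a right side that depends on $\bs\eta'$ through $\Omega_i$.
\item What you actually need is the marginal law of $\bs\eta'$ under $\Pchord_\alpha$: it is $\Pchord_{\hat\alpha}$ reweighted by $\LZ_{\hat\alpha}\,\Poisson(\Omega_i;x_j,x_{j+1})^{\mathfrak{b}}/\LZ_\alpha$. This requires the loop-measure construction of multiple SLE, not just the resampling property.
\end{itemize}

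\emph{The domination claim is unjustified.} You bound $\LZfusion{p}(\Omega\setminus\bs\eta;\cdot)\le\LZfusion{p}(\Omega;\cdot)$ by asserting $\mathfrak{c}\le 0$ for $\kappa\le4$. This is false: $\mathfrak{c}=(6-\kappa)(3\kappa-8)/(2\kappa)>0$ on $(8/3,4]$, with $\mathfrak{c}=1$ at $\kappa=4$. On that range the weight $\exp(\frac{\mathfrak{c}}{2}\mathfrak{m})$ exceeds $1$, so this justification fails. The failure affects both sub-cases of Case 1. The unpaired sub-case should simply be handled by~\eqref{eqn::LZmix_bound}, exactly as in Case 2.

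\emph{The fix is the paper's route: use~\eqref{eqn::LZmix_cascade1} instead.}
\begin{itemize}
\item There the random object is $\bs\gamma\sim\Pfusion{p}(\Omega;x_{s_1},\ldots,x_{s_p};y)$, whose law does not involve $x_j,x_{j+1}$ at all.
\item Almost surely $\bs\gamma$ stays away from $u$, so $\Omega\setminus\bs\gamma$ agrees with $\Omega$ near $u$. Hence $\Poisson(\Omega\setminus\bs\gamma;x_j,x_{j+1})/\Poisson(\Omega;x_j,x_{j+1})\to1$.
\item Combining this with~\eqref{eqn::PPF_ASY} in the fixed domain $\Omega\setminus\bs\gamma$ gives pointwise convergence of $\LZ_\alpha(\Omega\setminus\bs\gamma;\cdot)/\Poisson(\Omega;x_j,x_{j+1})^{\mathfrak{b}}$ to $\LZ_{\hat\alpha}(\Omega\setminus\bs\gamma;\cdot)$.
\item The power-law bound together with Poisson-kernel monotonicity, as in~\eqref{eqn::PLB_aux}, gives the deterministic dominating constant $\prod_{\{a,b\}\in\alpha,\,\{a,b\}\neq\{i,i+1\}}\Poisson(\Omega;x_{v_a},x_{v_b})^{\mathfrak{b}}$. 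This holds for every $\kappa\in(0,4]$, whatever the sign of $\mathfrak{c}$.
\item Bounded convergence, followed by~\eqref{eqn::LZmix_cascade1} for $\hat\alpha$, yields $\LZmixhat$ directly.
\end{itemize}
This route needs no weak convergence of multiple SLE and no shrinking-chord argument.
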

\begin{proof}
If $\{j,j+1\}$ are not paired in $\alpha$, the limit follows from~\eqref{eqn::LZmix_bound}. In the following, we assume that $\{j,j+1\}$ are paired in $\alpha$. In this case, there is $i$ such that $v_i=j, v_{i+1}=j+1$ and $\{i,i+1\}\in\alpha$. We denote $\hat{\alpha}=\alpha/\{i,i+1\}$. From~\eqref{eqn::LZmix_cascade1}, we have 
\begin{align}\label{eqn::LZmix_ASY_paired_aux1}
\LZmix(\Omega;\bs{x};y)=\LZfusion{p}(\Omega; x_{s_1}, \ldots, x_{s_p};y)\Efusion{p}\left[\LZ_{\alpha}(\Omega\setminus\bs{\gamma}; x_{v_1}, \ldots, x_{v_{2N}})\right].
\end{align}
Let us consider the asymptotics of $\LZ_{\alpha}(\Omega\setminus\bs{\gamma}; x_{v_1}, \ldots, x_{v_{2N}})$. 
Given $\bs{\gamma}$, from~\eqref{eqn::PPF_ASY}, we have almost surely, 
\begin{align*}
\lim_{x_j, x_{j+1}\to u}\frac{\LZ_{\alpha}(\Omega\setminus\bs{\gamma}; x_{v_1}, \ldots, x_{v_{2N}})}{\Poisson(\Omega; x_j, x_{j+1})^{\mathfrak{b}}}
=&\lim_{x_j, x_{j+1}\to u}\frac{\LZ_{\alpha}(\Omega\setminus\bs{\gamma}; x_{v_1}, \ldots, x_{v_{2N}})}{\Poisson(\Omega\setminus\bs{\gamma}; x_j, x_{j+1})^{\mathfrak{b}}}\frac{\Poisson(\Omega\setminus\bs{\gamma}; x_j, x_{j+1})^{\mathfrak{b}}}{\Poisson(\Omega; x_j, x_{j+1})^{\mathfrak{b}}}\\
=&\LZ_{\hat{\alpha}}(\Omega\setminus\bs{\gamma}; x_{v_1}, \ldots, x_{v_{i-1}}, x_{v_{i+2}}, \ldots, x_{v_{2N}}). 
\end{align*}
Using~\eqref{eqn::PLB_aux}, we have 
\begin{align*}
\frac{\LZ_{\alpha}(\Omega\setminus\bs{\gamma}; x_{v_1}, \ldots, x_{v_{2N}})}{\Poisson(\Omega; x_j, x_{j+1})^{\mathfrak{b}}}\le \prod_{\substack{\{a,b\}\in\alpha\\\{a,b\}\neq \{i,i+1\}}}\Poisson(\Omega; x_{v_a}, x_{v_b})^{\mathfrak{b}}. 
\end{align*}
Plugging the two observations into~\eqref{eqn::LZmix_ASY_paired_aux1}, bounded convergence theorem gives
\begin{align*}
\lim_{x_j, x_{j+1}\to u}\frac{\LZmix(\Omega; \bs{x}; y)}{\Poisson(\Omega; x_j, x_{j+1})^{\mathfrak{b}}}=&\LZfusion{p}(\Omega; x_{s_1}, \ldots, x_{s_p}; y)\Efusion{p}\left[\LZ_{\hat{\alpha}}(\Omega\setminus\bs{\gamma}; x_{v_1}, \ldots, x_{v_{i-1}}, x_{v_{i+2}}, \ldots, x_{v_{2N}})\right]\\
=&\LZmixhat(\Omega; \ddot{\bs{x}}_j; y),
\end{align*}
as desired. 
\end{proof}

\begin{lemma}\label{lem::LZmix_BPZ_s}
Assume the same setup as in Proposition~\ref{prop::LZmix_BPZ} and $p\ge 1$. The function $\LZmix(\bs{x})$ satisfies BPZ system~\eqref{eqn::BPZ_chordal} with $\lambda=0$ for $j\in\{s_1, \ldots, s_p\}$.  
\end{lemma}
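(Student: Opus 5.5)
We show the BPZ equation at $j=s_k$ by exhibiting a one-time local martingale for the curve from $x_{s_k}$ and applying Corollary~\ref{cor::SLE_mart}. We use the cascade representation~\eqref{eqn::LZmix_cascade2} in $\HH$ with $y=\infty$:
\[
\LZmix(\bs{x})=\LZ_{\alpha}(\HH;x_{v_1},\ldots,x_{v_{2N}})\,\Echord_{\alpha}\bigl[\LZ_{\shuffle_p}(\HH\setminus\bs{\eta};x_{s_1},\ldots,x_{s_p};\infty)\bigr],
\]
where $\LZ_{\shuffle_p}(\HH\setminus\bs{\eta};\cdot)$ is defined by conformal covariance through the map from the unbounded component of $\HH\setminus\bs{\eta}$ onto $\HH$ fixing $\infty$ with $\varphi(z)/z\to1$. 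Fix $j=s_k$ and let $\gamma$ be a chordal $\SLE_\kappa$ in $(\HH;x_j,\infty)$, run up to a stopping time $\tau$ before it comes close to the other marked points. The goal is to show that
\[
M_t:=\LZmix(g_t(x_1),\ldots,W_t,\ldots,g_t(x_n))\prod_{\ell\ne j}g_t'(x_\ell)^{\mathfrak b}
\]
is a local martingale. By Corollary~\ref{cor::SLE_mart}, this gives the $j$-th BPZ equation; smoothness of $\LZmix$ comes from the same corollary together with hypoellipticity.

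\emph{Step 1.} Rewrite $M_t$ by conformal covariance. Since $g_t'(x_\ell)^{\mathfrak b}$ is the covariance factor at the points $x_{v_i}$, we have $M_t=\LZ_\alpha(\HH_t;x_{v_\cdot})\,\Echord_\alpha^{\HH_t}[\LZ_{\shuffle_p}((\HH_t\setminus\bs\eta); \gamma_t, x_{s_\cdot\ne j};\infty)]$ times the covariance factors of the half-watermelon points, where $\HH_t=\HH\setminus\gamma_{[0,t]}$. Next apply the boundary perturbation Lemma~\ref{lem::multichordalSLE_bp} with $K=\gamma_{[0,t]}$ to express $\Pchord_\alpha(\HH_t)$ in terms of $\Pchord_\alpha(\HH)$. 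This gives
\[
M_t=\LZ_\alpha(\HH;x_{v})\,\Echord_\alpha\Bigl[\one\{\bs\eta\cap\gamma_{[0,t]}=\emptyset\}e^{\frac{\mathfrak c}{2}\blm(\HH;\bs\eta,\gamma_{[0,t]})}\,\widetilde N_t(\bs\eta)\Bigr],
\]
where $\widetilde N_t(\bs\eta)$ is the single-curve martingale~\eqref{eqn::halfwatermelon_mart} for $\LZ_{\shuffle_p}$, computed in the unbounded component $D_{\bs\eta}$ of $\HH\setminus\bs\eta$ and transported by covariance.

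\emph{Step 2.} Fix $\bs\eta$ and show that the integrand is a $\gamma$-local martingale on $\{\gamma\cap\bs\eta=\emptyset\}$. Map $D_{\bs\eta}$ to $\HH$ by $\varphi$ fixing $\infty$. The standard restriction computation (the Lawler--Schramm--Werner change of domain) says the following: for SLE in $\HH$, the process $\varphi_t'(W_t)^{h}\exp(\frac{\mathfrak c}{2}\blm(\HH;\gamma_{[0,t]},\HH\setminus D_{\bs\eta}))$, with $h=\frac{6-\kappa}{2\kappa}$, multiplied by a BPZ-solving function of the image points, is a local martingale exactly when that function satisfies the BPZ equation at the tip in the image domain. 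The function here is $\LZ_{\shuffle_p}$, which satisfies BPZ at every point. The extra interior weight $\blm(\HH;\bs\eta,\gamma)$ is exactly the loop term produced by this computation, and the boundary exponent $\mathfrak b=h$ matches. Since the term at $\infty$ is untouched, $\infty$ stays normalized.

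\emph{Step 3.} Interchange expectation and stochastic differential: use localization together with the bound~\eqref{eqn::LZmix_bound} and a Fubini/dominated-convergence argument up to $\tau$. This shows that $M_t$ is a local martingale.

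\textbf{Main obstacle.} I expect Step 2 to be the crux, in particular tracking the covariance exponent at $\infty$ and the loop-measure bookkeeping. The relation $\blm(\HH;\bs\eta,\gamma_{[0,t]})$ must decompose consistently with the definition~\eqref{eqn::blm_def} for unions of several curves. I would first try to reduce everything to the known multi-time martingale of Lemma~\ref{lem::multitime_mart}: grow $\gamma$ jointly with the already-sampled $\bs\eta$ viewed as a fixed hull. Step 3's integrability follows from the power-law bound~\eqref{eqn::PPF_PLB}, valid since $\kappa\le4$.
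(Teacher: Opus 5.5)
The gap is in your Step 3. Steps 1 and 2 of your route are sound. The loop terms do match, since $\blm(\HH;\gamma_{[0,t]},\HH\setminus D_{\bs\eta})=\blm(\HH;\gamma_{[0,t]},\bs\eta)$ on $\{\gamma_{[0,t]}\cap\bs\eta=\emptyset\}$, and allowability puts every $x_{s_i}$ on $\partial D_{\bs\eta}$. Step 2 is a standard change-of-domain computation, not the crux you expected.

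\emph{Why Step 3 fails as justified.} For fixed $\bs\eta$ you only get a \emph{local} martingale, and its localizing times depend on $\bs\eta$ (how close $\gamma$ comes to $\bs\eta$). Your $\tau$ does not keep $\gamma$ away from the random $\bs\eta$. For $\kappa\in(8/3,4]$ one has $\mathfrak c>0$, so $\exp(\frac{\mathfrak c}{2}\blm(\HH;\bs\eta,\gamma_{[0,t]}))$ is unbounded as $\gamma$ approaches $\bs\eta$, and the martingale coefficients blow up there too. The bounds you cite do not control this. \eqref{eqn::LZmix_bound} bounds the averaged process $M_t$, not the integrand or its stochastic differential, and \eqref{eqn::PPF_PLB} bounds only $\LZ_\alpha$. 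So neither stochastic Fubini nor dominated convergence is justified. If the integrand were a strict local martingale for a non-null set of $\bs\eta$, its average would only be a supermartingale.

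\emph{How to close it.} What is missing is a true-martingale property for each fixed $\bs\eta$, combined with nonnegativity. Normalized at time $0$, your integrand is, on $\mathcal F_{t\wedge\tau}$, the Radon--Nikodym density of the law of the curve from $x_j$ in half-$p$-watermelon SLE in $(D_{\bs\eta};x_{s_1},\ldots,x_{s_p};\infty)$ with respect to chordal SLE in $(\HH;x_j,\infty)$. For $\kappa\le4$ that curve a.s.\ avoids $\bs\eta$, so monotone convergence over the localizations shows no mass is lost. Conditional Tonelli for the nonnegative integrand, under the product law of $(\bs\eta,\gamma)$, then makes $M_{t\wedge\tau}$ a martingale with no domination needed.

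\emph{How the paper avoids this.} The paper uses the other cascade, \eqref{eqn::LZmix_cascade1}, and grows the curve $\gamma^q$ of the half-watermelon itself. Then $N_t$, the conditional expectation of $\LZ_\alpha(\HH\setminus\bs\gamma;x_{v_1},\ldots,x_{v_{2N}})$ given $\gamma^q_{[0,t]}$, is a Doob martingale of a variable bounded deterministically by \eqref{eqn::PLB_aux}. By the domain Markov property and conformal covariance, $N_t$ equals your $M_t$ divided by the one-curve martingale \eqref{eqn::halfwatermelon_mart}. Girsanov and Corollary~\ref{cor::SLE_mart} then finish, with no loop bookkeeping or interchange of limits. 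Your cascade \eqref{eqn::LZmix_cascade2} is the one the paper uses for $j\in\{v_1,\ldots,v_{2N}\}$, again growing a curve of the sampled family rather than an auxiliary chordal SLE.
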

\begin{proof}
Fix $j\in\{s_1,\ldots,s_p\}$ and write $j=s_q$. Let $\bs{\gamma}=(\gamma^1,\ldots,\gamma^p)$ be half-$p$-watermelon $\SLE_{\kappa}$ in $(\HH;x_{s_1},\ldots,x_{s_p};\infty)$. We parameterize the $\gamma^q$ starting from $x_j$ by half-plane capacity and let $g_t$ be the mapping-out function and $W_t$ its driving function. We have
\begin{align*}
	N_t	:=&\Efusion{p}\left[\LZ_{\alpha}(\HH\setminus\bs{\gamma};x_{v_1},\ldots,x_{v_{2N}})\cond\gamma^q[0,t]\right]\\
	=&\Efusion{p}\left[\LZ_{\alpha}(\HH\setminus g_t(\bs{\gamma});g_t(x_{v_1}),\ldots,g_t(x_{v_{2N}}))\cond\gamma^q[0,t]\right]
	\prod_{i=1}^{2N}g'_t(x_{v_i})^{\mathfrak{b}}
	\tag{due to~\eqref{eqn::PPF_COV}}\\
	=&\frac{\LZmix(g_t(x_1),\ldots,g_t(x_{j-1}),W_t,g_t(x_{j+1}),\ldots,g_t(x_n))}{\LZ_{\shuffle_{p}}(\HH;g_t(x_{s_1}),\ldots,g_t(x_{s_{q-1}}),W_t,g_t(x_{s_{q+1}}),\ldots,g_t(x_{s_p}))}
	\prod_{i=1}^{2N}g'_t(x_{v_i})^{\mathfrak{b}}
	\tag{due to~\eqref{eqn::LZmix_cascade1}}
\end{align*}
is a local martingale with respect to $\gamma^q$. Combining with~\eqref{eqn::halfwatermelon_mart}, we find
\begin{equation*}
	M_t(\LZmix)
	=\LZmix(g_t(x_1), \ldots, g_t(x_{j-1}), W_t, g_t(x_{j+1}), \ldots, g_t(x_{n}))
	\prod_{\ell\ne j} g'_t(x_\ell)^{\mathfrak{b}}
\end{equation*}
is a local martingale with respect to chordal $\SLE_{\kappa}$ in $(\HH;x_j,\infty)$. Corollary~\ref{cor::SLE_mart} implies that $\LZmix(\bs{x})$ satisfies the BPZ equation~\eqref{eqn::BPZ_chordal_j} for $j\in\{s_1,\ldots,s_p\}$ as desired.
\end{proof}

\begin{lemma}\label{lem::LZmix_BPZ_v}
Assume the same setup as in Proposition~\ref{prop::LZmix_BPZ} and $N\ge 1$. The function $\LZmix(\bs{x})$ satisfies BPZ system~\eqref{eqn::BPZ_chordal} with $\lambda=0$ for $j\in\{v_1, \ldots, v_{2N}\}$.  
\end{lemma}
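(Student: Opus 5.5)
We would mirror the proof of Lemma~\ref{lem::LZmix_BPZ_s}, now growing one of the chords instead of one of the watermelon curves, and using the cascade identity~\eqref{eqn::LZmix_cascade2} in place of~\eqref{eqn::LZmix_cascade1}. Fix $j=v_k$ and let $\{k,k'\}\in\alpha$ be its link, so that under $\Pchord_\alpha$ the chord $\eta$ starting from $x_j$ ends at $x_{v_{k'}}$. Parametrize this chord by half-plane capacity from $x_j$, with mapping-out functions $g_t$ and driving function $W_t$, and stop before it approaches any other marked point.

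The first step is to show that
\[
N_t:=\Echord_{\alpha}\left[\LZfusion{p}(\HH\setminus\bs{\eta};x_{s_1},\ldots,x_{s_p};\infty)\cond \eta[0,t]\right]
\]
is a martingale. This holds because it is a conditional expectation of an integrable variable; integrability comes from the bound~\eqref{eqn::LZmix_bound}, which by~\eqref{eqn::LZmix_cascade2} controls this expectation by $\LZfusion{p}(\HH;\cdot)$ times a finite constant. Next we rewrite $N_t$ using the domain Markov property of multi-chordal SLE: given $\eta[0,t]$, the remaining curves form $N$-chordal $\SLE_\kappa$ in $\HH\setminus\eta[0,t]$ with $x_j$ replaced by the tip. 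We transport this to $\HH$ by $g_t$, using the covariance rule~\eqref{eqn::halfwatermelon_COV}. Since $g_t$ fixes $\infty$ with $g_t'(\infty)=1$, only the factors $\prod_{i}g_t'(x_{s_i})^{\mathfrak b}$ appear. Then~\eqref{eqn::LZmix_cascade2}, applied in the configuration $(g_t(x_1),\ldots,W_t,\ldots,g_t(x_n))$, gives
\[
N_t=\frac{\LZmix(g_t(x_1),\ldots,W_t,\ldots,g_t(x_n))}{\LZ_\alpha(g_t(x_{v_1}),\ldots,W_t,\ldots,g_t(x_{v_{2N}}))}\prod_{i=1}^p g_t'(x_{s_i})^{\mathfrak b}.
\]

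The second step is a change of measure. By~\eqref{eqn::PPF_mart}, the law of $\eta$ is chordal $\SLE_\kappa$ from $x_j$ to $\infty$ weighted by $M_t(\LZ_\alpha)$. That formula is stated for the curve from $x_1$, but the same holds for any starting point by the standard Lemma~4.8 of~\cite{PeltolaWuGlobalMultipleSLEs}. Multiplying, $N_t\,M_t(\LZ_\alpha)$ is a local martingale for chordal $\SLE_\kappa$ in $(\HH;x_j,\infty)$, and this product is exactly
\[
\LZmix(g_t(x_1),\ldots,W_t,\ldots,g_t(x_n))\prod_{\ell\neq j}g_t'(x_\ell)^{\mathfrak b}.
\]
Corollary~\ref{cor::SLE_mart} then yields the BPZ equation~\eqref{eqn::BPZ_chordal_j} at index $j$.

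The main obstacle is the regularity hypothesis in Corollary~\ref{cor::SLE_mart}, which requires $\LZmix$ to be smooth (at least $C^2$). A priori $\LZmix$ is only defined as an expectation. We would first check that $\LZmix$ is continuous in $\bs x$, via the bound~\eqref{eqn::LZmix_bound} and continuity of the laws under boundary perturbation. We would then upgrade to smoothness with a hypoellipticity argument, as in~\cite{PeltolaWuGlobalMultipleSLEs} and~\cite[Lemma~2.4]{HuangWuCommutationRadial}: the local martingale property makes $\LZmix$ a distributional solution of the hypoelliptic operator in~\eqref{eqn::BPZ_chordal_j}, and H\"ormander's theorem gives smoothness. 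Once smoothness is established for one index, the same argument handles all $j\in\{v_1,\ldots,v_{2N}\}$.
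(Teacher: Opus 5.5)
Your proposal is correct and follows the paper's proof essentially verbatim. It uses the same conditional expectation $N_t$ of the half-watermelon partition function in $\HH\setminus\bs{\eta}$, rewritten via conformal covariance and \eqref{eqn::LZmix_cascade2}, then multiplied by the multichordal martingale \eqref{eqn::PPF_mart}, and passed to Corollary~\ref{cor::SLE_mart}. Treating general $j=v_k$ directly rather than $j=v_1$ plus symmetry, and flagging smoothness via hypoellipticity, are only refinements; the paper absorbs regularity into the ``if and only if'' of Corollary~\ref{cor::SLE_mart}.
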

\begin{proof}
Without loss of generality, we assume $j=v_1$. Let $\bs{\eta}=(\eta^1,\ldots,\eta^N)$ be $N$-chordal $\SLE_{\kappa}$ in $(\HH;x_{v_1},\ldots,x_{v_{2N}})$ associated with $\alpha$, with $\eta^1$ starting from $x_j$. We parameterize $\eta^1$ by half-plane capacity and let $g_t$ be the mapping-out function and $W_t$ its driving function. We have
\begin{align*}
	N_t
	:=&\Echord_{\alpha}\left[\LZ_{\shuffle_{p}}(\HH\setminus\bs{\eta};x_{s_1},\ldots,x_{s_p})\cond\eta^1[0,t]\right]\\
	=&\Echord_{\alpha}\left[\LZ_{\shuffle_{p}}(\HH\setminus g_t(\bs{\eta});g_t(x_{s_1}),\ldots,g_t(x_{s_p}))\cond\eta^1[0,t]\right]
	\prod_{i=1}^p g'_t(x_{s_i})^{\mathfrak{b}}
	\tag{due to~\eqref{eqn::halfwatermelon_COV}}\\
	=&\frac{\LZmix(g_t(x_1),\ldots,g_t(x_{j-1}),W_t,g_t(x_{j+1}),\ldots,g_t(x_n))}{\LZ_{\alpha}(\HH;W_t,g_t(x_{v_2}),\ldots,g_t(x_{v_{2N}}))}
	\prod_{i=1}^p g'_t(x_{s_i})^{\mathfrak{b}}
	\tag{due to~\eqref{eqn::LZmix_cascade2}}
\end{align*}
The process $N_t$ is a local martingale with respect to $\eta^1$. Combining with~\eqref{eqn::PPF_mart}, we find
\begin{equation*}
	M_t(\LZmix)
	=\LZmix(g_t(x_1), \ldots, g_t(x_{j-1}), W_t, g_t(x_{j+1}), \ldots, g_t(x_{n}))
	\prod_{\ell\ne j} g'_t(x_\ell)^{\mathfrak{b}}
\end{equation*}
is a local martingale with respect to chordal $\SLE_{\kappa}$ in $(\HH;x_j,\infty)$. Corollary~\ref{cor::SLE_mart} implies that $\LZmix(\bs{x})$ satisfies the BPZ equation~\eqref{eqn::BPZ_chordal_j} for $j=v_1$. By symmetry, we conclude that $\LZmix(\bs{x})$ satisfies the BPZ equation~\eqref{eqn::BPZ_chordal_j} for $j\in\{v_1,\ldots,v_{2N}\}$ as desired.
\end{proof}

\begin{lemma}\label{lem::allowable_pair}
Fix $n\ge 2$ and $p\in\mathsf{P}_n$. The number of allowlable pairs $(\bs{s},\alpha)$ in Proposition~\ref{prop::LZmix_BPZ} is given by $\mathsf{B}_{n,p}$ defined in~\eqref{eqn::bnp_def}. 
\end{lemma}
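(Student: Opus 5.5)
We need to count allowable pairs $(\bs{s},\alpha)$: $\bs{s}$ is a $p$-subset of $\{1,\ldots,n\}$, $\alpha\in\LP_N$ is a link pattern on the complementary indices $v_1<\cdots<v_{2N}$, and the pair is allowable exactly when the independent half-watermelon curves $\bs{\gamma}$ (from $x_{s_i}$ to $y=\infty$) and the multichordal curves $\bs{\eta}$ can avoid each other. The plan has three steps: characterize allowability combinatorially, encode allowable pairs bijectively, and count.

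\emph{Step 1 (topological characterization).} A pair $(\bs{s},\alpha)$ is allowable if and only if no link $\{a,b\}\in\alpha$ separates some $s_i$ from $\infty$. Concretely, there must be no $s_i$ with $v_a<s_i<v_b$, so every defect lies outside every arc.

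For necessity: the chord $\eta$ joining $x_{v_a}$ and $x_{v_b}$ disconnects $x_{s_i}$ from $\infty$ in $\HH$. Hence $\gamma^i$ must intersect it.

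For sufficiency: when the condition holds, choose deterministic disjoint simple curves realizing the configuration, namely nested semicircles for $\alpha$ and vertical rays from the $x_{s_i}$. Then take small tubular neighbourhoods around them. Both $\Pfusion{p}$ and $\Pchord_{\alpha}$ give positive probability to staying in such a tube neighbourhood. This support statement is standard for $\kappa\in(0,4]$, for instance via the resampling and boundary-perturbation descriptions. By independence, $\{\bs{\gamma}\cap\bs{\eta}=\emptyset\}$ then has positive probability.

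\emph{Step 2 (bijection).} Each allowable pair is encoded as a word in $\{U,D\}^n$. Position $k$ gets letter $U$ if $k$ is a defect or the left endpoint of a link, and letter $D$ if $k$ is the right endpoint of a link. Reading left to right gives a lattice path.

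It never goes below $0$, since each $D$ is matched by an earlier $U$. It ends at height $p$.

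Conversely, let a nonnegative path of length $n$ ending at height $p$ be given. Match each $D$ with the nearest preceding unmatched $U$; this is the standard parenthesis matching and yields a planar link pattern. The unmatched $U$'s are the defects. An unmatched $U$ cannot lie under any arc: if it lay between the endpoints of an arc, the matching rule would have paired that arc's $D$ with it or with something later. So the encoding is inverse to the decoding, and allowable pairs correspond exactly to such paths.

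\emph{Step 3 (counting).} Count nonnegative paths of length $n$ from height $0$ to height $p$ by the reflection principle. The path has $(n+p)/2$ up-steps, so the unrestricted count is $\binom{n}{(n-p)/2}$. A path that touches $-1$ is reflected at its first hit of $-1$, which turns it into an arbitrary path ending at $-p-2$. There are $\binom{n}{(n-p)/2-1}$ of these. The difference is $\mathsf{B}_{n,p}$.

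\emph{Main obstacle.} Step 1 is the only non-elementary part, specifically the positive-probability support statement for half-watermelon and multichordal SLE near prescribed deterministic configurations. I would argue it through the cascade structure. Each curve, given the others, is a chordal SLE or an $\SLE_\kappa(2,\ldots,2)$ in a smaller domain. Such a curve stays in any prescribed tube with positive probability, and conditioning curve by curve then gives the joint statement.
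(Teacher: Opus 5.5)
Your proof is correct, but it counts by a different combinatorial route than the paper.

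\textbf{The paper's route.} It argues directly from the components: $\Omega\setminus\bs{\gamma}$ has $p+1$ connected components, and each chord of $\alpha$ must join two marked points in the same component. A pair is therefore determined by the numbers $m_0,\ldots,m_p$ of chords in the successive gaps, together with a planar matching in each gap. This gives $\sum_{m_0+\cdots+m_p=N}\Cat_{m_0}\cdots\Cat_{m_p}$, and the paper then quotes, as a known fact, that this Catalan convolution equals $\binom{n}{N}-\binom{n}{N-1}$.

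\textbf{Your route.} You encode pairs as nonnegative lattice paths ending at height $p$ and count them by reflection. This proves the identity the paper cites rather than quoting it. Cutting your path at its last visits to the levels $0,1,\ldots,p-1$ splits it into exactly the paper's $p+1$ Dyck-path factors, separated by the $p$ defect up-steps.

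\textbf{What your Step~1 adds.} It makes explicit what the paper leaves tacit. The separation argument gives the necessity of ``no defect under an arc''. Sufficiency, namely that $\{\bs{\gamma}\cap\bs{\eta}=\emptyset\}$ really has positive probability for every such pair, needs a support statement for the half-watermelon and multichordal laws, and the paper's proof simply assumes it. Your tube argument through the cascade and resampling structure is the natural way to supply this, at a level of rigor comparable to the rest of the appendix.

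The paper's route is shorter once the convolution formula is granted. Yours is self-contained on the combinatorial side and states the allowability characterization it relies on.
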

\begin{proof}
Denote by $\Cat_m=\frac{1}{m+1}\binom{2m}{m}$ the $m$th Catalan number.  
For $p\in\mathsf{P}_n$, we write $2N=n-p$.
Assume the same notation as in Proposition~\ref{prop::LZmix_BPZ}. 
The domain $\Omega\setminus\bs{\gamma}$ has $(p+1)$ connected components. In each connected component, the boundary points are connected pairwise by some of $\{\eta^1, \ldots, \eta^N\}$. Thus the number of different link patterns is given by 
\[\sum_{\substack{m_0, m_1, \ldots, m_p\ge 0\\ m_0+m_1+\cdots+m_p=N}}\Cat_{m_0}\Cat_{m_1}\cdots\Cat_{m_p}.\]
This is $p$-fold convolution of Catalan numbers, which is known to be $\binom{n}{N}-\binom{n}{N-1}$ which is the same as $\mathsf{B}_{n,p}$. 
\end{proof}

\begin{proof}[Proof of Proposition~\ref{prop::LZmix_BPZ}]
\begin{itemize}
\item The expectation in~\eqref{eqn::LZmix_def} is finite due to~\eqref{eqn::LZmix_bound} proved in Lemma~\ref{lem::LZmix_bound}. The same lemma gives strict positivity.
\item The partition function $\LZmix(\bs{x})$ satisfies BPZ system~\eqref{eqn::BPZ_chordal} with $\lambda=0$ due to Lemmas~\ref{lem::LZmix_BPZ_s} and~\ref{lem::LZmix_BPZ_v}.
\item It is clear from the construction that $\LZmix(\bs{x})$ is translation-invariant. Thus it satisfies the first Ward identity~\eqref{eqn::Ward} with $\nu=0$. 
\item Fix $p\in\mathsf{P}_n$ and $(\bs{s}, \alpha)$ allowable. The function $\LZmix(\bs{x})$ has the following scaling rule: for $R>0$ and $\bs{x}\in\LX_n$, 
\begin{align}\label{eqn::LZmix_scaling}
\LZmix(R\bs{x})=R^{\xi_{p}/\kappa}\LZmix(\bs{x}),
\end{align}
due to~\eqref{eqn::LZshuffle_def} and~\eqref{eqn::PPF_COV}. Note that
\[\frac{\xi_{p}}{\kappa}=\frac{p(p-1)}{\kappa}-(n-p)\mathfrak{b}.\]
Therefore, $\LZmix$ satisfies the second Ward identity~\eqref{eqn::Ward_2nd} with $\xi=\xi_p$. 
\item Let us check that solutions in~\eqref{eqn::solutions_zerolambda} are linearly independent. Suppose $\{\Cmix\}$ are constants such that 
\begin{align*}
\sum_{(\bs{s}, \alpha)\text{ allowable}}\Cmix\LZmix(\bs{x})=0, \qquad \text{for }\bs{x}\in\LX_n. 
\end{align*}
We apply~\eqref{eqn::LZmix_ASY} recursively and obtain $\Cmix=0$. This gives the desired linear independence. 
\end{itemize}
Collecting the above observations, the functions in~\eqref{eqn::solutions_zerolambda} are linearly independent solutions for $\LSchord{n}^{(0;0;\xi_p)}$. As the number of functions in~\eqref{eqn::solutions_zerolambda} is $\mathsf{B}_{n,p}$ due to Lemma~\ref{lem::allowable_pair} and the dimension for $\LSchord{n}^{(0;0;\xi_p)}$ is also $\mathsf{B}_{n,p}$ due to Theorem~\ref{thm::xi_p_decomposition}, the functions in~\eqref{eqn::solutions_zerolambda} form a basis for $\LSchord{n}^{(0;0;\xi_p)}$. 
\end{proof}

\begin{remark}\label{rem::E_polynomial_alternativeproof}
We present an alternative proof for Proposition~\ref{prop::widehatE} in this remark.
Fix $n\ge2$ and $\bs{x}\in\LX_n$, and write $\widehat E_\kappa:=\widehat E(\bs{x})$ to display its dependence on $\kappa$.

First, suppose that $\kappa\in(0,4)$.
For each $p\in\mathsf P_n$, the functions in~\eqref{eqn::solutions_zerolambda} are linearly independent elements of $\LSchord{n}^{(0;0)}$ and satisfy the scaling rule~\eqref{eqn::LZmix_scaling}.
Differentiating this rule gives
$\kappa\LE\LZmix=\xi_p\LZmix$.
By Lemma~\ref{lem::Ematrix_tilde}, their images under $M(\bs{x})^{-1}\zeta_{\bs{x}}$ are linearly independent eigenvectors of $\widehat E_\kappa$ with eigenvalue $\xi_p$.
The exponents $\xi_p$ are distinct in this range.
By Lemma~\ref{lem::allowable_pair}, there are $\mathsf B_{n,p}$ such vectors for each $p$, and Lemma~\ref{lem::chordal_Ward_Jordan_normal_form} gives
\[
    \sum_{p\in\mathsf P_n}\mathsf B_{n,p}
    =\binom{n}{\lfloor n/2\rfloor}
    =\dim\ker\widetilde T^{(0)}.
\]
Thus these vectors form an eigenbasis of $\ker\widetilde T^{(0)}$, proving both~\eqref{eqn::chordal_dilation_characteristic} and~\eqref{eqn::chordal_dilation_annihilating} for $\kappa\in(0,4)$.
\medbreak
Next, we extend both identities to every $\kappa>0$.
By Lemma~\ref{lem::Ematrix_tilde}, the space $\ker\widetilde T^{(0)}$ is independent of $\kappa$.
Choose a fixed basis of this space.
The explicit formula~\eqref{eqn::chordal_dilation_similarity} shows that the entries of $\widehat E_\kappa$ in this basis are affine in $\kappa$.
Since each $\xi_p$ is also affine in $\kappa$, the coefficients in $t$ of
\[
    \det(tI-\widehat E_\kappa)
    -\prod_{p\in\mathsf P_n}(t-\xi_p)^{\mathsf B_{n,p}}
\]
and the entries of $Q_\kappa(\widehat E_\kappa)$ are polynomials in $\kappa$.
They vanish for every $\kappa\in(0,4)$ and hence vanish identically.
This proves~\eqref{eqn::chordal_dilation_characteristic} and~\eqref{eqn::chordal_dilation_annihilating} for all $\kappa>0$.
\end{remark}

\begin{corollary}
Fix $n\ge 2$ and $\kappa\in (0,4]$. 
All solutions in $\LSchord{n}^{(0;0)}$ satisfy power-law bound. 
\end{corollary}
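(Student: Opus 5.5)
The plan is to combine the decomposition of Theorem~\ref{thm::xi_p_decomposition} with the explicit positive basis of Proposition~\ref{prop::LZmix_BPZ} and the upper bound~\eqref{eqn::LZmix_bound} of Lemma~\ref{lem::LZmix_bound}. Since $\kappa\in(0,4]\subset(0,8)$, the exponents $\xi_p$ are distinct. Theorem~\ref{thm::xi_p_decomposition} therefore gives $\LSchord{n}^{(0;0)}=\bigoplus_{p\in\mathsf P_n}\LSchord{n}^{(0;0;\xi_p)}$. For each $p$, Proposition~\ref{prop::LZmix_BPZ} shows that the functions $\LZmix$, with $(\bs s,\alpha)$ allowable, form a basis of $\LSchord{n}^{(0;0;\xi_p)}$. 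Hence every $F\in\LSchord{n}^{(0;0)}$ is a finite linear combination
\[
F=\sum_{p\in\mathsf P_n}\ \sum_{(\bs s,\alpha)}\Cmix\,\LZmix .
\]
It then suffices to establish a power-law bound for each basis function and sum the bounds using the absolute values $|\Cmix|$.

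For a single basis function, I will apply~\eqref{eqn::LZmix_bound} in $(\HH;\bs x;\infty)$. By definition, $\LZfusion{p}(\HH;x_{s_1},\ldots,x_{s_p};\infty)$ is $\LZ_{\shuffle_p}=\prod_{i<\ell}(x_{s_\ell}-x_{s_i})^{2/\kappa}$, and $\Poisson(\HH;x_a,x_b)^{\mathfrak b}=|x_a-x_b|^{-2\mathfrak b}$. This gives
\[
0<\LZmix(\bs x)\le \prod_{1\le i<\ell\le p}(x_{s_\ell}-x_{s_i})^{2/\kappa}\prod_{\{a,b\}\in\alpha}|x_{v_b}-x_{v_a}|^{-2\mathfrak b}.
\]
Each factor has the form $(x_j-x_i)^{\mu}$ with a fixed real exponent $\mu$. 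Using $t^{\mu}\le \max(t^{\mu_+},t^{-\mu_-})$ with uniform exponents, a finite sum of such products is bounded by $C\prod_{i<j}\max\bigl((x_j-x_i)^{A},(x_j-x_i)^{-B}\bigr)$ for constants $C,A,B>0$ depending only on $n,\kappa$ and the coefficients. That is the power-law bound in the usual (Flores--Kleban) sense.

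The only step needing care is justifying~\eqref{eqn::LZmix_bound} with $y=\infty$ in $\HH$. Lemma~\ref{lem::LZmix_bound} is proved in a bounded domain, using conformal covariance. I would either note that~\eqref{eqn::PLB_aux} and the monotonicity~\eqref{eqn::bPoisson_mono} hold verbatim in $\HH$ with the target at $\infty$. Alternatively, I would transport the bound from a bounded domain via a M\"obius map; both sides of~\eqref{eqn::LZmix_bound} transform with identical covariance factors at the $x_j$ and at $y$, so the inequality is conformally invariant. Once this point is settled, the rest is bookkeeping.
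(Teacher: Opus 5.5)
Your proposal is correct and follows essentially the paper's proof. You expand in the basis $\bigcup_{p\in\mathsf P_n}\{\LZmix\}$ given by Theorem~\ref{thm::xi_p_decomposition} and Proposition~\ref{prop::LZmix_BPZ}, and bound each basis element by~\eqref{eqn::LZmix_bound} specialized to $(\HH;\bs x;\infty)$. Your extra remarks (the explicit $\max(t^A,t^{-B})$ form of the bound, and the conformal invariance of~\eqref{eqn::LZmix_bound} to handle $y=\infty$) only spell out details the paper leaves implicit; the latter is already covered by the reduction to bounded domains in the proof of Lemma~\ref{lem::LZmix_bound}.
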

\begin{proof}
From~\eqref{eqn::LZmix_bound}, we have
\[\LZmix(\bs{x})\le \LZ_{\shuffle_p}(x_{s_1}, \ldots, x_{s_p})\times\prod_{\{a,b\}\in\alpha}|x_{v_b}-x_{v_a}|^{-2\mathfrak{b}}.\]
Thus, all functions in the basis $\cup_{p\in\mathsf{P}_n}\{\LZmix(\bs{x}): (\bs{s}, \alpha)\text{ is allowable}\}$ for $\LSchord{n}^{(0;0)}$ satisfy power-law bound. This gives the conclusion.
\end{proof}


\begin{thebibliography}{CDCH{\etalchar{+}}14}

\bibitem[AHSY26]{AngHoldenSunYu2023}
Morris Ang, Nina Holden, Xin Sun, and Pu~Yu.
\newblock Conformal welding of quantum disks and multiple {SLE}: the non-simple
  case.
\newblock {\em Probab. Theory Related Fields}, 194(1-2):717--777, 2026.

\bibitem[ALS20]{AruLupuSepulvedaFPSGFF}
Juhan Aru, Titus Lupu, and Avelio Sep\'ulveda.
\newblock First passage sets of the 2{D} continuum {G}aussian free field.
\newblock {\em Probab. Theory Related Fields}, 176(3-4):1303--1355, 2020.

\bibitem[ASW19]{AruSepulvedaWernerBTLStwoDGFF}
Juhan Aru, Avelio Sep\'ulveda, and Wendelin Werner.
\newblock On bounded-type thin local sets of the two-dimensional {G}aussian
  free field.
\newblock {\em J. Inst. Math. Jussieu}, 18(3):591--618, 2019.

\bibitem[BB03]{BauerBernardCFTSLE}
Michel Bauer and Denis Bernard.
\newblock Conformal field theories of stochastic {L}oewner evolutions.
\newblock {\em Comm. Math. Phys.}, 239(3):493--521, 2003.

\bibitem[BBK05]{BauerBernardKytolaMultipleSLE}
Michel Bauer, Denis Bernard, and Kalle Kyt{\"o}l{\"a}.
\newblock Multiple {S}chramm-{L}oewner evolutions and statistical mechanics
  martingales.
\newblock {\em J. Stat. Phys.}, 120(5-6):1125--1163, 2005.

\bibitem[BP26]{BerestyckiPowellGFFandLQG}
Nathana\"el Berestycki and Ellen Powell.
\newblock Gaussian free field and {L}iouville quantum gravity.
\newblock  {\em Cambridge Studies in Advanced Mathematics, Cambridge University Press, Cambridge,} volume 220, 2026.

\bibitem[BPW21]{BeffaraPeltolaWuUniqueness}
Vincent Beffara, Eveliina Peltola, and Hao Wu.
\newblock On the uniqueness of global multiple {SLE}s.
\newblock {\em Ann. Probab.}, 49(1):400--434, 2021.

\bibitem[BPZ84a]{BPZInfiniteConformalSymmetry2DQuantum}
A.~A. Belavin, A.~M. Polyakov, and A.~B. Zamolodchikov.
\newblock Infinite conformal symmetry in two-dimensional quantum field theory.
\newblock {\em Nuclear Phys. B}, 241(2):333--380, 1984.

\bibitem[BPZ84b]{BPZInfiniteConformalSymmetryCritical2D}
A.~A. Belavin, A.~M. Polyakov, and A.~B. Zamolodchikov.
\newblock Infinite conformal symmetry of critical fluctuations in two
  dimensions.
\newblock {\em J. Statist. Phys.}, 34(5-6):763--774, 1984.




\bibitem[CW26]{ChenWuSemiclassicalBPZ}
Mo Chen and Hao Wu.
\newblock Quasi-exponentials and semi-classical limits for the chordal {BPZ} system.
\newblock Preprint in arXiv:2609.38192, 2026.


\bibitem[DF84]{DotsenkoFateevCoulombGas}
Vl.~S. Dotsenko and V.~A. Fateev.
\newblock Conformal algebra and multipoint correlation functions in {2D} statistical models.
\newblock {\em Nuclear Phys. B}, 240(3):312--348, 1984.

\bibitem[DLMF]{NISTDLMF}
NIST Digital Library of Mathematical Functions.
\newblock Section~5.9(i): Gamma function, Hankel's loop integral.
\newblock \href{https://dlmf.nist.gov/5.9.E2}{https://dlmf.nist.gov/5.9.E2}.

\bibitem[Dub06]{DubedatEulerIntegralsCommutingSLEs}
Julien Dub\'edat.
\newblock Euler integrals for commuting {SLE}s.
\newblock {\em J. Stat. Phys.}, 123(6):1183--1218, 2006.

\bibitem[Dub07]{DubedatCommutationSLE}
Julien Dub{\'e}dat.
\newblock Commutation relations for {S}chramm-{L}oewner evolutions.
\newblock {\em Comm. Pure Appl. Math.}, 60(12):1792--1847, 2007.

\bibitem[Dub09]{DubedatSLEFreeField}
Julien Dub{\'e}dat.
\newblock S{LE} and the free field: partition functions and couplings.
\newblock {\em J. Amer. Math. Soc.}, 22(4):995--1054, 2009.


\bibitem[Dub15]{DubedatSLEVirasoroFusion}
Julien Dub\'edat.
\newblock {SLE} and {V}irasoro representations: fusion.
\newblock {\em Comm. Math. Phys.}, 336(2):761--809, 2015.

\bibitem[FK15a]{FloresKlebanPDE1}
Steven~M. Flores and Peter Kleban.
\newblock A solution space for a system of null-state partial differential
  equations: {P}art 1.
\newblock {\em Comm. Math. Phys.}, 333(1):389--434, 2015.

\bibitem[FK15b]{FloresKlebanPDE2}
Steven~M. Flores and Peter Kleban.
\newblock A solution space for a system of null-state partial differential
  equations: {P}art 2.
\newblock {\em Comm. Math. Phys.}, 333(1):435--481, 2015.

\bibitem[FK15c]{FloresKlebanPDE3}
Steven~M. Flores and Peter Kleban.
\newblock A solution space for a system of null-state partial differential
  equations: {P}art 3.
\newblock {\em Comm. Math. Phys.}, 333(2):597--667, 2015.

\bibitem[FLPW24]{FengLiuPeltolaWu2024}
Yu~Feng, Mingchang Liu, Eveliina Peltola, and Hao Wu.
\newblock Multiple {SLEs} for $\kappa\in (0,8)$: Coulomb gas integrals and pure
  partition functions.
\newblock Preprint in arXiv: 2406.06522, 2024.

\bibitem[FMTV00]{FelderMarkovTarasovVarchenkoKZ}
G.~Felder, Y.~Markov, V.~Tarasov, and A.~Varchenko.
\newblock Differential equations compatible with {KZ} equations.
\newblock {\em Math. Phys. Anal. Geom.}, 3(2):139--177, 2000.

\bibitem[FP20]{FloresPeltolaSchurWeyl}
Steven~M. Flores and Eveliina Peltola.
\newblock Higher-spin quantum and classical {Schur--Weyl} duality for
  {$\mathfrak{sl}_2$}.
\newblock Preprint in arXiv:2008.06038, 2020.

\bibitem[FPW24]{FengPeltolaWuConnectionProbaFKIsing}
Yu~Feng, Eveliina Peltola, and Hao Wu.
\newblock Connection probabilities of multiple {FK-Ising} interfaces.
\newblock {\em Probability Theory and Related Fields}, 189(1-2):281--367, March
  2024.

\bibitem[FSKZ17]{FloresSimmonsKlebanZiffCrossingProba}
S.~M. Flores, J.~J.~H. Simmons, P.~Kleban, and R.~M. Ziff.
\newblock A formula for crossing probabilities of critical systems inside
  polygons.
\newblock {\em J. Phys. A}, 50(6):064005, 91, 2017.

\bibitem[HLR25]{HaghighatLiuReshetikhinIrregularBlocks}
Babak Haghighat, Yihua Liu, and Nicolai Reshetikhin.
\newblock Flat connections from irregular conformal blocks.
\newblock {\em Comm. Math. Phys.}, 406(6):Article~138, 2025.

\bibitem[HPW26]{HuangPeltolaWuMultiradialSLE}
Chongzhi Huang, Eveliina Peltola, and Hao Wu.
\newblock Multiradial {SLE} with spiral: resampling property and boundary
  perturbation.
\newblock {\em Int. Math. Res. Not. IMRN}, 2026(15):rnag160, 2026.

\bibitem[HW26]{HuangWuCommutationRadial}
Chongzhi Huang and Hao Wu.
\newblock Classification of commutation relation for multi-radial {SLE}.
\newblock Preprint in arXiv:2609.19739, 2026.

\bibitem[HWY26]{HuangWuYangMultipleSLEsDysonBM}
Chongzhi Huang, Hao Wu, and Lu~Yang.
\newblock Multiple {SLE}s and {Dyson} {Brownian} motion: transition density and
  {Green's} function.
\newblock {\em J. Theor. Probab. to appear.} Preprint in arXiv:2311.06789, 2023.

\bibitem[IKV16]{IvanovKangVasilevSlitHoloStochasticGFF}
Georgy Ivanov, Nam-Gyu Kang, and Alexander Vasil'ev.
\newblock Slit holomorphic stochastic flows and {G}aussian free field.
\newblock {\em Complex Anal. Oper. Theory}, 10(7):1591--1617, 2016.

\bibitem[KL07]{KozdronLawlerMultipleSLEs}
Michael~J. Kozdron and Gregory~F. Lawler.
\newblock The configurational measure on mutually avoiding {SLE} paths.
\newblock In {\em Universality and renormalization}, volume~50 of {\em Fields
  Inst. Commun.}, pages 199--224. Amer. Math. Soc., Providence, RI, 2007.


\bibitem[KLPR25]{KarrilaLafayPeltolaRoussillonUSTBranches}
Alex Karrila, Augustin Lafay, Eveliina Peltola, and Julien Roussillon.
\newblock Planar {UST} branches and $c=-2$ degenerate boundary correlations.
\newblock {\em Probab. Math. Phys.}, 6:1443--1506, 2025.

\bibitem[KP16]{KytolaPeltolaPurePartitionFunctions}
Kalle Kyt\"ol\"a and Eveliina Peltola.
\newblock Pure partition functions of multiple {SLE}s.
\newblock {\em Comm. Math. Phys.}, 346(1):237--292, 2016.

\bibitem[KP20]{KytolaPeltolaConformalCovBoundaryCorrelation}
Kalle Kyt\"{o}l\"{a} and Eveliina Peltola.
\newblock Conformally covariant boundary correlation functions with a quantum
  group.
\newblock {\em J. Eur. Math. Soc. (JEMS)}, 22(1):55--118, 2020.

\bibitem[KPS26]{KarrilaPeltolaSchougGFFdegeneratelevelline}
Alex Karrila, Eveliina Peltola, and Lukas Schoug.
\newblock Level lines of the {G}aussian free field and $c=1$ degenerate
  conformal blocks. Preprint in arXiv:2606.22228, 2026.

\bibitem[KWW24]{KrusellWangWuCommutationRelation}
Ellen Krusell, Yilin Wang, and Hao Wu.
\newblock Commutation relations for two-sided radial {SLE}.
\newblock Preprint in arXiv:2405.07082, 2024.

\bibitem[KZ84]{KnizhnikZamolodchikov1984}
V.~G. Knizhnik and A.~B. Zamolodchikov.
\newblock Current algebra and {Wess--Zumino} model in two dimensions.
\newblock {\em Nucl. Phys. B}, 247(1):83--103, 1984.

\bibitem[Law09]{LawlerPartitionFunctionsSLE}
Gregory~F. Lawler.
\newblock Partition functions, loop measure, and versions of {SLE}.
\newblock {\em J. Stat. Phys.}, 134(5-6):813--837, 2009.

\bibitem[LPW25]{LiuPeltolaWuUST}
Mingchang Liu, Eveliina Peltola, and Hao Wu.
\newblock Uniform spanning tree in topological polygons, partition functions
  for {SLE}(8), and correlations in $c=-2$ logarithmic {CFT}.
\newblock {\em Ann. Probab.}, 53(1):23--78, 2025.


\bibitem[LW04]{LawlerWernerBrownianLoopsoup}
Gregory~F. Lawler and Wendelin Werner.
\newblock The {B}rownian loop soup.
\newblock {\em Probab. Theory Related Fields}, 128(4):565--588, 2004.


\bibitem[LW21]{LiuWuCrossingProbaMGFF}
Mingchang Liu and Hao Wu.
\newblock Scaling limits of crossing probabilities in metric graph {GFF}.
\newblock {\em Electron. J. Probab.}, 26:Paper No.~37, 46 pp., 2021.

\bibitem[MS16]{MillerSheffieldIG2}
Jason Miller and Scott Sheffield.
\newblock Imaginary geometry {II}: {R}eversibility of {$\operatorname{SLE}\sb
  \kappa(\rho\sb 1;\rho\sb 2)$} for {$\kappa\in(0,4)$}.
\newblock {\em Ann. Probab.}, 44(3):1647--1722, 2016.

\bibitem[Pel19]{PeltolaCFTSLE}
Eveliina Peltola.
\newblock Toward a conformal field theory for {S}chramm-{L}oewner evolutions.
\newblock {\em J. Math. Phys.}, 60(10):103305, 39, 2019.

\bibitem[Pel20]{PeltolaBasisPDE}
Eveliina Peltola.
\newblock Basis for solutions of the {B}enoit \& {S}aint-{A}ubin {PDE}s with
  particular asymptotics properties.
\newblock {\em Ann. Inst. Henri Poincar\'{e} D}, 7(1):1--73, 2020.

\bibitem[PW19]{PeltolaWuGlobalMultipleSLEs}
Eveliina Peltola and Hao Wu.
\newblock Global and local multiple {SLE}s for $\kappa \leq 4$ and connection
  probabilities for level lines of {GFF}.
\newblock {\em Comm. Math. Phys.}, 366(2):469--536, 2019.

\bibitem[PW23]{PeltolaWuCrossingProbaIsing}
Eveliina Peltola and Hao Wu.
\newblock Crossing probabilities of multiple {I}sing interfaces.
\newblock {\em Ann. Appl. Probab.}, 33(4):3169--3206, 2023.

\bibitem[Sch00]{SchrammFirstSLE}
Oded Schramm.
\newblock Scaling limits of loop-erased random walks and uniform spanning
  trees.
\newblock {\em Israel J. Math.}, 118:221--288, 2000.

\bibitem[She07]{SheffieldGFFMath}
Scott Sheffield.
\newblock Gaussian free fields for mathematicians.
\newblock {\em Probab. Theory Related Fields}, 139(3-4):521--541, 2007.



\bibitem[SS13]{SchrammSheffieldContinuumGFF}
Oded Schramm and Scott Sheffield.
\newblock A contour line of the continuum {G}aussian free field.
\newblock {\em Probab. Theory Related Fields}, 157(1-2):47--80, 2013.

\bibitem[Wu20]{WuHyperSLE}
Hao Wu.
\newblock Hypergeometric {SLE}: conformal {M}arkov characterization and
  applications.
\newblock {\em Comm. Math. Phys.}, 374(2):433--484, 2020.

\bibitem[WW17]{WangWuLevellinesGFFI}
Menglu Wang and Hao Wu.
\newblock Level lines of {G}aussian free field {I}: zero-boundary {GFF}.
\newblock {\em Stochastic Process. Appl.}, 127(4):1045--1124, 2017.

\bibitem[Zha24]{ZhanExistenceUniquenessMultipleSLE}
Dapeng Zhan.
\newblock Existence and uniqueness of nonsimple multiple {SLE}.
\newblock {\em J. Stat. Phys.}, 191(8):Paper No. 101, 15, 2024.

\bibitem[Zha25]{zhang2025multiplechordalslekappaquantum}
Jiaxin Zhang.
\newblock Multiple chordal {SLE}($\kappa$) and quantum Calogero-Moser system.
\newblock Preprint in arXiv:2505.16093, 2025.

\bibitem[ZZ17]{ZabrodinZotov2017}
A.~Zabrodin and A.~Zotov.
\newblock {KZ}--{Calogero} correspondence revisited.
\newblock {\em J. Phys. A: Math. Theor.}, 50(20):205202, 2017.

\end{thebibliography}

{\small
\newcommand{\etalchar}[1]{$^{#1}$}

}

\end{document}